\DeclarePairedDelimiter\floor{\lfloor}{\rfloor}
\newtheorem {theorem}{Theorem}
\newtheorem {lemma}[theorem]{Lemma}
\newtheorem {proposition}[theorem]{Proposition}
\newtheorem {corollary}[theorem]{Corollary}
\newtheorem {conjecture}[theorem]{Conjecture}
\newtheorem {definition}[theorem]{Definition}
\theoremstyle{remark}
\newtheorem {remark}[theorem]{Remark}
\newtheorem {example}[theorem]{Example}
\numberwithin{equation}{section}
\numberwithin{theorem}{section}
\newlist{pcases}{enumerate}{1}
\setlist[pcases]{
  label=\bf{Case~\arabic*:}\protect\thiscase.~,
  ref=\arabic*,
  align=left,
  labelsep=0pt,
  leftmargin=0pt,
  labelwidth=0pt,
  parsep=0pt
}
\newcommand{\case}[1][]{%
  \if\relax\detokenize{#1}\relax
    \def\thiscase{}%
  \else
    \def\thiscase{~#1}%
  \fi
  \item
}
\newcommand{\rank}{\operatorname{rank}}
\newcommand{\alg}{\mathrm{alg}}
\newcommand{\ZZ}{\mathbb{Z}}
\newcommand{\Z}{\mathbb{Z}}
\newcommand{\N}{\mathbb{N}}
\newcommand{\R}{\mathbb{R}}
\newcommand{\Ralg}{\R^\alg}
\newcommand{\C}{\mathbb{C}}
\newcommand{\Q}{\mathbb{Q}}
\newcommand{\Qbar}{\overline{\Q}}
\newcommand{\img}{\operatorname{Im}}
\newcommand{\re}{\operatorname{Re}}
\newcommand{\cN}{\mathcal{N}}
\newcommand{\cR}{\mathcal{R}}
\newcommand{\Hom}{\operatorname{Hom}}
\newcommand{\ssm}{\smallsetminus}
\newcommand{\tr}{\operatorname{tr}}
\newcommand{\ab}{\operatorname{ab}}
\newcommand{\diag}{\operatorname{diag}}
\newcommand{\mirror}{\overline}
\newcommand{\Gal}{\mathrm{Gal}}
\newcommand{\twomatrix}[4]{{\left(\begin{matrix} #1 & #2 \\ #3 & #4 \end{matrix}\right)}}
\newcommand{\pt}{\mathrm{pt}}
\DeclareMathOperator{\lcm}{lcm}
\newcommand{\Wh}{\mathit{Wh}}
\newcommand{\KHI}{\mathit{KHI}}
\newcommand{\ad}{\operatorname{ad}}
\newcommand{\ch}{X}
\newcommand{\chsl}{{\ch}_{SL_2}}
\newcommand{\abs}[1]{\lvert#1\rvert}
\newcommand{\norm}[1]{\left\lVert #1 \right\rVert}
\renewcommand{\epsilon}{\varepsilon}
\newcommand{\pertdata}{{\{\iota_k,\chi_k\}}}
\newcommand{\Aut}{\operatorname{Aut}}
\newcommand{\id}{\operatorname{id}}
\newcommand{\Hol}{\operatorname{Hol}}
\title{$SU(2)$-cyclic surgeries and the pillowcase}
\date{}
\author{Steven Sivek}
\email{s.sivek@imperial.ac.uk}
\address{Imperial College London}
\author{Raphael Zentner}
\email{raphael.zentner@mathematik.uni-regensburg.de}
\address{Universit\"{a}t Regensburg}
\begin{document}

\begin{abstract}
We study knots in $S^3$ with infinitely many $SU(2)$-cyclic surgeries, which are Dehn surgeries such that every representation of the resulting fundamental group into $SU(2)$ has cyclic image.  We show that for every such nontrivial knot $K$, its set of $SU(2)$-cyclic slopes is bounded and has a unique limit point, which is both a rational number and a boundary slope for $K$.  We also show that such knots are prime and have infinitely many instanton L-space surgeries.  Our methods include the application of holonomy perturbation techniques to instanton knot homology, using a strengthening of recent work by the second author.
\end{abstract}

\maketitle

\section{Introduction}

The cyclic surgery theorem of Culler, Gordon, Luecke, and Shalen \cite{cgls} says that if a knot in $S^3$ is neither the unknot nor a torus knot, and if both $r$- and $s$-surgery on that knot give manifolds with cyclic fundamental group, then the distance $\Delta(r,s)$ between these slopes is at most 1, and hence there are at most two such nontrivial surgeries.  This is the strongest possible result: Moser \cite{moser} showed that torus knots have infinitely many lens space surgeries, and for example the pretzel knot $P(-2,3,7)$ has two lens space surgeries, of slopes 18 and 19.

In this paper we study a weaker property of surgeries on knots in $S^3$.  Following Lin \cite{lin}, we say that a 3-manifold $Y$ is \emph{$SU(2)$-cyclic} if all representations $\pi_1(Y) \to SU(2)$ have cyclic image.  For example, $\frac{37}{2}$-surgery on the pretzel knot $P(-2,3,7)$ is not a lens space, but it is known to be $SU(2)$-cyclic.  Kronheimer and Mrowka \cite{km-su2} proved that any $SU(2)$-cyclic surgery of slope $r$ on a nontrivial knot $K \subset S^3$ satisfies $|r|>2$, and Lin \cite{lin} proved that any two $SU(2)$-cyclic slopes $r_i = \frac{p_i}{q_i}$ ($i=1,2$) for $K$ satisfy the inequality $\Delta(r_1,r_2) \leq |p_1|+|p_2|$.  We will call a nontrivial knot \emph{$SU(2)$-averse} if it has infinitely many $SU(2)$-cyclic surgeries.

If $K$ is $SU(2)$-averse, then the slopes of its $SU(2)$-cyclic surgeries form an infinite subset of $\mathbb{RP}^1 = \R \cup \{\infty\}$, so they must have a limit point.  One can deduce from Lin's inequality or from Theorem~\ref{thm:open-pillowcase-image} below that this limit point is unique; we call this point the \emph{limit slope} of $K$.  For example, the torus knot $T_{p,q}$ has limit slope $pq$, since $(pq+\frac{1}{n})$-surgery on $T_{p,q}$ is a lens space for all integers $n \neq 0$.

By studying the $SU(2)$ character variety of the exterior $S^3 \ssm N(K)$, and its image under restriction in the character variety of the peripheral torus $\partial N(K)$, we prove the following.

\begin{theorem} \label{thm:main}
Let $K \subset S^3$ be an $SU(2)$-averse knot.  Then its limit slope $r(K)$ is a rational number, with $|r(K)| > 2$, and it is a boundary slope for $K$.  Moreover, $K$ is prime, and the manifold $S^3_{s}(K)$ constructed by $s$-surgery on $K$ is an instanton L-space for all rational $s \geq \lceil r(K)\rceil - 1$ if $r(K)>0$, or for all $s \leq \lfloor r(K)\rfloor + 1$ if $r(K)<0$.
\end{theorem}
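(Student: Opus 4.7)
The strategy is to analyze the image $R$ of the $SU(2)$ character variety $\ch(S^3 \ssm N(K))$ under the restriction map to the pillowcase $P = \ch(\partial N(K)) \cong (\R/\Z)^2/\{\pm 1\}$. Each slope $r = p/q$ determines a closed subset $L_r \subset P$ cut out by the relation $\rho(\mu^{p}\lambda^{q}) = I$, and $r$ is an $SU(2)$-cyclic slope precisely when $R \cap L_r$ consists only of abelian (i.e.\ central or binary dihedral) characters. Since $R$ is semi-algebraic, an infinite sequence of cyclic slopes $r_n \to r(K)$ forces a positive-dimensional component $R_0 \subset R$ to accumulate along the limit line $L_{r(K)}$. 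Lines in $P$ of irrational slope are dense and in particular not algebraic, so $R_0$ can contain an arc of $L_{r(K)}$ only if $r(K) \in \Q$. The bound $|r(K)|>2$ then follows from Kronheimer--Mrowka's $|r_n|>2$, combined with a separate argument to exclude the boundary case $r(K)=\pm 2$ using the boundary-slope description below.

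To show that $r(K)$ is a boundary slope, I would lift $R_0$ to an algebraic curve $C$ in $\ch(S^3 \ssm N(K))$ and apply Culler--Shalen theory: the ideal points of $C$ give rise to essential surfaces in $S^3 \ssm N(K)$, and boundedness of the trace of $\mu^p \lambda^q$ along $C$ (where $r(K) = p/q$) forces the boundary slope of the resulting surface to equal $r(K)$. For primeness, assume toward contradiction that $K = K_1 \# K_2$ with both summands nontrivial. Since $\pi_1(S^3 \ssm N(K))$ is the amalgamated product of the two summand groups over the meridian, one can combine irreducible $SU(2)$-representations of $K_1$ and $K_2$ (which exist by Kronheimer--Mrowka) to produce a two-parameter family of non-abelian representations of $\pi_1(S^3 \ssm N(K))$. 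Its image in $P$ is two-dimensional and therefore meets every line $L_r$ in non-abelian characters, contradicting the existence of infinitely many $SU(2)$-cyclic surgeries on $K$.

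The L-space assertion is the hardest part and depends on the strengthening of the second author's holonomy-perturbation techniques advertised in the abstract. Once one knows that $R$ is ``one-sided'' near $L_{r(K)}$, i.e.\ that the arc of $R_0$ accumulates only from the side containing the cyclic slopes, this input should feed into the framed-instanton surgery exact triangle to show that $\KHI(S^3_s(K))$ has rank $\abs{H_1(S^3_s(K);\Z)}$ for every rational $s$ in the asserted half-ray. The main obstacle I anticipate is upgrading the perturbation argument from the individual limit slope to the whole half-ray $[\lceil r(K)\rceil-1, \infty)$ or $(-\infty,\lfloor r(K)\rfloor+1]$: this requires a monotonicity statement along the surgery cobordism that goes beyond what is needed for the character-variety portion of the theorem, and this is precisely the point at which the existing holonomy-perturbation machinery must be sharpened.
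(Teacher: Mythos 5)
Your overall frame (study the image of $\ch^*(K)$ in the pillowcase and intersect it with the surgery lines) is the paper's, but the logical engine is inverted at the first step and the two hardest inputs are missing. First, infinitely many $SU(2)$-cyclic slopes accumulating at $r(K)$ do \emph{not} force a positive-dimensional piece of the image to lie along the limit line: if the image of the irreducibles were a finite set of points, almost every slope would still be cyclic. The paper argues in the opposite direction: a theorem of the second author (resting on nonvanishing of Donaldson invariants for $S^3_0(K)$) guarantees a homologically essential, hence nontrivial, arc in the image, and then a Diophantine estimate (Proposition~\ref{prop:slope-inequality}) shows that \emph{any} nontrivial arc compatible with infinitely many cyclic slopes must be a straight segment whose slope is forced by those slopes. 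Second, you never rule out $r(K)=\infty$ (all segments vertical); that is Theorem~\ref{thm:limit-slope-finite}, and it consumes Section~\ref{sec:finiteness} and the appendix — the holonomy-perturbation computation of $\KHI$ lives there, not in the L-space part. Third, ``lines of irrational slope are not algebraic'' is precisely where Section~\ref{sec:rationality} does real work: Tarski--Seidenberg is used to produce points on the segment with $e^{i\alpha},e^{i\beta}$ algebraic, and then a genuine transcendence theorem (if $x_j^r$ is algebraic for three multiplicatively independent algebraic $x_j$ then $r\in\Q$, fed by Salem numbers) closes the argument. A functional-transcendence version of your heuristic might work, but as written it is an assertion, not a proof. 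Your boundary-slope and $|r(K)|>2$ sketches are closest to the paper (the former is essentially \cite{ccgls} unpacked; the latter is proved geometrically from the $\epsilon<\alpha<\pi-\epsilon$ bound on irreducibles rather than by passing to the limit of $|r_n|>2$, which only gives $\geq 2$).

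The primeness and L-space arguments have concrete gaps. Kronheimer--Mrowka give one irreducible representation of each summand with traceless meridian, not arcs with varying meridional trace; bending along the amalgamating meridian then produces only a one-parameter (vertical) family for each fixed pair, so the advertised two-dimensional image does not exist without first invoking case~(2) of Theorem~\ref{thm:pillowcase-near-pi/2} for \emph{both} summands — which in turn requires the satellite argument showing each summand is itself $SU(2)$-averse. The paper's Theorem~\ref{thm:averse-connected-sum} instead glues the two arcs of slope $-r$ into an arc of slope $-2r$ and contradicts Theorem~\ref{thm:finitely-many-lines}. For the L-space statement, the paper does not sharpen the perturbation machinery or prove monotonicity along surgery cobordisms. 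It uses the converse result Theorem~\ref{thm:finite-lines-converse} (which needs $e^{ic}$ to be a root of unity, from the $A$-polynomial section) to produce $SU(2)$-cyclic slopes $\frac{pNi-1}{qNi}$ with \emph{prime} numerator via Dirichlet's theorem, then applies Baldwin--Sivek's detection criterion to conclude that one such surgery is an instanton L-space and their propagation theorem to sweep out the half-ray. Your sketch names no mechanism that would actually compute $\rank I^\#(S^3_s(K))$.
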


\begin{corollary}
Let $K$ be a nontrivial knot in $S^3$.  Then there is a constant $N$ such that there is an irreducible representation
\[ \pi_1(S^3_r(K)) \to SU(2) \]
for all rational $r$ with $|r| > N$.
\end{corollary}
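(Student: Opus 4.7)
The plan is to derive the corollary from Theorem~\ref{thm:main} by a dichotomy on whether $K$ is $SU(2)$-averse, combined with the elementary observation that reducible representations on Dehn surgeries have cyclic image. Specifically, for any rational slope $r = p/q$ the group $H_1(S^3_r(K);\Z) \cong \Z/|p|\Z$ is cyclic. A reducible representation $\rho\colon \pi_1(S^3_r(K)) \to SU(2)$ is conjugate into the maximal torus $U(1) \subset SU(2)$, hence factors through this cyclic abelianization, and since finite subgroups of $U(1)$ are cyclic, $\rho$ has cyclic image. Consequently, any representation with non-cyclic image is automatically irreducible, so to find an irreducible representation on $\pi_1(S^3_r(K))$ it suffices to show that $S^3_r(K)$ fails to be $SU(2)$-cyclic.

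The task thus reduces to showing that only finitely many slopes $r$ with $|r|$ large yield $SU(2)$-cyclic manifolds. First I would dispose of the case in which $K$ is not $SU(2)$-averse: by definition the set of $SU(2)$-cyclic slopes is then finite, and taking $N$ to exceed the absolute value of each of them works immediately.

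In the remaining case $K$ is $SU(2)$-averse, and Theorem~\ref{thm:main} identifies a unique limit point $r(K) \in \Q$ of the $SU(2)$-cyclic slopes. Here I would use uniqueness of the limit point in $\mathbb{RP}^1$ to conclude that the $SU(2)$-cyclic slopes form a \emph{bounded} subset of $\R$: if they were unbounded, $\infty \in \mathbb{RP}^1$ would be a second limit point, contradicting uniqueness. Choosing $N$ larger than the supremum of this bounded set then forces $S^3_r(K)$ to fail to be $SU(2)$-cyclic for all rational $r$ with $|r|>N$, and the preliminary observation finishes the proof.

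I do not expect any real obstacle beyond invoking Theorem~\ref{thm:main}; the only point to verify carefully is the deduction that the $SU(2)$-cyclic slopes are bounded in $\R$, which is immediate from the uniqueness clause in Theorem~\ref{thm:main} as sketched above.
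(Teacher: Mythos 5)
Your proof is correct and follows essentially the same route as the paper, which compresses both of your cases into a single contrapositive: an unbounded set of $SU(2)$-cyclic slopes would force $K$ to be $SU(2)$-averse with limit slope $\infty$, contradicting $r(K)\in\Q$. The extra details you supply (reducible representations factoring through the finite cyclic group $H_1(S^3_{p/q}(K))\cong\Z/|p|\Z$, and the case split on averseness) are exactly the implicit steps behind the paper's one-line argument.
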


\begin{proof}
If the set of $SU(2)$-cyclic slopes for a knot $K$ is unbounded, then $K$ must be $SU(2)$-averse with limit slope $\infty$, contradicting $r(K) \in \Q$.
\end{proof}

After the initial version of this paper appeared, Baldwin and the first author \cite{bs-lspace} proved that if a nontrivial knot $K$ has an instanton L-space surgery of positive slope, then $K$ is fibered and strongly quasipositive, and the set of instanton L-space slopes for $K$ is $[2g(K)-1,\infty) \cap \Q$.  Thus we have the following.

\begin{corollary} \label{cor:fibered-sqp}
If $K$ is $SU(2)$-averse then $K$ is fibered, either $K$ or its mirror is strongly quasipositive, and the limit slope satisfies $|r(K)| > 2g(K)-1$.
\end{corollary}

We remark that the pretzel $P(-2,3,7)$ has infinitely many instanton L-space surgeries, as does any knot with a lens space surgery, but we will show in Example~\ref{ex:pretzel-non-averse} that it is not $SU(2)$-averse.  Thus being $SU(2)$-averse is a strictly stronger condition than having infinitely many instanton L-space surgeries.

In Theorem~\ref{thm:main}, the finiteness and rationality of $r(K)$ are proved as Theorems~\ref{thm:limit-slope-finite} and \ref{thm:limit-slope-rational} respectively, and the bound $|r(K)|>2$ comes from Theorem~\ref{thm:open-pillowcase-image} (though $r(K) \geq 2$ is already implied by \cite{km-su2}).  The fact that $r(K)$ is a boundary slope is Theorem~\ref{thm:su2-averse-rational}; this means that there is a properly embedded, essential surface in $S^3 \ssm N(K)$ whose boundary is a nonempty union of curves of slope $r(K)$.  The primeness of $K$ is proved as Theorem~\ref{thm:averse-connected-sum}.  The assertion about instanton L-spaces, namely that for each $s=\frac{a}{b}$ in the given range the singular instanton knot homology $I^\#(S^3_{a/b}(K),\emptyset)$ defined in \cite{km-khovanov} has rank $|a|$, is Theorem~\ref{thm:l-space-surgeries}.

We can also prove something about the set of $SU(2)$-cyclic surgery slopes of an $SU(2)$-averse knot.  Combining Theorem~\ref{thm:open-pillowcase-image} and Theorem~\ref{thm:finite-lines-converse}, we have:

\begin{theorem} \label{thm:main-slopes}
Let $K$ be an $SU(2)$-averse knot with limit slope $r(K)$.  Then all $SU(2)$-cyclic slopes $\frac{m}{n}$ satisfy
\[ \left| \frac{m}{n} - r(K) \right| \leq \frac{|r(K)|}{n}, \]
and there is an integer $N \geq 1$ such that $r(K) + \frac{1}{kN}$ is an $SU(2)$-cyclic slope for all but finitely many $k \in \Z$.
\end{theorem}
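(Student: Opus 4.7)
The plan is to deduce Theorem~\ref{thm:main-slopes} by combining two structural results about the image of the $SU(2)$-character variety of the knot exterior $S^3 \ssm N(K)$ in the pillowcase, namely Theorem~\ref{thm:open-pillowcase-image} (which constrains where the non-abelian image can lie) and Theorem~\ref{thm:finite-lines-converse} (which produces $SU(2)$-cyclic slopes converging to $r(K)$ in a controlled fashion). The basic dictionary is that a slope $\frac{m}{n}$ is $SU(2)$-cyclic precisely when the line $\{m\mu+n\lambda\equiv 0\}$ in the pillowcase determined by the surgery relation meets the pillowcase image only at characters coming from abelian representations of the exterior, since then every representation of the surgered fundamental group to $SU(2)$ must factor through $H_1$ and hence have cyclic image.

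For the inequality, I would write $r(K) = \frac{p}{q}$ in lowest terms and rearrange $\left|\frac{m}{n} - r(K)\right| \le \frac{|r(K)|}{n}$ into $|mq - np| \le |p|$, which is the slope-distance inequality $\Delta(\frac{m}{n}, r(K)) \le |p|$. Theorem~\ref{thm:open-pillowcase-image} confines the non-abelian pillowcase image, near each central character, to an open cone of slopes about the line of slope $r(K)$; I would then show that any surgery line whose slope $\frac{m}{n}$ violates the inequality is forced to cross this cone at a non-abelian character, contradicting $SU(2)$-cyclicity. The constant $|p|$ on the right-hand side emerges naturally as the number of translates of the constrained region that the surgery line of slope $\frac{m}{n}$ is forced to traverse on the pillowcase.

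For the arithmetic-progression statement I would invoke Theorem~\ref{thm:finite-lines-converse}, which I expect to describe the portion of the pillowcase image accumulating at the relevant central character as a finite union of analytic arcs all tangent to the line of slope $r(K)$. After choosing $N$ to be a common denominator for the rational parameters controlling these finitely many arcs, I would verify that for all sufficiently large $|k|$ the surgery line of slope $r(K) + \frac{1}{kN}$ slips between consecutive arcs and thus misses the non-abelian image entirely, producing $SU(2)$-cyclic surgeries of the desired form. The main obstacle, granting both cited theorems, is reconciling their geometric content with the precise numerics here: checking that the cone implicit in Theorem~\ref{thm:open-pillowcase-image} is tight enough to give exactly $|p|$ on the right-hand side, and that the integer $N$ coming from Theorem~\ref{thm:finite-lines-converse} can be chosen uniformly in $k$ (rather than allowed to depend on the approach); the remainder is essentially bookkeeping in the pillowcase.
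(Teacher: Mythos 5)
You have correctly identified the two ingredients the paper uses: Theorem~\ref{thm:main-slopes} is stated in the paper as nothing more than the conjunction of conclusions already contained verbatim in Theorem~\ref{thm:open-pillowcase-image} and Theorem~\ref{thm:finite-lines-converse}, once their hypotheses are verified. What your sketch underplays is that verifying those hypotheses is where the real work lies: you need $r(K)\neq\infty$ (Theorem~\ref{thm:limit-slope-finite}, via the instanton argument) for the displayed inequality to even parse, $r(K)\in\Q$ (Theorem~\ref{thm:limit-slope-rational}), and the rationality of the intercepts $c_i\in 2\pi\Q$ (Theorem~\ref{thm:su2-averse-rational}) to feed into Theorem~\ref{thm:finite-lines-converse}. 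None of these appear in your outline.

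Your proposed re-derivations of the two conclusions also contain gaps. For the inequality, the constant $|r(K)|$ does not come from counting ``$|p|$ translates of a cone'': it comes from the existence of a single connected arc of irreducible characters whose lift spans a full period $\beta=0$ to $\beta=2\pi$ (Corollary~\ref{cor:connecting-path}, resting on the homologically essential component from \cite{zentner}), combined with the continuity argument of Proposition~\ref{prop:slope-inequality}, which forces $|m\,\Delta\alpha+n\,\Delta\beta|<2\pi$ along any arc of irreducibles missed by the surgery lines; with $\Delta\beta=2\pi$ and $\Delta\alpha=-2\pi/r(K)$ this gives exactly $|\tfrac{m}{n}-r(K)|\leq\tfrac{|r(K)|}{n}$. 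Without the full-period arc you only get $2\pi/|\Delta\alpha|$ for whatever arc you happen to have, not $|r(K)|$. For the second claim, you have the logical direction of Theorem~\ref{thm:finite-lines-converse} reversed: it is a converse whose \emph{hypothesis} is the structure of the pillowcase image and whose \emph{conclusion} is precisely the arithmetic-progression statement. The structure itself (supplied by Theorems~\ref{thm:finitely-many-lines} and \ref{thm:su2-averse-rational}) consists of genuine line segments of slope $-r(K)$ (note the sign; not $+r(K)$) with intercepts in $2\pi\Q$, plus finitely many isolated points -- not merely ``analytic arcs tangent to'' a line. The exact linearity and the rationality of the intercepts are what make the explicit choice $N=n^2$ in Lemma~\ref{lem:cyclic-slope-approximation} work uniformly in $k$, and the isolated points with irrational $c'_j$ must be handled separately (each obstructs at most one $k$).
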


A knot $K \subset S^3$ is said to be \emph{small} if there are no closed, incompressible surfaces in its exterior other than boundary-parallel tori; examples include all two-bridge knots \cite{hatcher-thurston} and Montesinos knots with at most three rational tangles \cite{oertel}.  In this case we know even more, as proved in Theorems~\ref{thm:small-knot-integer} and \ref{thm:det-limit-slope} and Proposition~\ref{prop:small-r-6}.
\begin{theorem} \label{thm:main-small}
If $K$ is a small, $SU(2)$-averse knot with Alexander polynomial $\Delta_K(t) = \sum_{j=-d}^d a_jt^j$, then the limit slope $r(K)$ is an integer satisfying $|r(K)| \geq 6$ and
\[ |r(K)| \geq 1 + \sum_{j=-d}^d |a_j|. \]
In this case, all but finitely many conjugacy classes of irreducible representations
\[ \rho: \pi_1(S^3 \ssm K) \to SU(2) \]
satisfy $\rho(\mu^{r(K)}\lambda) = \pm I$, where $\mu$ and $\lambda$ are a meridian and longitude of $K$.
\end{theorem}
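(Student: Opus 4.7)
The plan is to assemble three ingredients, each using the smallness hypothesis to sharpen the general pillowcase analysis developed in the main body of the paper.

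First, I would prove the integrality of $r(K)$ simultaneously with the statement that $\rho(\mu^{r(K)}\lambda) = \pm I$ for almost all irreducible $\rho$. By Theorem \ref{thm:main-slopes}, the $SU(2)$-cyclic slopes of $K$ accumulate on the slope-$r(K)$ line in the pillowcase. The image of the restriction map from the $SU(2)$-character variety of the knot exterior is a union of real-analytic arcs (coming from the one-dimensional components of the character variety), and the clustering of cyclic slopes forces all but finitely many points of this image to lie on a single line of slope $r(K) = p/q$. For a small knot, Culler--Shalen theory rules out ideal points of the character variety detecting closed essential surfaces, leaving only boundary-slope-detecting ideal points. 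This rigidity constrains the arcs enough that the denominator $q$ must equal $1$: the alternative would require the arcs to wind around the pillowcase in a way incompatible with the absence of closed essential surfaces. Granting integrality, the characters on the limit-line component then automatically satisfy $\rho(\mu^{r(K)}\lambda) = \pm I$, since the slope-$r(K)$ line in the pillowcase is precisely the locus where this central condition holds.

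Second, to prove the Alexander-polynomial bound $|r(K)| \geq 1 + \sum|a_j|$, I would count endpoints of the arcs lying on the slope-$r(K)$ line. By Riley--Burde, every one-dimensional component of the character variety meets the abelian stratum at characters with $\mu \mapsto \diag(e^{i\theta}, e^{-i\theta})$ satisfying $\Delta_K(e^{2i\theta}) = 0$. On the slope-$r(K)$ line these endpoints correspond to abelian characters where both $\rho(\mu^{r(K)}\lambda) = \pm I$ and the eigenvalue constraints coming from $\Delta_K$ are simultaneously met. Counting these endpoints together with the central character $\mu \mapsto I$, and relating the total length of the arcs along the line to $|r(K)|$ (each ``unit of travel'' must contribute an endpoint, up to finite exceptions), gives the inequality. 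The role of the $\ell^1$ norm $\sum|a_j|$ is that it controls the number of relevant sign changes of $\Delta_K$ on the unit circle, and hence the number of available endpoints.

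Third, the bound $|r(K)| \geq 6$ is obtained by ruling out $|r(K)| \in \{3, 4, 5\}$ individually, the case $|r(K)| \leq 2$ being excluded already by Theorem \ref{thm:main}. Here I would use the structural conclusion of the first step: if $|r(K)|$ is small, the one-parameter family of irreducible representations satisfying $\rho(\mu^{r(K)}\lambda) = \pm I$ gives rise, via the instanton $L$-space conclusion of Theorem \ref{thm:main}, to explicit constraints on $S^3_{r(K)}(K)$. Combining these with the structure of small knots (the form of their Alexander polynomials and known restrictions on exceptional surgeries) should eliminate the small-value cases one at a time. I expect this final case analysis to be the main obstacle: it does not follow from a single clean general principle but rather from carefully matching the character-variety picture with low-dimensional topological information specific to small knots.
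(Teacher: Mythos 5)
There are genuine gaps in all three steps, and the middle one is fatal to the strategy. For the integrality of $r(K)$, you gesture at Culler--Shalen theory and assert that a non-integral slope ``would require the arcs to wind around the pillowcase in a way incompatible with the absence of closed essential surfaces,'' but no mechanism is given; nothing about the $SU(2)$ pillowcase picture alone forces $q=1$. The paper's proof goes through Boyer--Zhang's seminorm machinery: the constancy of $\tr(\rho(\mu^p\lambda^q))$ on a curve component of the $SL_2(\C)$ character variety makes it a $\frac{p}{q}$-curve, and since the meridian is not a boundary slope, \cite[Corollary~6.7]{boyer-zhang-seminorms} forces integrality. Your follow-up claim that characters on the limit line ``automatically'' satisfy $\rho(\mu^{r}\lambda)=\pm I$ is also false as stated: a segment of slope $-r$ lies on $r\alpha+\beta\equiv c$, and the central condition holds only when $c\in\pi\Z$. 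Pinning down $c$ requires an extra input --- in the paper, \cite[Proposition~5.7(2)]{boyer-zhang-seminorms} applied to the cyclic filling at slope $\infty$, or equivalently the fact that $A_K(1,L)=\pm(L+1)^\alpha(L-1)^\beta L^\gamma$ for small knots.

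The decisive problem is the inequality $|r(K)|\geq 1+\sum_j|a_j|$. Your endpoint-counting scheme can at best relate $|r(K)|$ to the number of roots of $\Delta_K$ on the unit circle (the possible limit points of irreducibles among reducibles, by Klassen), and that number is not controlled from below by $\sum_j|a_j|$: for instance $5t-9+5t^{-1}$ has $\ell^1$-norm $19$ but only two unimodular roots. The paper's bound is gauge-theoretic, not character-variety-theoretic: one has $\sum_j|a_j|\leq\rank\KHI(K)$ because $\KHI$ categorifies $\Delta_K$, and then $\rank\KHI(K)\leq|r(K)|-1$ is proved by a holonomy-perturbation argument (Theorem~\ref{thm:perturbed-khi-bound}) in which the circle $\alpha=\frac{\pi}{2}$ is isotoped, area-preservingly, to a curve avoiding the lines $r\alpha+\beta\equiv 0\pmod{\pi}$ and meeting $\beta\equiv 0$ in at most $|r|-1$ points. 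No purely representation-variety argument in your outline substitutes for this. Finally, your treatment of $|r(K)|\geq 6$ is explicitly left open; the paper's route is short once the $\KHI$ bound is in hand ($\rank\KHI\leq 4$ and odd forces $\rank\KHI=3$, hence $\Delta_K=t^g-1+t^{-g}$, whose simple root at a primitive $6g$-th root of unity forces $6g\mid r(K)$ by Proposition~\ref{prop:small-alexander-root}, a contradiction), and it does not resemble the surgery-by-surgery case analysis you propose.
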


Finally, by studying how $SU(2)$-averseness behaves under satellite operations we can also say something about the potential existence of $SU(2)$-averse knots other than torus knots.  Notably, we show in Proposition~\ref{prop:cable-converse} that if $r(K) = \frac{p}{q}$ with $q \geq 2$, then the $(p,q)$-cable of $K$ is $SU(2)$-averse with limit slope $pq$.  However, by analogy with the cyclic surgery theorem, we conjecture that this never happens:

\begin{conjecture} \label{conj:main-conjecture}
If $K$ is a nontrivial $SU(2)$-averse knot, then $K$ is a torus knot.
\end{conjecture}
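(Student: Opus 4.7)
The plan is to combine the strong restrictions on an $SU(2)$-averse knot $K$ proved in Theorems~\ref{thm:main}, \ref{thm:main-slopes}, and \ref{thm:main-small} with the geometric decomposition of $S^3 \ssm N(K)$. Since $K$ is prime by Theorem~\ref{thm:averse-connected-sum}, its exterior is hyperbolic, Seifert-fibered, or toroidal with a nontrivial JSJ decomposition (so $K$ is a satellite). Among Seifert-fibered knot exteriors in $S^3$ only torus knot exteriors arise, so the strategy is to rule out the satellite and hyperbolic cases.

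For the satellite case, I would take Proposition~\ref{prop:cable-converse} as a model and try to prove its converse: if $K$ is a satellite with companion $K'$ and pattern $P$, then $K$ being $SU(2)$-averse should force $K'$ to be $SU(2)$-averse with limit slope determined by $P$, and $P$ itself to be a cabling pattern of matching slope. The idea is that the $SU(2)$ character variety of $S^3 \ssm N(K)$ splits over the character variety of the companion torus, so infinitely many $SU(2)$-cyclic fillings of $K$ should descend, via restriction to the JSJ torus and an analysis of which pillowcase points they occupy, to infinitely many $SU(2)$-cyclic fillings of $K'$. Iterating along the finite JSJ tree, one reduces to an innermost geometric piece, which is either a torus knot exterior (giving the conclusion) or hyperbolic.

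The hyperbolic case is the main obstacle, and I would attack it from two sides. First, by Theorem~\ref{thm:main} such a $K$ would be an instanton L-space knot in a strong sense, and by analogy with the genus and Alexander polynomial rigidity known for Heegaard Floer L-space knots one expects a classification forcing $K$ into a narrow family incompatible with hyperbolic geometry, especially in the presence of Theorem~\ref{thm:main-small}'s bounds on $|r(K)|$ and $\sum |a_j|$. Second, for a hyperbolic knot one would want to show that the $SU(2)$ character variety, restricted to the peripheral torus, contains an arc emanating from the abelian locus in a direction transverse to every rational slope, thereby preventing the concentration at a single rational point in the pillowcase required by Theorem~\ref{thm:open-pillowcase-image}. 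Both ingredients require substantial new theory: instanton L-space knot classification currently has no counterpart to the Ozsv\'ath--Szab\'o structure theorem, and the peripheral $SU(2)$ analysis needs a refinement of the pillowcase techniques developed here. A realistic intermediate target is to prove the conjecture under the hypothesis that $K$ is small, where Theorem~\ref{thm:main-small} already supplies the integrality of $r(K)$, the bound $|r(K)| \geq 6$, and Alexander polynomial constraints that can be matched against existing lists of small knots with large integral boundary slopes.
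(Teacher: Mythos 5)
The first thing to say is that the statement you are proving is stated in the paper as a \emph{conjecture}, not a theorem: the authors only verify it for algebraic knots (Corollary~\ref{cor:algebraic-knots}), for alternating Montesinos knots with at most three rational tangles (Theorem~\ref{thm:2-bridge-averse}), and for knot types through 10 crossings except possibly $10_{98}$ (Theorems~\ref{thm:up-to-9-crossings} and \ref{thm:10-crossing}). So there is no proof in the paper to compare against, and your proposal --- which you yourself flag as requiring ``substantial new theory'' --- is a research program rather than a proof.

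The two gaps you acknowledge are genuine, and it is worth being precise about where they bite. In the satellite case, the descent of $SU(2)$-cyclic surgeries from $P(K')$ to $K'$ (Proposition~\ref{prop:satellites-nonzero-winding}) really does require winding number $w \neq 0$: the construction there extends an irreducible representation of the companion exterior across the pattern space by an abelian representation whose value on $\mu_P$ is a $w$-th root of its value on $\mu_{K'}$, and this has no analogue when $w = 0$. That is exactly why the paper treats Whitehead doubles by a completely different route (slice genus together with Proposition~\ref{prop:slice-genus-1}). Moreover, even when $w \neq 0$ and the companion is forced to be averse with $r(K') = r(P(K'))/w^2$, nothing forces the pattern to be a cabling pattern: the JSJ piece of $S^3 \ssm N(P(K'))$ adjacent to the boundary is the pattern space $(S^1 \times D^2) \ssm P$, which can itself be hyperbolic, so your ``innermost piece'' induction cannot terminate without first resolving the hyperbolic case. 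For that case, your second suggested attack is the more plausible one but is subtler than stated: Culler--Shalen theory gives non-constancy of every peripheral trace function on the component of $\chsl(K)$ containing a discrete faithful character, but the arcs of irreducible $SU(2)$ characters produced by Theorem~\ref{thm:pillowcase-near-pi/2} need not lie on that component. The paper only closes this loop for twist knots (Example~\ref{ex:twist-knots}) by invoking Burde's theorem that \emph{all} irreducible characters lie on a single curve, which is special to that family. Your intermediate target of small knots is sensible and is essentially what Sections~\ref{sec:small} and \ref{sec:evidence} pursue, but even there the conjecture remains open.
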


\noindent Indeed, in Section~\ref{sec:evidence}, we verify Conjecture~\ref{conj:main-conjecture} for all alternating Montesinos knots with at most three rational tangles (including all two-bridge knots) and for all knot types through 11 crossings.  The conjecture also holds for algebraic knots.

\subsection*{Outline}

The proof of Theorem~\ref{thm:main} comes from studying the image of the $SU(2)$ character variety of $\pi_1(S^3 \ssm N(K))$ in the \emph{pillowcase}, which is the $SU(2)$ character variety of $\pi_1(\partial N(K))$.  Up to conjugacy, every representation $\rho: \pi_1(S^3 \ssm K) \to SU(2)$ satisfies
\[ \rho(\mu) = \twomatrix{e^{i\alpha}}{0}{0}{e^{-i\alpha}}, \qquad \rho(\lambda) = \twomatrix{e^{i\beta}}{0}{0}{e^{-i\beta}} \]
for some constants $\alpha$ and $\beta$, and hence determines a unique point $(\alpha,\beta)$ of the pillowcase
\[ P \cong \frac{(\R/2\pi\Z) \times (\R/2\pi\Z)}{(\alpha,\beta) \sim (-\alpha,-\beta)}. \]
A slope $\frac{m}{n}$ is $SU(2)$-cyclic precisely when there are no images of irreducible characters of $\pi_1(S^3 \ssm K)$ along the line $m\alpha+n\beta = 0$ in $P$.

Our key observation, developed in Section~\ref{sec:pillowcase}, is that the line $m\alpha+n\beta=0$ can only avoid a given non-constant path $\gamma: [0,1] \to P$ through images of irreducibles if $-\frac{m}{n}$ is a reasonable Diophantine approximation to the slope of the line from $\gamma(0)$ to $\gamma(1)$.  We deduce that if $K$ is $SU(2)$-averse, then any such path must be a straight line of slope $-r(K)$.  (Conversely, we show in Section~\ref{sec:converse} that if every such path is a straight line of rational slope $-r$, then $K$ is $SU(2)$-averse with $r(K)=r$.)

In Section~\ref{sec:finiteness}, we show that $r(K)\neq\infty$ by finding (in nearly all cases) an arc of images of irreducibles which is not a vertical line.   We do this by making use of Kronheimer and Mrowka's  instanton knot homology \cite{km-excision}, or $\KHI$, which by construction is closely related to the space of irreducible representations $\pi_1(S^3 \ssm K) \to SU(2)$ which send a meridian to a traceless matrix.  Using a strengthened version of recent work of the second author \cite{zentner}, we can perturb the Chern-Simons functional which defines $\KHI(K)$ so that if $K$ has $SU(2)$-cyclic surgeries and no such arc of irreducibles, then $\KHI(K)$ is the same as $\KHI$ of the unknot, which means that $K$ must have been unknotted.  More precisely, we prove the following.
{
\renewcommand{\thetheorem}{\ref{thm:pillowcase-near-pi/2}}
\begin{theorem}
Let $K$ be a nontrivial knot in $S^3$.  Then at least one of the following is true:
\begin{enumerate}
\item For every $\beta \in \R/2\pi\Z$, there is an irreducible $\rho:\pi_1(S^3 \ssm K) \to SU(2)$ such that
\[ \rho(\mu) = \twomatrix{i}{0}{0}{-i} \qquad\mathrm{and}\qquad \rho(\lambda) = \twomatrix{e^{i\beta}}{0}{0}{e^{-i\beta}}. \]
\item There is a constant $\epsilon > 0$ and an arc of irreducible representations
\[ \rho_s: \pi_1(S^3 \ssm K) \to SU(2), \qquad s \in \left[\frac{\pi}{2}, \frac{\pi}{2}+\epsilon\right) \]
such that $\rho_s(\mu) = \diag(e^{is},e^{-is})$ for all $s$.
\end{enumerate}
\end{theorem}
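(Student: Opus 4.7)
The plan is to argue by the contrapositive: assume $K$ is a knot for which neither (1) nor (2) holds, and then force $\KHI(K) \cong \Z$ so that Kronheimer--Mrowka's unknot detection theorem for singular instanton knot homology identifies $K$ as the unknot.

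Recall that $\KHI(K)$ is the homology of a complex generated by critical points of a Chern--Simons functional on a configuration space of singular $SU(2)$ connections whose asymptotic meridian holonomy is conjugate to $\diag(i,-i)$. Before any perturbation, irreducible critical points correspond exactly to the irreducible representations featured in condition (1). Failure of (1) then means that the image of the irreducible critical set in the pillowcase misses some point $(\pi/2,\beta_0)$, and failure of (2) means that no arc of irreducibles with $\rho_s(\mu) = \diag(e^{is},e^{-is})$ extends from $s=\pi/2$ into $s > \pi/2$.

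The key step is to apply a strengthened version of the second author's holonomy perturbation results from \cite{zentner}. The strategy is to introduce a small holonomy perturbation supported in a tubular neighborhood of a curve on $\partial N(K)$, chosen so that the perturbed meridian-holonomy condition traces out values $\diag(e^{is},e^{-is})$ with $s$ slightly above $\pi/2$. Critical points of the perturbed functional then correspond to irreducible representations with meridian holonomy at these slightly shifted values; the failure of (2) guarantees that no new irreducible critical points appear near the traceless locus as the perturbation is turned on, while the missing $\beta_0$ from the failure of (1) gives enough flexibility to arrange that no irreducible critical points survive at all. Once the perturbed complex has no irreducible generators, the reducible contributions match those for the unknot, yielding $\KHI(K) \cong \Z$.

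The main obstacle is this holonomy perturbation step: arranging a single perturbation that eliminates every irreducible critical point using only the pillowcase data from the failures of (1) and (2). This is where the strengthening of \cite{zentner} is essential, since one needs a precise correspondence between the analytic behaviour of critical points of the Chern--Simons functional and the geometry of the $SU(2)$ character variety near a specified point of the pillowcase. The pillowcase analysis developed earlier in the paper is designed precisely to supply the geometric hypotheses required by this analytic input; once those hypotheses are in place, the remainder of the argument, namely identifying the reducible contribution with that of the unknot and invoking detection, is standard.
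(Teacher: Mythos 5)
Your high-level strategy is the same as the paper's: assume both alternatives fail, use holonomy perturbations of the Chern--Simons functional on the Kronheimer--Mrowka closure to kill the irreducible generators, conclude $\rank\KHI(K)\leq 1$, and invoke unknot detection. But there are two genuine gaps in how you propose to execute the perturbation step. First, the available holonomy perturbations (shearing perturbations supported in a thickened torus between $S^3\ssm N(K)$ and $F\times S^1$) realize \emph{area-preserving} isotopies of the pillowcase fixing the four orbifold points, and the perturbed critical set corresponds to representations whose pillowcase image lies on the image of $C=\{\alpha=\pi/2\}$ under such an isotopy. A curve of the form $\{\alpha=\pi/2+\epsilon'\}$, which is what ``meridian holonomy $\diag(e^{is},e^{-is})$ with $s$ slightly above $\pi/2$'' amounts to, does \emph{not} bound equal areas with $C$ and so is not attainable. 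This is precisely why the failure of (1) is needed: it produces a gap $(\pi/2,b)$ in the image of $\ch^*(K)$ along the line $\alpha=\pi/2$ itself, through which one routes a detour into the region $\alpha>\pi/2$ so that the resulting closed curve (mostly at $\alpha=\pi/2-\delta$, bulging to $\alpha=\pi/2+\epsilon/2$ near $\beta=b$) both avoids the irreducible image and encloses the same area as $C$. Your appeal to ``enough flexibility'' skips the construction that makes the argument work.

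Second, the step from ``no arc of irreducibles extends from $s=\pi/2$ into $s>\pi/2$'' to ``no irreducible characters have $\alpha$-coordinate near but distinct from $\pi/2$'' is not automatic; a priori there could be irreducibles accumulating on $\alpha=\pi/2$ without forming an arc. Ruling this out uses compactness of $R(K)$ together with the semi-algebraic structure of the character variety (a convergent sequence of irreducibles with $\alpha_n\to\pi/2$, $\alpha_n\neq\pi/2$, lies in one path component, and a semi-algebraic path from the limit back to $\rho_1$ can be reparametrized to produce exactly the arc forbidden by the failure of (2)). You also wave off the nondegeneracy of the perturbed critical points and the invariance of the perturbed Floer homology as ``standard,'' but verifying regularity here (via a Mayer--Vietoris computation of $H^1_{A,\Phi}$, using transversality of the chosen curve with $\beta\equiv 0$ at a point where $\Delta_K(e^{2i\alpha})\neq 0$, and the fact that all perturbed critical points remain irreducible as connections because of the nontrivial bundle over $F\times S^1$) is a substantial part of the proof and constrains how the curve must be chosen. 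One small correction of framing: in the relevant setup the two surviving generators are irreducible critical points on $Y_1(K)$ covering the single reducible knot-group representation on the perturbed curve, so the conclusion is $\rank\KHI(K)\leq 1$ rather than ``no irreducible generators, reducibles match the unknot.''
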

\addtocounter{theorem}{-1}
}
\noindent This may be viewed as a strengthening of \cite[Corollary~7.17]{km-excision}; see also related work of Herald \cite{herald}, Collin--Steer \cite{collin-steer}, or Heusener--Kroll \cite{heusener-kroll}, in which similar results are proved under assumptions on the equivariant signatures of $K$.

\begin{remark}
If $K$ is small then the second case of Theorem~\ref{thm:pillowcase-near-pi/2} always applies, since the other case is ruled out by first arguing as in the proof of Theorem~\ref{thm:c-rational} that it would force $\infty$ to be a boundary slope and then applying \cite[Theorem~2.0.3]{cgls}.
\end{remark}

We mention here the needed strengthening of the approximation result in \cite{zentner}, which is of independent interest.  Recall that a shearing isotopy $(\zeta_t)$ on the 2-dimensional torus is an isotopy through area-preserving maps of the form
\begin{equation} \label{eq:shearing-isotopy}
\zeta_t\colon (x,y) \mapsto (x,y) + tf(w \cdot (x,y))\, v\, ,
\end{equation}
where $v,w \in \Z^2$ are orthogonal vectors with $w$ nonzero, $f: \R \to \R$ is a $2\pi$-periodic function, and $\cdot$ denotes the standard inner product on $\R^2$.  We say that an isotopy $(\phi_t)$ is a piecewise isotopy through shearings if there is a partition $0=t_0 < t_1 < \dots < t_n = 1$ such that for $t_i \leq t \leq t_{i+1}$, the map $\phi_t$ is given as
 \[
 	\phi_t = \zeta^{(i)}_{t-{t_i}} \circ \phi_{t_i} \, 
 \]
for some shearing isotopy $(\zeta^{(i)}_{s})_{s \in [0,1]}$.  For fixed $p\in T^2$, the path $t \mapsto \phi_t(p)$ of a piecewise isotopy through shearings is continuous but need not be smooth.  The case $r=0$ of the following was proved in \cite{zentner}.

{
\renewcommand{\thetheorem}{\ref{C1 approximation}}
\begin{theorem}
 Let $r \geq 0$. Let $(\psi_t)_{t \in [0,1]}$ be an isotopy through area-preserving maps $\psi_t: T^2 \to T^2$, and let $\epsilon > 0$. Then there is a piecewise isotopy through shearings $(\phi_t)_{t \in [0,1]}$ such that $\phi_t$ is $\epsilon$-close to $\phi_t$ in the $C^r$-topology for all $t \in [0,1]$. 
\end{theorem}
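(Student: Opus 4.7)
The plan is to pass to the generating vector field of the isotopy, exhibit it (up to arbitrarily small $C^{r+1}$ error) as a sum of finitely many shearing vector fields via Fourier analysis, and then reconstruct an approximating piecewise isotopy by Trotter-type splitting into single-shearing sub-intervals.

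Write $X_t := (\partial_t\psi_t)\circ\psi_t^{-1}$ for the time-dependent divergence-free vector field generating $(\psi_t)$. The key observation is that every Fourier-mode divergence-free vector field on $T^2$ is already a shearing: the real and imaginary parts of the mode $(-n,m)\,e^{i(mx+ny)}$ have the form $f(w\cdot(x,y))\,v$ with $w=(m,n)$, $v=(-n,m)\perp w$, and $f$ a sine or cosine, while the harmonic constant pieces $a\partial_x$ and $b\partial_y$ are shearings with $f$ constant. Since $X_t$ is smooth in $(t,x,y)$, a finite Fourier truncation yields $\tilde X_t = \sum_{\alpha=1}^M c_\alpha(t)\,Y_\alpha$ with each $Y_\alpha$ a fixed shearing vector field and each $c_\alpha$ smooth in $t$, and the truncation can be made $C^{r+1}$-close to $X_t$ uniformly in $t$. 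A standard Gronwall estimate then shows that the flow $\tilde\psi_t$ of $\tilde X_t$ is $C^r$-close to $\psi_t$ on $[0,1]$, so it suffices to approximate $\tilde\psi_t$.

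Next I would partition $[0,1]$ into $N$ intervals of length $\Delta t = 1/N$ (with $\Delta t < 1$) and subdivide each further into $M$ equal sub-intervals, one per shearing vector field $Y_\alpha$. On the $\alpha$-th sub-interval of the $i$-th interval I define $\phi_t$ as the composition of $\phi$ at the start of the sub-interval with the shearing isotopy generated by the frozen field $c_\alpha(t_i)\,Y_\alpha$ over time $\Delta t/M$. The flow of $c_\alpha(t_i)\,Y_\alpha$ is literally a shearing isotopy, since $v_\alpha\perp w_\alpha$ forces the level sets of $w_\alpha\cdot(x,y)$ to be preserved along the flow. Hence $(\phi_t)$ is a piecewise isotopy through shearings by construction, with $\phi_0=\psi_0=\mathrm{id}$. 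Standard Trotter/BCH estimates then show that freezing the coefficients over time $\Delta t$ and splitting the sum into successive single-summand flows introduces a per-interval $C^r$-error of $O(\Delta t^2)$; summed over $N$ intervals and combined with the fact that within a single sub-interval both $\tilde\psi_t$ and $\phi_t$ move by only $O(\Delta t)$ in $C^r$, this yields $C^r$-closeness throughout $[0,1]$.

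The main obstacle will be making these $C^r$ (rather than merely $C^0$) Trotter estimates uniform: compositions of many smooth maps can amplify higher derivatives through the chain rule, so one must fix the truncation $M$ first (giving uniform $C^{r+1}$-bounds on all the $Y_\alpha$ and on the $C^1$-norms of the $c_\alpha$) and only then shrink $\Delta t$ enough that the accumulated BCH and coefficient-freezing errors go to zero in $C^r$. With that ordering of quantifiers, the remaining bookkeeping is a routine, if tedious, Gronwall computation.
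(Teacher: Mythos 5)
Your proposal is correct and follows essentially the same strategy as the paper's proof: pass to the generating divergence-free vector field, use the fact that its Fourier modes are shearing fields, freeze in time on a fine partition, and Trotter-split the resulting sums into single-shearing flows, with Gronwall-type estimates controlling the accumulated $C^r$ error at each stage. The only difference is cosmetic — you truncate the Fourier series globally in time before freezing the coefficients, whereas the paper first freezes the vector field at times $i/n$ and then Fourier-approximates each frozen field — and your remark about fixing the truncation before shrinking the time step is exactly the quantifier ordering the paper's lemmata implement.
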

\addtocounter{theorem}{-1}
}

The rationality of $r(K)$ follows in Section~\ref{sec:rationality} from some foundational results in real algebraic geometry.  Using the fact that the pillowcase image of the $SU(2)$ character variety is a semi-algebraic set defined over the field of real algebraic numbers, and hence many of its points have algebraic coordinates, we manage to prove that a straight line in this image must have rational slope.  This allows us to further show that if $r(K) = \frac{p}{q}$, then along any such path of representations $\rho$, there is a real constant $c$ such that
\[ \rho(\mu^p\lambda^q) = \twomatrix{e^{ic}}{0}{0}{e^{-ic}} \]
up to conjugacy, where $e^{ic}$ is an algebraic number.

In Section~\ref{sec:a-polynomial}, we study $SU(2)$-averse knots via the $A$-polynomial, which describes the image of the $SL_2(\C)$ character variety of $\pi_1(S^3 \ssm K)$ in the $SL_2(\C)$ character variety of the peripheral torus.  The results of Section~\ref{sec:pillowcase} imply that if $r(K) = \frac{p}{q}$ then $A_K(M,L)$ is a multiple of $M^pL^q - e^{ic}$, and hence by applying deep results from \cite{ccgls} about the Newton polygon and edge polynomials of $A_K(M,L)$, we see that $r(K)$ is a boundary slope and that $e^{ic}$ is in fact a root of unity.  Building on this, Section~\ref{sec:small} uses further results of Boyer and Zhang \cite{boyer-zhang-seminorms} for small knots to prove Theorem~\ref{thm:main-small}.

In Section~\ref{sec:converse}, we use the fact that $e^{ic}$ is a root of unity to prove Theorem~\ref{thm:finite-lines-converse}, asserting that if the pillowcase image of the $SU(2)$ character variety consists mainly of line segments of slope $-r \in \Q$, then $K$ must be $SU(2)$-averse.  This allows us to prove in Section~\ref{sec:l-spaces} that $SU(2)$-averse knots have infinitely many instanton L-space surgeries, and our applications include Theorem~\ref{thm:smoothly-slice}, which says that smoothly slice knots are not $SU(2)$-averse.

Finally, in Section~\ref{sec:satellites} we investigate when a satellite of a given knot can be $SU(2)$-averse.  We prove the following, a combination of Propositions~\ref{prop:satellite-properties} and \ref{prop:satellites-nonzero-winding}:
\begin{theorem} \label{thm:main-satellites}
Let $K$ be a nontrivial knot, and suppose that some satellite $P(K)$ with winding number $w$ is $SU(2)$-averse.
\begin{itemize}
\item If $P(U)$ is not the unknot, then it is also $SU(2)$-averse, and $r(P(K)) = r(P(U))$.
\item If $w\neq 0$, then $K$ is $SU(2)$-averse, with $r(P(K)) = w^2r(K)$.
\end{itemize}
\end{theorem}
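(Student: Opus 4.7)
The strategy for both bullets is to exploit the satellite decomposition $S^3 \ssm N(P(K)) = E_P \cup_T E_K$, where $E_P = V \ssm N(P)$, $E_K = S^3 \ssm N(K)$, and $T = \partial V$ is identified with $\partial N(K)$, and to pass arcs of irreducible characters between the three knots by abelian extension across one of the two pieces. The essential homological input is that $H_1(E_P) \cong \Z\langle \mu_P, \lambda_V\rangle$, with $[\mu_V] = w[\mu_P]$ (winding number). Moreover, $\lambda_V$ is null-homologous both in the outer solid torus $S^3 \ssm V$ (as its meridian) and in $E_K$ (where it is $\lambda_K$), so the preferred longitude satisfies $[\lambda_{P(K)}] = w[\lambda_V]$ in $H_1(E_P)$, independent of $K$; in particular $\lambda_{P(K)}$ and $\lambda_{P(U)}$ agree as curves on $T_P = \partial N(P)$, and the pillowcase coordinate on $T_P$ of any representation of $\pi_1(E_P)$ is intrinsic to the pattern.

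For the first bullet, given any irreducible $\rho: \pi_1(S^3 \ssm N(P(U))) \to SU(2)$, the fact that $\lambda_V$ bounds in the outer solid torus forces $\rho(\lambda_V) = I$. We extend the restriction $\rho|_{E_P}$ across $E_K$ by the abelian representation sending $\mu_K \mapsto \rho(\mu_V)$ and $\lambda_K \mapsto I$, which matches $\rho$ on $T$ and yields an irreducible representation $\tilde\rho$ of $\pi_1(S^3 \ssm N(P(K)))$ with the same pillowcase coordinate on $T_P$. This embeds the pillowcase image of $P(U)$ into that of $P(K)$, sending arcs to arcs. Since $P(K)$ is $SU(2)$-averse, Theorem~\ref{thm:open-pillowcase-image} forces these arcs to lie on lines of slope $-r(P(K))$; combined with the existence of arcs for the nontrivial knot $P(U)$ given by Theorem~\ref{thm:pillowcase-near-pi/2}, the converse Theorem~\ref{thm:finite-lines-converse} then gives that $P(U)$ is itself $SU(2)$-averse with $r(P(U)) = r(P(K))$.

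For the second bullet, the construction is dual. Start with an arc of irreducibles $\rho_K(t): \pi_1(E_K) \to SU(2)$ for $K$ (provided by Theorem~\ref{thm:pillowcase-near-pi/2}), restrict to parameters where $\rho_K(t)(\mu_K) \neq \pm I$, choose a continuous $w$-th root $X(t) \in SU(2)$ of $\rho_K(t)(\mu_K)$, and define the abelian extension $\rho_P^{\mathrm{ext}}(t): \pi_1(E_P) \to SU(2)$ by $\mu_P \mapsto X(t)$ and $\lambda_V \mapsto \rho_K(t)(\lambda_K)$. Compatibility on $T$ follows from $[\mu_V] = w[\mu_P]$, and the glued representation $\tilde\rho(t)$ of $\pi_1(S^3 \ssm N(P(K)))$ remains irreducible because its image contains that of $\rho_K(t)$. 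Using $[\lambda_{P(K)}] = w[\lambda_V]$, a direct calculation shows that if $\rho_K(t)$ has pillowcase coordinates $(\alpha_K(t), \beta_K(t))$ on $T$, then $\tilde\rho(t)$ has pillowcase coordinates $(\alpha_K(t)/w + c, w\beta_K(t))$ on $T_P$ for some constant $c$. Since $P(K)$ is $SU(2)$-averse, Theorem~\ref{thm:open-pillowcase-image} forces this arc to lie on a line of slope $-r(P(K))$; unravelling shows that the original arc $(\alpha_K(t), \beta_K(t))$ lies on a line of slope $-r(P(K))/w^2$. As this holds for every arc of irreducibles of $K$, Theorem~\ref{thm:finite-lines-converse} yields that $K$ is $SU(2)$-averse with $r(K) = r(P(K))/w^2$, equivalently $r(P(K)) = w^2 r(K)$.

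The main obstacle is verifying the homological lemma $[\lambda_{P(K)}] = w[\lambda_V]$ in $H_1(E_P)$ via a careful framing/Mayer--Vietoris analysis of the preferred longitude of the satellite, since this is precisely what makes the pillowcase bookkeeping transparent and also explains why Part (i) produces a pattern-only quantity. Beyond that, the remaining subtleties are generic-type: continuity of the choice of $w$-th root breaks down at parameters where $\rho_K(\mu_K) = \pm I$, addressed by working on open subarcs and using compactness of the character variety to patch; and one must verify that the extension process produces enough arcs in each direction to meet the hypothesis of Theorem~\ref{thm:finite-lines-converse}, which follows from nontriviality of the relevant knots together with Theorem~\ref{thm:pillowcase-near-pi/2}.
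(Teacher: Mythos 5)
Your proposal is correct, and the core constructions are the same ones the paper uses: your bullet-one extension (restrict to $E_P$, then extend abelianly over $E_K$ killing $\lambda_K$) is exactly the composition with the Silver--Whitten surjection of Proposition~\ref{prop:satellite-quotient}, and your bullet-two extension via a $w$-th root of $\rho_K(\mu_K)$ together with the identity $[\lambda_{P(K)}]=w[\lambda_V]$ in $H_1(E_P)$ is precisely the construction in the proof of Proposition~\ref{prop:satellites-nonzero-winding}. Where you genuinely diverge is in the endgame. The paper transfers $SU(2)$-cyclic \emph{slopes} directly: for the first bullet, the containment of pillowcase images immediately makes every $SU(2)$-cyclic slope of $P(K)$ an $SU(2)$-cyclic slope of $P(U)$, and the limit slopes then agree because each is the unique accumulation point of these slope sets (Theorem~\ref{thm:open-pillowcase-image}); for the second bullet, a short congruence argument in the choice of the root (picking $k$ so that $n+k(p/d)$ is divisible by $w^2/d$) shows that each $SU(2)$-cyclic slope $p/q$ of $P(K)$ yields the $SU(2)$-cyclic slope $p/(qw^2)$ of $K$. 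You instead transfer the \emph{line structure} of the pillowcase image and invoke the converse Theorem~\ref{thm:finite-lines-converse}. That works, but it is heavier: it imports the rationality of $r(P(K))$ and, through Theorem~\ref{thm:c-rational}, the $A$-polynomial/root-of-unity machinery, whereas the paper's route for this theorem is elementary; and the paper's slope-by-slope transfer is a strictly stronger statement than averseness, which is used elsewhere (e.g.\ for $10_{98}$ and for cables via Gordon's formula).

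A few points you should tighten, none fatal. For the abelian extension over $E_P$ to be a homomorphism you must choose $X(t)$ inside the maximal torus containing both $\rho_K(t)(\mu_K)$ and $\rho_K(t)(\lambda_K)$ (unique since $\rho_K(t)(\mu_K)\neq\pm I$), so that $X(t)$ commutes with $\rho_K(t)(\lambda_K)$; your write-up only asks that $X(t)^w=\rho_K(t)(\mu_K)$. The worry about parameters where $\rho_K(\mu_K)=\pm I$ is vacuous: Proposition~\ref{prop:pillowcase-facts}(4) already gives $\epsilon<\alpha<\pi-\epsilon$ on all irreducibles. Finally, to satisfy the hypothesis of Theorem~\ref{thm:finite-lines-converse} you need not just that each arc lies on a line of slope $-r(P(K))/w^2$ but that \emph{finitely many} such lines and isolated points cover the whole image; this follows from the finiteness of path components of the semi-algebraic set $R^*(K)$, as in Theorem~\ref{thm:finitely-many-lines}, and should be said.
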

\noindent In particular, we prove that $SU(2)$-averse knots are prime (Theorem~\ref{thm:averse-connected-sum}) and we completely determine when a cable of a given knot is $SU(2)$-averse (Theorem~\ref{thm:cables}), leading to a proof of Conjecture~\ref{conj:main-conjecture} for algebraic knots (Corollary~\ref{cor:algebraic-knots}).  Similarly, in Section~\ref{sec:evidence} we apply many of the above results to prove that several other classes of knots (as described following Conjecture~\ref{conj:main-conjecture} above) are not $SU(2)$-averse.  In doing so, we make use of the following strengthening of a theorem of Crowell \cite[Theorem~6.5]{crowell}.
{
\renewcommand{\thetheorem}{\ref{prop:alternating-det-cr}}
\begin{proposition}
If $K$ is a prime alternating knot, then $\det(K) \geq 3c(K)-8$ unless $K$ is a $(2,2k+1)$ torus knot or a twist knot.
\end{proposition}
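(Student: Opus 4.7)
My plan is to translate the inequality into a statement about spanning trees of plane graphs via the Tait graph, and then prove the resulting combinatorial bound by induction on the cycle rank. Let $D$ be a reduced alternating diagram of $K$ with $m = c(K)$ crossings, and let $G$ be one of its two checkerboard (Tait) graphs. By a classical theorem of Kauffman one has $\det(K) = \tau(G)$, where $\tau(G)$ denotes the number of spanning trees of $G$, and $|E(G)| = m$. Because $D$ is reduced and $K$ is prime, $G$ is a bridgeless, $2$-connected plane graph.

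I would then organize $2$-connected plane graphs by their first Betti number $\beta_1(G) = |E(G)| - |V(G)| + 1$. If $\beta_1(G) = 1$, then $G$ is the cycle $C_m$, so $\tau(G) = m$ and $K$ is the $(2,m)$ torus knot. If $\beta_1(G) = 2$, then $G$ is topologically a theta graph: three internally disjoint paths of lengths $p, q, r$ with $p + q + r = m$ joining two common endpoints, giving $\tau(G) = pq + qr + rp$ by a direct count. Over $p, q, r \geq 1$ this quantity is minimized by $(p, q, r) = (1, 1, m-2)$ with value $2m - 3$, and I would match this minimizing Tait graph (using the Tait flyping theorem and a small combinatorial check) with the diagram of a twist knot; every other theta configuration gives $\tau(G) \geq 3m - 7 > 3m - 8$.

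The substantive case is $\beta_1(G) \geq 3$, which I would handle by induction on $m$ using the deletion-contraction identity $\tau(G) = \tau(G - e) + \tau(G / e)$ applied to a suitable non-bridge, non-loop edge $e$. Because contraction preserves $\beta_1$ for non-loop edges and deletion of a non-bridge edge decreases it by exactly one, $\beta_1(G / e) \geq 3$ and $\beta_1(G - e) \geq 2$. Choosing $e$ so that $G - e$ also remains bridgeless, the inductive hypothesis applied to $G/e$ yields $\tau(G / e) \geq 3(m - 1) - 8$, while the theta-graph analysis applied to $G-e$ (even in the worst case of the bad $(1,1,m-3)$ theta) yields $\tau(G - e) \geq 2(m - 1) - 3$. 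Summing gives $\tau(G) \geq 5m - 16 \geq 3m - 8$ as soon as $m \geq 4$, closing the induction.

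The main obstacle is producing the edge $e$ in the $\beta_1(G) \geq 3$ case: deletion can introduce bridges and contraction can create loops or parallel edges that disrupt the bounds. Such an $e$ can be extracted from an ear decomposition of $G$ with at least two ears beyond the initial cycle, though some case analysis is needed for long ears. The remaining base cases of small $m$ would be checked directly from a knot table (through $m=7$ or so), both confirming the induction and verifying that no exceptions appear beyond the $(2, 2k+1)$ torus knots and the twist knots.
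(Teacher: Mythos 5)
Your reduction to spanning-tree counts of the Tait graph, and your treatment of the cases $\beta_1(G)\le 2$, are sound, but the induction in the case $\beta_1(G)\ge 3$ rests on a statement that is false. The inductive hypothesis you invoke for $G/e$ is ``$2$-connected planar with $\beta_1\ge 3$ and $m-1$ edges implies $\tau\ge 3(m-1)-8$,'' and this already fails for the dipole $D_n$ (two vertices joined by $n$ parallel edges, $n\ge 5$): it is $2$-connected and planar with $\beta_1=n-1\ge 4$, yet $\tau(D_n)=n<3n-8$. This is not merely an artifact of leaving the class of Tait graphs, since $D_n$ is itself a checkerboard graph of the standard diagram of the $(2,n)$ torus knot; so even your initial trichotomy on $\beta_1(G)$ alone is unsound, and you must case on $\min\{\beta_1(G),\beta_1(G^*)\}$, i.e.\ require both checkerboard graphs to have at least $4$ vertices before claiming $\tau\ge 3m-8$. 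Even after that repair the induction does not close as described: contracting an edge of $G$ deletes the corresponding edge of the dual, so $\beta_1((G/e)^*)=\beta_1(G^*)-1$, and from the boundary case $\beta_1(G^*)=3$ a single contraction lands on a three-vertex multigraph (the dual of a theta graph), where $\tau$ can be as small as $2(m-1)-3$ rather than $3(m-1)-8$. Thus the step ``$\tau(G/e)\ge 3(m-1)-8$'' is unjustified, and the existence of a suitable edge $e$ --- which you yourself flag as the main obstacle --- is exactly the part that has not been proved.

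For comparison, the paper's proof avoids induction entirely: it combines Crowell's inequality $\det(K)-1\ge(b-1)(w-1)$, where $b$ and $w$ are the numbers of vertices and faces of the black graph, with the identity $(b-1)(w-1)=2(c(K)-2)+(b-3)(w-3)$ and the elementary observation that $(b-3)(w-3)\ge c(K)-5$ whenever $b,w\ge 4$; the only remaining case is $b=3$ or $w=3$, which is precisely your theta-graph computation in dual form. If you want to keep your framework, the cleanest fix is to replace the deletion--contraction induction by this product inequality (or by a direct proof that $\tau(G)\ge 1+(b-1)(w-1)$ for $2$-connected plane graphs), which handles the whole range $b,w\ge 4$ in one step.
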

}

\subsection*{Acknowledgments}
We thank Hans Boden, Marc Culler, Cynthia Curtis, Nathan Dunfield, Paul Feehan, Michael Heusener, and Tom Mrowka for helpful conversations.  We thank the anonymous referee for their careful reading of this paper and many comments which improved the exposition.  We are also grateful to the Max Planck Institute for Mathematics for its hospitality during a significant portion of this work. The second author is also grateful for support by the SFB `Higher invariants' (funded by the Deutsche Forschungsgemeinschaft (DFG)) at the University of Regensburg, and for support by a Heisenberg fellowship of the DFG.

\section{Background} \label{sec:background}

To any space $Y$ we can associate its $SU(2)$ representation variety
\[ R(Y) = \Hom(\pi_1(Y), SU(2)). \]
We can realize $R(Y)$ as a real algebraic subset of some $\R^n$, given a finite presentation
\[ \pi_1(Y) = \langle g_1,g_2,\dots,g_k \mid w_1,w_2,\dots,w_l \rangle. \]
Namely, we assign to each $g_j$ four real variables $a_j,b_j,c_j,d_j$ satisfying the relation $a_j^2+b_j^2+c_j^2+d_j^2 = 1$, so that we can set
\[ \rho(g_j) = \twomatrix{a_j+ib_j}{c_j+id_j}{-c_j+id_j}{a_j-ib_j}, \qquad
\rho(g_j^{-1}) = \twomatrix{a_j-ib_j}{-c_j-id_j}{c_j-id_j}{a_j+ib_j}. \]
Then each $2\times 2$ matrix equation $\rho(w_i) = I$ gives four additional polynomial relations among the various $a,b,c,d$, and the collection of these relations defines $R(Y) \subset \R^{4k}$ as the zero locus of some collection of polynomials with coefficients in $\Z$.  (Since $\Z[a_1,\dots,d_k]$ is Noetherian, we only need finitely many of these relations even if $\pi_1(Y)$ was merely finitely generated.)  This presents $R(Y)$ as a closed subspace of $(S^3)^k$, hence it is compact.

A representation $\pi_1(Y) \to SU(2)$ is irreducible if and only if its image does not lie in some $U(1)$ subgroup, or equivalently if its image is not abelian.  The space
\[ R^*(Y) = \{ \rho: \pi_1(Y) \to SU(2) \mid \rho \mathrm{\ irreducible} \} \]
can therefore be cut out from $R(Y)$ by imposing the additional constraint
\[ \rho(g_ig_j) \neq \rho(g_jg_i) \mathrm{\ for\ some\ }i,j. \]
Equivalently, we can sum the squares of the magnitudes of the entries of the matrices $\rho(g_ig_j)-\rho(g_jg_i)$ over all $1 \leq i < j \leq k$, and this is positive if and only if $\rho$ is irreducible, so $R^*(Y)$ is a semi-algebraic set defined by the polynomial relations appearing in $R(Y)$ plus an additional polynomial inequality, again with coefficients in $\Z$.

Dividing out the action of $SU(2)$ by conjugation on the representation variety, we get the $SU(2)$ character variety and its irreducible part:
\[ \ch(Y) = R(Y) / SU(2), \qquad \ch^*(Y) = R^*(Y) / SU(2). \]
Since $R(Y)$ is compact, so is its quotient $\ch(Y)$.

In the case of a torus $T^2$, we can take a pair of elements $\mu,\lambda$ which generate $\pi_1(T^2) \cong \Z^2$, and since they commute their images under any representation $\rho: \pi_1(T^2) \to SU(2)$ can be simultaneously diagonalized as
\[ \rho(\mu) = \twomatrix{e^{i\alpha}}{0}{0}{e^{-i\alpha}}, \qquad
\rho(\lambda) = \twomatrix{e^{i\beta}}{0}{0}{e^{-i\beta}}. \]
The pairs $\alpha, \beta$ lie in a torus $T = (\R/2\pi\Z) \times (\R/2\pi\Z)$, but they are not uniquely determined, since conjugation by $\left(\begin{smallmatrix} 0&-1\\1&0 \end{smallmatrix}\right)$ induces an involution $\iota$ sending $(\alpha, \beta)$ to $(2\pi-\alpha, 2\pi-\beta)$.  Each conjugacy class in $SU(2)$ is completely determined by its trace, so in fact $\ch(T^2)$ is the quotient of $T$ by this involution:
\begin{equation} \label{eq:iota}
\ch(T^2) = \big((\R/2\pi\Z) \times (\R/2\pi\Z)\big) / \iota.
\end{equation}

The space $\ch(T^2)$ is often called the \emph{pillowcase}: it is homeomorphic to a sphere with four orbifold points of order 2, since $\iota$ has the four fixed points $(0,0)$, $(0,\pi)$, $(\pi,0)$, and $(\pi,\pi)$.  It can be constructed as the quotient of the fundamental domain $[0,\pi] \times [0,2\pi]$ by the identifications
\[ (0,\beta) \sim (0,2\pi-\beta), \qquad (\alpha,0) \sim (\alpha,2\pi), \qquad (\pi,\beta) \sim (\pi,2\pi-\beta) \]
where $0 \leq \alpha \leq \pi$ and $0 \leq \beta \leq 2\pi$, see Figure~\ref{pillowcase trefoil} below.  We can cut the pillowcase open along the lines $\alpha=0$ and $\alpha=\pi$ to get an annulus $[0,\pi] \times (\R/2\pi\Z)$, which we will call the \emph{cut-open pillowcase}.

Given a knot $K \subset S^3$, we let $R(K) = R(S^3 \ssm N(K))$ and likewise for $R^*(K)$, $\ch(K)$, and $\ch^*(K)$.  The inclusion $i: \partial N(K) \hookrightarrow S^3 \ssm N(K)$ of the boundary torus induces a restriction map on the $SU(2)$ character varieties
\[ i^*: \ch(K) \to \ch(T^2). \]
The image of $\ch(K)$ inside the cut-open pillowcase $[0,\pi] \times (\R/2\pi\Z)$ has canonical coordinates $(\alpha,\beta)$ given by taking $\mu$ and $\lambda$ to be a meridian and longitude of the knot.  (Explicitly, we have $\alpha = \arccos(\frac{1}{2}\tr(\rho(\mu)))$, where $0 \leq \arccos(\theta) \leq \pi$; and then the uniqueness of $\beta$ will follow from Proposition~\ref{prop:pillowcase-facts} below.)  It is also compact since $\ch(K)$ is.  Figure~\ref{pillowcase trefoil} shows the image of $\ch(K)$ in the pillowcase in the case where $K$ is the left-handed trefoil.  

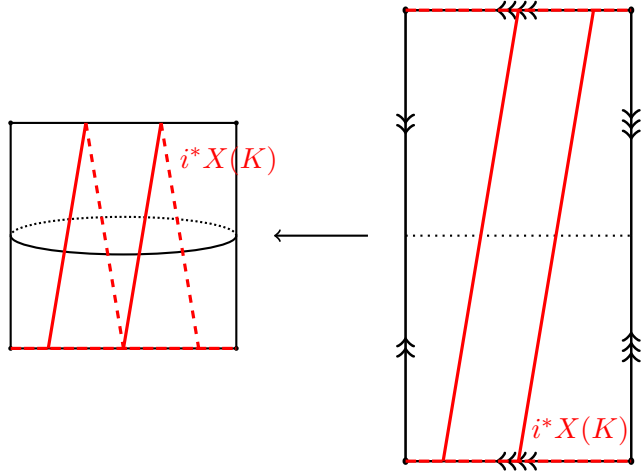
\begin{figure}
\begin{tikzpicture}[style=thick]
\begin{scope}
  \draw plot[mark=*,mark size = 0.5pt] coordinates {(0,0)(3,0)(3,3)(0,3)} -- cycle; 
  \draw (3,1.5) arc (0:-180:1.5 and 0.25);
  \draw[densely dotted] (3,1.5) arc (0:180:1.5 and 0.25);
  \begin{scope}[color=red,style=very thick]
    \draw[dash pattern=on 4pt off 1pt] (0,0) -- (3,0);
    \draw (0.5,0) -- (1,3) (1.5,0) -- (2,3);
    \draw[dashed] (1,3) -- (1.5,0) (2,3) -- (2.5,0);
    \node at (2.9,2.5) {$i^*\ch(K)$};
  \end{scope}
  \draw[->] (4.75,1.5) -- (3.5,1.5);
\end{scope}
\begin{scope}[xscale=1.5,yscale=2,shift={(3.5,-0.75)}]
  \draw plot[mark=*,mark size = 0.5pt] coordinates {(0,0)(2,0)(2,3)(0,3)} -- cycle; 
  \begin{scope}[decoration={markings,mark=at position 0.55 with {\arrow[scale=1.5]{>>}}}]
    \draw[postaction={decorate}] (0,0) -- (0,1.5);
    \draw[postaction={decorate}] (0,3) -- (0,1.5);
  \end{scope}
  \begin{scope}[decoration={markings,mark=at position 0.575 with {\arrow[scale=1.5]{>>>}}}]
    \draw[postaction={decorate}] (2,0) -- (2,1.5);
    \draw[postaction={decorate}] (2,3) -- (2,1.5);
  \end{scope}
  \begin{scope}[decoration={markings,mark=at position 0.6 with {\arrow[scale=1.5]{>>>>}}}]
    \draw[postaction={decorate}] (2,3) -- (0,3);
    \draw[postaction={decorate}] (2,0) -- (0,0);
  \end{scope}
  \draw[dotted] (0,1.5) -- (2,1.5);
  \begin{scope}[color=red,style=very thick]
    \draw (0.333,0) -- (1,3) (1,0) -- (1.666,3);
    \draw[dash pattern=on 4pt off 1pt] (0,0) -- (2,0) (0,3) -- (2,3);
    \node at (1.55,0.2) {$i^*\ch(K)$};
  \end{scope}
\end{scope}
\end{tikzpicture}
\caption{Identifications of the rectangle $[0,\pi] \times [0,2 \pi]$ yielding the pillowcase. The image of $\ch(K)$ is depicted for $K$ the left-handed trefoil.}
\label{pillowcase trefoil}
\end{figure}

The following facts are more or less standard; see e.g.\ \cite[Lemma~2.8]{lin}.

\begin{proposition} \label{prop:pillowcase-facts}
Let $\rho: \pi_1(S^3 \ssm K) \to SU(2)$ be a representation and $(\alpha,\beta)$ the coordinates of its image in the pillowcase.
\begin{enumerate}
\item Every point on the line $\beta \equiv 0 \pmod{2\pi}$ is the image of a reducible $\rho$.
\item If $\rho$ is reducible, then $\beta \equiv 0 \pmod{2\pi}$.
\item If $\rho$ is reducible and a limit of irreducible representations, then $e^{2i\alpha}$ is a root of the Alexander polynomial $\Delta_K(t)$.
\item There is a constant $\epsilon > 0$ depending only on $K$ such that if $\rho$ is irreducible, then $\epsilon < \alpha < \pi-\epsilon$.
\end{enumerate}
\end{proposition}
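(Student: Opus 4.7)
The plan is to handle (1) and (2) using the homological structure of the knot complement, to derive (3) from the standard deformation-theoretic link between reducible representations and the Alexander module, and to obtain (4) as a compactness consequence of (3).

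For (1), I would use that $H_1(S^3 \ssm N(K)) \cong \Z$ is generated by the meridian $\mu$ while the longitude $\lambda$ is nullhomologous. The assignment $\mu \mapsto \diag(e^{i\alpha},e^{-i\alpha})$ therefore extends through the abelianization to give a reducible representation with image in $U(1)$, which automatically satisfies $\rho(\lambda) = I$, hence $\beta \equiv 0 \pmod{2\pi}$. Part (2) is the converse: if $\rho$ is reducible then after conjugation its image lies in a single $U(1)$ subgroup, so $\rho$ factors through the abelianization and again sends $\lambda$ to $I$.

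For (3), the approach is deformation-theoretic. Decompose the adjoint representation $\operatorname{Ad} \circ \rho$ on $\sl_2(\C)$ into the diagonal line $\C$ and two off-diagonal lines $\C_\pm$ on which $\mu$ acts by $e^{\pm 2i\alpha}$. A first-order deformation of $\rho$ is a cocycle valued in $\sl_2(\C)$; the diagonal part preserves reducibility, so a non-abelian direction in the Zariski tangent space at $\rho$ requires a nontrivial class in $H^1(\pi_1(S^3 \ssm K); \C_\pm)$, and such a class must exist if $\rho$ is genuinely a limit of irreducibles. The standard identification of this twisted cohomology with the Alexander module then shows the cohomology is nonzero precisely when $e^{\pm 2i\alpha}$ is a root of $\Delta_K$; by the symmetry $\Delta_K(t) = \Delta_K(t^{-1})$ both conditions are equivalent to $\Delta_K(e^{2i\alpha}) = 0$.

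For (4), suppose to the contrary that there are $\rho_n \in R^*(K)$ whose pillowcase $\alpha$-coordinates tend to $0$ or $\pi$. Since $R(K)$ is compact, pass to a convergent subsequence with limit $\rho \in R(K)$ satisfying $\rho(\mu) = \pm I$. Because $\pi_1(S^3 \ssm K)$ is normally generated by $\mu$ and $\pm I$ is central in $SU(2)$, every conjugate of $\mu$ also maps to $\pm I$, so the image of $\rho$ is contained in $\{\pm I\}$ and $\rho$ is reducible. Applying (3), $e^{2i\alpha(\rho)} = 1$ must be a root of $\Delta_K$, contradicting the normalization $\Delta_K(1) = \pm 1$; this yields the desired uniform bound. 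The main obstacle is the cohomological input to (3): making the identification between the twisted $H^1$ of the knot group and the Alexander module precise. Once that is in place the remaining steps are essentially formal, and (4) reduces to compactness together with the observation that $\Delta_K(1) \neq 0$.
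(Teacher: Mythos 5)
Your proof is correct and follows essentially the same route as the paper: parts (1) and (2) via the abelianization of the knot group and the nullhomologous longitude, and part (4) by compactness of $R(K)$, normal generation by $\mu$, and the fact that $\Delta_K(1) = 1 \neq 0$. The only divergence is in (3), where the paper simply cites Klassen's theorem rather than proving it; your deformation-theoretic sketch is the standard argument underlying that citation, though as you acknowledge, the identification of the twisted $H^1$ with the Alexander module and the extraction of a nontrivial off-diagonal cocycle from a sequence of irreducibles is precisely the nontrivial content being outsourced.
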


\begin{proof}
Composing the abelianization $\pi_1(S^3 \ssm K) \to H_1(S^3 \ssm K) \cong \Z$ with the map $k \mapsto \diag(e^{ik\alpha}, e^{-ik\alpha})$ gives a reducible $\rho$ with coordinates $(\alpha,0)$, since $\lambda$ is nullhomologous.  Conversely, if $\rho$ is reducible then its image is abelian, so $\rho$ factors through $H_1(S^3 \ssm K)$ and hence $\lambda$ lies in its kernel.  The claim about reducible limits of irreducible representations is a theorem of Klassen \cite[Theorem~19]{klassen}.

Finally, if $\alpha=0$ or $\alpha=\pi$ then $\rho(\mu) = \pm I$, and thus $\rho$ is reducible with image in $\{\pm I\}$ since $\mu$ normally generates the knot group.  In these cases $\Delta_K(e^{2i\alpha}) = \Delta_K(1) = 1$, so some neighborhood of $\rho$ in $R(K)$ consists only of reducibles.  Now if there is a sequence of irreducibles $\rho_n$ with coordinates $(\alpha_n, \beta_n)$ such that $\alpha_n \to 0$ (resp.\ $\alpha_n \to \pi$), then by compactness some subsequence converges to a representation with $\alpha$-coordinate $0$ (resp.\ $\pi$), which must then have a neighborhood with no irreducibles, giving a contradiction.
\end{proof}

\begin{remark}
The coordinates on the pillowcase itself are not quite canonical, since $(0,\beta)$ is identified with $(0,2\pi-\beta)$ and likewise for $(\pi,\beta)$ and $(\pi,2\pi-\beta)$.  However, since the image of $\ch(K)$ avoids all such points except for $(0,0)$ and $(\pi,0)$, the coordinates on the image of $\ch(K)$ are indeed well-defined, both on the pillowcase and on its cut-open variant, and so we can safely ignore this ambiguity.
\end{remark}

The pillowcase images of character varieties exhibit symmetries, as in the following two propositions.

\begin{proposition} \label{prop:pillowcase-symmetry}
The image of $\ch(K)$ inside the pillowcase is invariant under the involution $(\alpha,\beta) \mapsto (\pi-\alpha, 2\pi-\beta)$.
\end{proposition}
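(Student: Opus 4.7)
The plan is to exhibit, for each character class $[\rho]$ in $i^*\ch(K)$ with pillowcase coordinates $(\alpha,\beta)$, an explicit second character class with coordinates equivalent to $(\pi-\alpha,2\pi-\beta)$. The natural construction is to twist $\rho$ by the unique $\{\pm 1\}$-valued character that sends the meridian to $-1$ and the longitude to $+1$, using the fact that $-I$ is central in $SU(2)$.

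More concretely, I would argue as follows. Since $H_1(S^3 \ssm N(K)) \cong \Z$ is generated by the class of the meridian $\mu$ and since the longitude $\lambda$ is nullhomologous, the composition of the abelianization with the sign homomorphism $\Z \to \{\pm 1\}$ yields a homomorphism
\[ \chi: \pi_1(S^3 \ssm N(K)) \to \{\pm 1\} \subset Z(SU(2)) \]
with $\chi(\mu) = -1$ and $\chi(\lambda) = +1$. Because $\chi$ takes values in the center, for any representation $\rho: \pi_1(S^3 \ssm N(K)) \to SU(2)$ the pointwise product $\rho' := \chi \cdot \rho$ is again a representation, and it is irreducible iff $\rho$ is. If $\rho$ has pillowcase coordinates $(\alpha,\beta)$, then
\[ \rho'(\mu) = -\rho(\mu) = \twomatrix{e^{i(\pi+\alpha)}}{0}{0}{e^{-i(\pi+\alpha)}}, \qquad \rho'(\lambda) = \rho(\lambda) = \twomatrix{e^{i\beta}}{0}{0}{e^{-i\beta}}, \]
so $\rho'$ determines the pillowcase point $(\pi+\alpha,\beta)$.

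Finally, I would unpack the pillowcase identifications: using the involution $(\alpha',\beta') \sim (-\alpha',-\beta')$ from \eqref{eq:iota} together with $2\pi$-periodicity in each coordinate gives
\[ (\pi+\alpha,\beta) \sim (-\pi-\alpha,-\beta) \equiv (\pi-\alpha, 2\pi-\beta) \pmod{2\pi\Z \times 2\pi\Z}, \]
which is exactly the image of $(\alpha,\beta)$ under the claimed involution. Thus $[\rho'] \in i^*\ch(K)$ has pillowcase coordinates $(\pi-\alpha, 2\pi-\beta)$, proving the invariance.

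There is really no serious obstacle here; the only subtlety worth flagging is to check that central twisting preserves the group-law (trivial since $\chi$ lands in $Z(SU(2))$) and to verify the identification $(\pi+\alpha,\beta) \sim (\pi-\alpha,2\pi-\beta)$ carefully in the pillowcase, which just uses the involution defining $\ch(T^2)$ together with $2\pi$-periodicity.
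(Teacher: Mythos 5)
Your proof is correct and is essentially identical to the paper's: both twist $\rho$ by the central character $\chi$ with $\chi(\mu)=-I$, $\chi(\lambda)=I$ obtained from the abelianization, and then apply the pillowcase involution $\iota$ to identify $(\pi+\alpha,\beta)$ with $(\pi-\alpha,2\pi-\beta)$.
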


\begin{proof}
Suppose that the image of $\rho$ has coordinates $(\alpha,\beta)$, and hence $\rho(\mu) = \diag(e^{i\alpha},e^{-i\alpha})$ and $\rho(\lambda) = \diag(e^{i\beta},e^{-i\beta})$ up to conjugacy.  We take the central character
\[ \chi: \pi_1(S^3 \ssm K) \xrightarrow{\ab} \Z \to \{\pm I\} \subset SU(2) \]
in which the latter map sends $k \in \Z$ to $(-I)^k$, and note that $\chi(\mu) = -I$ and $\chi(\lambda) = I$.  The representation $\tilde\rho = \chi\rho$ satisfies $\tilde\rho(\mu) = \diag(e^{i(\alpha+\pi)}, e^{-i(\alpha+\pi)})$ and $\tilde\rho(\lambda) = \diag(e^{i\beta},e^{-i\beta})$, and since $\iota(\alpha+\pi, \beta) = (\pi-\alpha, 2\pi-\beta)$ lies in $[0,\pi] \times (\R/2\pi\Z)$, it follows that the latter are the coordinates of $\tilde\rho$ in the pillowcase.
\end{proof}

\begin{proposition} \label{prop:mirror-image}
Let $\mirror{K}$ denote the mirror of $K$.  Then the pillowcase image of $\ch(\mirror{K})$ is the reflection of the pillowcase image of $\ch(K)$ across the line $\beta = \pi$, obtained by the transformation $(\alpha,\beta) \mapsto (\alpha,2\pi-\beta)$.
\end{proposition}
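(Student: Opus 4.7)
My plan is to exhibit an explicit bijection between the two character varieties induced by the orientation-reversing homeomorphism from $E(K)=S^3\setminus N(K)$ to $E(\overline{K})=S^3\setminus N(\overline{K})$, and then read off the effect on the pillowcase coordinates.

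First I would fix notation: let $h \colon E(K) \to E(\overline{K})$ be the natural orientation-reversing diffeomorphism, and let $\mu,\lambda \subset \partial N(K)$ and $\bar\mu,\bar\lambda\subset\partial N(\overline{K})$ be the canonical meridian/longitude pairs (oriented via the standard convention that $lk(\mu,K)=+1$ and $\lambda$ is parallel to $K$ and nullhomologous in the exterior, and similarly for $\overline{K}$). The key remark is that $h$ identifies the two boundary tori as unoriented surfaces with their pairs of unoriented curves, but a sign shows up on orientations: since linking numbers change sign under reversal of orientation of $S^3$, one has $h_*(\mu)=\bar\mu^{-1}$, while one can arrange $h_*(\lambda)=\bar\lambda$ (reversing the orientation of $K$ in the mirror would flip $\bar\lambda$ instead of $\bar\mu$, giving the same conclusion after swapping roles).

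Second, pullback by $h_*$ gives a homeomorphism $h^* \colon R(\overline{K}) \to R(K)$, $\bar\rho \mapsto \bar\rho\circ h_*$, which descends to a homeomorphism $\ch(\overline{K}) \to \ch(K)$ commuting with restriction to the boundary torus up to the induced self-map of $\ch(T^2)$. Concretely, if $\bar\rho$ has pillowcase coordinates $(\alpha,\beta)$, so that after conjugation $\bar\rho(\bar\mu)=\diag(e^{i\alpha},e^{-i\alpha})$ and $\bar\rho(\bar\lambda)=\diag(e^{i\beta},e^{-i\beta})$, then
\[ \rho(\mu)=\bar\rho(\bar\mu^{-1})=\diag(e^{-i\alpha},e^{i\alpha}), \qquad \rho(\lambda)=\bar\rho(\bar\lambda)=\diag(e^{i\beta},e^{-i\beta}). \]
Conjugating by $\left(\begin{smallmatrix}0&-1\\1&0\end{smallmatrix}\right)$ to put $\rho(\mu)$ back in the standard form $\diag(e^{i\alpha},e^{-i\alpha})$ simultaneously turns $\rho(\lambda)$ into $\diag(e^{-i\beta},e^{i\beta})$, so the pillowcase coordinates of $\rho$ are $(\alpha,-\beta)=(\alpha,2\pi-\beta)$.

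Hence $h^*$ matches the pillowcase image of $\ch(\overline{K})$ with the image of $\ch(K)$ reflected across $\beta=\pi$, which is the claim. The only genuinely subtle step is step one, the orientation bookkeeping for $(\mu,\lambda)$ under $h$; once the signs are pinned down, the rest is a direct conjugation computation. Nothing here depends on irreducibility, so the statement holds for the entire pillowcase image and not merely for $\ch^*$.
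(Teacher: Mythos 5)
Your proof is correct and follows essentially the same route as the paper: both identify the knot groups of $K$ and $\mirror{K}$ via the (orientation-reversing) homeomorphism of exteriors, record that the peripheral data transforms as $\mu \mapsto \mu^{-1}$, $\lambda \mapsto \lambda$, and then apply the pillowcase involution $\iota$ to convert $(2\pi-\alpha,\beta)$ into $(\alpha,2\pi-\beta)$. The orientation bookkeeping you flag as the subtle step is exactly the content of the paper's one-line observation, so there is nothing further to add.
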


\begin{proof}
We observe that $\mirror{K}$ has the same knot group as $K$, but with peripheral data
\[ \mu_{\mirror{K}} = \mu_K^{-1}, \qquad \lambda_{\mirror{K}} = \lambda_K. \]
Any representation $\rho \in R(K)$ with image $(\alpha,\beta)$ thus gives rise to an identical $\mirror{\rho} \in R(\mirror{K})$ with coordinates $\iota(2\pi-\alpha,\beta) = (\alpha,2\pi-\beta)$.
\end{proof}

Finally, we have the following general facts about the components of the image of $\ch(K)$ in the pillowcase.

\begin{proposition} \label{prop:path-components}
The image of $\ch(K)$ inside the cut-open pillowcase $C = [0,\pi] \times (\R/2\pi\Z)$ has finitely many path components.  If $K$ is not the unknot, then at least one of these components is homologically nontrivial in $H_1(C;\Z) \cong \Z$.
\end{proposition}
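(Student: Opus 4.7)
For the finiteness of path components, I would use semi-algebraic geometry. From the construction at the start of Section~\ref{sec:background}, $R(K) \subset (S^3)^k$ is a compact real algebraic set and $SU(2)$ acts on it algebraically by conjugation, so the quotient $\ch(K) = R(K)/SU(2)$ is compact semi-algebraic. The restriction $i^* : \ch(K) \to \ch(T^2)$ is semi-algebraic, being given by polynomial substitutions of words in the knot group, so its image in the pillowcase, and hence its image in the cut-open pillowcase $C$, is compact semi-algebraic. By standard results in real algebraic geometry (Bochnak--Coste--Roy), compact semi-algebraic sets have finitely many connected components and are triangulable, so the connected components coincide with the path components.

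For the homological nontriviality, assume $K$ is nontrivial. By Kronheimer--Mrowka's bound \cite{km-su2}, no rational slope $r$ with $|r| \leq 2$ is $SU(2)$-cyclic, so for every such $r = m/n$ (in lowest terms) there exists an irreducible $\rho_r$ whose pillowcase image lies on the line $m\alpha + n\beta \equiv 0 \pmod{2\pi}$ in $C$. Standard deformation theory for $SU(2)$-characters of knot groups places each such irreducible on a path component of $\ch^*(K)$ of real dimension at least $1$, which maps to an arc in $\Sigma := i^*\ch(K)$; by compactness this arc either terminates at a reducible (on $\{\beta = 0\}$, by Proposition~\ref{prop:pillowcase-facts}(2)) or closes up.

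I would then argue by contradiction: suppose every path component of $\Sigma$ were nullhomologous in $C$. Then $\Sigma$ lifts to a finite disjoint union (up to $2\pi\Z$-translation) of compact subsets of the universal cover $\widetilde C = [0,\pi] \times \R$. For each KM-irreducible $\rho_r$, the lift of its path component determines a specific lift $\widetilde{\rho_r} = (\alpha_r, \beta_r + 2\pi k_r) \in \widetilde C$. One then seeks to exhibit $r_1 \neq r_2$ such that $\rho_{r_1}$ and $\rho_{r_2}$ must lie in a common path component of $\Sigma$ yet whose required lifts fall on incompatible lines $m_i\alpha + n_i\beta = 2\pi \ell_i$, yielding a contradiction.

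The main obstacle is establishing that two such $\rho_{r_i}$ are forced into a common component. When $\Delta_K(t) \neq 1$, a natural route is via Klassen's theorem (Proposition~\ref{prop:pillowcase-facts}(3)) together with an analysis of the tangent direction of the arc of irreducibles at each reducible limit (as in Heusener's work), combined with the $A$-polynomial edge-slope data from \cite{ccgls}. When $\Delta_K(t) = 1$, no reducibles are limits of irreducibles by Proposition~\ref{prop:pillowcase-facts}(3), so every irreducible component of $\ch^*(K)$ is a closed arc; here one would invoke instanton-theoretic results in the spirit of \cite[\S 7]{km-excision} to produce a closed arc of irreducibles in $\Sigma$ whose $\beta$-image traverses $\R/2\pi\Z$.
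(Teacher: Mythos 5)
Your argument for the finiteness of path components is correct and is essentially the paper's: $R(K)$ is a semi-algebraic set, hence has finitely many semi-algebraically connected (therefore path-connected) components by the results of Bochnak--Coste--Roy, and this finiteness passes to the quotient $\ch(K)$ and to its continuous image in the cut-open pillowcase.

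The second assertion is where the proposal has a genuine gap. The paper does not derive homological nontriviality from the pillowcase picture at all; it quotes a theorem of the second author (\cite[Theorem~7.1]{zentner}), whose proof rests on the fact that the zero-surgery $S^3_0(K)$ on a nontrivial knot separates a symplectic $4$-manifold with nonvanishing Donaldson invariants, combined with holonomy-perturbation techniques. Your sketch does not reach this result. Two concrete problems: first, the claim that every irreducible $SU(2)$-character lies on a component of $\ch^*(K)$ of real dimension at least $1$ is false in general --- irreducible $SU(2)$ characters can be isolated points (the dimension bound you have in mind is an $SL_2(\C)$ statement about complex components and does not descend to the real locus). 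Second, and more seriously, the crucial step of forcing two irreducibles $\rho_{r_1}$, $\rho_{r_2}$ realizing distinct non-$SU(2)$-cyclic slopes into a \emph{common} path component of the image is exactly what you acknowledge you cannot establish, and none of the tools you list (Klassen's theorem, tangent directions at reducibles, $A$-polynomial edge data) supplies it; these control where arcs of irreducibles can begin and end, not which arcs are connected to one another. In the case $\Delta_K(t)=1$, where no reducible is a limit of irreducibles, your fallback of ``invoking instanton-theoretic results'' to produce a closed arc whose $\beta$-coordinate traverses all of $\R/2\pi\Z$ is precisely the statement to be proved, not a lemma you can appeal to. As written, the second half of the proposition is not established.
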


\begin{proof}
The representation variety $R(K)$ is a semi-algebraic subset of some $\R^n$, so by \cite[Theorem~2.4.5]{bcr} it has finitely many connected components, each of them semi-algebraic and semi-algebraically connected and therefore path-connected by \cite[Proposition~2.5.13]{bcr}.  Its quotient $\ch(K)$ then has finitely many path components, and so does its image in the pillowcase.  (In fact, the image is a finite embedded graph, as argued in \cite{zentner}.)  Cutting the pillowcase open along $\alpha=0$ and $\alpha=\pi$ does not change the number of components, since the image only meets these lines in the points $(0,0)$ and $(\pi,0)$.

The existence of a homologically nontrivial component when $K$ is a nontrivial knot is a theorem of the second author \cite[Theorem~7.1]{zentner}, which relies in turn on the fact that the result $S^3_0(K)$ of zero-surgery on $K$ separates a symplectic 4-manifold with nonvanishing Donaldson invariants \cite{km-p,km-excision}.
\end{proof}

\begin{corollary} \label{cor:connecting-path}
If $K$ is nontrivial, then the image of $\ch(K)$ inside the fundamental domain $[0,\pi] \times [0,2\pi]$ of the cut-open pillowcase contains a path from the line $\beta=0$ to the line $\beta=2\pi$.
\end{corollary}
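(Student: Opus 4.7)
The plan is to upgrade the homologically nontrivial path-component produced by Proposition~\ref{prop:path-components} into the required path in the fundamental domain by lifting a generating loop to the universal cyclic cover $\tilde{C} = [0,\pi] \times \R$ of $C = [0,\pi] \times (\R/2\pi\Z)$ and trimming it down with the intermediate value theorem.

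Concretely, I would begin by invoking Proposition~\ref{prop:path-components} to obtain a path-component $X$ of the image of $\ch(K)$ in $C$ that is nontrivial in $H_1(C;\Z) \cong \Z$. Since $X$ is path-connected and (by the proposition's proof) contained in a finite embedded graph, its first homology is generated by loops, so at least one such loop $\ell:[0,1] \to X$ has nonzero image in $H_1(C;\Z)$. Lifting $\ell$ to $\tilde{\ell}:[0,1] \to \tilde{C}$ gives $\tilde{\ell}(1) - \tilde{\ell}(0) = (0, 2\pi k)$ for some $k \neq 0$; reversing orientation if necessary, we may assume $k \geq 1$, so the continuous function $f(t) = \beta(\tilde{\ell}(t))$ runs from some $\beta_0$ up to $\beta_0 + 2\pi k$.

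The core step is then to extract a sub-interval $[t^{**}, t^*] \subseteq [0,1]$ on which $\tilde{\ell}$ traces a path from height $\beta_0$ to height $\beta_0 + 2\pi$ while $f$ remains inside $[\beta_0, \beta_0 + 2\pi]$. I would set $t^* = \inf\{t \in [0,1] : f(t) = \beta_0 + 2\pi\}$ and $t^{**} = \sup\{t \in [0,t^*] : f(t) = \beta_0\}$; continuity of $f$ places both endpoints on the correct horizontal lines, and these definitions force $f([t^{**}, t^*]) \subseteq [\beta_0, \beta_0 + 2\pi]$. The deck transformation $(\alpha,\beta) \mapsto (\alpha, \beta - \beta_0)$ preserves the preimage of $X$ in $\tilde{C}$, so it carries the sub-arc $\tilde{\ell}|_{[t^{**}, t^*]}$ into $[0,\pi] \times [0,2\pi]$; identifying this strip with the fundamental domain $D = [0,\pi] \times [0,2\pi]$ then yields the required path in the image of $\ch(K)$ from $\beta = 0$ to $\beta = 2\pi$.

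The only step requiring mild care is the IVT bookkeeping used to confine the sub-arc to a single $2\pi$-strip; no deep new input beyond Proposition~\ref{prop:path-components} is needed, as the Donaldson-theoretic content has already been absorbed there through \cite{km-p, km-excision, zentner}.
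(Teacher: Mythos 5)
Your overall strategy---extract a homologically nontrivial loop via Proposition~\ref{prop:path-components}, lift its $\beta$-coordinate to $\R$, and trim with the intermediate value theorem---is exactly the paper's argument, and your IVT bookkeeping for $t^*$ and $t^{**}$ is correct. The gap is in the very last step. The covering $[0,\pi]\times\R \to [0,\pi]\times(\R/2\pi\Z)$ has deck group generated by $(\alpha,\beta)\mapsto(\alpha,\beta+2\pi)$, so $(\alpha,\beta)\mapsto(\alpha,\beta-\beta_0)$ is a deck transformation, and hence preserves the preimage of $X$, only when $\beta_0\in 2\pi\Z$. For a generic starting height $\beta_0$ your translated sub-arc lies in a \emph{translate} of that preimage, and its image in the fundamental domain has no reason to consist of characters of $K$. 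Put differently, your sub-arc $\tilde\ell|_{[t^{**},t^*]}$ lives in the strip $\beta_0\le\beta\le\beta_0+2\pi$, which straddles a single line $\beta=2\pi m$ when $\beta_0\notin 2\pi\Z$, so it never traverses a standard strip $[2\pi m, 2\pi(m+1)]$ from bottom to top.

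The repair is a one-liner and is precisely the choice the paper makes: arrange $\beta_0\in 2\pi\Z$ before trimming. The paper bases the loop at the image $z_0$ of a reducible representation, which has $\beta=0$ by Proposition~\ref{prop:pillowcase-facts}. Alternatively, since your lifted function $f$ climbs by $2\pi k$ with $k\ge 1$, it attains some value $2\pi m$; re-base $\ell$ at such a point (or concatenate $\ell$ with itself so that the lift crosses two consecutive lines $2\pi m$ and $2\pi(m+1)$, and run your IVT argument between those crossings). After that, the translation needed to land in $[0,\pi]\times[0,2\pi]$ is an honest deck transformation and your argument closes up, coinciding with the paper's proof.
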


\begin{proof}
Let $Z = i^*(X(K)) \subset X(T^2)$ denote the image of $X(K)$, and let $z_0 \in Z$ be the image of some reducible representation.  If $j: Z \hookrightarrow [0,\pi]\times (\R/2\pi\Z)$ denotes inclusion into the cut-open pillowcase and $\pi: [0,\pi] \times (\R/2\pi\Z) \to \R/2\pi\Z$ is projection, then the composition
\[ \pi_1(Z, z_0) \xrightarrow{j_*} \pi_1([0,\pi] \times (\R/2\pi\Z), z_0) \xrightarrow{\pi_*} \pi_1(\R/2\pi\Z, 0) \]
is nonzero, because the abelianization $H_1(Z) \to H_1([0,\pi] \times (\R/2\pi\Z))$ of the first map is nonzero and the second map is an isomorphism.  But $\pi\circ j$ sends a point to its $\beta$-coordinate, so if $\gamma \in \pi_1(Z,z_0)$ is a path with image $2\pi n > 0$ in $\pi_1(\R/2\pi\Z,0) \cong 2\pi\Z$ then we can lift its $\beta$-coordinate to $\R$ to get a path from $0$ to $2\pi n$, and a segment of this path from $\beta = 0$ to $\beta=2\pi$ corresponds to the desired path in the fundamental domain.
\end{proof}

\section{The pillowcase and $SU(2)$-cyclic surgeries} \label{sec:pillowcase}

In this section we use the image of $\ch(K)$ inside the pillowcase to understand which surgeries on $K$ do not admit irreducible $SU(2)$ representations.  The fundamental group of $\frac{m}{n}$-surgery on $K$ is the quotient $\pi_1(S^3 \ssm K) / \langle\mu^m\lambda^n\rangle$, so irreducible representations of the surgered manifold correspond bijectively to irreducible representations $\rho$ of the knot group satisfying $\rho(\mu^m\lambda^n) = I$.

In this section we will let $\pi: \R^2 \to \ch(T^2)$ denote the map
\[ \R^2 \to (\R/2\pi\Z)^2 \to \ch(T^2), \]
defined by as the composition of reduction mod $2\pi$ with the quotient by the involution $\iota$ of equation~\eqref{eq:iota}.  It is clear that paths in $\ch(T^2)$ lift via $\pi$ to paths in $\R^2$.

\begin{proposition} \label{prop:slope-inequality}
Let $\gamma: [0,1] \to \ch(T^2)$ be a path consisting of images of irreducible characters of $\pi_1(S^3 \ssm K)$ in the pillowcase.  Choose a lift $\tilde\gamma: [0,1] \to \R^2$ of $\gamma$ to the plane, so that $\pi\circ\tilde\gamma = \gamma$.  Suppose that $\tilde\gamma(0)$ and $\tilde\gamma(1)$ are distinct points in $\R^2$ with coordinates $(\alpha_0, \beta_0)$ and $(\alpha_1,\beta_1)$, and that some $\frac{m}{n}$-surgery on $K$ is $SU(2)$-cyclic with $n \geq 1$.
\begin{itemize}
\item If $\alpha_0 \neq \alpha_1$ and if $r = \frac{\beta_1 - \beta_0}{\alpha_1 - \alpha_0}$ is the slope of the line segment from $\gamma(0)$ to $\gamma(1)$, then \[ \left|\frac{m}{n} - (-r)\right| < \frac{c_\gamma}{n} \]
where $c_\gamma = 2\pi / |\alpha_1-\alpha_0|$.
\item If $\alpha_0 = \alpha_1$, then $n < \frac{2\pi}{|\beta_1-\beta_0|}$.
\end{itemize}
\end{proposition}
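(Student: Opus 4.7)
The plan is to translate the $SU(2)$-cyclic hypothesis into a scalar continuity condition along the lift $\tilde\gamma$, and then extract the two inequalities by an intermediate value argument. Write $\tilde\gamma(t) = (\tilde\alpha(t), \tilde\beta(t))$ and define the continuous function
\[ f: [0,1] \to \R, \qquad f(t) = m\tilde\alpha(t) + n\tilde\beta(t). \]
First I would observe that for a representation $\rho$ whose pillowcase image admits a lift $(\alpha,\beta) \in \R^2$, the identity $\rho(\mu^m\lambda^n) = I$ is equivalent to $m\alpha + n\beta \in 2\pi\Z$. Since the involution $\iota$ sends $(\alpha,\beta)$ to $(-\alpha,-\beta)$, this condition is independent of the chosen lift, and by the discussion opening the section it is precisely what prevents an irreducible character from descending to an irreducible representation of $S^3_{m/n}(K)$.

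The hypotheses give that $\gamma(t)$ is the image of an irreducible character for every $t \in [0,1]$, and that $S^3_{m/n}(K)$ admits no irreducible $SU(2)$-representation. Combining these, $f(t) \notin 2\pi\Z$ for every $t \in [0,1]$. Since $f$ is continuous, $f([0,1])$ is a connected subset of $\R \smallsetminus 2\pi\Z$ and therefore lies in a single open interval of length $2\pi$. In particular,
\[ |f(1) - f(0)| \;=\; \bigl|m(\alpha_1 - \alpha_0) + n(\beta_1 - \beta_0)\bigr| \;<\; 2\pi. \]

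If $\alpha_0 \neq \alpha_1$, I would divide through by $n|\alpha_1 - \alpha_0| > 0$ to obtain
\[ \left|\frac{m}{n} + \frac{\beta_1 - \beta_0}{\alpha_1 - \alpha_0}\right| \;<\; \frac{2\pi}{n|\alpha_1 - \alpha_0|} \;=\; \frac{c_\gamma}{n}, \]
which is the claimed bound on $|\tfrac{m}{n} - (-r)|$. If instead $\alpha_0 = \alpha_1$, then the above estimate reduces to $n|\beta_1 - \beta_0| < 2\pi$, and since $\beta_1 \neq \beta_0$ (the endpoints of $\tilde\gamma$ being distinct), this rearranges to $n < 2\pi/|\beta_1 - \beta_0|$.

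There is essentially no serious obstacle: the entire content is the reformulation of $SU(2)$-cyclicity of $m/n$-surgery as the statement that $f$ avoids $2\pi\Z$, together with the connectedness observation forcing $\mathrm{length}(f([0,1])) < 2\pi$. The one point to be careful about is that this inequality is strict, which is automatic because $f$ takes no value in $2\pi\Z$ (so it cannot attain the endpoints of the ambient interval of length $2\pi$), and because the lift $\tilde\gamma$ of the path $\gamma$ is well-defined once an initial point is chosen, so no modular ambiguity enters the estimate.
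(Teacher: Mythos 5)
Your proof is correct and follows essentially the same route as the paper: both arguments reduce the $SU(2)$-cyclic hypothesis to the statement that $t \mapsto m\tilde\alpha(t) + n\tilde\beta(t)$ never meets $2\pi\Z$, conclude by continuity that its image stays in a single open interval $(2\pi k, 2\pi(k+1))$, hence $|f(1)-f(0)| < 2\pi$ strictly, and then divide to obtain the two cases. No issues.
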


\begin{proof}
Suppose that each $\tilde\gamma(t)$ has coordinates $(\alpha_t,\beta_t) \in \R^2$.  If $\frac{m}{n}$-surgery on $K$ is $SU(2)$-cyclic, then any representation $\rho: \pi_1(S^3 \ssm K) \to SU(2)$ such that $\rho(\mu^m\lambda^n) = I$ must be reducible.  In particular, if $\rho$ is irreducible with image $\tilde\gamma(t)$, then $\rho(\mu^m\lambda^n) \neq I$.  But up to conjugacy we have $\rho(\mu) = \diag(e^{i\alpha_t},e^{-i\alpha_t})$ and $\rho(\lambda) = \diag(e^{i\beta_t},e^{-i\beta_t})$, so since $S^3_{m/n}(K)$ is $SU(2)$-cyclic we must have
\[ m\alpha_t + n\beta_t \not\equiv 0 \pmod{2\pi} \]
for all $t$, $0 \leq t \leq 1$.

At $t=0$, the fact that $S^3_{m/n}(K)$ is $SU(2)$-cyclic tells us that
\[ 2\pi k < m\alpha_0 + n\beta_0 < 2\pi(k+1) \]
for some integer $k$.  Now $m\alpha_t+n\beta_t$ varies continuously with $t$, and it can never equal $2\pi k$ or $2\pi(k+1)$ since these are $0 \pmod{2\pi}$, so at time $t=1$ we must have
\[ 2\pi k < m\alpha_1 + n\beta_1 < 2\pi(k+1) \]
as well.  Combining these inequalities, we see that
\[ \left| (m\alpha_1 + n\beta_1) - (m\alpha_0 + n\beta_0) \right| < 2\pi. \]
If $\alpha_0 \neq \alpha_1$, then dividing both sides by $n|\alpha_1 - \alpha_0|$ yields
\[ \left| \frac{m}{n} + \left(\frac{\beta_1 - \beta_0}{\alpha_1 - \alpha_0}\right) \right| < \frac{2\pi}{n|\alpha_1 - \alpha_0|}. \]
Otherwise $\alpha_0 = \alpha_1$ but $\beta_0 \neq \beta_1$ since $\gamma(0) \neq \gamma(1)$, and we have $n < 2\pi/ |\beta_1-\beta_0|$ as claimed.
\end{proof}

\begin{theorem} \label{thm:open-pillowcase-image}
Suppose that $K$ admits infinitely many $SU(2)$-cyclic surgeries.  Define
\[ I(K) \subset \R^2 \]
as the preimage under $\pi: \R^2 \to \ch(T^2)$ of the pillowcase image of $\ch^*(K)$.  Then every path component of $I(K)$ is either a point or a line segment, and the line segments all have the same slope.  If in addition $K$ is not the unknot, then at least one path component is a line segment, and its slope $r \in \R \cup \{\infty\}$ satisfies $|r| > 2$.  If $r \neq \infty$, then the $SU(2)$-cyclic slopes $\frac{m}{n}$ all satisfy
\[ \left| \frac{m}{n} - (-r) \right| \leq \frac{|r|}{n}, \]
so only finitely many values of $m$ are possible for any given $n$, and $\frac{m}{n} \to -r$ as $n\to \infty$.
\end{theorem}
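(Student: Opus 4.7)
The plan is to wield Proposition~\ref{prop:slope-inequality} against the infinite set of $SU(2)$-cyclic slopes $\{m_k/n_k\}$ in order to extract the structure of $I(K)$ in four steps.

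First, I would pin down the slope $r$. For any path component of $I(K)$ that contains two points with distinct $\alpha$-coordinates, the first case of Proposition~\ref{prop:slope-inequality} gives $|m_k/n_k + r_\gamma| < c_\gamma/n_k$ for every $k$, where $r_\gamma$ is the slope between the chosen points. This confines the $SU(2)$-cyclic slopes to a bounded interval in which each denominator admits only finitely many numerators, so infinitude forces $n_k\to\infty$ and hence $m_k/n_k \to -r_\gamma$. Therefore $r_\gamma$ is independent of the choice, and I set $r:=r_\gamma$; if no such pair of points exists anywhere in $I(K)$, set $r=\infty$.

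Second, I would show each path component $C$ of $I(K)$ is a point or a line segment of slope $r$. If $C$ has two distinct points, the second case of Proposition~\ref{prop:slope-inequality} rules out equal $\alpha$-coordinates (this would bound every $n_k$, contradicting $n_k\to\infty$), so every pair has distinct $\alpha$-coordinates and slope $r$ between them, placing $C$ on a single line of slope $r$. To rule out unbounded $C$, observe that arbitrarily large $|\alpha_1-\alpha_0|$ within $C$ would push $c_\gamma = 2\pi/|\alpha_1-\alpha_0|$ to zero, forcing $m_k/n_k=-r$ identically and again contradicting the infinitude of cyclic slopes. So $C$ is a bounded line segment or a point, and all segments share the slope $r$.

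Third, when $K$ is nontrivial, Corollary~\ref{cor:connecting-path} provides a path inside the image of $\ch(K)$ joining $\beta=0$ to $\beta=2\pi$ in the fundamental domain $[0,\pi]\times[0,2\pi]$. Its interior lies in the image of $\ch^*(K)$ since reducibles have $\beta\equiv 0\pmod{2\pi}$, so it lifts to a nonconstant arc in $I(K)$, supplying the required line segment. Its reducible limit endpoints have $\alpha$-coordinates at values where $e^{2i\alpha}$ is a root of $\Delta_K$ by Proposition~\ref{prop:pillowcase-facts}(3); since $\Delta_K(1)=1$, these lie strictly inside $(0,\pi)$. Consequently the $\alpha$-span of the closure is strictly less than $\pi$ while the $\beta$-span is $2\pi$, giving $|r|>2$ (with the case $r=\infty$ trivial).

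For the Diophantine inequality when $r\neq\infty$, I apply Proposition~\ref{prop:slope-inequality} to a family of irreducible sub-arcs of the Corollary's path whose endpoints approach the reducible limits. In this limit $|\beta_1-\beta_0|\to 2\pi$ and, since each sub-arc lies on the line of slope $r$, $|\alpha_1-\alpha_0|\to 2\pi/|r|$; hence $c_\gamma\to |r|$ and the strict bound $|m/n+r|<c_\gamma/n$ passes to $|m/n+r|\leq |r|/n$. From this inequality the finiteness of $m$-values per $n$ (since $|m+nr|\leq|r|$ confines $m$ to a bounded interval) and the convergence $m/n\to-r$ as $n\to\infty$ are immediate. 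The main obstacle is precisely this limiting step: Corollary~\ref{cor:connecting-path} supplies a path with reducible endpoints, where Proposition~\ref{prop:slope-inequality} does not directly apply, so the sharp constant $|r|/n$ must be obtained by letting irreducible sub-arcs approach their reducible closure rather than by a single application of the proposition.
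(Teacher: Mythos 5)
Your proof is correct and follows essentially the same route as the paper: Proposition~\ref{prop:slope-inequality} forces each component of $I(K)$ onto a line of a slope determined by the limit of the $SU(2)$-cyclic slopes, Corollary~\ref{cor:connecting-path} supplies the nontrivial segment, and the sharp constant $|r|/n$ is obtained by letting irreducible sub-arcs limit onto the reducible endpoints. The only cosmetic differences are that the paper establishes collinearity by contradiction with three non-collinear points and bounds $|\alpha_1'-\alpha_0'|<\pi-2\epsilon$ via Proposition~\ref{prop:pillowcase-facts}(4), whereas you compute the slope directly and locate the endpoints at roots of $\Delta_K$ via Proposition~\ref{prop:pillowcase-facts}(3); both yield the same strict bound $|r|>2$.
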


\begin{proof}
Let $I_0 \subset I(K)$ be a path component.  If $I_0$ is not contained in some straight line, then it contains three points $p_i = (\alpha_i, \beta_i)$, $i=0,1,2$, which are not collinear.  The $\alpha_i$ cannot all be the same, so at most two of them are equal; we label the $p_i$ so that $\alpha_0$ is different from both $\alpha_1$ and $\alpha_2$.  Letting $c_i = \frac{2\pi}{|\alpha_i-\alpha_0|}$ and $r_i = \frac{\beta_i - \beta_0}{\alpha_i - \alpha_0}$ for $i=1,2$, we know that $r_1 \neq r_2$ or else the $p_i$ would be collinear, and that $r_1,r_2 \neq \infty$.

If $\frac{m}{n}$ is an $SU(2)$-cyclic surgery slope for $K$, say with $n>0$, then we can apply Proposition~\ref{prop:slope-inequality} to paths from $p_0$ to $p_1$ and from $p_0$ to $p_2$ within $I_0$ to see that
\[ \left| \frac{m}{n} - (-r_1) \right| < \frac{c_1}{n}, \qquad \left| \frac{m}{n} - (-r_2) \right| < \frac{c_2}{n}. \]
Given a sequence of such slopes $\frac{m_k}{n_k}$ with $n_k \to \infty$, it would follow that $\frac{m_k}{n_k}$ converges to both $-r_1$ and $-r_2$, which contradicts $r_1 \neq r_2$.  The $SU(2)$-cyclic slopes $\frac{m}{n}$ therefore satisfy some uniform upper bound $n \leq C$.  But from either of these inequalities it follows that for fixed $n$ there can only be finitely many $m$ such that $\frac{m}{n}$ is an $SU(2)$-cyclic surgery slope, so the total number of such slopes is finite, which is a contradiction.  We conclude that $I_0$ must be contained in a straight line.

In fact, the above argument shows that the slope $r$ of the straight line is uniquely determined by the set $\{\frac{m_k}{n_k}\}$ of $SU(2)$-cyclic slopes.  Indeed, Proposition~\ref{prop:slope-inequality} shows that $r = \infty$ if and only if the $n_k$ are bounded, and if they are unbounded then $n_k\to\infty$ and $-\displaystyle r = \lim_{n_k\to\infty} \frac{m_k}{n_k}$.  Thus all nontrivial path components of $I(K)$ have the same slope.

If $K$ is not the unknot, then Corollary~\ref{cor:connecting-path} says that there is a path in
\[ I(K) \cup \big(\R \times 2\pi\Z\big) \subset \R^2, \]
which is the lift to $\R^2$ of the image of all of $\ch(K)$ (including the reducible characters), between two points $(\alpha_0,0)$ and $(\alpha_1,2\pi)$ with $0 < \alpha_0,\alpha_1 < \pi$.  The path must be a straight line in the region $0 < \beta < 2\pi$, where every point is a lift of the image of an irreducible character, and for any small $\delta > 0$ it connects points $(\alpha'_0,\delta)$ and $(\alpha'_1, 2\pi-\delta)$ which project to the image of two irreducibles, with slope $r = \frac{2\pi-2\delta}{\alpha'_1-\alpha'_0}$.

By Proposition~\ref{prop:pillowcase-facts}, there is some $\epsilon>0$ depending only on $K$ such that every irreducible character has $\alpha$-coordinate in the open interval $(\epsilon,\pi-\epsilon)$ in the pillowcase.  The line segment in $\R^2$ between $(\alpha'_0,\delta)$ and $(\alpha'_1,2\pi-\delta)$ consists entirely of lifts of images of characters which have $\beta\not\in 2\pi\Z$ and are thus irreducible, so it follows that $k\pi + \epsilon < \alpha'_0,\alpha'_1 < (k+1)\pi-\epsilon$ for some integer $k$.  This gives us $|\alpha'_1 - \alpha'_0| < \pi-2\epsilon$, so the slope $r$ of this segment satisfies $|r| > \frac{2\pi-2\delta}{\pi-2\epsilon}$.  Taking limits as $\delta\to 0$, we see that $|r| \geq \frac{2\pi}{\pi-2\epsilon} > 2$.  We remark that if $r \neq \infty$ then we also have $\left\lvert\frac{m}{n} - (-r)\right\rvert < \frac{2\pi/|\alpha'_1-\alpha'_0|}{n}$ for each such $(\alpha'_0,\delta)$ and $(\alpha'_1,2\pi-\delta)$, and as $\delta \to 0$ the ratio $\frac{2\pi}{\alpha'_1 - \alpha'_0}$ approaches the slope $-r$, so that $\left\lvert\frac{m}{n}-(-r)\right\rvert \leq \frac{|r|}{n}$.
\end{proof}

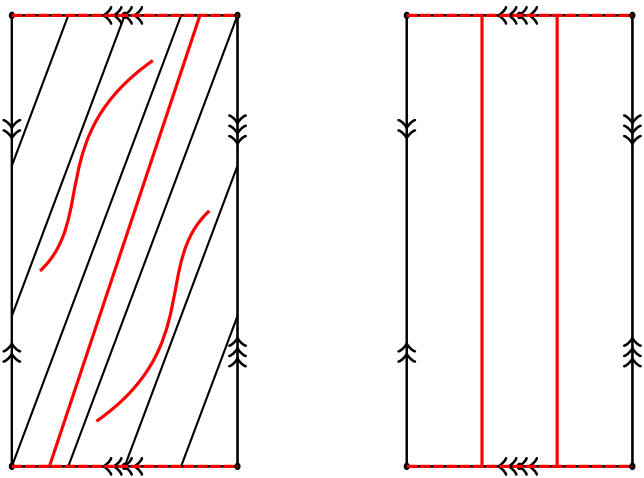
\begin{figure}
\begin{tikzpicture}[style=thick]
\begin{scope}[xscale=1.5,yscale=2]
  \draw plot[mark=*,mark size = 0.5pt] coordinates {(0,0)(1,0)(2,0)(2,3)(1,3)(0,3)} -- cycle; 
  \begin{scope}[decoration={markings,mark=at position 0.55 with {\arrow[scale=1.5]{>>}}}]
    \draw[postaction={decorate}] (0,0) -- (0,1.5);
    \draw[postaction={decorate}] (0,3) -- (0,1.5);
  \end{scope}
  \begin{scope}[decoration={markings,mark=at position 0.575 with {\arrow[scale=1.5]{>>>}}}]
    \draw[postaction={decorate}] (2,0) -- (2,1.5);
    \draw[postaction={decorate}] (2,3) -- (2,1.5);
  \end{scope}
  \begin{scope}[decoration={markings,mark=at position 0.6 with {\arrow[scale=1.5]{>>>>}}}]
    \draw[postaction={decorate}] (2,3) -- (0,3);
    \draw[postaction={decorate}] (2,0) -- (0,0);
  \end{scope}
  \draw (0,2) -- (0.5,3) (0,1) -- (1,3) (0,0) -- (1.5,3) (0.5,0) -- (2,3) (1,0) -- (2,2) (1.5,0) -- (2,1);
  \begin{scope}[color=red,style=very thick]
    \draw (0.333,0) -- (1.667,3);
    \draw (0.25,1.3) .. controls (0.75,1.65) and (0.3,2.2) .. (1.25,2.7);
    \draw (0.75,0.3) .. controls (1.7,0.8) and (1.25,1.35) .. (1.75,1.7);
    \draw[dash pattern=on 4pt off 1pt] (0,0) -- (2,0) (0,3) -- (2,3);
  \end{scope}
\end{scope}
\begin{scope}[xscale=1.5,yscale=2,shift={(3.5,0)}]
  \draw plot[mark=*,mark size = 0.5pt] coordinates {(0,0)(1,0)(2,0)(2,3)(1,3)(0,3)} -- cycle; 
  \begin{scope}[decoration={markings,mark=at position 0.55 with {\arrow[scale=1.5]{>>}}}]
    \draw[postaction={decorate}] (0,0) -- (0,1.5);
    \draw[postaction={decorate}] (0,3) -- (0,1.5);
  \end{scope}
  \begin{scope}[decoration={markings,mark=at position 0.575 with {\arrow[scale=1.5]{>>>}}}]
    \draw[postaction={decorate}] (2,0) -- (2,1.5);
    \draw[postaction={decorate}] (2,3) -- (2,1.5);
  \end{scope}
  \begin{scope}[decoration={markings,mark=at position 0.6 with {\arrow[scale=1.5]{>>>>}}}]
    \draw[postaction={decorate}] (2,3) -- (0,3);
    \draw[postaction={decorate}] (2,0) -- (0,0);
  \end{scope}
  \begin{scope}[color=red,style=very thick]
    \draw (0.667,0) -- (0.667,3) (1.333,0) -- (1.333,3);
    \draw[dash pattern=on 4pt off 1pt] (0,0) -- (2,0) (0,3) -- (2,3);
  \end{scope}
\end{scope}
\end{tikzpicture}
\caption{If the image $i^*(\ch(K))$ contains a component which is curved, as at left, then there are at most finitely many $SU(2)$-cyclic surgeries by Theorem~\ref{thm:open-pillowcase-image}.  If there are only vertical segments, as at right, then there could be an unbounded set of $SU(2)$-cyclic surgeries, but we will see in Section~\ref{sec:finiteness} that this cannot occur.}
\label{pillowcase curved and infinity}
\end{figure}

\begin{definition} \label{def:limit-slope}
Suppose that $K \subset S^3$ is a nontrivial knot with infinitely many $SU(2)$-cyclic surgeries.  Then we will say that $K$ is \emph{$SU(2)$-averse}, and we define the \emph{limit slope} $r(K) \in \R \cup\{\infty\}$ to be $-r$, where $r$ is the common slope of the line segments in $I(K)$ whose existence is guaranteed by Theorem~\ref{thm:open-pillowcase-image}.
\end{definition}

Rephrasing Theorem~\ref{thm:open-pillowcase-image} in terms of Definition~\ref{def:limit-slope}, we see that an $SU(2)$-averse knot with limit slope $r(K)$ satisfies $|r(K)| > 2$, and if $r(K) \neq \infty$ then every $SU(2)$-cyclic slope $\frac{m}{n}$ for $K$ satisfies
\[ \left|\frac{m}{n} - r(K)\right| \leq \frac{|r(K)|}{n}. \]
In particular, there are only finitely many possible $m$ for a given $n$, so the denominators $n$ are unbounded and we have $\frac{m}{n} \to r(K)$ as $n\to\infty$.  (We will prove in Section~\ref{sec:finiteness} that $r(K) \neq \infty$ always holds.)

\begin{example}
The torus knots $T_{p,q}$ are $SU(2)$-averse with limit slope $r=pq$, since Moser \cite{moser} showed that $(pq + \frac{1}{n})$-surgery on $T_{p,q}$ is a lens space, and thus $SU(2)$-cyclic, for all integers $n\neq 0$.
\end{example}

\begin{theorem} \label{thm:finitely-many-lines}
If $K$ is a nontrivial $SU(2)$-averse knot with limit slope $r = r(K)$, and if $r<\infty$ (resp.\ $r=\infty$), then there are finitely many constants $c_1,\dots,c_n$ such that every irreducible $\rho: \pi_1(S^3\ssm K) \to SU(2)$ has image on one of the lines $\beta = -r\alpha + c_i$ (resp.\ $\alpha = c_i$) in the cut-open pillowcase.
\end{theorem}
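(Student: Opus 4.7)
The plan is to combine the structural result of Theorem~\ref{thm:open-pillowcase-image} with a finiteness statement for the path components of the pillowcase image of $\ch^*(K)$. Let $A$ denote the image of $\ch^*(K)$ in the cut-open pillowcase $[0,\pi] \times (\R/2\pi\Z)$. As observed in Section~\ref{sec:background}, $R^*(K)$ is semi-algebraic, and so is its quotient $\ch^*(K)$ as well as its image in the pillowcase under the restriction map, which may be described in semi-algebraic coordinates derived from $\frac{1}{2}\tr\rho(\mu)$ and $\frac{1}{2}\tr\rho(\lambda)$. By Tarski--Seidenberg and \cite[Theorem~2.4.5, Proposition~2.5.13]{bcr}, the set $A$ is semi-algebraic and has finitely many path components, call them $J_1,\dots,J_n$.

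Next, I would show that each $J_i$ lies on a single line of the asserted form. Pick any $p_i \in J_i$ and a lift $\tilde{p}_i \in \R^2$ with $\pi(\tilde{p}_i) = p_i$, so $\tilde{p}_i \in I(K)$. By Theorem~\ref{thm:open-pillowcase-image} the path component of $\tilde{p}_i$ in $I(K)$ is contained in a line $\beta = -r\alpha + \tilde{c}_i$ when $r < \infty$, or $\alpha = \tilde{c}_i$ when $r = \infty$. For any other $q \in J_i$, choose a path $\gamma$ in $J_i$ from $p_i$ to $q$. By Proposition~\ref{prop:pillowcase-facts}(4), every point of $\gamma$ has $\alpha$-coordinate strictly inside $(0,\pi)$, so $\gamma$ avoids the four branch points of the projection $\pi: \R^2 \to \ch(T^2)$; hence $\gamma$ lifts uniquely to a continuous path $\tilde\gamma$ in $\R^2$ starting at $\tilde{p}_i$. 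As a continuous curve in $I(K)$, the lift $\tilde\gamma$ remains in the path component of $\tilde{p}_i$, so its endpoint satisfies the same line equation. Projecting back and reducing $\tilde{c}_i$ modulo $2\pi$ to obtain $c_i$, the point $q$ lies on the line $\beta \equiv -r\alpha + c_i \pmod{2\pi}$ (resp.\ $\alpha = c_i$) in the cut-open pillowcase.

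Taking $c_1,\dots,c_n$ to be the constants produced from the finitely many path components yields the desired lines, and every irreducible $\rho$ has image in some $J_i$, hence on one of them. The main technical step is the finiteness of path components, which reduces to confirming the semi-algebraicity of the image of $\ch^*(K)$ in the pillowcase; the remainder is straightforward path-lifting through the covering $\pi$ together with an application of Theorem~\ref{thm:open-pillowcase-image}. A cosmetic subtlety is that a single line $\beta \equiv -r\alpha + c_i \pmod{2\pi}$ may appear as several disjoint arcs in the cut-open pillowcase due to wrapping in the $\beta$-direction, and some $J_i$ may share the same line, but these possibilities only reinforce rather than weaken the finiteness statement.
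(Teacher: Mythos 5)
Your argument is correct and follows essentially the same route as the paper's proof: finiteness of path components of $R^*(K)$ (hence of the pillowcase image of $\ch^*(K)$) via semi-algebraicity as in Proposition~\ref{prop:path-components}, combined with Theorem~\ref{thm:open-pillowcase-image} to place each component on a single line of slope $-r$. Your explicit path-lifting through $\pi:\R^2\to\ch(T^2)$, justified by Proposition~\ref{prop:pillowcase-facts}(4), is just a slightly more detailed version of the paper's identification of the projected set $I'$ with the pillowcase image.
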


\begin{proof}
We lift the pillowcase image of $\ch^*(K)$ to $I(K) \subset \R^2$ as in Theorem~\ref{thm:open-pillowcase-image}, which guarantees that $I(K)$ is a union of isolated points and line segments of slope $-r$.  Then its image $I'$ in the cut-open pillowcase $C \cong [0,\pi] \times (\R/2\pi\Z)$, obtained from $I(K) \cap \big([0,\pi]\times \R\big)$ by reducing the second coordinate mod $2\pi$, is also a union of such points and line segments.

The set $I' \subset C$ can be identified with the pillowcase image of $\ch^*(K)$, since the latter avoids a neighborhood of the lines $\alpha=0$ and $\alpha=\pi$ in $\ch(T^2)$.  Since $R^*(K)$ is a semi-algebraic set, it has only finitely many path components, exactly as argued in the proof of Proposition~\ref{prop:path-components}, and hence so do its quotient $\ch^*(K)$ and the image $I' \subset C$ of this quotient.  Since each of these components of $I'$ is contained in a line of the form $\beta = -r\alpha+c_i$ (resp.\ $\alpha=c_i$), we conclude that finitely many $c_i$ suffice.
\end{proof}

\section{Finiteness of limit slopes} \label{sec:finiteness}

In this section we will show that $SU(2)$-averse knots have limit slope $r(K) \neq \infty$.  The result is a straightforward consequence of the following theorem about the $SU(2)$ character variety of a nontrivial knot.

\begin{theorem} \label{thm:pillowcase-near-pi/2}
Let $K$ be a nontrivial knot in $S^3$.  Then at least one of the following holds:
\begin{enumerate}
\item \label{i:whole-curve-pi/2} The pillowcase image of $\ch^*(K)$ contains the entire curve $\alpha=\frac{\pi}{2}$, or equivalently for every $\beta \in \R/2\pi\Z$ there is an irreducible $\rho: \pi_1(S^3 \ssm K) \to SU(2)$ such that
\[ \rho(\mu) = \twomatrix{i}{0}{0}{-i} \qquad\mathrm{and}\qquad \rho(\lambda) = \twomatrix{e^{i\beta}}{0}{0}{e^{-i\beta}}; \]
\item \label{i:arc-near-pi/2} There is a constant $\epsilon > 0$ and an arc of irreducible representations
\[ \rho_s: \pi_1(S^3 \ssm K) \to SU(2), \qquad s \in \left[\frac{\pi}{2},\frac{\pi}{2}+\epsilon\right) \]
with $\alpha$-coordinate $s$, i.e.\ such that $\rho_s(\mu) = \diag(e^{is},e^{-is})$.
\end{enumerate}
If case~\eqref{i:whole-curve-pi/2} applies, then $K$ has no nontrivial $SU(2)$-cyclic surgeries.
\end{theorem}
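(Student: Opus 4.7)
The plan is to prove the dichotomy by contradiction using Kronheimer and Mrowka's singular instanton knot homology $\KHI(K)$, which detects the unknot in the sense that $\KHI(K) \cong \KHI(\mathrm{unknot})$ only when $K$ is trivial. Suppose that neither case~\eqref{i:whole-curve-pi/2} nor case~\eqref{i:arc-near-pi/2} holds. Since \eqref{i:whole-curve-pi/2} fails, the intersection of the pillowcase image of $\ch^*(K)$ with the circle $\alpha = \tfrac{\pi}{2}$ is a proper closed subset. Since \eqref{i:arc-near-pi/2} fails, no arc of this image extends from a point on $\alpha = \tfrac{\pi}{2}$ into the half-pillowcase $\alpha > \tfrac{\pi}{2}$, and by the symmetry of Proposition~\ref{prop:pillowcase-symmetry}, which preserves the circle $\alpha = \tfrac{\pi}{2}$, no arc extends into $\alpha < \tfrac{\pi}{2}$ either. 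Together with the finiteness of path components from Proposition~\ref{prop:path-components}, this would force the intersection with $\alpha = \tfrac{\pi}{2}$ to be a finite union of isolated points and compact sub-arcs lying entirely within that circle.

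Next I would recall that $\KHI(K)$ is computed by a Floer-type chain complex whose irreducible generators, for a suitable small holonomy perturbation of the Chern-Simons functional, come from the irreducible $SU(2)$ representations of $\pi_1(S^3 \ssm K)$ whose meridian is sent near the traceless locus $\alpha = \tfrac{\pi}{2}$; these are the generators not already present for the unknot. The aim is to arrange the holonomy perturbation so that every such generator is pushed away from the traceless locus and thus eliminated, leaving $\KHI(K) \cong \KHI(\mathrm{unknot})$ and hence, by unknot detection, forcing $K$ to be trivial, contradicting the standing assumption. To engineer this perturbation I would apply Theorem~\ref{C1 approximation} to approximate, by a piecewise shearing isotopy, an area-preserving isotopy of $T^2$ that translates a neighborhood of $\alpha = \tfrac{\pi}{2}$ transversely off itself; a piecewise shearing isotopy is realized by standard holonomy perturbation data supported near the boundary torus, and under our structural hypothesis on the image it does slide every extra irreducible generator off the traceless circle. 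The hard part will be to verify that the shearing genuinely removes the relevant generators rather than merely permuting them, and this relies crucially on the sharpening in Theorem~\ref{C1 approximation} of the approximation result from \cite{zentner}: we need the approximation to be through shearings specifically, since only those correspond to honest holonomy perturbations.

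For the final claim, suppose case~\eqref{i:whole-curve-pi/2} holds and fix a slope $\tfrac{m}{n}$ with $n \geq 1$. An irreducible $SU(2)$ representation of $\pi_1(S^3_{m/n}(K))$ corresponds to an irreducible $\rho: \pi_1(S^3 \ssm K) \to SU(2)$ satisfying $\rho(\mu^m\lambda^n) = I$; taking $\rho(\mu) = \diag(i,-i)$ and $\rho(\lambda) = \diag(e^{i\beta},e^{-i\beta})$, this relation reduces to $\tfrac{m\pi}{2} + n\beta \equiv 0 \pmod{2\pi}$, which admits $n$ solutions $\beta \in \R/2\pi\Z$. By \eqref{i:whole-curve-pi/2} an irreducible representation with any prescribed $\beta$ exists, so $S^3_{m/n}(K)$ admits an irreducible $SU(2)$ representation and therefore cannot be $SU(2)$-cyclic.
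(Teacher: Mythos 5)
Your overall strategy is the paper's: assume both alternatives fail, use the resulting structure of $i^*(\ch^*(K))$ near $\alpha=\frac{\pi}{2}$ to build an area-preserving isotopy realized (via Theorem~\ref{C1 approximation}) by holonomy perturbations along shearings, cut the perturbed generating set of $\KHI(K)$ down to that of the unknot, and conclude by unknot detection. The final claim about $SU(2)$-cyclic surgeries is argued exactly as in the paper and is fine.

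However, there is a genuine gap at the heart of the construction: the isotopy you propose, one that ``translates a neighborhood of $\alpha=\frac{\pi}{2}$ transversely off itself,'' does not exist as an area-preserving isotopy. The circle $C=\{\alpha=\frac{\pi}{2}\}$ separates the pillowcase into two components of \emph{equal} area, whereas any circle disjoint from $C$ and isotopic to it is parallel to it and separates the pillowcase into components of unequal area; an area-preserving isotopy must preserve the areas of the complementary regions, so it cannot carry $C$ off itself. (A supported local push fails for the opposite reason: the perturbed curve would still contain most of $C$ and hence still meet the image of $\ch^*(K)$.) Consequently the perturbed curve $C'$ must cross $C$, spending part of its length in $\alpha>\frac{\pi}{2}$ to compensate the area lost in $\alpha<\frac{\pi}{2}$, and the crossing can only happen where $C$ itself misses the image of $\ch^*(K)$. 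This is precisely where the failure of case~\eqref{i:whole-curve-pi/2} must be used: compactness of $R^*(K,i)$ gives an open interval $\{\frac{\pi}{2}\}\times(b-\delta,b+\delta)$ disjoint from the image, and the paper's curve $Z$ detours into $\alpha>\frac{\pi}{2}$ exactly through that gap, with the offset $\delta=\frac{\epsilon^2}{4\pi-2\epsilon}$ tuned to balance the two areas. You record that case~\eqref{i:whole-curve-pi/2} fails but never feed the resulting gap into the construction of the isotopy, so as written the perturbing curve you need does not exist. Secondary points you would also need to control, as in Theorem~\ref{thm:perturbed-khi-bound}: the perturbed curve must meet $\{\beta\equiv 0\}$ transversely in a single point away from the roots of $\Delta_K(e^{2i\alpha})$ (so the reducible critical point is unique and nondegenerate), all perturbed critical points must be checked irreducible and nondegenerate, and the perturbed Floer homology must be identified with the unperturbed one via the absence of reducibles along the whole family of perturbations.
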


We do not know of any examples where case~\eqref{i:arc-near-pi/2} does not occur, and we expect (but are unable to prove) that it always happens.  When case~\eqref{i:whole-curve-pi/2} holds, however, the last claim of Theorem~\ref{thm:pillowcase-near-pi/2} is immediate: if $\frac{p}{q} \neq \infty$ is an $SU(2)$-cyclic slope, then the line $p\alpha+q\beta \equiv 0 \pmod{2\pi}$ in the pillowcase intersects the line $\alpha=\frac{\pi}{2}$ at the point
\[ \left(\frac{\pi}{2}, -\frac{p\pi}{2q}\pmod{2\pi}\right), \]
so there is an irreducible representation $\pi_1(S^3\ssm K) \to SU(2)$ with image at this point.  Before proving Theorem~\ref{thm:pillowcase-near-pi/2}, we explain how the finiteness of $r(K)$ follows.

\begin{theorem} \label{thm:limit-slope-finite}
If $K \subset S^3$ is a nontrivial $SU(2)$-averse knot, then the limit slope $r(K)$ is finite.
\end{theorem}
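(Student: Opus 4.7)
The plan is to derive a direct contradiction by combining Theorem~\ref{thm:pillowcase-near-pi/2} with Theorem~\ref{thm:finitely-many-lines}, which have already done all the real work. Suppose for contradiction that $K$ is a nontrivial $SU(2)$-averse knot but $r(K) = \infty$. By definition $K$ has infinitely many $SU(2)$-cyclic slopes in $\mathbb{RP}^1$; since only the $\infty$-slope yields the trivial surgery $S^3$, at least one (in fact infinitely many) of these slopes is a nontrivial surgery slope. The final sentence of Theorem~\ref{thm:pillowcase-near-pi/2} then rules out case~\eqref{i:whole-curve-pi/2} of that theorem, so case~\eqref{i:arc-near-pi/2} must hold: there exist $\epsilon > 0$ and an arc of irreducible representations $\rho_s : \pi_1(S^3 \smallsetminus K) \to SU(2)$, $s \in [\tfrac{\pi}{2}, \tfrac{\pi}{2} + \epsilon)$, whose $\alpha$-coordinates sweep out the entire interval $[\tfrac{\pi}{2}, \tfrac{\pi}{2} + \epsilon)$ in the pillowcase.

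Now I apply Theorem~\ref{thm:finitely-many-lines} with $r = \infty$: the pillowcase image of $\ch^*(K)$ is contained in the union of finitely many vertical lines $\alpha = c_1, \dots, c_n$. In particular, the set of $\alpha$-coordinates realized by irreducible characters is finite. But the arc $\rho_s$ exhibits uncountably many distinct $\alpha$-coordinates (the entire interval $[\tfrac{\pi}{2}, \tfrac{\pi}{2} + \epsilon)$), which is incompatible with their all lying in the finite set $\{c_1, \dots, c_n\}$. This contradiction shows that $r(K)$ cannot be $\infty$, so $r(K) \in \R$.

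The whole argument is essentially a one-line clash once the two inputs are in hand, so there is no real obstacle here at this stage; all the difficulty is black-boxed into Theorem~\ref{thm:pillowcase-near-pi/2}, whose proof requires the singular instanton knot homology $\KHI$ of Kronheimer and Mrowka together with the strengthened $C^r$-approximation result for area-preserving isotopies of $T^2$ by piecewise shearings. Thus the only step that needs care in writing up Theorem~\ref{thm:limit-slope-finite} itself is verifying that being $SU(2)$-averse forces at least one nontrivial $SU(2)$-cyclic surgery, which triggers the dichotomy in the desired direction.
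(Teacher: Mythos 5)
Your proposal is correct and is essentially the paper's own proof: both rule out case~(1) of Theorem~\ref{thm:pillowcase-near-pi/2} using the existence of a nontrivial $SU(2)$-cyclic surgery, and then play the resulting arc of irreducibles with varying $\alpha$-coordinate against Theorem~\ref{thm:finitely-many-lines} with $r=\infty$, which would confine the image to finitely many vertical lines. No differences worth noting.
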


\begin{proof}
Since $K$ has $SU(2)$-cyclic surgeries, it must satisfy case~\eqref{i:arc-near-pi/2} of Theorem~\ref{thm:pillowcase-near-pi/2}.  Thus we have an arc of irreducible representations
\[ \rho_s: \pi_1(S^3 \ssm K) \to SU(2), \qquad s \in \left[\frac{\pi}{2},\frac{\pi}{2}+\epsilon\right) \]
where each $\rho_s$ has $\alpha$-coordinate $s$ in the pillowcase.  If $r(K)=\infty$, then by Theorem~\ref{thm:finitely-many-lines} we can only achieve finitely many $\alpha$-coordinates along this arc, and this is a contradiction.
\end{proof}

The proof of Theorem~\ref{thm:pillowcase-near-pi/2} follows the same lines as the following theorem of Kronheimer and Mrowka, so we briefly recall its proof here.
\begin{theorem}[{\cite[Corollary~7.17]{km-excision}}] \label{thm:km-traceless}
Every nontrivial knot $K \subset S^3$ admits a nonabelian representation
\[ \rho: \pi_1(S^3 \ssm K) \to SU(2) \]
such that $\rho(\mu)$ is traceless.
\end{theorem}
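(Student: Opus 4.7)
The plan is to recall the argument from Kronheimer--Mrowka's work on singular instanton Floer homology with an earring. The central tool is an invariant $I^\natural(K)$, a $\Z$-graded abelian group whose underlying Floer chain complex is generated (in a Morse--Bott sense, before perturbation) by conjugacy classes of representations $\rho: \pi_1(S^3 \ssm (K \cup W)) \to SU(2)$ in which the meridians of both $K$ and an auxiliary earring $W$ are sent to traceless elements. The crucial external input is Kronheimer--Mrowka's theorem that this invariant detects the unknot, i.e.\ $\rank I^\natural(K) = 1$ if and only if $K$ is unknotted (this is deduced from the unknot-detection property of sutured instanton homology $\KHI$).

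First I would observe that the restriction of any \emph{irreducible} such $\rho$ to $\pi_1(S^3 \ssm K)$ produces a nonabelian representation sending $\mu$ to a traceless element, which is precisely what the theorem asserts. Conversely, suppose for contradiction that $K$ is nontrivial but admits no nonabelian representation $\pi_1(S^3 \ssm K) \to SU(2)$ with traceless meridian. Then every representation of $\pi_1(S^3 \ssm (K \cup W))$ appearing as a critical point, once restricted to $\pi_1(S^3 \ssm K)$, is abelian, and a direct analysis of how the earring relation on $W$ couples to this abelian restriction shows that the resulting critical set matches the one for the unknot $U$, which one computes by hand to be a single Morse--Bott critical circle contributing $I^\natural(U) \cong \Z$.

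Next I would invoke the holonomy perturbation theory for singular instanton homology to conclude that under this hypothesis $I^\natural(K) \cong I^\natural(U) \cong \Z$: a suitable small perturbation makes the critical set nondegenerate without introducing new irreducible critical points, and a grading argument (or a direct comparison of perturbed complexes with those of $U$) shows that the differential vanishes on the surviving generators. Applying the unknot-detection theorem to $\rank I^\natural(K) = 1$ then forces $K$ to be the unknot, contradicting our standing hypothesis, so a nonabelian traceless-meridian representation must exist after all.

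The hard part will be the passage from ``no nonabelian traceless-meridian representation of $\pi_1(S^3 \ssm K)$'' to ``$I^\natural(K) \cong I^\natural(U)$.'' One must verify that restriction to the knot complement really does capture all of the irreducibility information contributed by the $(K,W)$ pair, and that the holonomy perturbations used to define $I^\natural$ can be arranged both small enough to preserve this structural match and generic enough to produce a well-defined Floer homology. These are precisely the analytic inputs from the Kronheimer--Mrowka excision framework; the combinatorial bookkeeping of generators is comparatively routine once this gauge-theoretic control is in hand.
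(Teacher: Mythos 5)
Your argument is correct in outline, but it runs on a different vehicle than the paper's. The paper (following Kronheimer--Mrowka's own proof of Corollary~7.17) works with the closed manifold $Y_1(K) = (S^3\ssm N(K))\cup(F\times S^1)$ and the admissible bundle $w$, identifies the critical set of the Chern--Simons functional with a double cover of $R(K,i)$, and uses $\rank\KHI(K)\geq 2$ for nontrivial knots: if $R(K,i)$ contained only the reducible, the complex would have two nondegenerate generators and $\KHI(K)$ would have rank at most $1$. You instead use the singular instanton invariant $I^\natural(K)$ built from the earring $K\cup W$ with the arc $\omega$, and its unknot-detection property. The template is the same (no irreducible traceless-meridian representation $\Rightarrow$ the Floer complex is as small as the unknot's $\Rightarrow$ contradiction with nontriviality detection), and your route does work, but note three points. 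First, it is less economical: unknot detection for $I^\natural$ is itself deduced in \cite{km-khovanov} by comparison with the sutured/closure picture, so your argument ultimately routes through the same input the paper uses directly. Second, your description of the unknot's critical set as a Morse--Bott circle is off: with the earring the abelian traceless representation contributes a single critical point (the earring kills the residual $U(1)$ stabilizer), so no differential-vanishing or grading argument is needed once the critical set is a single nondegenerate generator. Third, the nondegeneracy of that reducible critical point is not something you should delegate to a ``suitable small perturbation'': it holds automatically because $\Delta_K(-1)=\pm\det(K)\neq 0$ (the same fact the paper invokes in Lemma~\ref{lem:space-near-pi/2}), and using this directly avoids having to argue that a perturbation creates no new irreducible critical points. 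With those adjustments your sketch becomes a faithful alternative proof; the paper's version has the added benefit that the $Y_1(K)$ setup is exactly what is later perturbed in Theorem~\ref{thm:perturbed-khi-bound}.
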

\begin{proof}[Proof (sketch)]
Kronheimer and Mrowka construct for each knot $K \subset S^3$ a closed 3-manifold
\begin{equation} \label{eq:y1-khi}
Y_1(K) = \big(S^3 \ssm N(K)\big) \cup \big(F\times S^1\big),
\end{equation}
where $F$ is a genus-1 surface with one boundary component, and the gluing identifies
\[ \mu \sim \{p\} \times S^1, \qquad \lambda \sim \partial F \times \{q\}. \]
Letting $\alpha,\beta \subset F$ be a pair of curves which intersect transversely in a point, they define a Hermitian line bundle $w \to Y_1(K)$ such that $c_1(w)$ is Poincar\'e dual to $\beta \times \{p\} \subset F \times S^1$, and a $U(2)$-bundle $E \to Y_1(K)$ equipped with an isomorphism $\bigwedge^2E \to w$. We denote by 
\begin{equation} \label{rep variety unperturbed}
	X^{w}(Y_1(K)) = \{ \rho \in \Hom(\pi_1(Y_1(K)),SO(3)) \mid w_2(\rho) \equiv c_1(w) \pmod{2} \}/SO(3)
\end{equation}
the variety of characters of representations $\pi_1(Y_1(K)) \to SO(3)$ whose second Stiefel-Whitney class agrees with $c_1(w)$ modulo $2$. In \cite[Lemma 7.15]{km-excision}, they show that $\cR^{w}(Y_1(K))$ can be identified with a double cover of the variety
\begin{equation} \label{eq:RKi}
R(K,i) := \left\{ \rho:\pi_1(S^3\ssm K) \to SU(2) \,\middle|\, \rho(\mu) = \twomatrix{i}{0}{0}{-i} \right\}.
\end{equation}
The centralizer of $\diag(i,-i)$ is a $U(1)$ acting on $R(K,i)$ by conjugation; the orbits are a point for the unique reducible $\rho \in R(K,i)$ and $S^1$ for all irreducible $\rho$.

Kronheimer and Mrowka define $\KHI(K)$ to be a group whose dimension is half that of the instanton homology $I_*(Y_1(K))_w$, which is defined as in \cite{donaldson-book}.  If $R(K,i)$ contains only the reducible representation, then the Chern-Simons functional used to construct $I_*(Y_1(K))_w$ has only two critical points, both nondegenerate, and it follows that $\dim(\KHI(K)) \leq 1$.  But they prove that $\dim(\KHI(K)) \geq 2$ whenever $K$ is knotted, and thus $R(K,i)$ must contain at least one irreducible.
\end{proof}

We will use $\KHI(K)$ in a similar way to prove Theorem~\ref{thm:pillowcase-near-pi/2}.  Assuming that the theorem is false for some knot $K$, we will describe the pillowcase image of $\ch(K)$ near the line $\alpha=\frac{\pi}{2}$ in Lemma~\ref{lem:space-near-pi/2}.  Using a generalization of the results of \cite{zentner}, we then perturb the Chern-Simons functional defining $I_*(Y_1(K))_w$ so that its critical point set corresponds not to the curve $\alpha=\frac{\pi}{2}$ (i.e., where $\rho(\mu)$ is conjugate to $\left(\begin{smallmatrix}i&0\\0&-i\end{smallmatrix}\right)$), which was used to prove Theorem~\ref{thm:km-traceless}, but to some nearby curve in the pillowcase which avoids the image of $\ch^*(K)$ completely.  This would imply that $\KHI(K)$ has rank 1 by the same argument (which we generalize in Theorem~\ref{thm:perturbed-khi-bound}), and so $K$ must be the unknot.  We will now begin the proof of Theorem~\ref{thm:pillowcase-near-pi/2} in earnest.

We denote by $\mathscr{A}$ the space of $SO(3)$ connections on the adjoint bundle $\ad(E)$, or equivalently, the space of $U(2)$-connections on $E$ which induce some fixed connection in the determinant line bundle $w$ which we suppress from our notation.  The character variety $X^w(Y_1(K))$ can be identified with the space of flat $SO(3)$ connections on $\ad(E)$ modulo determinant-1 gauge transformations,
\[
	X^w(Y_1(K)) \cong \{ A \in \mathscr{A} \mid F_A = 0 \}/{\Aut(E)} \, . 
\]
Equivalently, it can be identified with projectively flat $U(2)$-connections in $E$; this is independent of the choice of fixed connection in the determinant line bundle.

The proof of Theorem~\ref{thm:pillowcase-near-pi/2} will make use of a variation of Kronheimer and Mrowka's argument sketched above by applying holonomy perturbations. In order to use these, we set up the notation and recall the main technical results about holonomy perturbations of \cite{zentner}, and we refer to the latter reference for more details on these perturbations. We define a manifold diffeomorphic to the original $Y_1(K)$ from equation \eqref{eq:y1-khi} above by
\[
	Y_1(K) = \big(S^3 \ssm N(K)\big) \cup \big([0,1] \times T^2\big) \cup \big(F\times S^1\big) \, ,
\]
where we have introduced a thickened cylinder $M = [0,1] \times T^2$ in between the knot complement and $(F\times S^1)$.  We declare that the boundary component $\{0\} \times T^2$ is the one glued to $S^3 \ssm N(K)$, mapping the circles $\{0\} \times S^1 \times \{\pt\}$ to meridians of $K$ and the circles $\{0\} \times \{\pt\} \times S^1$ to longitudes.  The boundary component $\{1\} \times T^2$ is glued to $F \times S^1$ with the circle $\{1\} \times S^1 \times \{\pt\}$ mapping to the $S^1$-factors, and with the circles $\{1\} \times \{\pt\} \times S^1$ mapping to the boundary of $F$. 

Let $\Sigma = [0,1] \times S^1$ be the 2-dimensional annulus. For $k = 0, \dots, n-1$, let 
\[
	A_k = \begin{pmatrix} a_k & c_k \\ b_k & d_k \end{pmatrix} \in \text{\em SL}(2,\Z)
\]
be a sequence of matrices. We define a sequence of embeddings 
$
	\iota_k \colon \Sigma \times S^1 \to M
$
by the formula 
\begin{equation} \label{holonomy perturbation directions}
\left(t,\begin{pmatrix} z \\ w \end{pmatrix}\right) \mapsto \left(\frac{k + t}{n} , \begin{pmatrix} a_k & c_k \\ b_k & d_k \end{pmatrix} \begin{pmatrix} z \\ w \end{pmatrix} \right) \, , 
\end{equation}
where we understand $S^1 = \R / \Z$ and write $\Sigma \times S^1$ as $[0,1] \times T^2$, so that the matrix multiplication is understood in the usual sense. Notice that $\img(\iota_k) = [\frac{k}{n},\frac{k+1}{n}] \times T^2 \subseteq M$. 

Let $\mu$ be a 2-form on $\Sigma$ with support in the interior of $\Sigma$ and integral $1$. The 2-form $\pi_\Sigma^*\mu$ on $\Sigma \times S^1$, defined by pulling back $\mu$ by the projection onto the first factor, can be pushed forward via $\iota_k$ to obtain a 2-form $\mu_k$ on $M$.  Notice that $\mu_k$ has compact support in the interior of $M$, so that $\mu_k$ extends to $Y_1(K)$.

For $k = 0, \dots, n-1$, let $\chi_k\colon SU(2) \to \R$ be a sequence of class functions. These give rise to $2\pi$-periodic even functions $g_k: \R \to \R$ via the formula
\begin{equation}\label{eq:even functions}
\chi_k \left(\begin{pmatrix} e^{it} & 0 \\ 0 & e^{-it} \end{pmatrix} \right) = g_k(t) \, .
\end{equation}
To each class function $\chi \colon SU(2) \to \R$ we associate a Lie-algebra valued, $SU(2)$-equivariant function $\chi': SU(2) \to \mathfrak{su}(2)$ which is dual, with respect to the trace, to the differential $d \chi \colon TSU(2) \to \R$.

In the sequel, we suppose we have chosen a trivialization of our bundle $E \to Y_1(K)$ over $M$. In fact, by our choice of the line bundle $w$, there is no obstruction to such a trivialization over $M$. For any point $z \in \Sigma$ there is a loop $\iota_k( \{ z \} \times S^1)$ passing through it, and there is precisely one such loop through any point in the image of $\iota_k$. For such a point $x$ in the image, we denote by $\Hol_{\iota_k}(A)_x$ the holonomy of $A$ along the loop through this point. This way $\Hol_{\iota_k}(A)$ can be seen as a section of the automorphism bundle $\Aut(E)$ over the image of $\iota_k$. For a class function $\chi_k$ we can therefore define 
\[
	\chi_k'(\Hol_{\iota_k}(A)) \,  \mu_k 
\]
which is a 2-form with values in the bundle $\ad(E)$ over $Y_1(K)$, and with compact support in the interior of the image of $\iota_k$.

We now define a family, parametrized by $t \in [0,1]$, of perturbations to the flatness equation on $Y_1(K)$.

\begin{definition}\label{one-parameter perturbed flat thickened torus}
Let the function $\theta^t\colon  \mathscr{A} \to \Omega^2(Y_1(K);\ad(E))$ be defined in the following way. For each $k=0,\dots,n-1$ and $\frac{k}{n} \leq t \leq \frac{k+1}{n}$, we let
\begin{equation*}
	\theta^t_{\pertdata}(A) := n\left(t-\frac{k}{n}\right) \cdot \chi'_k(\operatorname{Hol}_{\iota_k}(A)) \, \mu_k + \sum_{l=0}^{k-1} \chi'_l(\operatorname{Hol}_{\iota_l}(A)) \,  \mu_l.
\end{equation*}
This function interpolates between $\theta^0 = 0$ and $\theta^1 (A)  = \sum_{l=0}^{n-1} \chi'_l(\operatorname{Hol}_{\iota_l}(A)) \,  \mu_l$. 

The perturbed flatness equation for a connection $A \in \mathscr{A}$ now is 
\begin{equation}\label{perturbed_flatness_equation}
 F_A = \theta^t_{\pertdata}(A) \, ,
\end{equation}
and we denote by $X^w_{\pertdata(t)}(Y_1(K))$ the space of connections $A \in \mathscr{A}$ which solve this equation, modulo the group of determinant-1 automorphisms of $E$. 
\end{definition}

The parameter $t$ is not to be confused with the coordinate $t$ on the thickened torus $M=[0,1]\times T^2$, although there is the accidental coincidence that $\theta^{t}$ has support inside $[0,\frac{1}{n}\lceil nt\rceil] \times T^2 \subseteq M$.   At each time $t$, the set $X^w_{\pertdata(t)}(Y_1(K))$ arises as the critical set modulo gauge transformations of some perturbation of the Chern-Simons functional, which for $t=1$ we write as
\begin{equation} \label{eq:perturbed-cs}
\mathit{CS} + \Phi: \mathscr{A} \to \R
\end{equation}
where $\Phi(A) = \sum_{k=0}^{n-1} \int_{\Sigma_k} \chi_k(\Hol_{\iota_k}(A))\mu_k(z)$.

Note that equation \eqref{perturbed_flatness_equation} also makes sense for $SU(2)$-connections defined over the thickened cylinder $M$. We denote by $X_{\pertdata(t)}(M)$ the connections which solve this equation modulo bundle automorphisms. The two inclusions of the boundary components $\{0\} \times T^2$ and $\{1\} \times T^2$ into $M$ induce two restriction maps $r_0$ and $r_1$ to the pillowcase $\ch(T^2)$, to which we give coordinates $\alpha, \beta$ just as before. 

The main result (Theorem 4.2) of \cite{zentner} states that given any isotopy through area-preserving maps $\psi_t$ of the pillowcase $\ch(T^2)$, fixing the four singular points, and given some $\epsilon > 0$, there is a finite sequence of embeddings $\{\iota_k\}$ and there are class functions $\{\chi_k\}$ as above, such that the isotopy $\psi_t$ is $\epsilon$-close in the $C^0$-topology to an isotopy $\overline{\phi}_t$ which also fixes the four singular points.  The isotopy $\overline{\phi}_t$ is described in the following way.

The lift $\phi_t$ of $\overline\phi_t$ to the branched double cover $\R^2/2\pi \Z^2$ is an isotopy which for $\frac{k}{n} \leq t \leq \frac{k+1}{n}$ is given by
\begin{equation*}
\begin{split}
	\phi_t =  \zeta_{n(t-\frac{k}{n})}^{(k)} \circ \zeta_1^{(k-1)} \circ \dots  \circ \zeta_1^{(0)} 
\end{split}
\end{equation*}
for any $k= 0, \dots, n-1$. 
Here the maps $\zeta_{s}^{(k)}$ are defined for $s \in [0,1]$, and for $k= 0, \dots, n-1$  by the equation
\begin{equation*}
	\zeta_{s}^{(k)} = A_k \circ \chi^{s}_{f_k} \,\circ A_k^{-1},
\end{equation*}
where the $A_k$ are elements of $SL(2,\Z)$ and
\begin{equation}\label{eq:shearing map perturbation}
	\chi^{s}_{f_k}\colon  \begin{pmatrix} \alpha \\ \beta \end{pmatrix} \mapsto 
\begin{pmatrix} \alpha + s\cdot f_k(\beta) \\ \beta \end{pmatrix} 
\end{equation} 
are shearing isotopies. The functions $f_k$ are the derivatives of the functions $g_k$ defined in equation \eqref{eq:even functions}. These isotopies $\zeta_{s}^{(k)}$ are in fact shearing isotopies, as defined in equation~\eqref{eq:shearing-isotopy} in the introduction. 

Associated to the perturbation data $\pertdata$, there is the {\em perturbed character variety} $X_{\pertdata (t)}(M)$ for every $t \in [0,1]$, for which the two restriction maps $r_0$ and $r_1$ are related by the commutative diagram
\begin{equation}\label{commutative diagram}
\vcenter{\xymatrix{
& X_{\pertdata(t)}(M) \ar[dl]_{r_0} \ar[dr]^{r_1} & \\
\ch(T^2) \ar[rr]^{\overline{\phi}_t} && \ch(T^2).
}}
\end{equation}
We will need a slight enhancement of the approximation result \cite[Theorem 3.3]{zentner} which is a part of the statement of \cite[Theorem 4.2]{zentner}. In fact, the approximation holds in any $C^k$-topology for $k \geq 0$. However, in this article we will only need the fact that we can approximate area-preserving isotopies by shearing isotopies in the $C^1$-topology.

We defer the slightly technical proof of the following result to Appendix~\ref{sec:c1-approximation}. We state it in the $\Z/2$-equivariant case which is needed for our application, but we remark that it also holds in the non-equivariant version. 

\begin{theorem}\label{C1 approximation}
Let 
\[ \psi: [0,1] \times T^2 \to T^2, \qquad (t,(x,y)) \mapsto \psi_t(x,y) \]
be a $\Z/2$-equivariant smooth isotopy through area-preserving maps, which necessarily fixes the four fixed points of the hyperelliptic involution. Let $r \geq 0$ be given. Then for any $\epsilon > 0$, there is a $\Z/2$-equivariant map
\[ \phi: [0,1] \times T^2 \to T^2, \qquad (t,(x,y)) \mapsto \phi_t(x,y) \]
which is continuous in $t$ and smooth in $(x,y)$, such that:
	\begin{enumerate}[label=(\roman*)]
	\item If $d$ denotes the Euclidean metric on $T^2$, then we have
	\begin{equation*}
		\sup_{(x,y) \in T^2} d(\psi_t(x,y),\phi_t(x,y)) + \sum_{l=1}^{r} \norm{D^{(l)}\psi_t- D^{(l)} \phi_t}_\infty < \epsilon
	\end{equation*}
	for all $t \in [0,1]$ and all $(x,y) \in T^2$.
	\item There is a partition $0=t_0 < t_1 \dots < t_{n+1}=1$ and a $\Z/2$-equivariant shearing isotopy $(\zeta^{(i)}_s)_{s\in[0,1]}$ for each $i=0,\dots,n$ such that for any $t_i \leq t \leq t_{i+1}$, we have
	\begin{equation*}
	   \phi_t = \zeta^{(i)}_{t-t_i} \circ \phi_{t_i}
	\end{equation*}
for all $(x,y) \in T^2$. 
	\end{enumerate}
In other words, the smooth isotopy $(\psi_t)$ can be $C^r$-approximated
by isotopies $(\phi_t)$ which are piecewise (in the coordinate $t$) isotopies through shearings.
\end{theorem}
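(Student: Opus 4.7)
The plan is to realise the isotopy $(\psi_t)$ as the flow of a time-dependent Hamiltonian vector field on $T^2$, approximate that Hamiltonian in $C^{r+1}$ by a finite sum of shearing Hamiltonians via Fourier truncation, and then implement the approximate flow as a piecewise shearing isotopy via a $C^r$ Trotter-type product estimate.

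First I would produce the generating vector field $V_t = (\partial_t \psi_t) \circ \psi_t^{-1}$. Area-preservation makes it divergence-free, and $\Z/2$-equivariance of $\psi_t$ makes it odd under the hyperelliptic involution. On $T^2$ every divergence-free vector field decomposes as $X_{H_t} + c_t$ with $c_t \in \R^2$ a harmonic constant; constant fields are even under the involution, so equivariance forces $c_t \equiv 0$, and $V_t = X_{H_t}$ for a smooth family of even Hamiltonians $H_t \colon T^2 \to \R$. Fourier-expanding and regrouping by primitive directions, for each primitive $w \in \Z^2/\!\pm 1$ the modes on $\Z w$ assemble into $F_{w,t}(w \cdot p)$, with $F_{w,t}$ even since $H_t$ is. Truncating to $|w| \leq N$ gives $\tilde H^N_t = \sum_{j=1}^m F_{w_j,t}(w_j \cdot p)$ with $\sup_t \|H_t - \tilde H^N_t\|_{C^{r+1}} \to 0$ as $N \to \infty$, and Gronwall applied to the flow ODE together with its first $r$ variational derivatives shows that the Hamiltonian flow $\tilde\psi^N_t$ of $X_{\tilde H^N_t}$ is $C^r$-close to $\psi_t$ uniformly in $t$ and remains $\Z/2$-equivariant.

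Next I would choose a fine uniform partition $0 = t_0 < t_1 < \dots < t_{Km} = 1$ with mesh $\delta = 1/(Km)$. On the subinterval $[t_{km+j-1}, t_{km+j}]$ I apply the shearing generated by $m \cdot F_{w_j, t_{km}}(w_j \cdot p)$; after an integer change of basis sending $w_j$ to $(0,1)$ this has exactly the form \eqref{eq:shearing-isotopy} with odd shearing function $f_j = F_{w_j, t_{km}}'$, and the oddness of $f_j$ is precisely what makes the map commute with $p \mapsto -p$. Each piece is therefore $\Z/2$-equivariant, and the concatenation defines a piecewise shearing isotopy $(\phi_t)$ satisfying (ii). Over a block of $m$ successive subintervals (total length $m\delta$) the resulting composition is the Lie--Trotter splitting
\[ \phi_{t_{(k+1)m}} \circ \phi_{t_{km}}^{-1} \;=\; \exp\!\bigl(m\delta\, X_{F_{w_m,t_{km}}}\bigr) \circ \cdots \circ \exp\!\bigl(m\delta\, X_{F_{w_1,t_{km}}}\bigr), \]
which approximates $\exp(m\delta\, X_{\tilde H^N_{t_{km}}})$ with $C^r$-error $O((m\delta)^2)$, while the freezing error $\|\tilde H^N_t - \tilde H^N_{t_{km}}\|_{C^{r+1}}$ over a block is $O(m\delta)$. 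Summing over the $K$ blocks yields $\|\phi_{t_{km}} - \tilde\psi^N_{t_{km}}\|_{C^r} = O(m\delta)$, and since between block endpoints both isotopies move by at most $O(m\delta)$ in $C^r$ the same bound persists for all $t \in [0,1]$; taking $m\delta$ small and $N$ large gives the desired $C^r$-closeness.

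The main obstacle is the quantitative $C^r$ Trotter estimate itself. In $C^0$ it is a routine Baker--Campbell--Hausdorff calculation, but for $C^r$ one must control how higher derivatives of a composition of many flows compare to those of the target Hamiltonian flow. This reduces, via the variational equations of orders $1$ through $r$, to bounding iterated Lie brackets $[X_{F_{w_i}}, X_{F_{w_j}}]$ and their derivatives in terms of $\|F_{w_i}\|_{C^{r+2}} \|F_{w_j}\|_{C^{r+2}}$, which are uniformly bounded once $N$ is fixed; Gronwall then propagates the pointwise estimate through the time-ordered flow to give the required $C^r$ control.
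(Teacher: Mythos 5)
Your proposal is correct and follows essentially the same route as the paper's Appendix~\ref{sec:c1-approximation}: realize $(\psi_t)$ as the flow of a time-dependent divergence-free vector field, approximate by finite sums of Fourier (shearing) vector fields, freeze the time-dependence on a fine partition, and control the Lie--Trotter splitting error in $C^r$ by Gronwall estimates on the variational equations of orders $1$ through $r$. The only differences are cosmetic --- you work with the Hamiltonian rather than the vector field, dispose of the harmonic constant part via equivariance, and truncate before freezing, whereas the paper freezes first and then Fourier-approximates each frozen field --- and the three quantitative estimates you identify correspond exactly to the paper's Lemmas bounding $a_l$, $b_l$, and $c_l^{(m)}$ and the three approximation steps built from them.
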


The following lemma is the analogue of \cite[Lemma 7.15]{km-excision} for the perturbed character variety $X^w_{\pertdata(t)}(Y_1(K))$; we will mostly be interested in the case $t=1$. We let $C$ be the circle in $\ch(T^2)$ defined by $\alpha = \pi/2$, and then we define the circle
\[ \tilde{C} \subset (\R/2\pi\Z)_\alpha \times (\R/2\pi\Z)_\beta \]
as one of two possible lifts of $C$ to the branched double cover $\tilde\ch(T^2) \cong T^2$ of $\ch(T^2)$: for concreteness, we will take $\tilde{C}$ to be the circle $\alpha = \frac{\pi}{2}$, rather than $\alpha = \frac{3\pi}{2}$.  This is analogous to a choice implicitly made in \cite[Lemma~7.15]{km-excision}: if $\rho(\mu)$ is diagonal and traceless then it could be either $\diag(i,-i)$ or $\diag(-i,i)$, and the two choices are equivalent since they are related by conjugation but Kronheimer and Mrowka choose the former.
\begin{lemma}\label{le:repvariety_Y1_perturbed}
The character variety $X^w_{\pertdata(t)}(Y_1(K))$ is a double cover of the perturbed representation variety
\[ R(K \mid (\phi_t)^{-1}(\tilde{C})) := \left\{ \rho \in R(K) \,\middle\vert\,
\begin{array}{l} \rho(\mu) =  \twomatrix{e^{i\alpha}}{0}{0}{e^{-i\alpha}}, \rho(\lambda) = \twomatrix{e^{i\beta}}{0}{0}{e^{-i\beta}} \\ \quad\mathrm{for\ some\ } (\alpha,\beta) \in (\phi_t)^{-1}(\tilde{C}). \end{array} \right\} \, .
\]

\end{lemma}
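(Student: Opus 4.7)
The plan is to follow Kronheimer and Mrowka's proof of \cite[Lemma~7.15]{km-excision} closely, with the perturbed cylinder $M = [0,1]\times T^2$ inserted as the new ingredient. Since $\theta^t_{\pertdata}$ is compactly supported in the interior of $M$, any solution $A$ of the perturbed flatness equation~\eqref{perturbed_flatness_equation} on $Y_1(K)$ restricts to a genuinely flat $SO(3)$ connection on each of $S^3\ssm N(K)$ and $F\times S^1$, and to an element of $X_{\pertdata(t)}(M)$ on the cylinder. The analysis of the $F\times S^1$ piece should therefore go through verbatim: combined with the constraint $w_2(\rho)\equiv c_1(w)\pmod 2$, it shows that the assignment $A|_{F\times S^1}\mapsto A|_{\{1\}\times T^2}$ is two-to-one onto its image, which is the curve $C = \{\alpha=\pi/2\}\subset \ch(T^2)$, and the two preimages account for the double cover that ultimately appears in the lemma.

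Next, I would use the commutative diagram~\eqref{commutative diagram} to transport this boundary condition across $M$. If $r_1(A|_M)$ lies on $C$, then $r_0(A|_M) = \overline\phi_t^{-1}(r_1(A|_M))$ lies on $\overline\phi_t^{-1}(C)$; lifting to the branched double cover $(\R/2\pi\Z)^2$ via the $\Z/2$-equivariant lift $\phi_t$ of $\overline\phi_t$ produced by Theorem~\ref{C1 approximation}, the condition that $r_1$ lands in the chosen component $\tilde C = \{\alpha=\pi/2\}$ translates into $r_0$ landing in $\phi_t^{-1}(\tilde C)$. Restricting $A$ to $S^3\ssm N(K)$ and lifting the resulting flat $SO(3)$ character to $SU(2)$ (there is no obstruction, as the restriction of $w$ to the knot complement is trivial), we obtain precisely an element of $R(K \mid \phi_t^{-1}(\tilde C))$ as defined in the statement.

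To complete the proof I would run this construction in reverse: given $\rho \in R(K\mid \phi_t^{-1}(\tilde C))$, interpret it as a flat connection on $S^3\ssm N(K)$; extend uniquely across $M$ to an element of $X_{\pertdata(t)}(M)$ with the prescribed restriction on $\{0\}\times T^2$, using that $r_0$ determines the $M$-piece up to gauge; and then extend across $F\times S^1$ in one of the two ways permitted by the $w_2$-constraint, thereby producing the two sheets of the double cover. The main obstacle is the careful bookkeeping that localizes the doubling to the $F\times S^1$ factor and confirms that neither the perturbation on $M$ nor the choice of lift of $C$ to $\tilde C$ introduces any further ambiguity; here the $\Z/2$-equivariance built into Theorem~\ref{C1 approximation} is exactly what guarantees that $\overline\phi_t^{-1}(C)$ pulls back compatibly to $\phi_t^{-1}(\tilde C)$ in $(\R/2\pi\Z)^2$, so that the double cover structure descends cleanly from the unperturbed case.
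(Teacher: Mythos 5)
Your proposal is correct and follows essentially the same route as the paper: the paper's proof likewise invokes Kronheimer--Mrowka's description of the $SO(3)$ character variety of $F\times S^1$ (their Lemma~7.14) as the source of the double cover of $\tilde{C}$, and then uses the commutative diagram~\eqref{commutative diagram} to identify which boundary characters extend across the perturbed cylinder, namely those over $(\phi_t)^{-1}(\tilde{C})$. Your write-up simply fills in more of the bookkeeping (flatness off the support of the perturbation, the lift to $SU(2)$ over the knot complement, the reverse construction) that the paper leaves implicit.
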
 

\begin{proof}
Lemma 7.14 of \cite{km-excision} describes the $SO(3)$-character variety of $F\times S^1$ with the second Stiefel-Whitney class $w$ as specified above, where the restriction map on characters from $F\times S^1$ to its boundary $T^2$ identifies these conjugacy classes of representations with the circle which doubly covers the circle $\tilde{C}\subset \tilde\ch(T^2)$.  The representations of $T^2$ from this circle which extend to equivalence classes of perturbed representations of $X^w_{\pertdata(t)}(Y_1(K))$ on the other side of $F \times S^1$ are precisely those which correspond to points of $(\phi_t)^{-1}(\tilde{C})$ because of the diagram \eqref{commutative diagram} from above. 
\end{proof}

Having explained how the above perturbations affect the generators of the $\KHI$ chain complex, we now find a convenient choice of curve in $\ch(T^2)$ to make the generating set as small as possible.  The following lemma, which we will use to prove Theorem~\ref{thm:pillowcase-near-pi/2}, describes the pillowcase image of $\ch^*(K)$ in a neighborhood of the curve $\alpha=\frac{\pi}{2}$.

\begin{lemma} \label{lem:space-near-pi/2}
Suppose that $K\subset S^3$ is a nontrivial knot which does not satisfy Theorem~\ref{thm:pillowcase-near-pi/2}.  Then there are constants $b$ and $\epsilon$, with $0 < \epsilon < b < \pi-\epsilon$, such that if $(\alpha,\beta)$ lies in the pillowcase image of $\ch^*(K)$ and
\[ \frac{\pi}{2} - \epsilon < \alpha < \frac{\pi}{2} + \epsilon, \]
then $\alpha = \frac{\pi}{2}$ and $\beta \not\in (b-\epsilon,b+\epsilon) \cup (2\pi-b-\epsilon,2\pi-b+\epsilon)$.
\end{lemma}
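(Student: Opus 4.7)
The plan is to establish the two conclusions of the lemma separately, relying on two structural inputs: that $V := \overline{i^*(\ch^*(K))}$ is a finite embedded graph in the pillowcase (as noted in the proof of Proposition~\ref{prop:path-components}), and that the determinant $|\Delta_K(-1)|$ of a knot in $S^3$ is always odd (since $\Delta_K(1) = 1$ forces $\Delta_K(-1) \equiv 1 \pmod 2$). Hence $-1$ is not a root of $\Delta_K(t)$, so by Proposition~\ref{prop:pillowcase-facts}(3) the reducible point $(\pi/2, 0)$ does not lie in $V$.

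For the first conclusion I will produce $\epsilon_1 > 0$ such that every point of $i^*(\ch^*(K))$ with $|\alpha - \pi/2| < \epsilon_1$ actually satisfies $\alpha = \pi/2$. Arguing by contradiction, take a sequence $(\alpha_n, \beta_n) \in i^*(\ch^*(K))$ with $\alpha_n \neq \pi/2$ and $\alpha_n \to \pi/2$; by compactness, pass to a subsequential limit $p_\infty = (\pi/2, \beta_\infty) \in V$. Since $(\pi/2, 0) \notin V$ we have $\beta_\infty \neq 0$, and in particular $p_\infty$ itself lies in $i^*(\ch^*(K))$. Using the finite-graph structure, after passing to a further subsequence all the $(\alpha_n, \beta_n)$ lie on a single edge meeting $p_\infty$; on that edge the $\alpha$-coordinate is a semi-algebraic function, not identically $\pi/2$, hence strictly monotonic on some one-sided neighborhood of $p_\infty$. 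Reparametrizing by $\alpha$ yields a continuous injection $\tilde\gamma$ whose domain is either $[\pi/2, \pi/2 + \delta)$ or $(\pi/2 - \delta, \pi/2]$, with $\tilde\gamma(s)$ of $\alpha$-coordinate $s$, and whose image lies entirely in $i^*(\ch^*(K))$ for small $\delta$ because $\beta$ stays near $\beta_\infty \neq 0$. I then lift $\tilde\gamma$ to $\ch^*(K)$ by semi-algebraic path lifting, then to $R^*(K)$ via a continuous local section of the principal $SO(3)$-bundle $R^*(K) \to \ch^*(K)$, and finally arrange $\rho_s(\mu) = \diag(e^{is}, e^{-is})$ exactly by continuously diagonalizing $\rho_s(\mu)$ (which is possible near $s = \pi/2$ since its eigenvalues $e^{\pm is}$ are distinct). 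If the parameter interval is $[\pi/2, \pi/2 + \delta)$ this directly realizes case \eqref{i:arc-near-pi/2} of Theorem~\ref{thm:pillowcase-near-pi/2}; if it is $(\pi/2 - \delta, \pi/2]$, then applying the symmetry $(\alpha, \beta) \mapsto (\pi - \alpha, 2\pi - \beta)$ of Proposition~\ref{prop:pillowcase-symmetry} converts it into an arc over $[\pi/2, \pi/2 + \delta)$, again giving case \eqref{i:arc-near-pi/2}, a contradiction to the standing hypothesis.

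For the second conclusion I use the failure of case \eqref{i:whole-curve-pi/2}: the set $B := \{\beta \in \R/2\pi\Z \mid (\pi/2, \beta) \in i^*(\ch^*(K))\}$ is a proper closed subset of $\R/2\pi\Z$, and by Proposition~\ref{prop:pillowcase-symmetry} restricted to the line $\alpha = \pi/2$ it is invariant under $\beta \mapsto 2\pi - \beta$. Its open complement $U$ is therefore nonempty, open, and inherits the same symmetry; a nonempty open subset of the circle always contains a point in the open interval $(0, \pi)$ (e.g.\ if it meets only a neighborhood of $0$ or of $\pi$, such a neighborhood still contains points of $(0, \pi)$). I choose $b$ in the interior of an open subinterval of $U \cap (0, \pi)$ and $\epsilon_2 > 0$ with $(b - \epsilon_2, b + \epsilon_2) \subset U$; the symmetry automatically places $(2\pi - b - \epsilon_2, 2\pi - b + \epsilon_2) \subset U$ as well. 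Setting $\epsilon := \tfrac{1}{2} \min(\epsilon_1, \epsilon_2, b, \pi - b)$ then arranges both $0 < \epsilon < b < \pi - \epsilon$ and both conclusions of the lemma.

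The main technical point is the first step, where I need to pass from a semi-algebraic arc in the pillowcase image to an honest continuous family of \emph{representations} $\rho_s$ (not merely characters) with $\rho_s(\mu)$ equal to $\diag(e^{is}, e^{-is})$. This combines semi-algebraic path lifting through $i^* \colon \ch^*(K) \to V$, a continuous section of the $SO(3)$-quotient $R^*(K) \to \ch^*(K)$ over a contractible arc, and a continuous diagonalization of $\rho_s(\mu)$ — all of which are available so long as one works on a short enough arc avoiding the central elements $\pm I$. After that, the symmetry argument and the choice of $b$ and $\epsilon$ are elementary.
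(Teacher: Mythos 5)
Your overall architecture matches the paper's: the interval in $\beta$ comes from the failure of case~\eqref{i:whole-curve-pi/2} together with the compactness of the traceless locus (which rests on $\Delta_K(-1)=\pm\det(K)\neq 0$, exactly as you note) and the symmetry of Proposition~\ref{prop:pillowcase-symmetry}, while the rigidity $\alpha=\frac{\pi}{2}$ comes from a convergent sequence plus a semi-algebraic monotonicity argument that manufactures the forbidden arc of case~\eqref{i:arc-near-pi/2}. The second half of your argument and the final bookkeeping with $\epsilon=\frac12\min(\epsilon_1,\epsilon_2,b,\pi-b)$ are fine.

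The weak point is the step you yourself single out as the main technical one. You build the arc \emph{downstairs}, as a subarc of an edge of the graph $i^*(\ch^*(K))$, and then invoke ``semi-algebraic path lifting through $i^*\colon \ch^*(K)\to V$'' to get back to characters. Path lifting for a surjective continuous semi-algebraic map onto an interval is false in general: the map can be a disjoint union of two closed sub-intervals overlapping only in image, in which case no continuous section exists. To make your route work you would need Hardt's semi-algebraic triviality theorem to get a semi-algebraic section over the open part of the arc, followed by a separate limit argument (using compactness of $\ch(K)$ and the fact that the limit point has $\beta_\infty\neq 0$, hence is irreducible) to extend the section to the closed endpoint $s=\frac{\pi}{2}$ that case~\eqref{i:arc-near-pi/2} demands. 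None of this is in your write-up. The paper sidesteps the issue entirely by running the argument upstairs: the sequence of \emph{representations} $\rho_n$ is placed, after passing to a subsequence, in a single path component $C$ of the compact semi-algebraic set $R(K)$; one then takes a semi-algebraic path in $C$ from the limit $\rho$ to $\rho_1$ and observes that $\cos(\alpha_t)=\tfrac12\tr(\gamma_t(\mu))$ is semi-algebraic in $t$, so after the last time the path meets $\alpha=\frac{\pi}{2}$ it has distinct $\alpha$-coordinates and reparametrizes to the desired arc. I recommend you restructure your first step this way: everything you need (finitely many path components, semi-algebraic path-connectedness, monotonicity of $\alpha_t$) is already available in $R(K)$, and the only ``downstairs'' input required is that the limit has $\beta_\infty\neq 0$, hence is irreducible. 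Your remaining point about continuously diagonalizing $\rho_s(\mu)$ near $s=\frac{\pi}{2}$ is a legitimate (minor) refinement that the paper leaves implicit.
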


\begin{proof}
Since case~\eqref{i:whole-curve-pi/2} of Theorem~\ref{thm:pillowcase-near-pi/2} does not apply, there is some point $(\frac{\pi}{2},b_0)$ which is not in the pillowcase image of $\ch^*(K)$.  The variety $R(K,i)$ of \eqref{eq:RKi} is compact, as it is the closed subset of the compact variety $R(K)$ cut out by the matrix equation $\rho(\mu) = \diag(i,-i)$.  Moreover, since $\Delta_K(-1) = \pm \det(K) \neq 0$, Proposition~\ref{prop:pillowcase-facts} says that the unique reducible representation in $R(K,i)$ is not a limit of irreducibles, and so the subset
\[ R^*(K,i) \subset R(K,i) \]
consisting of irreducibles is also compact.

The image of the compact set $R^*(K,i)$ in the pillowcase is closed, so it avoids an open neighborhood $U \subset \{\frac{\pi}{2}\}\times \R/2\pi\Z$ of $(\frac{\pi}{2},b_0)$.  Letting $b \not\in \{0,\pi\} $ be the $\beta$-coordinate of some point in $U$, we can assume that $0<b<\pi$ by Proposition~\ref{prop:pillowcase-symmetry}, and we pick $\delta > 0$ so that $0 < b-\delta < b+\delta < \pi$ and the interval
\[ \left\{\frac{\pi}{2}\right\} \times (b-\delta,b+\delta) \]
in the pillowcase is disjoint from the image of $\ch^*(K)$.  The interval
\[ \left\{\frac{\pi}{2}\right\} \times (2\pi - b - \delta, 2\pi - b + \delta) \]
also avoids the image of $\ch^*(K)$, again by Proposition~\ref{prop:pillowcase-symmetry}.

Next, since case~\eqref{i:arc-near-pi/2} of Theorem~\ref{thm:pillowcase-near-pi/2} is not satisfied, we claim that $\frac{\pi}{2}$ is not a limit point of the set
\[ \left\{ \alpha \neq \frac{\pi}{2} \,\middle|\, \exists \rho \in R^*(K) \mathrm{\ with\ pillowcase\ coordinates\ }(\alpha,\beta) \mathrm{\ for\ some\ }\beta\right\}. \]
This will guarantee for some $\epsilon > 0$ that any irreducible representation with $\alpha$-coordinate between $\frac{\pi}{2}-\epsilon$ and $\frac{\pi}{2}+\epsilon$ actually has $\alpha$-coordinate $\frac{\pi}{2}$, and taking $\epsilon < \delta$ without loss of generality, this will complete the proof.

To prove the claim, suppose that there is a sequence of irreducible representations $\rho_n: \pi_1(S^3 \ssm K) \to SU(2)$ with pillowcase coordinates $(\alpha_n,\beta_n)$, such that $\alpha_n \neq \frac{\pi}{2}$ and
\[ \lim_{n\to\infty} \alpha_n = \frac{\pi}{2}. \]
Using Proposition~\ref{prop:pillowcase-symmetry}, we can assume that $\alpha_n > \frac{\pi}{2}$ for all $n$; since $R(K)$ is compact and has finitely many path components, we can pass to a subsequence to ensure that they all lie in the same path component $C$ and converge to a limit $\rho$, with pillowcase coordinates $(\frac{\pi}{2},\beta)$ for some $\beta$.  Then $\rho$ is irreducible as argued above, and since $C$ is closed it must also contain $\rho$.  A continuous semi-algebraic path $\gamma=\gamma_t: [0,1] \to C$ from $\rho$ to $\rho_1$ (which exists by \cite[Proposition~2.5.13]{bcr}) must eventually leave the line $\alpha=\frac{\pi}{2}$ in the pillowcase, since $\alpha_1 > \frac{\pi}{2}$.  If $t_0$ is the last time at which $\gamma_{t_0}$ meets this line, then for some $\tau > 0$ the arc $\gamma|_{[t_0,t_0+\tau)}$ consists of points with distinct $\alpha$-coordinates; note that here we use the fact that $\gamma$ is not just continuous but semi-algebraic, so that $\cos(\alpha_t) = \frac{1}{2}\tr(\gamma_t(\mu))$ defines a semi-algebraic map $[0,1] \to \R$.  A reparametrization of this arc shows that $K$ must have satisfied case~\eqref{i:arc-near-pi/2} of Theorem~\ref{thm:pillowcase-near-pi/2} after all, which is a contradiction.
\end{proof}

Lemmas~\ref{le:repvariety_Y1_perturbed} and \ref{lem:space-near-pi/2} now point us toward the proof of Theorem~\ref{thm:pillowcase-near-pi/2}: we find an isotopy which takes $C$ to some curve in the pillowcase which avoids the image $i^*(\ch^*(K))$ of the irreducible characters of $K$, so that $X^w_{\pertdata(t)}(Y_1(K))$ consists of only finitely many points.  If these are irreducible and nondegenerate, then they will generate a complex whose homology is $\KHI(K)^{\oplus 2}$.  The following statement is more general than we need for Theorem~\ref{thm:pillowcase-near-pi/2}, but we will eventually use it again in a different setting to prove Theorem~\ref{thm:det-limit-slope}.

\begin{theorem} \label{thm:perturbed-khi-bound}
Let $K \subset S^3$ be a knot.  Let $C'$ be a smooth, simple closed curve in the pillowcase, and suppose that there is an area-preserving isotopy
\[ \overline{\psi}_t: \ch(T^2) \to \ch(T^2), \qquad 0 \leq t \leq 1 \]
which takes $C'$ to $C = \left\{\alpha=\frac{\pi}{2}\right\}$ and fixes the four orbifold points $(0,0)$, $(0,\pi)$, $(\pi, 0)$, and $(\pi,\pi)$.  Suppose that $C'$ intersects the line $\{\beta\equiv 0\pmod{2\pi}\}$ transversely in exactly $n \geq 1$ points $(\alpha_1,0),\dots,(\alpha_n,0)$, and that $e^{2i\alpha_j}$ is not a root of $\Delta_K(t)$ for any $j$.  If the pillowcase image $i^*(\ch^*(K))$ of the irreducible character variety of $K$ is disjoint from $C'$, then $\KHI(K)$ has rank at most $n$.
\end{theorem}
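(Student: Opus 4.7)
The plan is to apply the holonomy perturbation machinery of Section~\ref{sec:finiteness} (especially Lemma~\ref{le:repvariety_Y1_perturbed} and Theorem~\ref{C1 approximation}) to deform the Chern-Simons functional defining $\KHI(K)$ so that its critical set corresponds to representations whose pillowcase image lies on $C'$ rather than on $C$. Since $C'$ avoids the irreducible image $i^*(\ch^*(K))$, the perturbed critical set will be small and entirely reducible, and counting gives the desired bound.

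First I would apply Theorem~\ref{C1 approximation} with $r=1$ to the $\Z/2$-equivariant lift of $\overline{\psi}_t$, obtaining a piecewise shearing isotopy $\overline{\phi}_t$ that is arbitrarily $C^1$-close to $\overline{\psi}_t$ and fixes the four orbifold points. Because the pillowcase image of $\ch^*(K)$ and $C'$ are disjoint compact sets, they are separated by some positive distance $\delta > 0$; and because $C'$ meets the reducible line $\{\beta \equiv 0 \pmod{2\pi}\}$ transversely in exactly $n$ points $(\alpha_j,0)$ with $e^{2i\alpha_j}$ not a root of $\Delta_K$, the nearby curve $(\overline{\phi}_1)^{-1}(C)$ also meets this line transversely in exactly $n$ points $(\alpha'_j, 0)$, each still avoiding the zero set of $t \mapsto \Delta_K(e^{2it\alpha})$, provided the $C^1$ approximation is taken small enough. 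By the same transversality and distance estimate, $(\overline{\phi}_1)^{-1}(C)$ remains disjoint from $i^*(\ch^*(K))$.

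Next, I would feed the perturbation data $\pertdata$ coming from this $\overline{\phi}_t$ into Definition~\ref{one-parameter perturbed flat thickened torus} to obtain the perturbed Chern-Simons functional $\mathit{CS} + \Phi$ of equation~\eqref{eq:perturbed-cs} on $Y_1(K)$, whose critical set modulo gauge is $X^w_{\pertdata(1)}(Y_1(K))$. By Lemma~\ref{le:repvariety_Y1_perturbed}, this critical set is a double cover of $R(K \mid (\phi_1)^{-1}(\tilde C))$, and since $(\overline{\phi}_1)^{-1}(C)$ avoids the irreducible pillowcase image, every element of the latter representation variety is reducible. Reducibles have $\beta \equiv 0$, so they correspond bijectively to the $n$ intersection points $(\alpha'_j, 0)$ in the pillowcase, and hence $X^w_{\pertdata(1)}(Y_1(K))$ has exactly $2n$ points.

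Finally I would argue that these $2n$ critical points are nondegenerate (in the Morse-Bott sense with zero-dimensional critical manifolds), so that the perturbed instanton chain complex has exactly $2n$ generators and
\[
\dim I_*(Y_1(K))_w \;\leq\; 2n, \qquad \text{hence}\qquad \dim \KHI(K) \;=\; \tfrac{1}{2}\dim I_*(Y_1(K))_w \;\leq\; n.
\]
Nondegeneracy is the main obstacle: one must check that at each reducible critical point, the kernel of the linearization of the perturbed flatness equation is trivial modulo gauge. The contribution from the torus side is controlled by the transverse intersection of $(\overline{\phi}_1)^{-1}(C)$ with the reducible line, coming from the commutative diagram~\eqref{commutative diagram}; the contribution from the knot complement side at a reducible abelian representation is governed by the twisted cohomology $H^1(S^3 \ssm K; \C_{e^{2i\alpha'_j}})$, which vanishes precisely because $\Delta_K(e^{2i\alpha'_j}) \neq 0$. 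Combining these two transversality statements through the gluing picture used in Lemma~\ref{le:repvariety_Y1_perturbed}, all $2n$ critical points of $\mathit{CS}+\Phi$ on $Y_1(K)$ are nondegenerate, and the rank bound on $\KHI(K)$ follows.
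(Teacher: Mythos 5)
Your overall strategy matches the paper's: approximate $\overline{\psi}_t$ by shearing isotopies via Theorem~\ref{C1 approximation}, perturb the Chern--Simons functional accordingly, identify the critical set through Lemma~\ref{le:repvariety_Y1_perturbed}, and count. But there are two genuine gaps. The first is your assertion that the perturbed critical set is ``entirely reducible,'' and your later reference to ``reducible critical points.'' The points of $X^w_{\pertdata(1)}(Y_1(K))$ are connections on all of $Y_1(K)$, and they are \emph{irreducible}: their restriction to $F\times S^1$ is a flat connection on a bundle whose $w_2$ pairs oddly with $\alpha\times S^1$, which forces irreducibility. Only their restrictions to $S^3\ssm N(K)$ correspond to reducible knot-group representations. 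This distinction is not cosmetic. If the critical points of $\mathit{CS}+\Phi$ were genuinely reducible, they would have nontrivial stabilizers and could not simply be counted as generators of an instanton Floer complex; the entire counting argument depends on checking that every $A_j$ is irreducible and nondegenerate as a connection on $Y_1(K)$.

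The second gap is that you pass directly from ``the perturbed complex has $2n$ generators'' to $\dim I_*(Y_1(K))_w \le 2n$. The group $\KHI(K)$ is defined from the \emph{unperturbed} Floer homology, and the perturbation here is large, so the isomorphism $I_*^{\pertdata(1)}(Y_1(K))_w \cong I_*(Y_1(K))_w$ is not automatic: one must verify that no reducibles appear in $X^w_{\pertdata(t)}(Y_1(K))$ for \emph{any} $t\in[0,1]$ (which again follows from the admissibility of the bundle over $F\times S^1$) and then run the continuation/cobordism argument of \cite[Section~5.3]{donaldson-book}. Without this step the count of perturbed generators says nothing about $\KHI(K)$. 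A smaller point: $i^*(\ch^*(K))$ need not be compact, since irreducibles can limit to reducibles; the positive separation from $C'$ instead follows because any such limit would land at a point $(\alpha,0)$ with $\Delta_K(e^{2i\alpha})=0$, which $C'$ avoids by hypothesis. Your sketch of nondegeneracy (transversality on the torus side plus vanishing of $H^1(S^3\ssm K;\C_{e^{2i\alpha'_j}})$ from $\Delta_K(e^{2i\alpha'_j})\neq 0$) is the right idea and matches the paper's Mayer--Vietoris computation.
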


\begin{proof}
We first claim that $C'$ has an open neighborhood $U$ which is disjoint from $i^*(\ch^*(K))$.  Supposing otherwise, we can find conjugacy classes $[\rho_n] \in \ch^*(K)$ such that the distance from $i^*([\rho_n])$ to $C'$ goes to zero as $n \to \infty$, and since $R(K)$ is compact, some subsequence of the $\rho_n$ converges to a representation $\rho$; since $i^*([\rho])$ has distance zero from the compact set $C'$, it must lie in $C'$.  By hypothesis $\rho$ must then be reducible, but it is a limit of irreducibles and so it must have pillowcase coordinates $(\alpha,0)$ with $\Delta_K(e^{2i\alpha}) = 0$.  We have assumed that $C'$ avoids all such points, though, giving a contradiction.

Fixing $\epsilon > 0$, we now lift $\overline{\psi}_t$ to an isotopy $\psi_t: T^2 \to T^2$ of the branched double cover, apply Theorem~\ref{C1 approximation} to find a composition of $\Z/2$-equivariant shearing isotopies $\phi_t: T^2 \to T^2$ such that $\norm{\phi_t-\psi_t}_{C^1} < \epsilon$ for all $t \in [0,1]$, and project this back down to get an isotopy $\overline{\phi}_t: \ch(T^2) \to \ch(T^2)$ which is $C^1$-close to $\overline{\psi}_t$.  If we take $\epsilon$ sufficiently small, then this will guarantee first that $(\overline{\phi}_1)^{-1}(C)$ lies in the neighborhood $U$ of $C' = (\overline{\psi}_1)^{-1}(C)$, hence remains disjoint from $i^*(\ch^*(K))$; second, that it still avoids the points $(\alpha,0)$ with $\Delta_K(e^{2i\alpha})=0$; and third, that it still intersects the line $\{\beta \equiv 0 \pmod{2\pi}\}$ transversely in $n$ points.  Thus we may replace $C'$ with $(\overline{\phi}_1)^{-1}(C)$ and replace $\overline{\psi}_t$ with the isotopy $\overline{\phi}_t$ which lifts to $\phi_t$.

We now use the shearing isotopies which make up $\phi_t$ to perturb the flatness equation on
\[ Y_1(K) = \big(S^3 \ssm N(K)\big) \cup \big([0,1] \times T^2\big) \cup \big(F\times S^1\big) \]
to $F_A = \theta^t_{\pertdata}(A)$, exactly as described in Definition~\ref{one-parameter perturbed flat thickened torus}.  At time $t=1$, we recall that these perturbed flat connections are equivalently the critical points of a perturbed Chern-Simons functional $\mathit{CS}+\Phi$, as in \eqref{eq:perturbed-cs}.  By Lemma~\ref{le:repvariety_Y1_perturbed}, the space $X^w_{\pertdata(1)}(Y_1(K))$ of solutions to this equation modulo gauge transformations is a double cover of
\[ R(K\mid (\phi_1)^{-1}(\tilde{C})) = R(K \mid \tilde{C'}), \]
where $\tilde{C}$ is the lift of $C$ to $T^2$ satisfying $\alpha=\frac{\pi}{2}$ and $\tilde{C'} = (\phi_1)^{-1}(\tilde{C})$ is the corresponding lift of $C'$.  Thus by hypothesis $X^w_{\pertdata(1)}(Y_1(K))$ consists of $2n$ points $A_1, \dots, A_{2n}$.

Our goal is to associate an instanton Floer homology group $I_*^{\pertdata(1)}(Y_1(K))_w$ to this perturbation data and show that it is isomorphic to the usual group $I_*(Y_1(K))_w$; then the rank of the latter will be at most $2n$, and hence by definition $\KHI(K)$ will have rank at most $n$.  In order to carry this out, we must first show that all points of $X^w_{\pertdata(1)}(Y_1(K))$ are irreducible and nondegenerate, and then an additional small perturbation which we suppress from the notation will fix the set of generators while ensuring that the moduli spaces of trajectories which define the differential are transversely cut out as well.  (See \cite[\S5.5.1]{donaldson-book} for discussion of this last perturbation, or \cite[Proposition~3.18]{km-yaft} for a concise statement.)

For irreducibility, we note that the 2-forms $\mu_k$ appearing in the definition of $\theta^t_{\pertdata}(A)$ are supported entirely on $[0,1] \times T^2$.  In particular, any connection $A$ with equivalence class
\[ [A] \in X^w_{\pertdata(t)}(Y_1(K)) \]
restricts to a flat connection on both $S^3 \ssm N(K)$ and $F \times S^1$.  But flat connections on the bundle $\ad(E)|_{F \times S^1}$ are irreducible by construction, since $c_1(w)$ has odd pairing with the surface $\alpha \times S^1 \subset F\times S^1$ (where $E$, $w$, and $\alpha$ are as described in the proof of Theorem~\ref{thm:km-traceless}), and so $X^w_{\pertdata(t)}(Y_1(K))$ does not contain any reducible connections for any $t$, $0 \leq t \leq 1$.

Now we claim that each $A=A_j \in X^w_{\pertdata(1)}(Y_1(K))$, $j=1,\dots,2n$, is nondegenerate.  Writing $Y=Y_1(K)$, this is equivalent to the vanishing of the kernel of the operator
\[ K_{A,\Phi} = \twomatrix{0}{d_A^*}{d_A}{-{*}d_{A,\Phi}} \]
on $(\Omega^0\oplus\Omega^1)(Y;\ad(E))$, where $d_A$ and $d_A^*$ are the exterior derivative twisted by $A$ and its Hodge dual, and $d_{A,\Phi} = d_A - 4\pi^2\,{*}\operatorname{Hess}_A(\Phi)$; see \cite{floer,taubes} or \cite[Section~6]{herald-legendrian}.
By Hodge theory we can identify
\[ \ker(K_{A,\Phi}) \cong H^0_A(Y;\ad(E)) \oplus H^1_{A,\Phi}(Y;\ad(E)), \]
where the groups on the right are cohomology groups of the complex
\[ 0 \to \Omega^0(Y;\ad(E)) \xrightarrow{d_A} \Omega^1(Y;\ad(E)) \xrightarrow{d_{A,\Phi}} \Omega^2(Y;\ad(E)) \xrightarrow{d_A} \Omega^3(Y;\ad(E)) \to 0. \]
Since $A$ is irreducible we have $H^0_A(Y;\ad(E)) = 0$, so we wish to prove that $H^1_{A,\Phi}(Y;\ad(E)) = 0$ as well.  In the sequel we will drop the $\ad(E)$ from our notation.

This cohomology can be computed by a Mayer-Vietoris argument, in which we write
\[ Y = (S^3 \ssm N(K)) \cup_{T^2} V, \]
where $V$ is a neighborhood of $([0,1] \times T^2) \cup (F\times S^1)$, so that the perturbation $\Phi$ is supported in $V$ away from a collar neighborhood of $\partial V \cong T^2$.  Since $A$ is a reducible flat connection on both $S^3 \ssm N(K)$ and $T^2$ with the same $U(1)$ stabilizer, the restriction map
\[ H^0_A(S^3 \ssm N(K)) \to H^0_A(T^2) \]
is surjective.  Thus we have an exact sequence
\begin{equation} \label{eq:mayer-vietoris-Y}
0 \to H^1_{A,\Phi}(Y) \to H^1_A(S^3 \ssm N(K)) \oplus H^1_{A,\Phi}(V) \xrightarrow{h} H^1_A(T^2),
\end{equation}
and it suffices to show that the map $h$ is injective.  (We may drop $\Phi$ from two of the above subscripts because it is zero on $S^3 \ssm N(K)$ and on $T^2$.)

Denoting by $\rho = \Hol(A)$ the holonomy representation of $A$, with restrictions $\rho_T$ and $\rho_K$ to $T^2$ and $S^3 \ssm N(K)$ having centralizers $Z(\rho_T)$ and $Z(\rho_K)$ respectively, we know that
\[ \dim T_{\rho_T}R(T^2) = 3+\dim Z(\rho_T) = 4, \qquad \dim T_{\rho_K}R(K) = 3, \]
by \cite[Section~1.2]{goldman} and \cite[Theorem~19]{klassen} respectively; the latter claim uses the assertion that $\Delta_K(e^{2i\alpha}) \neq 0$.  Then we have
\[ \dim H^1_A(T^2) = \dim T_{\rho_T}R(T^2) - (3-\dim Z(\rho_T)), \]
since the tangent spaces at $\rho_T$ to $R(T^2)$ and to the $SU(2)$-orbit of $\rho_T$ are identified with $\ker(d_A|_{\Omega^1(T^2)})$ and $\img(d_A|_{\Omega^0(T^2)})$ respectively, and likewise for $\rho_K$.  By construction we have $\dim Z(\rho_T) = \dim Z(\rho_K) = 1$, since $\rho_T$ and $\rho_K$ are reducible (with image in a $U(1)$ subgroup of $SU(2)$) but not central.  Thus we have
\[ \dim H^1_A(T^2) = 2, \qquad \dim H^1_A(S^3 \ssm N(K)) = 1. \]
We will show that $\dim(H^1_{A,\Phi}(V)) = 1$.  In this case, since each $H^1$ is identified with the tangent space to the corresponding moduli space at $[A]$, the assumption that $C'$ meets the line $\beta \equiv 0 \pmod{2\pi}$ transversely at the image of $[\rho]=[\Hol(A)]$ will guarantee that the map $h$ in \eqref{eq:mayer-vietoris-Y} is injective, and hence that $H^1_{A,\Phi}(Y) = 0$.

We decompose $V = ([0,1]\times T^2) \cup_{T^2} (F\times S^1)$, where we have slightly enlarged the interval $[0,1]$ so that $\Phi$ is supported on the interior of $[0,1]\times T^2$, and where the intersection $T^2$ is identified with $\{1\}\times T^2 \subset [0,1]\times T^2$.  Again the Mayer-Vietoris sequence says that
\[ 0 \to H^1_{A,\Phi}(V) \xrightarrow{i} H^1_{A,\Phi}([0,1]\times T^2) \oplus H^1_A(F\times S^1) \xrightarrow{j} H^1_A(T^2) \]
is exact, since the restriction map $H^0_A([0,1]\times T^2) \to H^0_A(T^2)$ is surjective.  But restriction of connections from $[0,1]\times T^2$ to $T^2$ induces an isomorphism of the respective (perturbed) flat moduli spaces, hence the map $H^1_{A,\Phi}([0,1]\times T^2) \to H^1_A(T^2)$ between their tangent spaces at $[A]$ is an isomorphism.  It follows that $\ker(j)$ is isomorphic to $H^1_A(F\times S^1)$, which is 1-dimensional, and since $\img(i) \cong H^1_{A,\Phi}(V)$ we conclude that $\dim H^1_{A,\Phi}(V) = 1$, as desired.

The above argument has shown that each connection in the moduli space
\[ X^w_{\pertdata(1)}(Y) = \{ A_1,\dots, A_{2n} \} \]
is irreducible and nondegenerate.  Thus by standard methods, the perturbed Chern-Simons functional $\mathit{CS}+\Phi$ gives rise to an instanton Floer homology group $I_*^{\pertdata(1)}(Y)_w$, and we have shown that its rank is at most $2n$.  Since there are no reducible connections in any of the moduli spaces $X^w_{\pertdata(t)}(Y)$ for $0 \leq t \leq 1$, the cobordism argument presented in \cite[Section~5.3]{donaldson-book}  carries over to our situation and shows that $I_*^{\pertdata(1)}(Y)_w \cong I_*(Y)_w$.  Then $\KHI(K)$ is by definition the $+2$-eigenspace of the operator $\mu(\pt): I_*(Y)_w \to I_*(Y)_w$, which is isomorphic to the $-2$-eigenspace of $\mu(\pt)$ and hence has rank at most $n$.
\end{proof}

Here we remark on some points arising in the proof of Theorem~\ref{thm:perturbed-khi-bound}.  First, we could make the moduli space of perturbed flat connections regular by a generic holonomy perturbation, but such a perturbation cannot necessarily be confined to the $[0,1]\times T^2$ region of $Y_1(K)$: in general, one may need to introduce holonomy perturbations along a collection of curves which generates $\pi_1(Y_1(K))$ (see \cite[Section~5.5.1]{donaldson-book}), and this destroys the relationship of Lemma~\ref{le:repvariety_Y1_perturbed} between the perturbed moduli space and the pillowcase image of $R(K)$.  Thus we prefer to check directly that the relevant moduli space is already regular.

Second, in constructing instanton homology groups over 3-manifolds equipped with nontrivial bundles, one usually only allows small perturbations of the Chern-Simons functional in order to ensure that all critical points remain irreducible.  However, this is not a concern for us since our perturbed flat connections are still flat on $\ad(E)|_{F \times S^1}$ and hence irreducible.  Instanton homology groups defined with large perturbations have occasionally appeared in the literature before, e.g.\ in Floer's proof of the surgery exact triangle \cite{braam-donaldson}.

Third, the statement of the theorem requires us to have an \emph{area-preserving} isotopy taking $C$ to $C'$, and if we only know that there is an isotopy such that the image of $C$ divides the pillowcase in half by area at all times, then it is not obvious that we can find an isotopy which preserves area.  The following lemma provides the desired isotopy; the proof is a bit long but mostly standard, so we postpone it to Appendix~\ref{sec:symplectic-isotopy}.

\begin{lemma} \label{lem:symplectic-isotopy}
Let $\Sigma$ be a compact surface with area form $\omega$, and let $K$ be a compact subset of $\Sigma$.
Let $C_t \subset \Sigma$ ($0 \leq t \leq 1$) be a smooth isotopy of smoothly embedded curves (not necesarily connected), and suppose that
\begin{enumerate}
\item the union $\bigcup_{t\in[0,1]} C_t$ is contained in the interior of a compact, connected subsurface $\Sigma'$ with smooth boundary, such that $\Sigma'$ is disjoint from $K$;
\item each component $\gamma \subset C_t$ is separating in $\Sigma \setminus (C_t \setminus \gamma)$; %\label{i:separating-condition}
\item the areas of the respective components of $\Sigma \setminus C_t$ are independent of $t$.
\end{enumerate}
Then there is a smooth isotopy $\psi_t: \Sigma \to \Sigma$ with $\psi_0 = \mathrm{id}_\Sigma$ such that
\begin{enumerate}
\item $\psi_t(C_0) = C_t$ for all $t$;
\item $\psi_t$ is constant on $\overline{\Sigma\setminus\Sigma'}$, and hence on a neighborhood of $K$, for all $t \in [0,1]$;
\item each $\psi_t$ is a symplectomorphism, i.e., $\psi_t^*\omega = \omega$ for all $t$.
\end{enumerate}
\end{lemma}

In the application to Theorem~\ref{thm:perturbed-khi-bound}, we take $(\Sigma,\omega)$ to be the pillowcase with its usual area form.  Then $K \subset \Sigma$ is the set of orbifold points, and $\Sigma'$ is the complement of sufficiently small open disks around these points.

We can now complete the proof of Theorem~\ref{thm:pillowcase-near-pi/2}.

\begin{proof}[Proof of Theorem~\ref{thm:pillowcase-near-pi/2}.]
Suppose that $K$ is a non-trivial knot and neither property (1) nor (2) in the statement of Theorem~\ref{thm:pillowcase-near-pi/2} holds. By Lemma \ref{lem:space-near-pi/2}, there are some $b \in (0,\pi)$ and $\epsilon > 0$ such that the line segment 
\[ 
L = \left\{ \left(\frac{\pi}{2}, \beta\right) \,\middle|\, \beta \in (b-\epsilon,b+\epsilon) \right\}
\]
of the pillowcase does not meet either of the lines $\beta = 0$ and $\beta = \pi$, 
and it contains no element of the image of $R^*(K,i)$. (The same statement holds for the image of $L$ under the involution $(\alpha,\beta) \mapsto (\pi-\alpha, 2\pi - \beta)$ of Proposition \ref{prop:pillowcase-symmetry}.) 
Furthermore, any irreducible character in $\ch^*(K)$ whose image in the pillowcase $\ch(T^2)$ has $\alpha$-coordinate satisfying $\frac{\pi}{2} - \epsilon < \alpha < \frac{\pi}{2} + \epsilon$ must actually have $\alpha$-coordinate equal to $\frac{\pi}{2}$. 

We may suppose that $\epsilon$ is smaller than $\pi/2$. Let $\delta = \frac{\epsilon^2}{4 \pi - 2 \epsilon}$. 
Let $C \subset \ch(T^2)$ be the embedded circle in the pillowcase defined by $\alpha = \frac{\pi}{2}$, and let $Z \subset \ch(T^2)$ be the piecewise linear closed curve which passes through the successive points
\begin{equation*}
\begin{split}
		z_0 & = \left(\frac{\pi}{2} - \delta, 0\right), \\
 		z_1 & = \left(\frac{\pi}{2}- \delta, b - \frac{\epsilon}{2}\right), \\ 
		z_2 & = \left(\frac{\pi}{2} + \frac{\epsilon}{2},b-\frac{\epsilon}{2}\right), \\ 
 		z_3 & = \left(\frac{\pi}{2} + \frac{\epsilon}{2},b+\frac{\epsilon}{2}\right), \\
		z_4 & = \left(\frac{\pi}{2} - \delta, b +\frac{\epsilon}{2}\right), \\
		z_5 & = \left(\frac{\pi}{2} - \delta, 2 \pi\right) = z_0.
\end{split}
\end{equation*}
The value of $\delta$ is chosen such that the curve $Z$ separates the pillowcase $\ch(T^2)$ into two halves of equal area with respect to the standard Euclidean area measure.  See Figure~\ref{perturbed curve}.
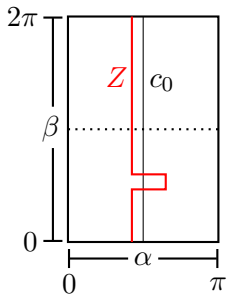
\begin{figure}
\begin{tikzpicture}[style=thick]
\begin{scope}
  \draw (0,0) rectangle (2,3);
  \draw[style=thin] (1,0) -- (1,3);
  \draw[dotted] (0,1.5) -- (2,1.5);
  \node at (1.25,2.12) {$c_0$};
  \draw[color=red] (0.85,0) -- (0.85,0.7) -- (1.3,0.7) -- (1.3,0.9) -- (0.85,0.9) -- (0.85,3);
  \node[color=red] at (0.65,2.2) {$Z$};
  \draw[|-] (-0.2,3) -- (-0.2,1.8);
  \node at (-0.23,1.5) {$\beta$};
  \draw[-|] (-0.2,1.25) -- (-0.2,0);
  \node at (-0.5,0) {$0$};
  \node at (-0.6,3) {$2\pi$};
  \draw[|-] (0,-0.22) -- (0.8,-0.22);
  \node at (1,-0.22) {$\alpha$};
  \draw[-|] (1.2,-0.22) -- (2,-0.22);
  \node at (2,-0.55) {$\pi$};
  \node at (0.02,-0.55) {$0$};
\end{scope}
\end{tikzpicture}
\caption{The curves $Z$ and $c_0 = \left\{\alpha=\frac{\pi}{2}\right\}$ in the pillowcase.}
\label{perturbed curve}
\end{figure}

It is clear that we can continuously approximate $Z$ arbitrarily well by rounding the corners $z_1$, $z_2$, $z_3$, and $z_4$ and fixing an open neighborhood of $z_0=z_5$ to get a new curve $C'$, which by Lemma~\ref{lem:symplectic-isotopy} is isotopic to $C$ through an area-preserving isotopy which fixes an open neighborhood of the corners of the pillowcase.  Then $C'$ still intersects the line $\{\beta \equiv 0 \pmod{2\pi}\}$ transversely in the single point $(\alpha,0) = (\frac{\pi}{2} - \delta, 0)$, and for $\epsilon$ sufficiently small we must have $\Delta_K(e^{2i\alpha}) \neq 0$, since $\Delta_K(e^{2i\cdot \pi/2}) \neq 0$.  We now apply Theorem~\ref{thm:perturbed-khi-bound} to the curve $C'$ to conclude that $\rank(\KHI(K)) \leq 1$.  But this is a contradiction just as in the proof of Theorem~\ref{thm:km-traceless}, since $\KHI(K)$ has rank strictly greater than 1 unless $K$ is unknotted.
\end{proof}

\section{Rationality of limit slopes} \label{sec:rationality}

In this section we prove that limit slopes of $SU(2)$-averse knots are always rational numbers, having already shown in Theorem~\ref{thm:limit-slope-finite} that they are finite.  In what follows we will let $\Qbar \subset \C$ denote the field of algebraic numbers, and $\Ralg = \Qbar \cap \R$ will denote the field of real algebraic numbers.

\begin{proposition} \label{prop:alpha-beta-algebraic}
Let $K$ be nontrivial and $SU(2)$-averse with limit slope $r = r(K)$, and let $(\alpha_0,\beta_0)$ be a point in the pillowcase image of $\ch^*(K)$.
\begin{enumerate}
\item If $e^{i\alpha_0}$ is algebraic, then $e^{i\beta_0}$ is algebraic. \label{i:alpha-algebraic}
\item If $e^{i\beta_0}$ is algebraic, then $e^{i\alpha_0}$ is algebraic. \label{i:beta-algebraic}
\item If $(\alpha_0,\beta_0)$ is an isolated point in the image of $\ch^*(K)$, then $e^{i\alpha_0}$ and $e^{i\beta_0}$ are both algebraic numbers. \label{i:isolated-point}
\end{enumerate}
\end{proposition}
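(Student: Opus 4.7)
The plan is to switch to algebraic coordinates on the pillowcase and exploit the fact that the image $S := i^*(\ch^*(K)) \subset \ch(T^2)$ is a $\Q$-semi-algebraic set. Specifically, introduce the trace map
\[
\Phi\colon \ch(T^2) \hookrightarrow \R^3, \qquad (\alpha,\beta) \mapsto (x,y,z) := (2\cos\alpha,\, 2\cos\beta,\, 2\cos(\alpha+\beta));
\]
a short direct check at the four fixed points of $\iota$ shows that $\Phi$ is injective, hence a homeomorphism onto its compact image, and that this image is $\Q$-algebraic. Since $R(K) \subset (S^3)^k \subset \R^{4k}$ is cut out by polynomials with integer coefficients, the restriction map $\ch^*(K) \to \ch(T^2)$ is $\Q$-semi-algebraic in these coordinates, so $S \subset \R^3$ is $\Q$-semi-algebraic. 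Moreover $e^{i\alpha} \in \Qbar$ iff $\cos\alpha \in \Ralg$ (because $e^{i\alpha}$ satisfies $t^2 - 2\cos\alpha \cdot t + 1 = 0$), so each part of the proposition becomes a statement about when coordinates of $(x_0,y_0,z_0) := \Phi(\alpha_0,\beta_0)$ lie in $\Ralg$. The elementary fact I will use repeatedly is that a 0-dimensional semi-algebraic subset of $\R^n$ defined over a real closed subfield $F \subset \R$ is contained in $F^n$: its Zariski closure is a finite $F$-algebraic set, and $F$ is algebraically closed in $\R$.

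Part~(3) is then immediate: an isolated point of $S$ forms a 0-dimensional $\Q$-semi-algebraic subset of $\R^3$, hence $(x_0,y_0,z_0) \in \Ralg^3$ and both $e^{i\alpha_0}, e^{i\beta_0}$ are algebraic. For parts~(1) and~(2), combine Theorems~\ref{thm:limit-slope-finite} and~\ref{thm:finitely-many-lines}: $S$ is a finite union of line segments (images of arcs of slope $-r(K)$ in the cut-open pillowcase) together with finitely many isolated points, where the slope $-r(K)$ is finite and nonzero since $|r(K)|>2$. In particular, both $\alpha$ and $\beta$ are strictly monotonic along each segment. For~(1), given $x_0 \in \Ralg$, consider the fiber of the projection $S \to \R_x$ over $x_0$: Proposition~\ref{prop:pillowcase-facts}(4) forces $\alpha_0 \in (0,\pi)$, so $\alpha \mapsto 2\cos\alpha$ is locally a diffeomorphism near $\alpha_0$, and each of the finitely many segments meets the vertical line $\alpha = \alpha_0$ in at most one point. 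Thus the fiber is finite, hence a 0-dimensional $\Q(x_0)$-semi-algebraic set, hence $\Ralg$-semi-algebraic, and therefore contained in $\Ralg^2$; in particular $y_0 \in \Ralg$, so $e^{i\beta_0}$ is algebraic.

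Part~(2) follows symmetrically by projecting onto $\R_y$: although $\beta \mapsto 2\cos\beta$ has critical points at $\beta \in \{0,\pi\}$, each line segment has nonzero finite slope in $(\alpha,\beta)$-coordinates and so crosses any horizontal line $\beta = \beta_0$ only finitely many times, so the $y$-fiber over $y_0$ is still finite; the same argument yields $x_0 \in \Ralg$. The only genuinely delicate point is the semi-algebraic setup — checking that $\Phi$ embeds the pillowcase correctly and that the relevant fibers are actually zero-dimensional — after which the result reduces to the two elementary facts that isolated points of $\Q$-semi-algebraic sets are algebraic and that 0-dimensional semi-algebraic sets over a real closed subfield live in that subfield.
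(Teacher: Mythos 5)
Your proof is correct, but it takes a genuinely different route from the paper's. The paper never passes to trace coordinates on the pillowcase: it works upstairs in the representation variety, defining an auxiliary semi-algebraic set $R_0 \subset R^*(K)$ over $\Ralg$ by (i) rational inequalities pinning $\tr(\rho(\mu))$ and $\tr(\rho(\lambda))$ within $\epsilon$ of $2\cos\alpha_0$ and $2\cos\beta_0$, (ii) the requirement that $\rho(\mu),\rho(\lambda)$ be diagonal with normalized signs of the imaginary parts, so that the pillowcase coordinates are read off directly from matrix entries, and (iii) in cases (1) and (2) the exact relation $\tr(\rho(\mu)) = 2\cos\alpha_0$ (resp.\ $\tr(\rho(\lambda)) = 2\cos\beta_0$). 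Tarski--Seidenberg then produces an $\Ralg$-point of $R_0$, and the same structural input you use --- Theorem~\ref{thm:finitely-many-lines} together with the finiteness and nonvanishing of $r$, i.e.\ that near $(\alpha_0,\beta_0)$ the image is either the single point or a segment of finite nonzero slope --- forces that $\Ralg$-point to have pillowcase image exactly $(\alpha_0,\beta_0)$. Your version pushes everything down to the image: you need the (true, but not free) facts that $\Phi$ embeds the pillowcase as a $\Q$-semi-algebraic subset of $\R^3$ (the surface $z^2 - xyz + x^2 + y^2 - 4 = 0$ in the box $|x|,|y| \leq 2$) and that $S = \Phi(i^*\ch^*(K))$ is $\Q$-semi-algebraic as the image of $R^*(K)$ under an integral polynomial map; after that, finiteness of the relevant fibers plus the transfer principle for finite semi-algebraic sets over a real closed subfield does the rest. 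This is arguably cleaner and isolates the real-algebraic input into two reusable facts. One imprecision in part~(3): a single isolated point of $S$ is not by itself a $\Q$-semi-algebraic set. What you should say is that the set of \emph{all} isolated points of $S$ is $\Q$-definable (isolatedness is a first-order condition) and finite, hence contained in $\Ralg^3$; this is exactly the principle you state in your closing sentence, so the fix is cosmetic.
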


\begin{proof}
Certainly $e^{i\alpha_0}$ and $e^{i\beta_0}$ are algebraic if and only if $e^{i(\pi-\alpha_0)}$ and $e^{i(2\pi-\beta_0)}$ are, so by Proposition~\ref{prop:pillowcase-symmetry} we can replace $(\alpha_0,\beta_0)$ with $(\pi-\alpha_0,2\pi-\beta_0)$ as needed to ensure that $0 \leq \beta_0 \leq \pi$.  We will take $\rho_0: \pi_1(S^3\ssm K) \to SU(2)$ to be an irreducible representation with $\rho_0(\mu) = \diag(e^{i\alpha_0},e^{-i\alpha_0})$ and $\rho_0(\lambda) = \diag(e^{i\beta_0},e^{-i\beta_0})$.

Theorem~\ref{thm:finitely-many-lines} implies that any sufficiently small neighborhood of $(\alpha_0,\beta_0)$ intersects the pillowcase image of $\ch^*(K)$ only in the point $(\alpha_0,\beta_0)$ if it is isolated, or in a connected segment $\ell$ of the line of slope $-r$ through $(\alpha_0,\beta_0)$ otherwise.  Therefore for sufficiently small rational $\epsilon>0$, if $(\alpha,\beta)$ is in the image of $\ch^*(K)$ and
\[ \left| \cos(\alpha) - \cos(\alpha_0) \right|^2 + \left| \cos(\beta) - \cos(\beta_0) \right|^2 < 2\epsilon^2, \]
then either $\beta > \pi$, or $(\alpha,\beta)=(\alpha_0,\beta_0)$, or $(\alpha,\beta) \in \ell$.  We then pick rational numbers $a$ and $b$ so that
\[ \left| a - 2\cos(\alpha_0) \right| < \epsilon \qquad\mathrm{and}\qquad \left| b - 2\cos(\beta_0) \right| < \epsilon, \]
and we define a semi-algebraic subset $R_0$ of $R^*(K)$ by imposing the inequalities
\[ \left(\tr(\rho(\mu))-a\right)^2 < \epsilon^2 \qquad\mathrm{and}\qquad \left(\tr(\rho(\lambda))-b\right)^2 < \epsilon^2, \]
as well as the relations $\rho(\mu) = \left(\begin{smallmatrix}*&0\\0&*\end{smallmatrix}\right)$ and $\rho(\lambda) = \left(\begin{smallmatrix}*&0\\0&*\end{smallmatrix}\right)$, and the additional inequalities 
$\img(\rho(\mu)_{1,1}) \geq 0$ and $\img(\rho(\lambda)_{1,1}) \geq 0$.  Here $\img(\rho(g)_{i,j})$ denotes the imaginary part of the $(i,j)$-entry of the matrix $\rho(g)$; this is always a polynomial with $\ZZ$-coefficients in the variables used to construct $R^*(K)$.  The relations on $\rho(\mu)$ and $\rho(\lambda)$ require them to have the form $\diag(e^{i\alpha},e^{-i\alpha})$ and $\diag(e^{i\beta},e^{-i\beta})$ respectively.  Then $\img(\rho(\mu)_{1,1}) = \sin(\alpha)$ and $\img(\rho(\lambda)_{1,1}) = \sin(\beta)$, both of which must be nonnegative, so we can take $0 \leq \alpha,\beta \leq \pi$ at any such $\rho \in R_0$, and then $\rho$ has pillowcase coordinates $(\alpha,\beta)$.

We will also impose one additional relation on $R_0$ in cases~\eqref{i:alpha-algebraic} and \eqref{i:beta-algebraic}.  If we are in case~\eqref{i:alpha-algebraic}, so that $e^{i\alpha_0}$ is algebraic, we also impose the relation $\tr(\rho(\mu)) = 2\cos(\alpha_0)$ on $R_0$.  Similarly, in case~\eqref{i:beta-algebraic} we impose $\tr(\rho(\lambda)) = 2\cos(\beta_0)$.  We note that $2\cos(\alpha_0)$ (resp.\ $2\cos(\beta_0)$) is algebraic whenever $e^{i\alpha_0}$ (resp.\ $e^{i\beta_0}$) is.

In each case $R_0$ is defined over the real closed field $\Ralg$, and $\R$ is a real closed extension of $\Ralg$, so the Tarski-Seidenberg principle implies that $R_0$ has an $\Ralg$-point if and only if it has an $\R$-point \cite[Proposition~4.1.1]{bcr}.  But $R_0$ contains the $\R$-point $\rho_0$ by construction, so $R_0$ must also have an $\Ralg$-point $\rho_\alg$, for which the real and imaginary parts of the entries of every matrix $\rho_\alg(g)$, $g \in \pi_1(S^3\ssm K)$, are real algebraic numbers.  In particular, if $\rho_\alg$ has image $(\alpha,\beta)$ in the pillowcase, then we have
\[ \rho_\alg(\mu) = \twomatrix{e^{i\alpha}}{0}{0}{e^{-i\alpha}}, \qquad \rho_\alg(\lambda) = \twomatrix{e^{i\beta}}{0}{0}{e^{-i\beta}} \]
where $0 \leq \alpha,\beta \leq \pi$, and $e^{i\alpha}$ and $e^{i\beta}$ are both algebraic numbers.

Since $\rho_\alg \in R_0$, we have $\lvert 2\cos(\alpha) - a\rvert = \lvert\tr(\rho(\mu))-a\rvert < \epsilon$ and $\lvert a-2\cos(\alpha_0)\rvert < \epsilon$, so $\lvert\cos(\alpha)-\cos(\alpha_0)\rvert < \epsilon$ by the triangle inequality; likewise $\lvert\cos(\beta)-\cos(\beta_0)\rvert < \epsilon$.  Since we ensured that $\beta \leq \pi$, it follows that if $(\alpha_0,\beta_0)$ is an isolated point then we must have $(\alpha,\beta) = (\alpha_0,\beta_0)$, and that otherwise $(\alpha,\beta) \in \ell$.  The latter only applies to cases~\eqref{i:alpha-algebraic} and \eqref{i:beta-algebraic} of the proposition, and then we know that $(\alpha_0,\beta_0)$ is the only point of $\ell$ satisfying $\alpha = \alpha_0$ or $\beta = \beta_0$, so again we must have $(\alpha,\beta) = (\alpha_0,\beta_0)$.  In each case we conclude that $e^{i\alpha_0}$ and $e^{i\beta_0}$ are algebraic, since this is true of $e^{i\alpha}$ and $e^{i\beta}$.
\end{proof}

\begin{proposition} \label{prop:exp-limit-slope}
Let $K$ be a nontrivial $SU(2)$-averse knot with limit slope $r = r(K)$, and suppose that the pillowcase image of $\ch(K)$ contains a nontrivial segment of the line $\beta = -r\alpha + c$.  Then $e^{i\pi r}$ and $e^{ic}$ are both algebraic numbers.
\end{proposition}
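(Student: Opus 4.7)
The plan is to leverage Proposition~\ref{prop:alpha-beta-algebraic}(1) by evaluating at points on the segment whose $\alpha$-coordinate is a rational multiple of $\pi$, and then to separate out the parameters $r$ and $c$ by taking a difference.

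First I would choose two distinct rationals $q_1, q_2 \in \Q$ such that $\alpha_i := q_i \pi$ both lie in the $\alpha$-range of the given nontrivial segment $\beta = -r\alpha + c$. Since $r$ is finite by Theorem~\ref{thm:limit-slope-finite}, the reducible characters along this line (which by Proposition~\ref{prop:pillowcase-facts}(2) must satisfy $\beta \equiv 0 \pmod{2\pi}$) form a discrete subset of the segment, and the density of the rationals lets me arrange that $q_1 \ne q_2$ and that each point $(\alpha_i, \beta_i) = (q_i\pi,\, -r q_i \pi + c)$ is the image of an irreducible character. Since $e^{i\alpha_i} = e^{i\pi q_i}$ is a root of unity, Proposition~\ref{prop:alpha-beta-algebraic}(1) then gives that $e^{i\beta_i} \in \Qbar$ for $i=1,2$.

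Next I would eliminate $c$. From $\beta_j = -r\alpha_j + c$ one has
\[ \beta_1 - \beta_2 \;=\; -r\pi (q_1-q_2), \]
so $e^{-i\pi r(q_1-q_2)} = e^{i\beta_1}/e^{i\beta_2}$ is algebraic. Writing $q_1-q_2 = k/N$ with $k\in\Z\setminus\{0\}$ and $N\in\N$, the $N$-th power $e^{-i\pi r k}$ is algebraic, and then $e^{i\pi r}$ is a $|k|$-th root of an algebraic number and hence algebraic itself. With $e^{i\pi r} \in \Qbar$ in hand, the identity
\[ e^{ic} \;=\; e^{i\beta_1}\, e^{i r \alpha_1} \;=\; e^{i\beta_1}\, \bigl(e^{i\pi r}\bigr)^{q_1} \]
expresses $e^{ic}$ as a product of algebraic numbers once one uses that rational powers of algebraic numbers are algebraic (write $q_1 = p/N'$ and take an $N'$-th root followed by a $p$-th power).

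I do not expect a real obstacle here beyond bookkeeping; the substantive content lies in Proposition~\ref{prop:alpha-beta-algebraic} and in the observation that two well-chosen points on the segment are enough to decouple the two unknowns $r$ and $c$. The only care needed is to avoid the finitely many reducible points on the segment when choosing $q_1, q_2$, which is trivially possible by the density of $\Q$.
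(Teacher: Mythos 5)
Your proof is correct and follows essentially the same route as the paper's: evaluate Proposition~\ref{prop:alpha-beta-algebraic}(1) at two points of the segment whose $\alpha$-coordinates are rational multiples of $\pi$, divide the resulting algebraic values of $e^{i\beta}$ to isolate $e^{i\pi r}$, and then back-substitute to get $e^{ic}$. The only cosmetic difference is that the paper takes the two $\alpha$-values to be consecutive fractions $\frac{m}{n}\pi$ and $\frac{m+1}{n}\pi$, so the $n$-th power gives $e^{i\pi r}$ directly without your extra $|k|$-th-root step.
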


\begin{proof}
We pass to a fundamental domain $[0,\pi] \times [0,2\pi]$ for the cut-open pillowcase.  Since $r \neq \infty$, we can pick two points $(\alpha_1,\beta_1)$ and $(\alpha_2,\beta_2)$ in this domain, with $\alpha_1 < \alpha_2$, such that the image of $\ch(K)$ contains the entire line segment $\ell$ connecting these points.

If we fix an integer $n > \frac{2\pi}{\alpha_2-\alpha_1}$ and let $m = \lceil\frac{\alpha_1n}{\pi}\rceil$, then $\ell$ contains two points of the form $p = (\frac{m}{n}\pi, \beta_-)$ and $p' = (\frac{m+1}{n}\pi,\beta_+)$, and its slope is
\[ -r = \frac{n(\beta_+ - \beta_-)}{\pi}. \]
Both $e^{im\pi/n}$ and $e^{i(m+1)\pi/n}$ are algebraic numbers, so Proposition~\ref{prop:alpha-beta-algebraic} says that $e^{i\beta_-}$ and $e^{i\beta_+}$ are algebraic as well, and hence so is
\[ \left(\frac{e^{i\beta_-}}{e^{i\beta_+}}\right)^n = e^{-in(\beta_+-\beta_-)} = e^{i\pi r}. \]
Finally, we rearrange $\beta_- = -r(\frac{m}{n}\pi) + c$ and apply $x \mapsto \exp(ix)$ to both sides to get
\[ e^{ic} = e^{i\beta_-} \cdot \left(e^{i\pi r}\right)^{m/n}, \]
and since both $e^{i\beta_-}$ and $e^{i\pi r}$ are algebraic it follows that $e^{ic}$ is as well.
\end{proof}

\begin{remark}
We will eventually show in Theorem~\ref{thm:c-rational} that the constants $e^{ic}$ appearing in Proposition~\ref{prop:exp-limit-slope} are in fact roots of unity.  The proof will require the fact that $r$ is rational, which we are about to prove.
\end{remark}

\begin{theorem} \label{thm:limit-slope-rational}
Let $K$ be a nontrivial $SU(2)$-averse knot with limit slope $r = r(K)$.  Then $r$ is a rational number.
\end{theorem}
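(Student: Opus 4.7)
The plan is to suppose for contradiction that $r$ is irrational and then force an impossible polynomial identity out of the semi-algebraic structure of the pillowcase image. By Theorem~\ref{thm:open-pillowcase-image} together with Theorem~\ref{thm:limit-slope-finite}, I fix a nontrivial line segment of slope $-r$ in the pillowcase image of $\ch^*(K)$, lifted to $\R^2$ as
\[
\ell = \{(\alpha, -r\alpha + c) : \alpha \in (\alpha_0, \alpha_1)\}
\]
for some real constant $c$. The first step is to pass to trace coordinates: the map
\[
\tau\colon \ch^*(K) \to \R^2, \qquad [\rho] \mapsto (\tr\rho(\mu),\, \tr\rho(\lambda)),
\]
is polynomial with $\Z$-coefficients in the defining variables of $R^*(K)$ from Section~\ref{sec:background}, so its image $T = \tau(\ch^*(K))$ is semi-algebraic over $\Q$. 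Since the pillowcase image of $\ch^*(K)$ is a 1-dimensional embedded graph (Proposition~\ref{prop:path-components}) and the map $(\alpha,\beta) \mapsto (2\cos\alpha, 2\cos\beta)$ is generically finite-to-one on it, the set $T$ is 1-dimensional.

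The next step is to extract a polynomial identity. The arc
\[
\gamma = \{(2\cos\alpha,\, 2\cos(-r\alpha+c)) : \alpha \in (\alpha_0,\alpha_1)\} \subset T
\]
is connected and of dimension one, so it lies in a single $\Q$-irreducible component $\bar{T}_0$ of the Zariski closure of $T$ in $\R^2$. This $\bar T_0$ has codimension one, and since $\Q[x,y]$ is a UFD its vanishing ideal is generated by a single irreducible polynomial $F(x,y) \in \Q[x,y]$. Consequently $F(2\cos\alpha,\, 2\cos(-r\alpha+c)) = 0$ for $\alpha \in (\alpha_0,\alpha_1)$, and since both sides are entire in $\alpha$, this identity extends by analytic continuation to all of $\C$.

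The last step is a Fourier-type linear independence argument. Expanding each cosine via $2\cos\theta = e^{i\theta}+e^{-i\theta}$ and the multinomial expansion of $F(x,y) = \sum_{(j,k)\in S} a_{jk} x^j y^k$, the identity becomes a finite sum
\[
\sum_{(p,q)} C_{p,q}\, e^{i\alpha(p - rq)} = 0,
\]
where the $(p,q)$ are integer pairs and each $C_{p,q}$ is a $\Q$-combination of terms $a_{jk}\binom{j}{m}\binom{k}{n} e^{ic(k-2n)}$ with $j-2m=p$ and $k-2n=q$. If $r$ is irrational, then the real numbers $p - rq$ are pairwise distinct as $(p,q)$ varies, so the entire functions $e^{i\alpha(p-rq)}$ are $\C$-linearly independent and every $C_{p,q}$ must vanish. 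Setting $D = \max\{j : (j,k) \in S\}$ and $E = \max\{k : (D,k) \in S\}$, the maximality of $D$ and $E$ forces the only quadruple $(j,k,m,n)$ contributing to $C_{D,E}$ to be $(D,E,0,0)$, which yields $C_{D,E} = a_{DE}\, e^{icE} \neq 0$. This contradiction establishes that $r$ is rational.

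I expect the main obstacle to be the semi-algebraic bookkeeping in the middle step: confirming that the 1-dimensional $T$ has $\Q$-Zariski closure of the same dimension, that $\gamma$ lies in a single $\Q$-irreducible component of it, and that such a component in $\R^2$ is cut out by a single irreducible polynomial in $\Q[x,y]$. Once $F$ is produced, the combinatorial uniqueness of the top-monomial contribution and the linear independence of the resulting exponentials are straightforward, and the use of $e^{i\pi r}$ being algebraic (Proposition~\ref{prop:exp-limit-slope}) is in fact unnecessary for this argument, though it confirms the conclusion independently via Gelfond--Schneider once $r$ is known to be real algebraic.
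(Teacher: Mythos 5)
Your proof is correct, but it takes a genuinely different route from the paper. The paper's proof uses the Tarski--Seidenberg principle to find points on the segment where both $e^{i\alpha}$ and $e^{i\beta}$ are algebraic (Proposition~\ref{prop:alpha-beta-algebraic}), deduces that $(e^{ia})^r$ is algebraic whenever $e^{ia}$ is, and then invokes genuine transcendence theory: a corollary of the six exponentials theorem from Lang, fed by a construction of multiplicatively independent algebraic numbers on the unit circle via Salem numbers (Proposition~\ref{prop:log-algebraic-vector-space}). You instead exploit the algebraicity of the character variety in a more direct way -- a nonzero polynomial $F$ vanishes on the trace image, so $F(2\cos\alpha,2\cos(-r\alpha+c))\equiv 0$, and for irrational $r$ the frequencies $p-rq$ in the resulting exponential sum are pairwise distinct, forcing all coefficients to vanish and contradicting the nonvanishing of the extremal coefficient $a_{DE}e^{icE}$. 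This is more elementary (no transcendence theory at all) and is sound. Two remarks. First, the ``semi-algebraic bookkeeping'' you flag as the main obstacle is largely unnecessary: the final contradiction only uses $a_{DE}\neq 0$, so you do not need $F\in\Q[x,y]$ or any $\Q$-irreducibility -- any nonzero $F\in\R[x,y]$ vanishing on the arc suffices, and such $F$ exists simply because $T$ is a semi-algebraic subset of $\R^2$ of dimension at most one (the pillowcase image is a finite graph by Proposition~\ref{prop:path-components}, and the Zariski closure of a semi-algebraic set has the same dimension by \cite[Proposition~2.8.2]{bcr}), hence is contained in a proper algebraic hypersurface. Second, the single-component step, if you keep it, is justified by observing that a product of real-analytic functions of $\alpha$ vanishing identically on an interval forces one factor to vanish identically. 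What the paper's longer route buys is Proposition~\ref{prop:exp-limit-slope} (algebraicity of $e^{i\pi r}$ and $e^{ic}$) as a byproduct, though that is ultimately superseded by the $A$-polynomial argument of Theorem~\ref{thm:c-rational}; your route proves the rationality of $r$ with strictly fewer inputs.
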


\begin{proof}
As in the proof of Proposition~\ref{prop:exp-limit-slope}, we take two points $p = (\frac{m}{n}\pi, \beta_-)$ and $p' = (\frac{m+1}{n}\pi, \beta_+)$ in the fundamental domain $[0,\pi] \times [0,2\pi]$ of the pillowcase (so that $0 \leq m < n$) such that the entire line segment $\ell$ from $p$ to $p'$ lies in the pillowcase image of $\ch^*(K)$; we recall that the existence of $p$ and $p'$ follows from Theorem~\ref{thm:open-pillowcase-image}.  The line containing $\ell$ has the form $\beta = -r\alpha + c$, and both $e^{i\pi r}$ and $e^{ic}$ are algebraic.

Given any $\alpha$ such that $e^{i\alpha}$ is algebraic and $\frac{m}{n}\pi < \alpha < \frac{m+1}{n}\pi$, we know that $\ell$ contains a point $(\alpha,\beta) = (\alpha, -r\alpha+c)$ in the image of $\ch^*(K)$, and that $e^{i\beta}$ is algebraic by Proposition~\ref{prop:alpha-beta-algebraic}.  But then
\[ e^{ir\alpha} = e^{i(c-\beta)} = \frac{e^{ic}}{e^{i\beta}} \]
is also an algebraic number, so we have shown for $\alpha$ in the range $(\frac{m}{n}\pi, \frac{m+1}{n}\pi)$ that if $e^{i\alpha}$ is algebraic then so is $(e^{i\alpha})^r$.  In fact, given any $a \in \R$ such that $e^{ia}$ is algebraic, we can let $\alpha = a+q\pi$ for some rational $q$ such that $\frac{m}{n}\pi < \alpha < \frac{m+1}{n}\pi$; then $e^{i\alpha}$ is also algebraic, so it follows from the above that $(e^{ia})^r = e^{i\alpha r} \cdot (e^{i\pi r})^{-q}$ is as well.

Finally, if $x_1,x_2,x_3$ are nonzero, multiplicatively independent algebraic numbers (meaning that if integers $n_1,n_2,n_3$ are not all zero, then $x_1^{n_1}x_2^{n_2}x_3^{n_3} \neq 1$), and if each $x_j^r$ is algebraic, then $r$ must be rational \cite[\S II.1, Corollary 1]{lang}.  According to Proposition~\ref{prop:log-algebraic-vector-space}, we can take $x_j = e^{ia_j}$ for some $a_1,a_2,a_3 \in \R$ such that the numbers $a_1,a_2,a_3,\pi$ are linearly independent over $\Q$; then $x_j^r$ is also algebraic, as argued above.  The condition $\prod x_j^{n_j} = 1$ is equivalent to $\sum n_j\alpha_j = 2k\pi$ for some integer $k$, and then $n_1=n_2=n_3=0$ by the assumed linear independence, so the $x_j$ are algebraically independent and hence $r$ is rational.
\end{proof}

The following proposition, which completes the proof of Theorem~\ref{thm:limit-slope-rational}, is surely known to experts, but we could not find a proof in the literature so we include one for completeness.

\begin{proposition} \label{prop:log-algebraic-vector-space}
Let $V$ denote the $\Q$-vector space of all $x \in \R$ such that $e^{ix}$ is an algebraic number.  Then $V$ has infinite dimension over $\Q$.
\end{proposition}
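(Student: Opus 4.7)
The plan is to use the surjection $\phi\colon V \to A := \Qbar \cap S^1$, $x \mapsto e^{ix}$, to reduce the claim to a statement about multiplicative independence in $A$ modulo roots of unity, and then to exhibit an infinite independent family using the splitting of primes in the Gaussian integers $\Z[i]$.

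First I would observe that $\ker\phi = 2\pi\Z$ and that $\phi^{-1}(T) = 2\pi\Q$, where $T \subset A$ denotes the subgroup of roots of unity (if $e^{ix} \in T$ then $e^{iNx} = 1$ for some positive integer $N$, so $Nx \in 2\pi\Z$, whence $x \in 2\pi\Q$). Therefore $\phi$ descends to an isomorphism of abelian groups $V/(2\pi\Q) \cong A/T$, and since $V/(2\pi\Q)$ is a $\Q$-vector space, proving $\dim_\Q V = \infty$ reduces to showing that the (torsion-free) group $A/T$ has infinite rank. The key construction is: for each rational prime $p \equiv 1 \pmod 4$, factor $p = \pi_p \overline{\pi_p}$ in the principal ideal domain $\Z[i]$ with $\pi_p$ irreducible, and set
\[ \alpha_p := \pi_p / \overline{\pi_p} \in \Q(i) \subset \Qbar. \]
Since $|\alpha_p|^2 = \pi_p\overline{\pi_p}/(\overline{\pi_p}\pi_p) = 1$, we have $\alpha_p \in A$.

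The main obstacle is verifying multiplicative independence of the $\{\alpha_p\}$ modulo $T$, which I would handle as follows. Suppose $\prod_p \alpha_p^{n_p} = \zeta \in T$ with finitely many nonzero $n_p \in \Z$. The roots of unity in the quadratic field $\Q(i)$ form the group $\{\pm 1, \pm i\}$ (by a brief Galois argument: a primitive $n$th root of unity has degree $\varphi(n)$ over $\Q$, forcing $\varphi(n) \leq 2$, which permits only $n \in \{1,2,3,4,6\}$, and of these only $n \mid 4$ gives a subfield of $\Q(i)$). Hence $\zeta$ is a unit in $\Z[i]$. Writing $n_p = a_p - b_p$ with $a_p, b_p \geq 0$, clearing denominators, and passing to ideals (which kills the unit $\zeta$), we obtain
\[ \prod_p (\pi_p)^{a_p} (\overline{\pi_p})^{b_p} \;=\; \prod_p (\pi_p)^{b_p} (\overline{\pi_p})^{a_p}. \]
The prime ideals $(\pi_p)$ and $(\overline{\pi_p})$ are distinct because $p$ splits rather than ramifies in $\Z[i]$, and for $p \neq q$ the pairs $\{(\pi_p),(\overline{\pi_p})\}$ and $\{(\pi_q),(\overline{\pi_q})\}$ are disjoint since they lie over distinct rational primes. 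Unique factorization therefore forces $a_p = b_p$ for every $p$, i.e.\ $n_p = 0$ for every $p$.

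Finally, I would pick any $x_p \in V$ with $e^{ix_p} = \alpha_p$ and verify that the $x_p$ have $\Q$-linearly independent images in $V/(2\pi\Q)$: any relation $\sum_p q_p x_p \in 2\pi\Q$ with $q_p \in \Q$ can be scaled by a common denominator to produce an integer relation $\sum_p n_p x_p \in 2\pi\Z$, and exponentiating gives $\prod_p \alpha_p^{n_p} = 1 \in T$, which by the preceding paragraph forces all $n_p = 0$. Hence $V/(2\pi\Q)$, and therefore $V$, has infinite dimension over $\Q$. Apart from the multiplicative-independence step, all of the argument is formal manipulation of the exponential exact sequence; the unique-factorization argument in $\Z[i]$ is where the content lies.
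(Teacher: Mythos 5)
Your proof is correct, but it takes a genuinely different route from the paper's. The paper argues by contradiction: assuming $\dim_\Q V = n < \infty$ with basis $a_1,\dots,a_n$, it forms the number field $K = \Q(e^{ia_1},\dots,e^{ia_n})$ and observes that every algebraic $z$ with $|z|=1$ must have some positive power of degree at most $[K:\Q]$; it then refutes this using Salem numbers, whose unimodular conjugates have all powers of a fixed, arbitrarily large degree (the existence of Salem numbers of arbitrarily large degree being extracted from the fact that the set of Salem numbers accumulates at Pisot numbers). You instead construct an explicit infinite $\Q$-linearly independent family: reduce via the exponential sequence to exhibiting infinitely many elements of $\Qbar \cap S^1$ that are multiplicatively independent modulo roots of unity, and take $\alpha_p = \pi_p/\overline{\pi_p}$ for split primes $p \equiv 1 \pmod 4$ in $\Z[i]$, with independence following from unique factorization and the fact that the only roots of unity in $\Q(i)$ are $\pm 1, \pm i$. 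Each step of your argument checks out, including the point (implicit in the case of a single nonzero exponent) that no $\alpha_p$ is itself a root of unity. What your approach buys is self-containment and explicitness: it needs only that $\Z[i]$ is a PID and that there are infinitely many primes congruent to $1$ modulo $4$, whereas the paper imports nontrivial facts about Salem numbers. What the paper's approach buys is a slightly stronger-flavored conclusion along the way — a single unimodular algebraic number all of whose powers have large degree — which is not needed here but is the natural statement its contradiction scheme demands.
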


\begin{proof}
Suppose that $V$ has a finite basis $a_1,\dots,a_n$, and let $K = \Q(e^{ia_1},\dots,e^{ia_n})$.  Since each $e^{ia_j}$ is algebraic, $K$ is a finite extension of $\Q$.  Any nonzero $x\in V$ can be written as a $\Q$-linear combination of the $a_i$, so upon clearing denominators we have integers $q \geq 1$ and $c_1,\dots,c_n$ such that $qx = c_1a_1 + c_2a_2 + \dots, + c_na_n$, and then
\[ (e^{ix})^q = (e^{ia_1})^{c_1} (e^{ia_2})^{c_2} \dots (e^{ia_n})^{c_n} \in K. \]
So for every algebraic number $z \in \C$ with $\lvert z\rvert = 1$, there is a positive integer $q$ such that $z^q$ has degree at most $[K:\Q]$ over $\Q$.  If for any $d \geq 1$ we can find an algebraic number $z$ such that $|z|=1$ and every power $z^q$ ($q \geq 1$) has degree at least $d$, then taking $d > [K:\Q]$ will give a contradiction, proving that $\dim_\Q(V) = \infty$.

In order to find such $z$, we turn to Salem numbers, for which we refer to the survey \cite{smyth}.  These are real algebraic integers $s > 1$ all of whose Galois conjugates satisfy $|z| \leq 1$, and such that at least one conjugate satisfies $|z|=1$.  It follows from these properties that if $s$ has minimal polynomial $p(t) \in \Z[t]$ of degree $d$, then $p(s)=p(\frac{1}{s})=0$ and all other roots of $p$ lie on the unit circle \cite[Lemma~1]{smyth}; and that $s^q$ is also an algebraic integer of degree $d$ for all $q \geq 1$ \cite[Lemma~2]{smyth}.

Let $s$ be a Salem number with minimal polynomial $p(t)$ and splitting field $L$.  Letting $z$ be a root of $p$ with $|z|=1$ and taking $\sigma \in \Gal(L/\Q)$ for which $\sigma(s)=z$, we know that $z^q = \sigma(s^q)$ has the same minimal polynomial as $s^q$, since if $f \in \Z[t]$ is irreducible and $f(s^q) = 0$ then $f(z^q) = f(\sigma(s^q)) = \sigma(f(s^q)) = 0$ as well.  We conclude that $z^q$ is algebraic of degree $\deg(p)$ for all integers $q \geq 1$, so the proof will be complete if we can take $\deg(p)$ arbitrarily large, i.e.\ if there are Salem numbers of arbitrarily large degree.

Given any fixed $c > 0$, the set of Salem numbers $s \leq c$ of degree at most $d$ is finite.  This is because if $s$ is a Salem number with minimal polynomial $p(t) = t^d + a_1t^{d-1} + \dots + a_d \in \Z[t]$, then each coefficient $a_k$ is (up to sign) the sum of all $k$-fold products of distinct roots of $p$; there are ${d-1 \choose k-1}$ such products which involve $s$ and ${d-1 \choose k}$ which only involve the other roots, and since the other roots all have modulus at most 1, we get bounds $|a_k| \leq c{d-1 \choose k-1} + {d-1 \choose k}$.  But the set of Salem numbers has limit points, including what are called Pisot numbers \cite[Section~3.1]{smyth} (which include $\sqrt{2}+1$, for concreteness), so a bounded neighborhood of any limit point contains infinitely many Salem numbers and hence Salem numbers of arbitrarily high degree, as desired.
\end{proof}

\section{The $A$-polynomial of an $SU(2)$-averse knot} \label{sec:a-polynomial}

The $A$-polynomial of a knot $K \subset S^3$, defined by Cooper, Culler, Gillet, Long, and Shalen \cite{ccgls}, is a polynomial which captures the appropriate analogue of the pillowcase image of $\ch(K)$ for $SL_2(\C)$ representations of $\pi_1(S^3 \ssm K)$.  Since $SU(2) \subset SL_2(\C)$, it is possible to use what we know about $\ch(K)$ for an $SU(2)$-averse knot $K$ to say something about its $A$-polynomial, and by deep results of \cite{ccgls} we can then in turn understand the geometry of the complement of $K$.  Our goals are to show that the limit slope $r(K)$ is always a \emph{boundary slope} for $K$, meaning that there is an essential surface in the exterior of $K$ which intersects $\partial N(K)$ in parallel curves of slope $r(K)$ -- the set of such slopes for any fixed knot is finite \cite{hatcher} -- and to study the constants $c_j$ which appear in Theorem~\ref{thm:finitely-many-lines}.

We begin by recalling the construction of the $A$-polynomial, borrowing our notation from \cite{dunfield-garoufalidis}.  We define the $SL_2(\C)$ character variety of a manifold $M$ to be the quotient
\[ \chsl(M) = \Hom(\pi_1(M), SL_2(\C)) / SL_2(\C), \]
in which $SL_2(\C)$ acts by conjugation.  Given a knot $K\subset S^3$ and writing $\chsl(K)$ in place of $\chsl(S^3 \ssm N(K))$, the inclusion map $i: T^2 = \partial N(K) \hookrightarrow S^3 \ssm N(K)$ induces a restriction map
\[ i^*: \chsl(K) \to \chsl(T^2). \]
The part of $\chsl(T^2)$ consisting of conjugacy classes of diagonal representations $\rho$ has $\C^* \times \C^*$ as a branched double cover, where we take peripheral elements $\mu,\lambda \in \pi_1(T^2)$ and identify a point $(M,L) \in \C^*\times\C^*$ with the conjugacy class of $\rho$ such that
\[ \rho(\mu) = \twomatrix{M}{0}{0}{M^{-1}}, \qquad \rho(\lambda) = \twomatrix{L}{0}{0}{L^{-1}}. \]
(The covering map identifies $(M,L)$ with $(M^{-1},L^{-1})$, exactly as in the $SU(2)$ case.)

We let $V$ be the union of the closures $\overline{i^*(X)}$ over all irreducible components $X \subset \chsl(K)$ such that $i^*(X)$ has complex dimension 1; by \cite[Lemma~2.1]{dunfield-garoufalidis}, we know that $\dim_\C(i^*(X))$ is always either 0 or 1.  We then let $\overline{V}$ be the preimage of $V$ in $\C^* \times \C^*$.  Then $\overline{V}$ is a complex algebraic plane curve, so it is defined by a single polynomial in $\C[M,L]$, which is uniquely determined up to a choice of normalization and can even be taken with integer coefficients.  The $A$-polynomial $A_K(M,L) \in \Z[M,L]$ is this defining polynomial.

In what follows we will also make use of the \emph{Newton polygon} $\cN_f \subset \R^2$ of a polynomial $f \in \C[M,L]$: this is the convex hull in the $(M,L)$-plane of the lattice points $(a,b) \in \Z^2$ for which the $M^aL^b$-coefficient of $f$ is nonzero.  The following fact about Newton polygons is standard, but we include a proof for completeness.

\begin{lemma} \label{lem:minkowski-sum}
Given two nonzero polynomials $f,g \in \C[M,L]$, the Newton polygon $\cN_{fg}$ is the \emph{Minkowski sum} of $\cN_f$ and $\cN_g$, defined as the set of points $p+q$ where $p \in \cN_f$ and $q\in\cN_g$.
\end{lemma}

\begin{proof}
To prove that $\cN_{fg} \subset \cN_f + \cN_g$, we note that any vertex $(a,b)$ of $\cN_{fg}$ comes from a monomial $cM^aL^b$ of $fg$ with $c$ nonzero, and this means that there must be nonzero monomials $c_1M^{a_1}L^{b_1}$ and $c_2M^{a_2}L^{b_2}$ of $f$ and $g$ respectively with $a=a_1+a_2$ and $b=b_1+b_2$.  Then $(a_1,b_1) \in \cN_f$ and $(a_2,b_2) \in \cN_g$, and so $(a+b) = (a_1+b_1,a_2+b_2)$ belongs to $\cN_f+\cN_g$.  Since $\cN_f+\cN_g$ is convex and contains every vertex of $\cN_{fg}$, it contains all of $\cN_{fg}$.

For the opposite inclusion, we pick a vertex $v = (a,b)$ of $\cN_f+\cN_g$.  We claim that if we write $v = v_f+v_g$ where $v_f =(a_f,b_f) \in \cN_f$ and $v_g = (a_g,b_g) \in \cN_g$, then $v_f$ and $v_g$ are vertices of $\cN_f$ and $\cN_g$ and are uniquely determined by $v$.  Since they are vertices, we know that the coefficients of $M^{a_f}L^{b_f}$ in $f$ and of $M^{a_g}L^{b_g}$ in $g$ are nonzero.  The uniqueness implies that if we write $fg$ as a sum of monomials of the form $c_{f,a_1,b_1}c_{g,a_2,b_2}M^{a_1+a_2}L^{b_1+b_2}$, where $c_{f,a_1,b_1}M^{a_1}L^{b_1}$ is a nonzero monomial of $f$ and $c_{g,a_2,b_2}M^{a_2}L^{b_2}$ is a nonzero monomial of $g$, then exactly one summand is a scalar multiple of $M^aL^b$, and its coefficient is $c_{f,a_f,b_f}c_{g,a_g,b_g} \neq 0$.  But then $v=(a,b)$ also belongs to $\cN_{fg}$, and since $\cN_{fg}$ contains all of the vertices of $\cN_f+\cN_g$ we conclude that $\cN_f+\cN_g \subset \cN_{fg}$.

To prove the claim, suppose we have written the vertex $v=(a,b)$ of $\cN_f+\cN_g$ as a sum of two points $v_f = (a_f,b_f) \in \cN_f$ and $v_g = (a_g,b_g) \in \cN_g$.  If $v_f$ is not a vertex of $\cN_f$, then there is a nonzero $w\in\R^2$ and an $\epsilon > 0$ such that $\cN_f$ contains the line segment $v_f + tw$, $-\epsilon<t<\epsilon$, and then $\cN_f+\cN_g$ contains the line segment
\[ \left\{ \left(v_f+tw\right) + v_g = v+tw \mid -\epsilon < t < \epsilon \right\}. \]
But this contradicts the claim that $v$ is a vertex of $\cN_f+\cN_g$, so $v_f$ must have been a vertex of $\cN_f$, and likewise for $v_g \in \cN_g$.  Now if we can write $v$ as a sum of elements of $\cN_f$ and $\cN_g$ in two different ways, say
\[ v = v_f + v_g = v'_f + v'_g, \qquad v_f,v'_f \in \cN_f, \ v_g,v'_g \in \cN_g, \]
then by the convexity of $\cN_f$ and $\cN_g$ we also have
\[ v = \left(\frac{v_f+v'_f}{2}\right) + \left(\frac{v_g+v'_g}{2}\right) \in \cN_f + \cN_g. \]
But $\frac{v_f+v'_f}{2}$ cannot be a vertex of $\cN_f$, since it is the midpoint of a line segment in $\cN_f$, so this is a contradiction and we conclude that the representation $v=v_f+v_g$ is unique.
\end{proof}

Lemma~\ref{lem:minkowski-sum} works equally well for Laurent polynomials $f,g \in \C[M^{\pm1},L^{\pm1}]$, and we will generally consider the $A$-polynomial in this setting.

\begin{lemma} \label{lem:irreducible-newton}
If $p,q$ are relatively prime integers which are not both zero and $k$ is a nonzero complex number, then the polynomial $f(M,L) = M^pL^q - k \in \C[M^{\pm 1},L^{\pm 1}]$ is irreducible.
\end{lemma}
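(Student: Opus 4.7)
The plan is to reduce irreducibility of $f(M,L) = M^pL^q - k$ to the irreducibility of a linear polynomial via an $SL_2(\ZZ)$ change of variables. Since $\gcd(p,q)=1$, choose integers $a,b \in \ZZ$ with $pa + qb = 1$, so that the matrix $\left(\begin{smallmatrix} p & q \\ -b & a \end{smallmatrix}\right)$ lies in $SL_2(\ZZ)$. This matrix induces a ring automorphism $\varphi$ of $\C[M^{\pm 1}, L^{\pm 1}]$, determined by $\varphi(U) = M^pL^q$ and $\varphi(V) = M^{-b}L^a$ for new indeterminates $U,V$; equivalently, $\varphi^{-1}$ sends $M \mapsto U^aV^{-q}$ and $L \mapsto U^bV^p$ (a straightforward computation using $pa + qb = 1$). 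Under this automorphism, $\varphi^{-1}(f) = U - k$, so it suffices to prove that $U - k$ is irreducible in $\C[U^{\pm 1}, V^{\pm 1}]$.

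To do this, I would invoke the normal form in a Laurent polynomial ring: the units of $\C[U^{\pm 1}, V^{\pm 1}]$ are precisely the Laurent monomials $\alpha U^i V^j$ with $\alpha \in \C^*$, and every nonzero element can be written uniquely, up to a unit, as a polynomial $g \in \C[U,V]$ which is not divisible by either $U$ or $V$. The element $U-k$ is already of this form. Hence any factorization $U - k = f_1 f_2$ with $f_i = U^{a_i} V^{b_i} g_i$ and $g_i \in \C[U,V]$ coprime to $UV$ forces $a_1 + a_2 = b_1 + b_2 = 0$ and $g_1 g_2 = c\,(U-k)$ for some $c \in \C^*$. But $U - k$ is irreducible in the UFD $\C[U,V]$ — it has degree $1$ in $U$ with unit leading coefficient and nonzero constant term $-k$ — so one of $g_1, g_2$ must be a nonzero scalar, making the corresponding $f_i$ a unit of $\C[U^{\pm 1}, V^{\pm 1}]$.

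Since ring automorphisms preserve irreducibility, $f = \varphi(U-k)$ is irreducible. The edge case where one of $p,q$ is zero is handled by the same argument, because coprimality forces the other exponent to be $\pm 1$ and the corresponding matrix is still in $SL_2(\ZZ)$ (indeed, in this case $f$ is already, up to a unit, a linear polynomial in one variable). The only mildly delicate step is the bookkeeping of units in the Laurent ring; once the monomial-times-coprime-polynomial normal form is in hand, everything reduces to the familiar irreducibility of $U-k$ in $\C[U,V]$, so I do not anticipate a genuine obstacle.
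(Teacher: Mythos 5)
Your proof is correct, but it takes a genuinely different route from the paper's. The paper argues via Newton polygons: since $\gcd(p,q)=1$, the Newton polygon $\cN_f$ is the segment from $(0,0)$ to $(p,q)$, which meets $\Z^2$ only at its two endpoints, whereas a factorization $f=gh$ with neither factor a monomial would make $\cN_f$ a Minkowski sum of two polygons each containing at least two lattice points, hence would force $\cN_f$ to contain at least three. Your argument instead uses the $SL_2(\Z)$-action on $\C[M^{\pm1},L^{\pm1}]$ (a monomial change of coordinates on $\C^*\times\C^*$) to transport $f$ to $U-k$, and then reduces to the irreducibility of a linear polynomial in $\C[U,V]$ together with the standard normal form for units in a Laurent polynomial ring; your bookkeeping there is correct, and you rightly use $k\neq 0$ to ensure $U-k$ is not divisible by $U$ (i.e., is not a unit). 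Both proofs use coprimality and $k\neq0$ in essential ways and each is complete. The paper's version is shorter and thematically matched to the rest of Section 6, where Newton polygons and edge polynomials of $A_K$ carry the main argument; yours is the more standard commutative-algebra reduction and has the conceptual advantage of exhibiting explicitly the torus automorphism that straightens the slope $p/q$, which echoes the reparametrizations of peripheral curves used elsewhere in the paper.
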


\begin{proof}
The Newton polygon $\cN_f$ is the line segment from $(0,0)$ to $(p,q)$, which only intersects $\Z^2$ at its two endpoints; there are no lattice points on its interior since $p$ and $q$ are relatively prime.  If $f = gh$ where $g$ and $h$ are not monomials, then $\cN_g$ and $\cN_h$ each contain at least two lattice points as well, so their Minkowski sum $\cN_{gh} = \cN_f$ must have at least three lattice points and this is impossible.
\end{proof}

\begin{proposition} \label{prop:a-polynomial-linear-factor}
Let $K$ be a nontrivial knot, and suppose that the pillowcase image of $\ch^*(K)$ contains a nontrivial line segment of slope $-r \in \Q$; we write $r = \frac{p}{q}$, where $p$ and $q$ are relatively prime integers and $q \geq 1$.  If this segment belongs to the line $\beta \equiv -r\alpha + c \pmod{2\pi}$, then $M^pL^q - e^{iqc}$ divides $A_K(M,L)$.
\end{proposition}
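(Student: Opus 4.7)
The plan is to promote the given real one-parameter family of irreducible $SU(2)$ characters to a complex one-dimensional algebraic piece of $\chsl(K)$ whose image in $\C^* \times \C^*$ is the affine curve $C := \{M^p L^q = e^{iqc}\}$, and then invoke Lemma~\ref{lem:irreducible-newton} to deduce divisibility.

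First I would convert the slope-intercept condition into an algebraic identity. The relation $\beta \equiv -(p/q)\alpha + c \pmod{2\pi}$ rewrites as $p\alpha + q\beta \equiv qc \pmod{2\pi}$, so any irreducible $\rho \in \ch^*(K)$ with pillowcase coordinates $(\alpha,\beta)$ on the segment satisfies
\[
\rho(\mu^p\lambda^q) = \diag(e^{iqc}, e^{-iqc}),
\]
and hence its diagonal lift $(M,L) = (e^{i\alpha}, e^{i\beta}) \in \C^* \times \C^*$ lies on $C$. Since $SU(2) \hookrightarrow SL_2(\C)$ carries irreducible characters to irreducible characters, this produces an uncountable set $A \subset \chsl^*(K)$ whose $i^*$-image is the whole line segment (hence uncountable), and whose $\C^*\times\C^*$-lifts are all contained in $C$.

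Second I would isolate an irreducible component. Because $\chsl(K)$ has only finitely many irreducible components, some component $X$ meets $A$ in a set whose $i^*$-image is infinite. In particular $i^*(X)$ is infinite, so the dichotomy of \cite[Lemma~2.1]{dunfield-garoufalidis} forces $\dim_\C i^*(X) = 1$ and $X$ therefore contributes to $V$. Lifting to $\C^*\times\C^*$, the algebraic set $\overline{V}$ contains infinitely many points of the irreducible curve $C$ (Lemma~\ref{lem:irreducible-newton}), and so must contain all of $C$. Since $M^pL^q - e^{iqc}$ is the irreducible defining polynomial of $C$, it must divide the defining polynomial $A_K(M,L)$ of $\overline{V}$.

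The main technical obstacle, as I see it, is the bridge from a real analytic arc of $SU(2)$ characters to a bona fide complex algebraic subvariety of $\chsl(K)$; the pigeonhole over the finitely many irreducible components, combined with the $0$/$1$ dimension dichotomy for $i^*$-images, is what secures this step, after which the irreducibility statement of Lemma~\ref{lem:irreducible-newton} makes the divisibility automatic.
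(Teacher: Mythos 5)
Your proposal is correct and follows essentially the same route as the paper: rewrite the line condition as $\rho(\mu^p\lambda^q)=\diag(e^{iqc},e^{-iqc})$, pigeonhole the infinitely many distinct characters into a single irreducible component $X$ of $\chsl(K)$ with $\dim_\C i^*(X)=1$, and then use the irreducibility of $M^pL^q-e^{iqc}$ from Lemma~\ref{lem:irreducible-newton} to conclude that it divides the defining polynomial of $\overline{V}$ and hence $A_K(M,L)$. The only cosmetic difference is that you phrase the final step as $\overline{V}$ containing all of the curve $C$, whereas the paper phrases it as $f$ and $P$ sharing a common factor; these are equivalent.
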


\begin{proof}
By hypothesis there are infinitely many $\rho \in R^*(K)$ whose conjugacy classes have distinct images in the pillowcase $\ch(T^2)$, and which satisfy $\rho(\mu^p\lambda^q) = \diag(e^{iqc},e^{-iqc})$.  We view these classes $[\rho]$ as points of $\chsl(K)$ via the inclusion $SU(2) \subset SL_2(\C)$.  Since $\chsl(K)$ has finitely many irreducible components, some irreducible component $X \subset \chsl(K)$ contains infinitely many of the conjugacy classes $[\rho]$, whose images under $i^*$ are all distinct.  Then $i^*(X)$ cannot be 0-dimensional, so $\dim_\C(i^*(X)) = 1$.

Suppose that the preimage of $i^*(X)$ in $\C^* \times \C^*$ has defining polynomial $P(M,L)$, and let $f(M,L) = M^pL^q - e^{iqc}$.  Then the equation $f(M,L) = 0$ is satisfied by infinitely many points of the plane curve $P(M,L) = 0$, namely each of the $i^*([\rho])$ described above, and so $f(M,L)$ and $P(M,L)$ have a common factor.   This factor must be $f(M,L) = M^pL^q - e^{iqc}$ itself, since Lemma~\ref{lem:irreducible-newton} says that $f$ is irreducible, and so $f$ divides $P$.  But $P$ divides $A_K(M,L)$ by definition, so $f(M,L)$ does as well.
\end{proof}

\begin{theorem} \label{thm:c-rational}
Let $K$ be a nontrivial knot, and suppose for some $r \in \Q$ that the pillowcase image of $\ch^*(K)$ contains a nontrivial segment of the line $\beta \equiv -r\alpha + c \pmod{2\pi}$.  Then
\begin{itemize}
\item $r$ is the boundary slope of an essential surface in $S^3 \ssm N(K)$; and
\item $e^{ic}$ is a root of unity, or equivalently $c \in 2\pi\Q$.
\end{itemize}
\end{theorem}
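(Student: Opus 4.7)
The plan is to derive both conclusions from Proposition~\ref{prop:a-polynomial-linear-factor} combined with two foundational results from \cite{ccgls}: that the slopes of the sides of the Newton polygon of $A_K$ are boundary slopes of $K$, and that each edge polynomial of $A_K$ is, up to a unit and a monomial, a product of cyclotomic polynomials in a single variable.

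Write $r = p/q$ in lowest terms with $q \geq 1$, so that Proposition~\ref{prop:a-polynomial-linear-factor} yields a factorization $A_K(M,L) = (M^p L^q - e^{iqc})\, Q(M,L)$ in $\C[M^{\pm 1}, L^{\pm 1}]$. The Newton polygon of the linear factor is the primitive segment $S$ from $(0,0)$ to $(p,q)$; since Newton polygons behave multiplicatively as Minkowski sums, we have $\cN_{A_K} = S + \cN_Q$. Because $S$ is itself a segment in direction $(p,q)$, this Minkowski sum must exhibit at least one edge of $\cN_{A_K}$ parallel to $(p,q)$, and the first cited result of \cite{ccgls} identifies the slope of such an edge (namely $p/q = r$) as a boundary slope of $K$.

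For the second conclusion, fix an edge $E \subset \cN_{A_K}$ of direction $(p,q)$. By the Minkowski sum description, $E = S + E'$ where $E'$ is either a vertex of $\cN_Q$ or an edge of $\cN_Q$ parallel to $(p,q)$. The edge polynomial of $E$ has the form $M^{i_0}L^{j_0}\, f(z)$ with $z := M^p L^q$, and a direct coefficient computation in the product $A_K = (z - e^{iqc})\,Q$ shows that $f(z) = (z - e^{iqc})\, g(z)$ for some nonzero $g \in \C[z]$ recording the corresponding edge polynomial or vertex coefficient of $Q$. For example, when $E' = \{v\}$ is a single vertex, the extremality of $v$ in $\cN_Q$ perpendicular to $(p,q)$ forces $q_{v \pm (p,q)} = 0$, and the two endpoint coefficients of $E$ work out to $-e^{iqc}q_v$ and $q_v$, giving $f(z) = q_v(z - e^{iqc})$. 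The second cited result of \cite{ccgls} forces $f(z)$ to be a product of cyclotomic polynomials up to a unit and a power of $z$, so its root $e^{iqc}$ must be a root of unity; since $(e^{ic})^q = e^{iqc}$, $e^{ic}$ is a root of unity as well.

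The principal obstacle is bookkeeping rather than ideas: one must confirm that the slope convention in \cite{ccgls} lines up with our setup so that the edge direction $(p,q)$ really does correspond to the boundary slope $r = p/q$, and one must execute the coefficient calculation at the endpoints of $E$ carefully enough to extract the factor $z - e^{iqc}$ from the edge polynomial in the general case (not just the simplest case where $E' = \{v\}$ is a single vertex). Once these normalizations are dispatched, both conclusions of the theorem follow at once.
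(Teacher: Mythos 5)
Your proposal is correct and follows essentially the same route as the paper: factor $A_K = (M^pL^q - e^{iqc})\,Q$ via Proposition~\ref{prop:a-polynomial-linear-factor}, use the Minkowski-sum structure of Newton polygons to produce a side of $\cN_{A_K}$ in the direction $(p,q)$ whose slope is a boundary slope by \cite[Theorem~3.4]{ccgls}, and then observe that the corresponding edge polynomial inherits the factor coming from $M^pL^q - e^{iqc}$, so \cite[Proposition~5.10]{ccgls} forces $e^{iqc}$, and hence $e^{ic}$, to be a root of unity. The bookkeeping you defer (the $(M,L)$ versus $(L,M)$ slope convention, and extracting the linear factor from the edge polynomial when $E'$ is an edge rather than a vertex) is handled in the paper exactly as you anticipate, by writing the extremal part $h$ of $Q$ explicitly and computing $\Theta_E(z)=(z^p-e^{iqc})\bigl(\sum_j a_j z^{pj+m_0}\bigr)$.
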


\begin{proof}
Write $r = \frac{p}{q}$ as usual.  From Proposition~\ref{prop:a-polynomial-linear-factor} we know that $f(M,L) = M^pL^q - e^{iqc}$ divides the $A$-polynomial of $K$, so we can write $A_K(M,L) = f(M,L) \cdot g(M,L)$ for some nonzero polynomial $g$.  The Newton polygon $\cN_{A_K}$ is the Minkowski sum of $\cN_f$ and $\cN_g$, and $\cN_f$ is a line segment of slope $\frac{q}{p} = \frac{1}{r}$.

Let $h(M,L)$ be the sum of all monomials of $g(M,L)$ which correspond to points of the vertex or side of $\cN_g$ on which $-qM+pL$ is maximized.  Then $\cN_h$ is either a point or a line segment of slope $\frac{q}{p} = \frac{1}{r}$ on the boundary of $\cN_g$, and $\cN_{A_K}$ also has a side $E$ of slope $\frac{1}{r}$, namely the Minkowski sum $\cN_{fh}$ of $\cN_f$ and $\cN_h$.

Now \cite[Theorem~3.4]{ccgls} says that the slopes of the sides of $\cN_{A_K}$ are all boundary slopes of essential surfaces.  Their result is stated in the $(L,M)$-plane whereas we have described $\cN_{A_K}$ in the $(M,L)$-plane, so the side $E$ of $\cN_{A_K}$ with slope $\frac{1}{r}$ actually produces a surface with boundary slope $r$, as desired.

In fact, if we write $A_K(M,L) = \sum_{m,n} b_{mn}M^mL^n$ and define the \emph{edge polynomial}
\[ \Theta_E(z) = \sum_{(m,n) \in E} b_{mn}z^m, \]
then \cite[Proposition~5.10]{ccgls} asserts that $\Theta_E(z)$ is a product of cyclotomic polynomials.  Writing $h(M,L) = \sum_{j=0}^k a_j M^{pj+m_0}L^{qj+l_0}$ for some integers $k,m_0,l_0$ and constants $a_j$, and recalling that $E$ is the Newton polygon of $fh$, we have
\[ \Theta_E(z) = (z^p - e^{iqc}) \left(\sum_{j=0}^k a_j z^{pj+m_0}\right). \]
In particular, the factor $z^p-e^{iqc}$ divides a product of cyclotomic polynomials, so its roots are all roots of unity.  But these roots are rational powers of $e^{ic}$, and therefore $e^{ic}$ must be a root of unity as well.
\end{proof}

\begin{theorem} \label{thm:su2-averse-rational}
Let $K$ be a nontrivial $SU(2)$-averse knot.  Then the limit slope $r = r(K)$ is a boundary slope for $K$, and the pillowcase image of $\ch^*(K)$ consists of finitely many isolated points together with finitely many nontrivial segments of lines $\beta \equiv -r\alpha + c_j$, where $c_j \in 2\pi\Q$ for all $j$.
\end{theorem}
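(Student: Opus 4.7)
The plan is to assemble Theorem~\ref{thm:su2-averse-rational} directly from results already established in Sections~\ref{sec:pillowcase}, \ref{sec:finiteness}, \ref{sec:rationality}, and from Theorem~\ref{thm:c-rational} in this section; no new geometric input is needed. First, I would invoke Theorem~\ref{thm:limit-slope-finite} to conclude that $r=r(K)\neq\infty$, and then Theorem~\ref{thm:limit-slope-rational} to conclude that $r\in\Q$. Then Theorem~\ref{thm:finitely-many-lines} yields finitely many constants $c_1,\dots,c_n$ such that the pillowcase image of $\ch^*(K)$ is contained in the union of the lines $\beta \equiv -r\alpha + c_j \pmod{2\pi}$, discarding any $c_j$ that is not realized.

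Next I would identify the path components. The character variety $\ch^*(K)$ is semi-algebraic, so as in the proof of Proposition~\ref{prop:path-components} its image $I'$ in the cut-open pillowcase has only finitely many path components, each of which is semi-algebraically connected and hence path-connected by \cite[Proposition~2.5.13]{bcr}. Because $I'$ is also compact (being the continuous image of the compact set $\ch^*(K)$ embedded into the cut-open pillowcase via Proposition~\ref{prop:pillowcase-facts}), each component is a closed, bounded, connected subset of one of the lines $\beta \equiv -r\alpha + c_j \pmod{2\pi}$, and is therefore either a single point or a closed, nontrivial line segment of slope $-r$.

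Finally, since $K$ is nontrivial and $SU(2)$-averse, Theorem~\ref{thm:open-pillowcase-image} guarantees that at least one path component is an actual line segment (not a point); indeed, this is what Definition~\ref{def:limit-slope} extracts. For each such nontrivial segment, lying along $\beta \equiv -r\alpha + c_j \pmod{2\pi}$, Theorem~\ref{thm:c-rational} applies and tells us simultaneously that $r$ is a boundary slope for $K$ and that $e^{ic_j}$ is a root of unity, i.e.\ $c_j \in 2\pi\Q$. A single application of Theorem~\ref{thm:c-rational} to any one such segment suffices for the boundary-slope assertion, so the conclusion is independent of which segment one picks.

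There is no real obstacle beyond bookkeeping; the theorem is a clean consolidation of previous results. The one point requiring attention is that the constraint $c_j \in 2\pi\Q$ comes from Theorem~\ref{thm:c-rational}, which is stated only for nontrivial segments, so I would be careful to phrase the conclusion of the theorem exactly in the form ``isolated points together with nontrivial segments, with $c_j \in 2\pi\Q$ for the segments,'' matching the hypotheses under which Theorem~\ref{thm:c-rational} is available.
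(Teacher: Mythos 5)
Your proposal is correct and follows essentially the same route as the paper, which likewise assembles the theorem from Theorems~\ref{thm:open-pillowcase-image}, \ref{thm:limit-slope-finite}, \ref{thm:limit-slope-rational}, \ref{thm:finitely-many-lines}, and \ref{thm:c-rational}. One small inaccuracy: $\ch^*(K)$ is generally \emph{not} compact (since $R^*(K)$ is cut out by a strict inequality, irreducibles can limit to reducibles at roots of $\Delta_K$), so you cannot conclude that each path component is closed; but this is harmless, since a connected subset of a line is an interval and hence either a point or a nontrivial segment regardless, which is all the statement requires.
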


\begin{proof}
Theorem~\ref{thm:open-pillowcase-image} guarantees the existence of a nontrivial segment of slope $-r$ in the image of $\ch^*(K)$, and $r \in \Q$ by Theorems~\ref{thm:limit-slope-finite} and \ref{thm:limit-slope-rational}.  Theorem~\ref{thm:c-rational} now says that $r$ is a boundary slope, and that for all such segments the corresponding constant $c_j$ is a rational multiple of $2\pi$.
\end{proof}

Theorem~\ref{thm:su2-averse-rational} gives us a criterion in terms of Alexander polynomials by which we can show that many knots are not $SU(2)$-averse.

\begin{proposition} \label{prop:alexander-root-averse}
Let $K \subset S^3$ be a knot with Alexander polynomial $\Delta_K(t)$, normalized so that $\Delta_K(t) = \Delta_K(t^{-1})$ and $\Delta_K(1)=1$.  If $\Delta_K(t)$ has a root $z=e^{i\theta}$ of odd order which is not a root of unity, then $K$ is not $SU(2)$-averse.  In particular, if $\Delta_K(t)$ has no cyclotomic factors and $\Delta_K(t_0) < 0$ for some $t_0=e^{i\theta_0}$, then $K$ is not $SU(2)$-averse.
\end{proposition}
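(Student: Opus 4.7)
The plan is to argue by contradiction, so assume $K$ is $SU(2)$-averse. By Theorems~\ref{thm:limit-slope-finite} and \ref{thm:limit-slope-rational} the limit slope $r := r(K)$ is a finite rational number, and $|r|>2$ by Theorem~\ref{thm:open-pillowcase-image}, so in particular $r \neq 0$. Theorem~\ref{thm:su2-averse-rational} then guarantees that the pillowcase image of $\ch^*(K)$ consists of finitely many isolated points together with finitely many nontrivial segments of lines of the form $\beta \equiv -r\alpha + c_j \pmod{2\pi}$, with each $c_j \in 2\pi\Q$.

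The next step, which is the main external input, is to apply a classical deformation theorem (due to Burde and de~Rham, with refinements by Frohman--Klassen, Heusener--Kroll, and Ben~Abdelghani) to convert the odd-order root $e^{i\theta}$ of $\Delta_K(t)$ into an arc of irreducible representations
\[ \rho_s \colon \pi_1(S^3 \ssm K) \to SU(2), \qquad s \in (0,\epsilon), \]
whose pillowcase images $(\alpha_s,\beta_s)$ converge to $(\alpha_0,0)$ as $s \to 0^+$, where $2\alpha_0 \equiv \theta \pmod{2\pi}$, and for which $\alpha_s$ varies nontrivially with $s$. The odd-multiplicity hypothesis is exactly what ensures that the Burde--de~Rham obstruction to deforming the reducible representation vanishes, producing a genuine one-parameter family of irreducibles rather than just an infinitesimal deformation. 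Identifying the precise form of this deformation theorem in the literature is the step I expect to require the most care.

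Once the arc is in hand, its pillowcase image lies inside $i^*(\ch^*(K))$, and since the $\alpha$-coordinate varies the image cannot coincide with a single isolated point. By Theorem~\ref{thm:su2-averse-rational} it therefore lies in one of the closed line segments $\beta \equiv -r\alpha + c_j \pmod{2\pi}$. Passing to the limit $s \to 0^+$, the endpoint $(\alpha_0,0)$ lies on the same line, which gives
\[ r\alpha_0 \equiv c_j \pmod{2\pi}. \]
Since $r \in \Q\setminus\{0\}$ and $c_j \in 2\pi\Q$, this forces $\alpha_0 \in \pi\Q$, and hence $\theta \equiv 2\alpha_0 \pmod{2\pi}$ also lies in $2\pi\Q$. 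But then $e^{i\theta}$ is a root of unity, contradicting the hypothesis and completing the proof of the first statement.

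For the ``in particular'' clause, the functional equation $\Delta_K(t) = \Delta_K(t^{-1})$ forces $\Delta_K$ to be real-valued on the unit circle. Since $\Delta_K(1) = 1 > 0$ and $\Delta_K(t_0) < 0$ for some $t_0 = e^{i\theta_0}$, the intermediate value theorem produces a root of $\Delta_K$ on the unit circle at which the polynomial changes sign, and any such root must have odd multiplicity. The assumption that $\Delta_K$ has no cyclotomic factors guarantees that this root is not a root of unity, and so the first part applies.
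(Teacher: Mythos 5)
Your proposal is correct in substance and follows essentially the same route as the paper: reduce the ``in particular'' clause to the main statement via the intermediate value theorem, use the odd-order root $e^{i\theta}$ to produce irreducible characters accumulating at $(\theta/2,0)$ in the pillowcase, and then observe that the resulting line segment of slope $-r$ through $(\theta/2,0)$ would have constant $c = r\theta/2 \notin 2\pi\Q$, contradicting Theorem~\ref{thm:su2-averse-rational}. The one caveat is the step you yourself flag: for roots of odd order greater than one there is no off-the-shelf ``arc of irreducibles with varying meridional trace'' to cite, and the Burde--de~Rham/Frohman--Klassen deformation results apply to simple roots. What the paper actually does is rework the Heusener--Kroll equivariant-signature argument: if $(\theta/2,0)$ were an isolated point of the image of $\ch^*(K)$, compactness would yield a cobordism between the trace-$2\cos(\theta/2\mp\epsilon)$ slices of the irreducible representation space, forcing $\sigma_K(e^{i(\theta-2\epsilon)})=\sigma_K(e^{i(\theta+2\epsilon)})$ and contradicting the signature jump at an odd-order root. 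Note also that a convergent sequence of distinct images (rather than a genuine arc) already suffices for your endgame, since after passing to a path component such a sequence forces that component to be a nontrivial segment through $(\theta/2,0)$.
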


\begin{proof}
We first check that the given conditions on the factorization and values of $\Delta_K(t)$ suffice to produce a root $z$ of the desired form.  Since $\Delta_K(t)$ has real coefficients and $\Delta_K(t) = \Delta_K(t^{-1})$, we have
\[ \overline{\Delta_K(e^{i\theta})} = \Delta_K\left(\overline{e^{i\theta}}\right) = \Delta_K(e^{-i\theta}) = \Delta_K(e^{i\theta}) \]
for all real $\theta$, and so $\Delta_K(e^{i\theta}) \in \R$.  The continuous function $f: \R \to \R$ defined by $f(\theta) = \Delta_K(e^{i\theta})$ satisfies $f(0) = \Delta_K(1) = 1$ and $f(\theta_0) = \Delta_K(t_0) < 0$ by hypothesis, so the total multiplicity of the zeroes of $f$ in the interval $[0,\theta_0]$ must be odd, and hence some $\theta \in [0,\theta_0]$ is a zero of odd multiplicity.  Then $z=e^{i\theta}$ is a zero of $\Delta_K(t)$ with odd multiplicity, and by assumption it is not a root of unity.

In general, given the odd-order root $z=e^{i\theta}$ and letting $\gamma = \frac{\theta}{2}$, it now follows as in work of Heusener and Kroll \cite{heusener-kroll} that the abelian representation $\rho_\gamma: \pi_1(S^3 \ssm K) \to SU(2)$ with $\rho_\gamma(\mu) = \diag(e^{i\gamma}, e^{-i\gamma})$ is a limit of irreducible representations.  In fact, one can adapt their proof of \cite[Proposition~3.8]{heusener-kroll} to show that the pillowcase image of $\ch^*(K)$ has $(\gamma,0)$ as a limit point.  The rough idea is that the equivariant signatures $\sigma_K(e^{2i(\gamma \pm \epsilon)})$ are signed counts of points in the spaces of irreducible $\rho$ with $\tr(\rho(\mu)) = 2\cos(\gamma \pm \epsilon)$, and if $(\gamma,0)$ is not a limit point then these spaces are cobordant as in \cite{heusener-kroll}, hence $\sigma_K(e^{i(\theta-2\epsilon)})=\sigma_K(e^{i(\theta+2\epsilon)})$.  We briefly indicate the details below, adopting the same notation as in \cite{heusener-kroll}.

Let $\sigma \in B_n$ be a braid with closure $K$ and take $\epsilon>0$ sufficiently small, so that in particular $\gamma$ is the only root of $\Delta_K(e^{2it})$ in the interval $\gamma-\epsilon \leq t \leq \gamma+\epsilon$.  We define
\[ R_{2n} = \{(A_1,A_2,\dots,A_{2n}) \in SU(2)^{2n} \mid \tr(A_i) = \tr(A_j) \ \forall i,j \} \setminus \{ \pm(1,1,\dots,1) \}, \]
and then define several subspaces of $R_{2n}$ by
\begin{align*}
H_n &= \{(A_1,\dots,A_n,B_1,\dots,B_n) \mid A_1\dots A_n=B_1\dots B_n \}, \\
H_n^\alpha &= \{(A_1,\dots,A_n,B_1,\dots,B_n) \in H_n \mid \tr(A_1)=2\cos(\alpha)\}, \\
\Lambda_n & = \{(A_1,\dots,A_n,A_1,\dots,A_n)\}, \\
\Gamma_\sigma &= \{A_1,\dots,A_n, \sigma(A_1),\dots,\sigma(A_n)\}.
\end{align*}
(In the last case we use the fact that the action of $\sigma$ on the free group of rank $n$ induces an action of $\sigma$ on $SU(2)^n$, which we write $\sigma(A_1,\dots,A_n) = (\sigma(A_1),\dots,\sigma(A_n))$ as in \cite{heusener-kroll}.)  For each $\Theta \in \{H_n,H_n^\alpha,\Lambda_n,\Gamma_\sigma\}$ there is a diagonal action of $SU(2)$ by conjugation on $\Theta$, and we write $\hat\Theta$ to denote the quotient $\Theta / SU(2)$, minus the orbits of tuples $(A_1,\dots,A_{2n})$ such that all $A_i$ and $A_j$ commute.

Having set this up, Heusener and Kroll identify the intersection
\[ \hat{H} := \left(\hat\Lambda_n \cap \hat\Gamma_\sigma\right) \cap \bigcup_{\alpha \in [\gamma-\epsilon,\gamma+\epsilon]} \hat{H}^\alpha_n \]
in $\hat{H}_n$ with the space of conjugacy classes of irreducible representations $\rho: \pi_1(S^3 \ssm K) \to SU(2)$ satisfying
\[ 2\cos(\gamma-\epsilon) \leq \tr(\rho(\mu)) \leq 2\cos(\gamma+\epsilon). \]
The set $\hat{H}$ is not quite compact because it contains a sequence which limits to the conjugacy class $[\rho_\gamma] \not\in \hat{H}$.  Supposing that $(\gamma,0)$ is an isolated point, however, its preimage in $\hat{H}$ is a union of connected components of $\hat{H}$ and is contained in $\hat{H}^\gamma_n$, so we let $\hat{H}' \subset \hat{H}_n$ be the complement of these components.   Then $\hat{H}'$ is a compact cobordism from
\[ \hat\Lambda^{\gamma-\epsilon}_n \cap \tilde\Gamma^{\gamma-\epsilon}_\sigma \subset \hat{H}^{\gamma-\epsilon}_n \qquad\mathrm{to}\qquad \hat\Lambda^{\gamma+\epsilon}_n \cap \tilde\Gamma^{\gamma+\epsilon}_\sigma \subset \hat{H}^{\gamma+\epsilon}_n, \]
which implies as in \cite{heusener-kroll} that $h^{\gamma-\epsilon}(K) = h^{\gamma+\epsilon}(K)$.  Now \cite[Theorem~1.2]{heusener-kroll} gives us $\sigma_K(e^{2i(\gamma-\epsilon)}) = \sigma_K(e^{2i(\gamma+\epsilon)})$, where $\sigma_K$ denotes equivariant signature, and since $\Delta_K(t)$ has a root of odd order at $e^{2i\gamma}$, we have a contradiction.

Finally, we suppose that $K$ is $SU(2)$-averse with limit slope $r(K) = \frac{p}{q}$.  As shown above, we can find a sequence of distinct points in the pillowcase image of $\ch^*(K)$ which limit to $(\gamma,0)$, and by passing to a subsequence we can take them all to lie in the same path component.  This path component must then be a nontrivial segment of the line of slope $-\frac{p}{q}$ passing through $(\gamma,0)$, namely
\[ p\alpha + q\beta \equiv p\gamma \pmod{2\pi}, \]
but $p\gamma$ is not a rational multiple of $2\pi$ since $z = e^{2i\gamma}$ is not a root of unity.  This contradicts Theorem~\ref{thm:su2-averse-rational}, so $K$ cannot be $SU(2)$-averse after all.
\end{proof}

\begin{corollary}
If $K$ has Alexander polynomial $\Delta_K(t) = at - (2a-1) + at^{-1}$ for some integer $a \geq 2$, then $K$ is not $SU(2)$-averse.
\end{corollary}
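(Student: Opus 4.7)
The plan is to apply Proposition~\ref{prop:alexander-root-averse} using its second criterion: show that $\Delta_K$ has no cyclotomic factors and that $\Delta_K(t_0) < 0$ for some $t_0$ on the unit circle. The evaluation part is immediate, namely $\Delta_K(-1) = -2a - (2a-1) = -(4a-1) < 0$, so the real content is the absence of cyclotomic factors.

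To handle the factorization, I would first multiply through by $t$ to get the ordinary polynomial
\[ t\,\Delta_K(t) = at^2 - (2a-1)t + a, \]
whose roots are
\[ t_\pm = \frac{(2a-1) \pm i\sqrt{4a-1}}{2a}. \]
A direct calculation gives $|t_\pm|^2 = \frac{(2a-1)^2 + (4a-1)}{4a^2} = 1$, so I may write $t_\pm = e^{\pm i\theta_a}$ with $\cos\theta_a = \frac{2a-1}{2a} = 1 - \frac{1}{2a}$. Since $a \geq 2$, this value lies strictly between $\tfrac{1}{2}$ and $1$.

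The key step is to invoke Niven's theorem: the only rational values assumed by $\cos\theta$ at rational multiples of $\pi$ are $0, \pm\tfrac{1}{2}, \pm 1$. Since $1 - \frac{1}{2a}$ is rational but not in $\{0,\pm\tfrac{1}{2},\pm 1\}$ for any $a \geq 2$, the angle $\theta_a$ is not a rational multiple of $\pi$, and therefore $e^{i\theta_a}$ is not a root of unity. Because $\Delta_K$ is (up to a unit in $\Z[t^{\pm 1}]$) a quadratic whose only roots are $t_\pm$, neither of which is a root of unity, no cyclotomic polynomial divides $\Delta_K$. Both hypotheses of Proposition~\ref{prop:alexander-root-averse} are therefore satisfied, so $K$ cannot be $SU(2)$-averse.

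There is no real obstacle here; the only step that requires an outside ingredient is Niven's theorem, and it applies transparently. As a minor alternative, one could instead use the first criterion of Proposition~\ref{prop:alexander-root-averse}: the discriminant $-(4a-1)$ is nonzero for $a \geq 2$, so $t_\pm$ are simple roots (multiplicity one, hence odd) which are not roots of unity, giving the conclusion directly.
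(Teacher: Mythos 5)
Your proof is correct and follows the same route as the paper, which simply asserts that $\Delta_K(-1)<0$ and that $\Delta_K(t)$ is irreducible and not cyclotomic before invoking Proposition~\ref{prop:alexander-root-averse}. Your explicit computation of the roots $e^{\pm i\theta_a}$ with $\cos\theta_a = 1-\tfrac{1}{2a}$ and the appeal to Niven's theorem supply the justification the paper leaves implicit, and your closing remark about using the first criterion (simple roots of odd order) is an equally valid variant.
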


\begin{proof}
We have $\Delta_K(-1) < 0$, and $\Delta_K(t)$ is irreducible and not cyclotomic.
\end{proof}

\begin{example} \label{ex:pretzel-non-averse}
The pretzel knot $K = P(-2,3,7)$ has Alexander polynomial
\[ \Delta_K(t) = t^5 - t^4 + t^2 - t + 1 - t^{-1} + t^{-2} - t^{-4} + t^{-5}, \]
which has a root of odd order on the unit circle since $\Delta_K(i) = -3$.  Then $\Delta_K(t)$ is irreducible and not cyclotomic (in fact, $-t^5\Delta_K(-t)$ is the minimal polynomial of the smallest known Salem number: see \cite[Section~2]{smyth}), so this root is not a root of unity.  Proposition~\ref{prop:alexander-root-averse} thus ensures that $P(-2,3,7)$ is not $SU(2)$-averse.
\end{example}

\section{Small knots} \label{sec:small}

We recall that a knot $K \subset S^3$ is \emph{small} if its complement does not contain any closed  incompressible surfaces other than boundary-parallel tori.  In this section we use the results of Section~\ref{sec:a-polynomial}, together with work of Boyer and Zhang \cite{boyer-zhang-seminorms}, to develop further restrictions on the limit slopes of small $SU(2)$-averse knots.

\begin{theorem} \label{thm:small-knot-integer}
If $K$ is a small, $SU(2)$-averse knot, then its limit slope $r(K)$ is an integer.  In this case, we have $\rho(\mu^{r(K)}\lambda) = \pm I$ for all but finitely many conjugacy classes of irreducible representations $\rho: \pi_1(S^3 \ssm K) \to SU(2)$.
\end{theorem}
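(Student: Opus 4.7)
\medskip

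\noindent\textbf{Proof plan.} The plan is to combine the $A$-polynomial analysis of Section~\ref{sec:a-polynomial} with the Culler-Shalen seminorm theory for small knots developed by Boyer and Zhang \cite{boyer-zhang-seminorms}. Writing $r(K) = p/q$ in lowest terms with $q \geq 1$ (finite and rational by Theorems~\ref{thm:limit-slope-finite} and \ref{thm:limit-slope-rational}), we first gather the structural data already available: combining Theorem~\ref{thm:finitely-many-lines}, Theorem~\ref{thm:c-rational}, and Proposition~\ref{prop:a-polynomial-linear-factor}, the pillowcase image of $\ch^*(K)$ consists of finitely many isolated points together with finitely many nontrivial segments of lines $\beta \equiv -(p/q)\alpha + c_j \pmod{2\pi}$, where each $e^{ic_j}$ is a root of unity and each factor $M^pL^q - e^{iqc_j}$ divides $A_K(M,L)$. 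Each such factor arises from an irreducible algebraic curve $X_j \subset \chsl(K)$, and the peripheral trace function $f_{\mu^p\lambda^q}$ is constant on $X_j$, equal to $2\cos(qc_j)$.

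The second step is where smallness enters. For a small knot $K$, Boyer and Zhang show that the Culler-Shalen seminorm on $H_1(\partial N(K); \R)$ associated to each irreducible curve $X_j \subset \chsl(K)$ is a genuine norm on $\R^2$, and its unit ball is a polygon whose vertices are controlled by the boundary slopes of strict essential surfaces detected at ideal points of $X_j$. The fact that the trace function of a nonzero peripheral class $\gamma = \mu^p\lambda^q$ is constant on $X_j$ is highly restrictive in this regime: the usual formula expressing $\|\gamma\|_{X_j}$ in terms of the degree of $f_\gamma^2 - 4$ (together with the norm property) forces the constant value $2\cos(qc_j)$ to equal $\pm 2$, so $e^{iqc_j} = \pm 1$ and $\rho(\mu^p\lambda^q) = \pm I$ for every $\rho \in X_j$. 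A further application of the norm structure — specifically, that the direction $(p, q) \in H_1(\partial N(K); \Z)$ along which the trace is trivialized must correspond to a vertex of the dual polygon, which for small knots is forced to lie on a line of integer slope — yields $q = 1$, so $r(K) = p \in \Z$.

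The second conclusion is then essentially immediate. With $q = 1$, every irreducible $[\rho] \in \ch^*(K)$ whose pillowcase image lies on one of the finitely many segments satisfies $\rho(\mu^p\lambda) = \pm I$, and by Theorem~\ref{thm:finitely-many-lines} the complement of this union in $\ch^*(K)$ consists of only finitely many isolated conjugacy classes. Hence all but finitely many irreducible $[\rho]$ satisfy $\rho(\mu^{r(K)}\lambda) = \pm I$, as desired.

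The hard part will be invoking the right Boyer-Zhang statements to simultaneously rule out $q \geq 2$ and force the constant trace value to equal $\pm 2$. Both conclusions ultimately rest on the fact that the Culler-Shalen seminorm is a genuine norm for small knots, so that no nonzero peripheral class can be entirely annihilated along a component of the character variety unless very specific integrality constraints are satisfied. The careful bookkeeping — relating the polygonal structure of the norm's unit ball, the Newton polygon of $A_K$, and the $SU(2)$-representation data in the pillowcase — is where the smallness hypothesis is doing all the work, since for general knots the seminorm may have nontrivial kernel and the analogous rigidity fails.
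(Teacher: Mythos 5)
Your outline follows the same route as the paper (smallness forces every component of $\chsl(K)$ to be a curve, the constancy of $\tr\rho(\mu^p\lambda^q)$ on a component makes it an $r$-curve in the sense of Boyer--Zhang, and their seminorm results then give integrality and the $\pm2$ trace), but the step you defer as ``the hard part'' is described with the wrong mechanism, and a second step is missing entirely.

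First, the Boyer--Zhang input. You assert that for a small knot the Culler--Shalen seminorm of each curve $X_j$ is a genuine norm, and then that constancy of $f_{\mu^p\lambda^q}$ on $X_j$ forces integrality. These two statements are incompatible: the seminorm of a curve is (up to normalization) $\|\gamma\|_{X_j}=\deg\bigl(f_\gamma|_{X_j}\bigr)$, so constancy of $f_{\mu^p\lambda^q}$ means precisely that $\mu^p\lambda^q$ lies in the kernel of the seminorm, i.e.\ the seminorm is \emph{not} a norm on that component. The whole argument lives in the degenerate case that Boyer--Zhang treat separately: $X_0$ is a ``$\frac{p}{q}$-curve'' (one must also check, as the paper does, that no \emph{other} peripheral trace is constant on $X_0$ --- e.g.\ $\tr\rho(\lambda)$ is not --- so that exactly one slope is annihilated). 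Integrality then comes from \cite[Corollary~6.7]{boyer-zhang-seminorms} together with the fact that the meridian is not a boundary slope \cite[Theorem~2.0.3]{cgls}, and the conclusion $\tr\rho(\mu^r\lambda)=\pm2$ comes from \cite[Proposition~5.7(2)]{boyer-zhang-seminorms}, using that the knot exterior admits a cyclic filling along the slope $\infty\neq r$. Neither of these hypotheses (meridian not a boundary slope; existence of a cyclic filling of a different slope) appears in your sketch, and the degree formula for $\|\gamma\|_{X_j}$ alone does not force the constant to be $\pm2$.

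Second, your deduction of the ``all but finitely many conjugacy classes'' statement has a gap. Theorem~\ref{thm:finitely-many-lines} controls the pillowcase \emph{image} of $\ch^*(K)$: the complement of the line segments consists of finitely many isolated points of the image. But a single isolated point could a priori be the image of infinitely many pairwise non-conjugate irreducible representations, and the theorem is a statement about conjugacy classes, not image points. The paper closes this by observing that infinitely many non-conjugate $SU(2)$ representations with the same peripheral image would remain non-conjugate over $SL_2(\C)$ (Klassen), hence accumulate on an irreducible component $X_1$ on which both $f_\mu$ and $f_\lambda$ are constant, contradicting \cite[Proposition~5.7]{boyer-zhang-seminorms}. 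Some argument of this kind is needed to finish; as written, your last paragraph silently conflates ``finitely many isolated image points'' with ``finitely many conjugacy classes.''
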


\begin{proof}
We write $r(K) = \frac{p}{q}$ with $p$ and $q$ relatively prime.  Since $K$ is small, every irreducible component of the $SL_2(\C)$ character variety $\chsl(K)$ is a curve \cite[Proposition~2.4]{ccgls}.  One of these components, say $X_0$, contains infinitely many distinct points which are the images of $SU(2)$-representations on a line of the form $p\alpha+q\beta=c$ but with distinct $\beta$-coordinates in the pillowcase, so $\tr(\rho(\mu^p\lambda^q)) = 2\cos(c)$ at infinitely many points on the curve $X_0$, hence $\tr(\rho(\mu^p\lambda^q))$ is constant on $X_0$.  The image of $X_0$ in the $PSL_2(\C)$ character variety of $K$ is then a $\frac{p}{q}$-curve in the terminology of Boyer-Zhang \cite{boyer-zhang-seminorms} -- we note that the traces of other slopes cannot also be constant on $X_0$, since for example $\tr(\rho(\lambda))$ is not constant -- and the meridian of $K$ is not a boundary slope \cite[Theorem~2.0.3]{cgls}, so by \cite[Corollary~6.7]{boyer-zhang-seminorms}, the slope $\frac{p}{q} = r(K)$ must be integral.

Letting $r = r(K)$, any nontrivial segment of a line $\beta = -r\alpha + c$ in the pillowcase image of $\ch^*(K)$ produces an $r$-curve $X_0$ as above, and clearly $X_0$ contains the character of an irreducible representation.  Since the exterior of $K$ is small and admits a cyclic filling of slope $\infty \neq r$, we apply \cite[Proposition~5.7(2)]{boyer-zhang-seminorms} to see that $\tr(\rho(\mu^r\lambda)) = \pm 2$ on all of $X_0$, hence the $SU(2)$-representations which contribute to $X_0$ must satisfy $\rho(\mu^r\lambda) = \pm I$.  Thus $\rho(\mu^r\lambda) = \pm I$ for every $\rho$ with pillowcase image on the line $\beta = -r\alpha + c$.

All but finitely many isolated points in the pillowcase image of $\ch^*(K)$ belong to a nontrivial line segment, so it remains to be seen that an isolated point in the pillowcase can only be the image of finitely many conjugacy classes of $SU(2)$ representations.  Suppose that infinitely many conjugacy classes $[\rho_n] \in R^*(K)$ have the same image $(\alpha,\beta)$.  The $\rho_n$ remain pairwise non-conjugate as $SL_2(\C)$ representations by \cite[Proof of Proposition 15]{klassen}, so they produce infinitely many points in $\chsl(K)$ and hence in some irreducible component $X_1$ of the $PSL_2(\C)$ character variety.  By \cite[Proposition~5.7]{boyer-zhang-seminorms}, the function $f_\gamma = \tr(\rho(\gamma))^2-4 : X_1 \to \C$ cannot be constant for two different slopes $\gamma$, but it is clearly constant for the slopes $\mu$ and $\lambda$ and this is a contradiction.
\end{proof}

The claim in Theorem~\ref{thm:small-knot-integer} that $\rho(\mu^{r(K)}\lambda) = \pm I$ for representations along a nontrivial segment of $\beta = -r\alpha+c$ can also be proved using the $A$-polynomial as follows.  Proposition~\ref{prop:a-polynomial-linear-factor} says that $M^rL - e^{ic}$ divides $A_K(M,L)$, so $e^{ic}$ is a root of $A_K(1,L)$.  Since $K$ is small, the latter is equal to $\pm(L+1)^\alpha (L-1)^\beta L^\gamma$ for some integers $\alpha,\beta,\gamma$ by \cite[Corollary~4.5]{cooper-long-remarks}, so its nonzero root $e^{ic}$ must be $\pm 1$.

\begin{example} \label{ex:twist-knots}
Let $K_n$ denote the twist knot shown in Figure~\ref{twist knot}, where $n \in \frac{1}{2}\Z$.  Then $K_n$ is a two-bridge knot, hence small, and moreover $K_n$ is hyperbolic for all $n$ other than $-1$, $-\frac{1}{2}$, $0$, and $\frac{1}{2}$, for which it is the right-handed trefoil, unknot, unknot, and left-handed trefoil, respectively.  We claim that no hyperbolic twist knot is $SU(2)$-averse; since $K_n$ is the mirror of $K_{-\frac{1}{2}-n}$, it suffices to consider $n \in \Z$.

Indeed, suppose for some integer $n \not\in \{-1,0\}$ that $K_n$ is $SU(2)$-averse with limit slope $r = r(K_n)$.  Then just as in the proof of Theorem~\ref{thm:small-knot-integer} we see that the $SL_2(\C)$ character variety $\chsl(K_n)$ has an irreducible component $X_0$, containing irreducible characters, for which the function $\tr(\rho(\mu^r\lambda))|_{X_0}$ is constant.  Burde \cite[Section~3]{burde} showed that all irreducible $SL_2(\C)$ characters of $\pi_1(S^3 \ssm K_n)$ lie on a single irreducible curve in $\chsl(K)$, so this curve must be $X_0$; then since $K_n$ is hyperbolic we see that $X_0$ must also contain the character of a discrete faithful $SL_2(\C)$ representation, which exists by a theorem of Thurston \cite[Proposition~3.1.1]{culler-shalen-splittings}.  But then \cite[Proposition~2]{culler-shalen-bounded} says that if $\gamma$ is any nontrivial peripheral curve then the function $\tr(\rho(\gamma))$ cannot be constant on $X_0$, so we have a contradiction.
\end{example}

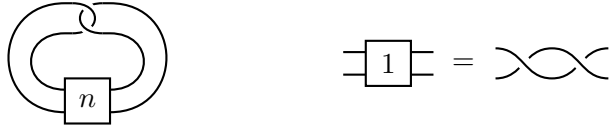
\begin{figure}
\begin{tikzpicture}[style=thick]
\begin{scope} %% twist knot diagram
  \draw (-0.3,-0.15) .. controls (-1.3,-0.15) and (-1.3,1.3) .. (-0.3,1.3) -- (-0.2,1.3);
  \draw (0.3,-0.15) .. controls (1.3,-0.15) and (1.3,1.3) .. (0.3,1.3) -- (0.2,1.3);
  \draw (-0.3,0.15) .. controls (-0.9,0.15) and (-0.9,0.9) .. (-0.3,0.9) -- (-0.2,0.9);
  \draw (0.3,0.15) .. controls (0.9,0.15) and (0.9,0.9) .. (0.3,0.9) -- (0.2,0.9);
  \begin{scope} % left half of clasp
    \clip (-0.4,0.8) -- (-0.4,1.4) -- (0.2,1.4) -- (0.2,0.85) -- (0,1.05) -- (-0.1,0.95) -- (0.05,0.8) -- cycle;
    \draw (-0.2,1.3) .. controls (0.2,1.3) and (0.2,0.9) .. (-0.2,0.9);
  \end{scope}
  \begin{scope} % right half of clasp
    \clip (-0.2,1.4) -- (-0.2,0.8) -- (0.4,0.8) -- (0.4,1.4) -- (-0.02,1.4) -- (0.18,1.15) -- (0,1.15) -- (-0.1,1.25) -- (0.05,1.4) -- cycle;
    \draw (0.2,1.3) .. controls (-0.2,1.3) and (-0.2,0.9) .. (0.2,0.9);
  \end{scope}
  \draw (-0.3,-0.3) rectangle (0.3,0.3);
  \node at (0,0) {$n$};
\end{scope}
%%%
\begin{scope}[shift={(4,0.5)}] %% twist
  \begin{scope}
    \clip (-0.3,0.3) -- (-0.3,-0.3) -- (-1,-0.3) -- (-1,0.3) -- (1,0.3) -- (1,-0.3) -- (0.3,-0.3) -- (0.3,0.3) -- cycle;
    \draw (-0.6,0.15) -- (0.6,0.15);
    \draw (-0.6,-0.15) -- (0.6,-0.15);
  \end{scope}
  \draw (-0.3,-0.3) rectangle (0.3,0.3);
  \node at (0,0) {$1$};
  \node at (1,0) {$=$};
  \begin{scope}[xscale=0.75, shift={(-0.1,0)}] % draw the twist
    \begin{scope}
      \clip (1.8,-0.3) -- (1.8,0.3) -- (4.2,0.3) -- (4.2,-0.3) -- (3,-0.3) -- (2.5,0.155) -- (2.35,0) -- (2.7,-0.3) -- cycle;
      \draw (2,-0.2) .. controls (2.5,-0.2) and (2.5,0.2) .. (3,0.2) .. controls (3.5,0.2) and (3.5,-0.2) .. (4,-0.2);
    \end{scope}
    \begin{scope}
      \clip (1.8,-0.3) -- (1.8,0.3) -- (4.2,0.3) -- (4.2,-0.3) -- (4,-0.3) -- (3.5,0.155) -- (3.35,0) -- (3.7,-0.3) -- cycle;
      \draw (2,0.2) .. controls (2.5,0.2) and (2.5,-0.2) .. (3,-0.2) .. controls (3.5,-0.2) and (3.5,0.2) .. (4,0.2);
    \end{scope}
  \end{scope}
\end{scope}
\end{tikzpicture}
\caption{The twist knot $K_n$.}
\label{twist knot}
\end{figure}

\begin{theorem} \label{thm:det-limit-slope}
Let $K$ be a small, $SU(2)$-averse knot with boundary slope $r(K)$ and with Alexander polynomial
\[ \Delta_K(t) = \sum_{j=-d}^d a_jt^j, \]
normalized so that $\Delta_K(t)=\Delta_K(t^{-1})$ and $\Delta_K(1) = 1$.  Then
\[ \det(K) \leq \sum_{j=-d}^{d} |a_j| \leq \rank(\KHI(K)) \leq |r(K)|-1, \]
where $\KHI(K)$ is the instanton knot homology of $K$.
\end{theorem}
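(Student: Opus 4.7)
The plan is to prove the chain of three inequalities in turn. The first, $\det(K) \leq \sum_j |a_j|$, is immediate from $\det(K) = |\Delta_K(-1)| = |\sum_j (-1)^j a_j|$ and the triangle inequality. For the second, $\sum_j |a_j| \leq \rank(\KHI(K))$, the plan is to invoke the result of Kronheimer and Mrowka that $\KHI(K)$ carries an Alexander $\Z$-grading whose graded Euler characteristic equals $\Delta_K(t)$; the rank of each graded summand then bounds $|a_j|$ from below, and summing yields the inequality.

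The third and main inequality, $\rank(\KHI(K)) \leq |r(K)|-1$, will be established by applying Theorem~\ref{thm:perturbed-khi-bound} to a suitable curve with $n = |r(K)|-1$. Concretely, I plan to exhibit a smooth simple closed curve $C'$ in the pillowcase, disjoint from the image $i^*(\ch^*(K))$, meeting $\{\beta \equiv 0 \pmod{2\pi}\}$ transversely in $|r(K)|-1$ points $(\alpha_j, 0)$ with $e^{2i\alpha_j}$ not a root of $\Delta_K$, and isotopic to $C = \{\alpha = \pi/2\}$ via an area-preserving isotopy of the pillowcase fixing the four orbifold points.

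To construct $C'$ I will use the strong constraint of Theorem~\ref{thm:small-knot-integer}: since $K$ is small, $r = r(K)$ is an integer and all but finitely many irreducible characters have pillowcase image on the two slope-$(-r)$ curves $\beta \equiv -r\alpha \pmod \pi$. These two curves meet $\{\beta = 0\}$ only at the $|r|+1$ points $\alpha = k\pi/r$ for $k = 0, 1, \ldots, |r|$; after removing the orbifold points $\alpha = 0, \pi$, this leaves exactly $|r(K)|-1$ interior touching points, partitioning $[0,\pi]$ into $|r(K)|$ open intervals. I will choose $C'$ to cross $\{\beta = 0\}$ transversely at one $\alpha$-value in each of $|r(K)|-1$ of these $|r(K)|$ intervals (picking those $\alpha$-values to avoid the finitely many roots of $\Delta_K$ as well as the finitely many exceptional isolated characters that need not lie on the two lines above).

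The main obstacle is routing $C'$ through the rest of the pillowcase so that it is a simple closed curve, disjoint from the slope-$(-r)$ image segments, and bounding two disks of equal area separating $\{(0,0),(0,\pi)\}$ from $\{(\pi,0),(\pi,\pi)\}$ (precisely the conditions required for $C'$ to be isotopic to $\{\alpha = \pi/2\}$ through the required area-preserving isotopy). The plan is to exploit the fact that the image segments share the common slope $-r$ and avoid a uniform neighborhood of $\alpha \in \{0,\pi\}$ by Proposition~\ref{prop:pillowcase-facts}(4), so that the complement contains parallel open channels through which $C'$ can be threaded between consecutive touching points; an additional parity adjustment (skipping two rather than one of the $|r(K)|$ intervals when $|r(K)|$ is odd) ensures compatibility with the orbifold-separation constraint on $C'$, so the number of crossings stays at most $|r(K)|-1$. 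Once $C'$ is constructed with these properties, Theorem~\ref{thm:perturbed-khi-bound} immediately yields $\rank(\KHI(K)) \leq |r(K)|-1$, completing the proof.
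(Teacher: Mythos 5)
Your handling of the first two inequalities is exactly the paper's: $\det(K)=|\Delta_K(-1)|$ plus the triangle inequality, and the categorification of $\Delta_K$ by $\KHI$. Your strategy for the third inequality is also the paper's in outline: use Theorem~\ref{thm:small-knot-integer} to confine all but finitely many irreducible characters to the two curves $r\alpha+\beta\equiv 0\pmod{\pi}$, then feed a suitable curve $C'$ into Theorem~\ref{thm:perturbed-khi-bound} with $n\leq |r|-1$.

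The gap is in the construction of $C'$, which is the entire content of this step. Your plan is to place one transverse crossing of $\{\beta\equiv 0\}$ in each of $|r|-1$ of the $|r|$ intervals $(k\pi/|r|,(k+1)\pi/|r|)$ and to join these crossings by arcs ``threaded through the parallel open channels'' between the image segments. But those channels are strips of slope $-r$: as $\alpha$ traverses the full width $[\epsilon,\pi-\epsilon]$, a single such strip winds roughly $|r|/2$ times around the $\beta$-circle and therefore meets $\{\beta\equiv 0\}$ roughly $|r|/2$ times. An arc of $C'$ that follows a channel across the pillowcase thus accumulates many additional crossings of $\{\beta\equiv 0\}$, blowing the budget of $|r|-1$; and an arc that does \emph{not} follow a channel must cross the lines $r\alpha+\beta\equiv 0\pmod\pi$, which is only permitted in the thin regions $\alpha<\epsilon$ and $\alpha>\pi-\epsilon$ supplied by Proposition~\ref{prop:pillowcase-facts}(4). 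You have not explained how to reconcile these two constraints, and your claimed crossing pattern (one point in each of $|r|-1$ intervals) is asserted rather than exhibited. The paper resolves this with a specific device you are missing: $C'$ is (a perturbation of) the boundary of a small neighborhood of a ``comb'' consisting of the circle $\alpha=\pi/2$ as spine together with the segments of the lines $r\alpha+\beta\in\pi\Z$ restricted to $\epsilon/2\leq\alpha\leq\pi/2$ as teeth. The curve hugs each tooth on both sides, turns around at the tips $\alpha=\epsilon/2$ where the image is guaranteed absent, and otherwise runs parallel to $\alpha=\pi/2$, which meets $\{\beta\equiv 0\}$ only once. Counting crossings gives two near each point $(k\pi/|r|,0)$ with $0<k<|r|/2$ plus one near $(\pi/2,0)$, i.e.\ $|r|-1$ for $r$ even and $|r|$ for $r$ odd; the odd case then needs a further explicit area-compensating perturbation to cancel two crossings, rather than the parity fix you propose. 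The area-bisection and the existence of the area-preserving isotopy to $\{\alpha=\pi/2\}$ also have to be arranged quantitatively (the paper tracks the areas of the regions being pushed across), not just asserted. Until $C'$ is actually constructed with all of these properties, the inequality $\rank(\KHI(K))\leq |r(K)|-1$ is not established.
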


\begin{proof}
The leftmost two inequalities come from the fact that $\KHI(K)$ categorifies the Alexander polynomial \cite{km-alexander,lim}, meaning that it has a decomposition
\[ \KHI(K) = \bigoplus_{j\in\Z} \KHI(K,j) \]
into canonically $\Z/2\Z$-graded summands satisfying
\[ \Delta_K(t) = - \sum_{j\in\Z} \chi(\KHI(K,j))t^j, \]
and so $\det(K) = |\Delta_K(-1)| \leq \sum_j |a_j| \leq \sum_j \rank(\KHI(K,j)) = \rank(\KHI(K))$.  Thus we only need to prove that $\rank(\KHI(K)) \leq |r(K)|-1$.

This last inequality will be proved by an application of Theorem~\ref{thm:perturbed-khi-bound}.  It suffices to find an area-preserving isotopy of the pillowcase which fixes the four corners and carries $C = \{\alpha = \frac{\pi}{2}\}$ to a curve $C'$ which avoids both the image of $\ch^*(K)$ and the points $(\alpha,0)$ where $\Delta_K(e^{2i\alpha})=0$.  If $C'$ intersects the line $\beta \equiv 0 \pmod{2\pi}$ transversely in at most $|r(K)|-1$ points, then this will prove the desired inequality.

Since $K$ is small, all but finitely many conjugacy classes of irreducible representations $\rho: \pi_1(S^3 \ssm K) \to SU(2)$ satisfy $\rho(\mu^r\lambda) = \pm I$ where $r=r(K)$, and hence their images $(\alpha,\beta)$ in the pillowcase satisfy $r\alpha+\beta \equiv 0 \pmod{\pi}$.  We take $\epsilon > 0$ so that the irreducible characters all satisfy $\epsilon < \alpha < \pi-\epsilon$.  We then let $U$ be a $\delta$-neighborhood of the set
\[ \left\{(\alpha,\beta) \ \middle|\ r\alpha+\beta \in \pi\Z, \frac{\epsilon}{2} \leq \alpha \leq \frac{\pi}{2} \right\} \cup \left\{ \left(\frac{\pi}{2},\beta\right) \ \middle| \ \beta \in \R/2\pi\Z \right\}, \]
where $0 < \delta < \frac{\epsilon}{2}$, and let $Z_0$ denote the component of $\partial U$ contained in the region $\alpha < \frac{\pi}{2}$.  The curves corresponding to different choices of $\delta$ are all disjoint, so only finitely many of them can pass through the images of irreducible characters or through points $(\alpha,0)$ where $\Delta_K(e^{2i\alpha})=0$.  Taking $\delta$ sufficiently small thus ensures that $Z_0$ avoids all such points.

Next, we perturb $Z_0$ to a smooth curve $Z$ which separates $\ch(T^2)$ into two components of equal area, by rounding corners and then pushing some segment of $Z_0$ slightly into the $\alpha > \frac{\pi}{2}$ region of the pillowcase away from the lines $r\alpha+\beta\equiv 0 \pmod{\pi}$ and $\beta\equiv 0 \pmod{2\pi}$, as shown in Figure~\ref{small knot perturbation}.  This is possible because the difference in area between the two components of $\ch(T^2) \ssm Z_0$ approaches zero as $\delta$ does, and once again it can easily be arranged to avoid all images of irreducible characters.  We also arrange that $Z$ crosses the line $\beta \equiv 0 \pmod{2\pi}$ transversely.  It does so twice near each point $(\alpha,0)$ on the pillowcase with $r\alpha \in \pi\Z$ and $0 < \alpha < \frac{\pi}{2}$, and once near $(\frac{\pi}{2},0)$, for a total of $|r|-1$ points if $r$ is even and $|r|$ points if $r$ is odd.
\begin{figure}
\begin{tikzpicture}[style=thick]
\begin{scope}[scale=1.5] % r=4
  \draw[style=thin] (1,0) -- (1,3);
  \draw[dotted] (0,1.5) -- (2,1.5);
  \node at (1.15,2.1) {$C$};
  \begin{scope}[color=red,style=semithick,rounded corners=0.75mm] % perturbation
    \draw (0.94,3) -- (0.94,1.9) -- (0.567,3);
    \draw (0.567,0) -- (0.15,1.25) to[out=108,in=108,looseness=2] (0.15,3*0.28) -- (0.43,0);
    \draw (0.43,3) -- (0.94,3-3*0.51) -- (0.94,1.3) -- (1.2,0.52) -- (1.2,0.17) -- (0.94,0.95) -- (0.94,0.39) -- (0.15,2.76) to[out=105,in=105,looseness=2] (0.15,2.37) -- (0.94,0);
    \node at (0.2,1.75) {$Z$};
  \end{scope}
  \begin{scope}[dotted] % diagonal grid lines
    \clip (0,0) rectangle (2,3);
    \foreach \i in {-1,0,...,3}
      \draw (0.5*\i,3) -- (1+0.5*\i,0);
  \end{scope}
  \draw (0,0) rectangle (2,3);
  \draw[|-] (-0.2,3) -- (-0.2,1.8);
  \node at (-0.23,1.5) {$\beta$};
  \draw[-|] (-0.2,1.25) -- (-0.2,0);
  \node at (-0.5,0) {$0$};
  \node at (-0.6,3) {$2\pi$};
  \draw[|-] (0,-0.22) -- (0.8,-0.22);
  \node at (1,-0.22) {$\alpha$};
  \draw[-|] (1.2,-0.22) -- (2,-0.22);
  \node at (2,-0.55) {$\pi$};
  \node at (0.02,-0.55) {$0$};
\end{scope}
%%%
\begin{scope}[scale=1.5,shift={(4,0)}] % r=5
  \draw[style=thin] (1,0) -- (1,3);
  \draw[dotted] (0,1.5) -- (2,1.5);
  \node at (1.15,2.5) {$C$};
  \begin{scope}[color=red,style=semithick,rounded corners=0.75mm] % perturbation
    \draw (0.94,3) -- (0.94,2.7) -- (0.87,3);
    \draw (0.87,0) -- (0.15,3.75*0.72) to[out=105,in=105,looseness=2] (0.15,3.75*0.58) -- (0.73,0);
    \draw (0.73,3) -- (0.94,3-3.75*0.19) -- (0.94, 2) -- (1.2, 1.025) -- (1.2,0.425) -- (0.94, 1.4) -- (0.94,1.2) -- (0.47,3);
    \draw (0.47,0) -- (0.15,3.75*0.32) to[out=105,in=105,looseness=2] (0.15,3.75*0.18) -- (0.33,0);
    \draw (0.33,3) -- (0.94, 3 - 3.75*0.61) -- (0.94,0);
    \node at (0.14,1.7) {$Z$};
  \end{scope}
  \begin{scope}[dotted] % diagonal grid lines
    \clip (0,0) rectangle (2,3);
    \foreach \i in {-1,0,...,4}
      \draw (0.4*\i,3) -- (0.8+0.4*\i,0);
  \end{scope}
  \draw (0,0) rectangle (2,3);
  \draw[|-] (-0.2,3) -- (-0.2,1.8);
  \node at (-0.23,1.5) {$\beta$};
  \draw[-|] (-0.2,1.25) -- (-0.2,0);
  \node at (-0.5,0) {$0$};
  \node at (-0.6,3) {$2\pi$};
  \draw[|-] (0,-0.22) -- (0.8,-0.22);
  \node at (1,-0.22) {$\alpha$};
  \draw[-|] (1.2,-0.22) -- (2,-0.22);
  \node at (2,-0.55) {$\pi$};
  \node at (0.02,-0.55) {$0$};
\end{scope}
\end{tikzpicture}
\caption{A perturbation $Z$ of the line $C = \left\{\alpha=\frac{\pi}{2}\right\}$ in the pillowcase, shown for $r=4$ (left) and $r=5$ (right).  The diagonal dotted lines are the points where $r\alpha+\beta \equiv 0 \pmod{\pi}$.}
\label{small knot perturbation}
\end{figure}
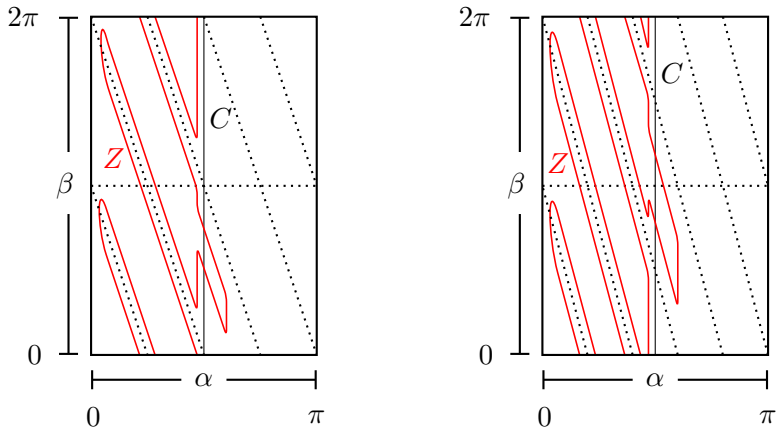

In the case where $r$ is odd, we can perturb $Z$ further to remove two of its points of intersection with $\beta\equiv 0 \pmod{2\pi}$, as shown in Figure~\ref{small knot perturbation for r odd}.  The idea is that in a fundamental domain $[0,\pi]\times[0,2\pi]$ for the pillowcase, if we assume for convenience that $r>0$ (the case $r<0$ is nearly identical), then there is a triangle with vertices
\[ \left(\frac{r-1}{2r}\pi, 2\pi\right), \qquad \left(\frac{\pi}{2},2\pi\right), \qquad \left(\frac{\pi}{2},\frac{3\pi}{2}\right), \]
whose area is $\frac{\pi^2}{8r}$, and an arc of $Z$ which intersects this triangle in two points along the line $\beta=2\pi$.  We can push this arc up across the line $\beta=2\pi$ to remove these two points, and the total area lost on that side of $Z$ (i.e., the side incident to $\alpha=0$) is less than $\frac{\pi^2}{8r}$.  We then offset this change in area by simultaneously pushing $Z$ further into the interior of the quadrilateral bounded by the points
\[ \left(\frac{r+1}{2r}\pi, 0\right), \qquad \left(\frac{\pi}{2},\frac{\pi}{2}\right), \qquad \left(\frac{\pi}{2},\frac{3\pi}{2}\right), \qquad \left(\frac{r+3}{2r}\pi,0\right), \]
whose area is $\frac{\pi^2}{r}$, so that $Z$ still divides $\ch(T^2)$ into two halves of equal area.

The end result is that the curve $C'=Z$ intersects $\beta\equiv 0\pmod{2\pi}$ in $|r|-1$ points if $r$ is even and $|r|-2$ points if $r$ is odd, and Lemma~\ref{lem:symplectic-isotopy} gives us an area-preserving isotopy of $X(T^2)$ which fixes a neighborhood of the four corners and carries $C'$ to $C$, so we conclude that $\rank(\KHI(K)) \leq |r|-1$.
\begin{figure}
\begin{tikzpicture}[style=thick]
\begin{scope}[scale=1.5] % r=5
  \draw[style=thin] (1,0) -- (1,3);
  \draw[dotted] (0,1.5) -- (2,1.5);
  \node at (1.15,2.5) {$C$};
  \begin{scope}[color=red,style=semithick,rounded corners=0.75mm] % perturbation
    \draw (0.94,3) -- (0.94,2.7) -- (0.87,3);
    \draw (0.87,0) -- (0.15,3.75*0.72) to[out=105,in=105,looseness=2] (0.15,3.75*0.58) -- (0.73,0);
    \draw (0.73,3) -- (0.94,3-3.75*0.19) -- (0.94, 2) -- (1.2, 1.025) -- (1.2,0.425) -- (0.94, 1.4) -- (0.94,1.2) -- (0.47,3);
    \draw (0.47,0) -- (0.15,3.75*0.32) to[out=105,in=105,looseness=2] (0.15,3.75*0.18) -- (0.33,0);
    \draw (0.33,3) -- (0.94, 3 - 3.75*0.61) -- (0.94,0);
    \node at (0.14,1.7) {$Z$};
  \end{scope}
  \begin{scope}[dotted] % diagonal grid lines
    \clip (0,0) rectangle (2,3);
    \foreach \i in {-1,0,...,4}
      \draw (0.4*\i,3) -- (0.8+0.4*\i,0);
  \end{scope}
  \draw (0,0) rectangle (2,3);
  \draw[|-] (-0.2,3) -- (-0.2,1.8);
  \node at (-0.23,1.5) {$\beta$};
  \draw[-|] (-0.2,1.25) -- (-0.2,0);
  \node at (-0.5,0) {$0$};
  \node at (-0.6,3) {$2\pi$};
  \draw[|-] (0,-0.22) -- (0.8,-0.22);
  \node at (1,-0.22) {$\alpha$};
  \draw[-|] (1.2,-0.22) -- (2,-0.22);
  \node at (2,-0.55) {$\pi$};
  \node at (0.02,-0.55) {$0$};
\end{scope}
%%%
\begin{scope}[scale=1.5,shift={(3,1.5)}]
\node at (-0.2,0) {\scalebox{2}{$\leadsto$}};
\end{scope}
%%%
\begin{scope}[scale=1.5,shift={(4,0)}] % r=5
  \draw[style=thin] (1,0) -- (1,3);
  \draw[dotted] (0,1.5) -- (2,1.5);
  \node at (1.15,2.5) {$C$};
  \begin{scope}[color=red,style=semithick,rounded corners=0.75mm] % perturbation
    \draw (0.33,3) -- (0.94, 3 - 3.75*0.61) -- (0.94,0.1) -- (0.8433,0.1) -- (0.15,3.75*0.72) to[out=105,in=105,looseness=2] (0.15,3.75*0.58) -- (0.73,0);
    \draw (0.73,3) -- (0.94,3-3.75*0.19) -- (0.94, 2) -- (1.37, 0.425) -- (1.2,0.425) -- (0.94, 1.4) -- (0.94,1.2) -- (0.47,3);
    \draw (0.47,0) -- (0.15,3.75*0.32) to[out=105,in=105,looseness=2] (0.15,3.75*0.18) -- (0.33,0);
    \node at (0.14,1.7) {$Z$};
  \end{scope}
  \begin{scope}[dotted] % diagonal grid lines
    \clip (0,0) rectangle (2,3);
    \foreach \i in {-1,0,...,4}
      \draw (0.4*\i,3) -- (0.8+0.4*\i,0);
  \end{scope}
  \draw (0,0) rectangle (2,3);
  \draw[|-] (-0.2,3) -- (-0.2,1.8);
  \node at (-0.23,1.5) {$\beta$};
  \draw[-|] (-0.2,1.25) -- (-0.2,0);
  \node at (-0.5,0) {$0$};
  \node at (-0.6,3) {$2\pi$};
  \draw[|-] (0,-0.22) -- (0.8,-0.22);
  \node at (1,-0.22) {$\alpha$};
  \draw[-|] (1.2,-0.22) -- (2,-0.22);
  \node at (2,-0.55) {$\pi$};
  \node at (0.02,-0.55) {$0$};
\end{scope}
\end{tikzpicture}
\caption{If $r$ is odd, we can perturb the curve $Z$ from Figure~\ref{small knot perturbation} so that it only intersects the curve $\beta\equiv 0 \pmod{2\pi}$ in $|r|-2$ points.}
\label{small knot perturbation for r odd}
\end{figure}
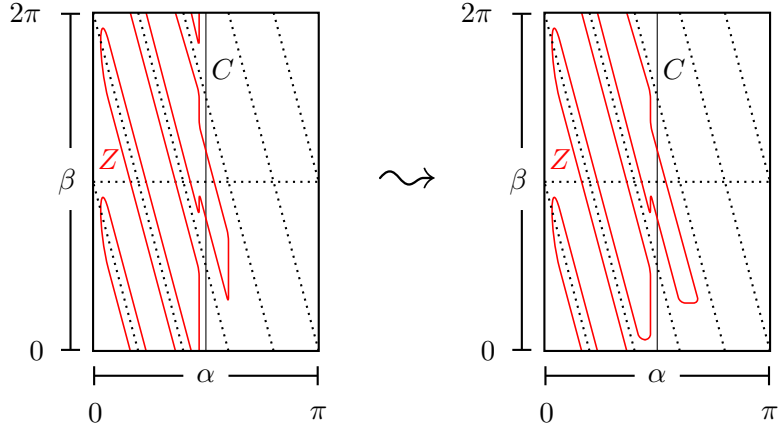
\end{proof}

Although Proposition~\ref{prop:alexander-root-averse} does not say anything useful about knots whose Alexander polynomials have zeros at roots of unity, we can use such zeros to deduce useful information for small knots as follows.

\begin{proposition} \label{prop:small-alexander-root}
Let $K$ be a small knot which is $SU(2)$-averse.  If the Alexander polynomial $\Delta_K(t)$ has a root of odd order at $t=e^{i\cdot 2\pi p/q}$, where $p$ and $q$ are relatively prime integers, then the limit slope $r(K)$ is a multiple of $q$.
\end{proposition}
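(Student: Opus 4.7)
The plan is to combine three ingredients already developed in the paper: the Heusener--Kroll-type argument from the proof of Proposition~\ref{prop:alexander-root-averse}, the structure of the pillowcase image given by Theorem~\ref{thm:su2-averse-rational}, and the small-knot rigidity of Theorem~\ref{thm:small-knot-integer}.

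First I would invoke the adaptation of Heusener--Kroll reviewed in the proof of Proposition~\ref{prop:alexander-root-averse}: since $e^{i\theta}$ with $\theta = 2\pi p/q$ is a root of $\Delta_K(t)$ of odd order, the point $(\gamma,0)$ with $2\gamma \equiv \theta \pmod{2\pi}$ is a limit point of the pillowcase image of $\ch^*(K)$. After reducing $p/q$ modulo $1$ and, if necessary, applying the symmetry of Proposition~\ref{prop:pillowcase-symmetry}, I may assume $\gamma \in [0,\pi]$ and write $\gamma = \pi p'/q$ with $\gcd(p',q) = 1$; the value of $q$ is unchanged by these reductions, and $\gamma \in (0,\pi)$ because $\Delta_K(1) = 1$ forces $e^{2i\gamma} \neq 1$.

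Next, Theorem~\ref{thm:su2-averse-rational} tells us that the pillowcase image of $\ch^*(K)$ is a finite union of isolated points together with finitely many nontrivial line segments, all of slope $-r$ where $r = r(K)$. Finitely many isolated points cannot accumulate, so the limit point $(\gamma,0)$ must lie in the closure of one of these line segments $\ell$. In particular, $\ell$ is contained in the line of slope $-r$ through $(\gamma,0)$, so along $\ell$ the quantity $r\alpha + \beta$ is congruent modulo $2\pi$ to the constant $r\gamma$.

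Finally, because $K$ is small and $SU(2)$-averse, Theorem~\ref{thm:small-knot-integer} applies: $r$ is an integer, and $\rho(\mu^r \lambda) = \pm I$---equivalently $r\alpha + \beta \in \pi\Z$---for all but finitely many conjugacy classes of irreducible $\rho$. Since each point of the nontrivial segment $\ell$ is the image of at least one such conjugacy class, cofinitely many of the corresponding classes satisfy this congruence, and together with $r\alpha + \beta \equiv r\gamma = r\pi p'/q \pmod{2\pi}$ this forces $rp'/q \in \Z$. As $\gcd(p',q) = 1$, we conclude $q \mid r$. The only non-routine step is the first one---locating the reducible point $(\gamma,0)$ in the closure of the irreducible part of the pillowcase image---but this is exactly what the argument sketched in the proof of Proposition~\ref{prop:alexander-root-averse} provides, so the main obstacle has effectively been dealt with earlier in the paper.
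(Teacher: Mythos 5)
Your proof is correct and follows essentially the same route as the paper's: the Heusener--Kroll argument from Proposition~\ref{prop:alexander-root-averse} places a nontrivial segment of slope $-r(K)$ through $(\pi p/q,0)$, and Theorem~\ref{thm:small-knot-integer} then forces $r(K)\cdot \pi p/q \in \pi\Z$, hence $q \mid r(K)$. The only cosmetic difference is that you route the ``limit point lies on a nontrivial segment'' step through Theorem~\ref{thm:su2-averse-rational} rather than repeating the subsequence argument directly.
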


\begin{proof}
Let $r=r(K)$, and note that the line $r\alpha + \beta \equiv r\cdot \frac{\pi p}{q} \pmod{2\pi}$ has slope $-r$ and passes through the image $(\frac{\pi p}{q},0)$ of the character of the reducible representation $\rho: \pi_1(S^3 \ssm K) \to SU(2)$ with $\rho(\mu) = \diag(e^{i\pi p/q}, e^{-i\pi p/q})$.  Arguing exactly as in the proof of Proposition~\ref{prop:alexander-root-averse}, we see that the pillowcase image of $\ch^*(K)$ contains a nontrivial segment of this line.  By Theorem~\ref{thm:small-knot-integer}, we have $r \cdot \frac{\pi p}{q} \equiv 0 \pmod{\pi}$, and so $q$ divides $r$ as claimed.
\end{proof}

\begin{proposition} \label{prop:small-r-6}
If $K$ is a small, $SU(2)$-averse knot, then $|r(K)| \geq 6$.
\end{proposition}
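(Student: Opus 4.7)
The plan is to argue by contradiction, assuming $K$ is a small $SU(2)$-averse knot with $|r(K)|\leq 5$. By Theorem~\ref{thm:small-knot-integer} the limit slope $r:=r(K)$ is an integer, and by Theorem~\ref{thm:open-pillowcase-image} one has $|r|>2$, so $|r|\in\{3,4,5\}$. Theorem~\ref{thm:det-limit-slope} then forces
\[ \textstyle\sum_{j=-d}^d |a_j|\;\leq\; |r|-1\;\leq\;4, \]
and I would combine this with Proposition~\ref{prop:small-alexander-root}, which says every odd-order root $e^{2\pi i p/q}$ of $\Delta_K$ (with $\gcd(p,q)=1$) satisfies $q\mid r$.

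The next step is a short enumeration of all Laurent polynomials $\Delta_K(t)=\sum a_j t^j\in\Z[t^{\pm 1}]$ with $\Delta_K(t)=\Delta_K(t^{-1})$, $\Delta_K(1)=1$, and $\sum|a_j|\leq 4$.  Writing $\sum|a_j|=|a_0|+2\sum_{j\geq 1}|a_j|$ and $\Delta_K(1)=a_0+2\sum_{j\geq 1}a_j=1$, one first notes $|a_d|\leq 2$; the case $|a_d|=2$ forces all other coefficients to vanish, giving $\Delta_K(1)=\pm 4$, a contradiction.  When $|a_d|=1$, the remaining budget $|a_0|+2\sum_{1\leq j\leq d-1}|a_j|\leq 2$, combined with the fact that the even sum $2\sum_{j\geq 1}a_j$ must equal the odd integer $1-a_0$, forces $a_j=0$ for $1\leq j\leq d-1$ and then $(a_d,a_0)=(1,-1)$.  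So either $\Delta_K(t)=1$, or $\Delta_K(t)=t^d-1+t^{-d}$ for some $d\geq 1$.

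The case $\Delta_K=1$ is eliminated because for any nontrivial knot, Corollary~\ref{cor:connecting-path} gives a path in the image of $\ch(K)$ from $\beta=0$ to $\beta=2\pi$ in the fundamental domain; this path must leave the reducible stratum $\{\beta\equiv 0\pmod{2\pi}\}$, and where it first enters the image of $\ch^*(K)$, Proposition~\ref{prop:pillowcase-facts}(3) produces a root of $\Delta_K$ on the unit circle, contradicting $\Delta_K=1$.  For $\Delta_K(t)=t^d-1+t^{-d}$, substituting $u=t^d$ in $t^{2d}\Delta_K(t)=t^{2d}-t^d+1$ gives $u=e^{\pm i\pi/3}$; a quick check that $p(t)=t^{2d}-t^d+1$ and $p'(t)=dt^{d-1}(2t^d-1)$ have no common root shows all roots of $\Delta_K$ are simple, and they have the form $t=e^{2\pi i m/(6d)}$ with $m\equiv\pm 1\pmod 6$.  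Since $\gcd(m,6)=1$, the denominator of $m/(6d)$ in lowest terms equals $6d/\gcd(m,d)\geq 6$, and Proposition~\ref{prop:small-alexander-root} then forces this denominator to divide $r$, so $|r|\geq 6$, contradicting our assumption.  The only routine step is the coefficient enumeration; the one conceptual subtlety is handling $\Delta_K=1$ separately, since Proposition~\ref{prop:small-alexander-root} is vacuous in that case and one must fall back on Corollary~\ref{cor:connecting-path}.
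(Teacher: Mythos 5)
Most of your argument is sound and runs parallel to the paper's: both proofs reduce $|r(K)|\le 5$ to the statement that $\Delta_K(t)=t^d-1+t^{-d}$, and then both invoke Proposition~\ref{prop:small-alexander-root} at the simple root $e^{2\pi i/(6d)}$ to force $6d\mid r(K)$. Your coefficient enumeration is correct, and your root analysis of $t^{2d}-t^d+1$ is correct. The paper reaches $\Delta_K(t)=t^g-1+t^{-g}$ by a different route: it uses that $\rank\KHI(K)\le |r|-1\le 4$ is odd and at least $3$ for a knotted $K$, hence exactly $3$, and then uses Kronheimer--Mrowka's genus detection $\KHI(K,\pm g)\ne 0$ together with the symmetry $\KHI(K,j)\cong\KHI(K,-j)$ to force rank one in each of the gradings $-g,0,g$, which pins down the Alexander polynomial outright. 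Your route, which only retains the numerical inequality $\sum|a_j|\le\rank\KHI(K)$, discards the grading information and therefore leaves the case $\Delta_K=1$ alive.

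That case is where your proof has a genuine gap. Your proposed exclusion of $\Delta_K=1$ would, if valid, show that \emph{every} nontrivial knot has a root of $\Delta_K$ on the unit circle; this is false (untwisted Whitehead doubles are nontrivial with $\Delta_K=1$), so the argument must break down, and it does at the application of Proposition~\ref{prop:pillowcase-facts}(3). The path from Corollary~\ref{cor:connecting-path} does give a sequence of images of irreducible characters converging to a point $(\alpha^*,0)$, and by compactness a subsequence of representatives converges to some $\rho\in R(K)$ with $\beta$-coordinate $0$. But nothing forces that limit $\rho$ to be \emph{reducible}: an irreducible representation can perfectly well satisfy $\rho(\lambda)=I$ (such representations descend to $0$-surgery), and the fact that every point of $\{\beta=0\}$ is also the image of a reducible character does not help, since the pillowcase map is far from injective and Klassen's theorem requires the reducible representation itself to be a limit of irreducibles. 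To close the gap you should replace the enumeration-plus-separate-case structure by the paper's grading argument (or at least import it to kill $\Delta_K=1$): from $\rank\KHI(K)=3$ and $\KHI(K,\pm g)\ne 0$ with $g\ge 1$ you get $a_{\pm g}=\pm 1\ne 0$, so $\Delta_K\ne 1$ automatically.
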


\begin{proof}
Suppose that $|r(K)| \leq 5$.  Then Theorem~\ref{thm:det-limit-slope} says that $\rank(\KHI(K)) \leq 4$, and in fact the rank of $\KHI(K)$ is odd (since it categorifies the Alexander polynomial and $\Delta_K(1) = 1$ is odd), so we have $\rank(\KHI(K)) \leq 3$.  By assumption $K$ is not the unknot, so we must have $\rank(\KHI(K)) = 3$.

Kronheimer and Mrowka proved that $\rank \KHI(K,g) \geq 1$ \cite[Proposition~7.16]{km-excision}, where $g \geq 1$ is the Seifert genus of $K$.  Since $\KHI(K,j) \cong \KHI(K,-j)$ for all $j$, it follows that $\KHI(K,j)$ has rank one for each of $j=-g,0,g$ and rank zero for all other $j$, and this immediately implies that
\[ \Delta_K(t) = t^g - 1 + t^{-g}. \]
But then $\Delta_K(t)$ has a simple root at $t = e^{i \cdot 2\pi/(6g)}$, so Proposition~\ref{prop:small-alexander-root} tells us that $|r(K)|$, which is an integer between 1 and 5, is a multiple of $6g \geq 6$, which is absurd.
\end{proof}

We remark that Proposition~\ref{prop:small-r-6} is sharp, since the right- and left-handed trefoils are $SU(2)$-averse with limit slope $\pm 6$.

\section{$SU(2)$-cyclic surgeries from the pillowcase} \label{sec:converse}

In this section we prove a converse to Theorem~\ref{thm:finitely-many-lines}.  Our goal is to say that if the pillowcase image of $\ch^*(K)$, the irreducible part of the character variety, is contained in finitely many lines of rational slope $-r$, then $K$ must be $SU(2)$-averse with limit slope $r$. We will use the following lemma to prove such a result.

\begin{lemma} \label{lem:cyclic-slope-approximation}
Let $r, c \in \R$ be constants, and fix a rational number $\frac{p}{q} \neq r$ with $q>0$ and $p$ relatively prime to $q$.  Let $\{x\} = x - \lfloor x\rfloor$ denote the fractional part of $x$.  If
\[ 0 \leq \frac{q}{2}\left(\frac{p}{q}-r\right) + \left\{\frac{qc}{2\pi}\right\} \leq 1, \]
then no point on the line $\beta=-r\alpha+c$, $0 < \alpha < \pi$, satisfies $p\alpha+q\beta \equiv 0 \pmod{2\pi}$.
\end{lemma}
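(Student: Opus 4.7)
The plan is a direct substitution followed by elementary interval analysis. First, I substitute $\beta = -r\alpha + c$ into the expression $p\alpha + q\beta$ to get $(p - qr)\alpha + qc$; note $p - qr \neq 0$ because $\frac{p}{q} \neq r$. So the question becomes whether the continuous function
\[ f(\alpha) = \frac{(p-qr)\alpha + qc}{2\pi} \]
can take an integer value for some $\alpha \in (0,\pi)$.

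As $\alpha$ sweeps $(0, \pi)$, the image $f((0,\pi))$ is the open interval with endpoints $\frac{qc}{2\pi}$ and $\frac{qc}{2\pi} + \frac{p-qr}{2} = \frac{qc}{2\pi} + \frac{q}{2}\klammer{\frac{p}{q}-r}$. I would then write $\frac{qc}{2\pi} = m + t$ with $m \in \Z$ and $t = \{\frac{qc}{2\pi}\} \in [0,1)$, and translate by $-m$, reducing the problem to showing that the open interval with endpoints $t$ and $t + \frac{p-qr}{2}$ contains no integer. The hypothesis says precisely that $t + \frac{p-qr}{2} \in [0,1]$, while $t \in [0,1)$; so both endpoints lie in $[0,1]$ and the only candidate integers are $0$ and $1$.

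The final step splits into two sign cases. If $p - qr > 0$, the interval $\klammer{t,\, t + \tfrac{p-qr}{2}}$ sits inside $[0, 1]$; it cannot contain $0$ because $t \geq 0$, and it cannot contain $1$ because $t + \tfrac{p-qr}{2} \leq 1$ (the interval is open). If $p - qr < 0$, the interval $\klammer{t + \tfrac{p-qr}{2},\, t}$ sits inside $[0, 1)$; it cannot contain $0$ since $t + \tfrac{p-qr}{2} \geq 0$. In both cases no integer lies in the open interval, so $f(\alpha) \notin \Z$ for every $\alpha \in (0,\pi)$, which is what we wanted.

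There is no real obstacle here; the only thing to watch is the bookkeeping of strict versus weak inequalities at the endpoints across the two sign cases, which is exactly what the openness of the interval $(0,\pi)$ and the weak inequalities in the hypothesis are designed to handle.
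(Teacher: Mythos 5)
Your proof is correct and is essentially the paper's argument in different notation: where you track the value of the linear function $f(\alpha)=\frac{(p-qr)\alpha+qc}{2\pi}$ and show its image over $(0,\pi)$ stays strictly inside a unit interval $[k,k+1]$ with $k=\lfloor qc/2\pi\rfloor$, the paper equivalently sandwiches the line $\beta=-r\alpha+c$ strictly between the consecutive lines $p\alpha+q\beta=2\pi k$ and $p\alpha+q\beta=2\pi(k+1)$. The endpoint checks and the use of strict monotonicity in the interior are the same in both.
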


\begin{proof}
Let $k = \lfloor \frac{qc}{2\pi} \rfloor$.  Then the given inequalities are equivalent to
\[ k \leq \frac{q}{2}\left(\frac{p}{q}-r\right) + \frac{qc}{2\pi} \leq k+1, \]
which can be rearranged to get
\[ \frac{-p\pi + 2\pi k}{q} \leq - r\pi + c \leq \frac{-p\pi + 2\pi (k+1)}{q}. \]
Thus we have a pair of linear inequalities in $\alpha$ of the form
\[ \frac{-p\alpha + 2\pi k}{q} \leq -r\alpha + c \leq \frac{-p\alpha + 2\pi (k+1)}{q}, \]
which are satisfied at $\alpha = 0$ (since our choice of $k$ implies that $\frac{2\pi k}{q} \leq c < \frac{2\pi (k+1)}{q}$) and at $\alpha = \pi$, and hence for all $\alpha$ in between.  In fact, each of these inequalities must be strict for $0 < \alpha < \pi$: otherwise, since $\frac{p}{q}\neq r$, the corresponding inequality would be violated on one side of the $\alpha$ in question.  Thus $\beta = -r\alpha+c$ lies strictly between $\beta = \frac{-p\alpha+2\pi k}{q}$ and $\beta = \frac{-p\alpha + 2\pi (k+1)}{q}$ for $0 < \alpha < \pi$, and so it cannot contain any point on the lines $p\alpha + q\beta \equiv 0 \pmod{2\pi}$, as claimed.
\end{proof}

\begin{theorem} \label{thm:finite-lines-converse}
Let $K$ be a nontrivial knot, and suppose that there is a constant $r\in\Q$ such that the pillowcase image of $\ch^*(K)$ is contained in the union of finitely many isolated points and finitely many nontrivial segments of lines $\beta = -r\alpha + c_i$.  Then there is some $N \geq 1$ such that $(r + \frac{1}{kN})$-surgery on $K$ is $SU(2)$-cyclic for all but finitely many $k \in \Z$.  In particular, $K$ is $SU(2)$-averse.
\end{theorem}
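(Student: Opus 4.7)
My plan is to choose $N$ explicitly based on the arithmetic of $r$ and the $c_i$, enlarged further to handle the finitely many isolated points of the pillowcase image. \textbf{Setup.} Write $r = p/q$ in lowest terms with $q > 0$. By Theorem~\ref{thm:c-rational} each $c_i$ is a rational multiple of $2\pi$, so I write $c_i/(2\pi) = a_i/b_i$ in lowest terms and set $B := \lcm(b_i)$. For each isolated point $(\alpha_j,\beta_j)$ of the pillowcase image of $\ch^*(K)$ for which $(p\alpha_j + q\beta_j)/(2\pi)$ is rational, I write this ratio as $s_j/t_j$ in lowest terms and put $M := \lcm(t_j)$ (taking $M := 1$ if there are no such $j$). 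Then I set $N := q^2 B M$.

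For each nonzero integer $k$, a direct coprimality check will show that the slope $r + \frac{1}{kN}$ is already in lowest terms when written as $\frac{pkqBM + 1}{kq^2BM}$ for $k > 0$, and as $\frac{p|k|qBM - 1}{|k|q^2BM}$ for $k < 0$; the coprimality follows from $\gcd(p,q)=1$ together with the fact that both numerators are $\pm 1 \pmod{q}$ and $\pm 1 \pmod{|k|BM}$. Writing this slope as $m/n$ with $n > 0$, these formulas give $|m - nr| = 1$ and $nc_i \in 2\pi\Z$ for every $i$, which are the key arithmetic inputs for the next step.

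Along a line $\beta = -r\alpha + c_i$, the computation $m\alpha + n\beta = (m-nr)\alpha + n c_i \equiv \pm\alpha \pmod{2\pi}$ produces nothing in $2\pi\Z$ for $0 < \alpha < \pi$; this is exactly (a sign-symmetric version of) the hypothesis of Lemma~\ref{lem:cyclic-slope-approximation}, and it rules out any irreducible $\rho$ along such a segment from satisfying $\rho(\mu^m\lambda^n) = I$. For an isolated point, I set $y_j := qBM(p\alpha_j + q\beta_j)$ and compute
\[ m\alpha_j + n\beta_j \equiv \operatorname{sign}(k)\,\alpha_j + |k|\,y_j \pmod{2\pi}. \]
When $(p\alpha_j + q\beta_j)/(2\pi) = s_j/t_j$ is rational, the divisibility $t_j \mid M$ forces $y_j \in 2\pi\Z$, reducing the right-hand side to $\pm\alpha_j \not\equiv 0 \pmod{2\pi}$ for every $k$; when this ratio is irrational, $y_j \notin 2\pi\Q$, so the map $k \mapsto |k|\,y_j \pmod{2\pi}$ is injective, giving at most two bad $k$ per such point. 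Summing over the finitely many isolated points shows that $(r + \frac{1}{kN})$-surgery on $K$ is $SU(2)$-cyclic for all but finitely many $k$, and hence $K$ is $SU(2)$-averse.

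The main obstacle will be the isolated points for which $(p\alpha_j + q\beta_j)/(2\pi)$ is rational: without the enlargement of $N$ by $M$, the bad $k$'s at such a point form an arithmetic progression and wreck cofiniteness. The factor $M := \lcm(t_j)$ is introduced precisely to move $y_j$ into $2\pi\Z$ and thereby make the bad condition $m\alpha_j + n\beta_j \equiv 0 \pmod{2\pi}$ independent of $k$, after which it is ruled out for the trivial reason that $0 < \alpha_j < \pi$.
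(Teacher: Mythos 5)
Your proof is correct and follows essentially the same strategy as the paper: choose $N$ so that the reduced slope $\frac{m}{n}=r+\frac{1}{kN}$ satisfies $m-nr=\pm 1$ and $nc_i\in 2\pi\Z$ (making each line contribute $m\alpha+n\beta\equiv\pm\alpha\not\equiv 0$), and observe that each isolated point can obstruct at most finitely many $k$. The only cosmetic differences are that you absorb the rational isolated points into $N$ via the factor $M$ where the paper absorbs them into the collection of lines, you treat negative $k$ directly where the paper mirrors $K$, and you redo the computation of Lemma~\ref{lem:cyclic-slope-approximation} inline rather than citing it.
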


The hypotheses of Theorem~\ref{thm:finite-lines-converse} are satisfied by all $SU(2)$-averse knots: this is guaranteed for some $r \in \mathbb{RP}^1$ by Theorem~\ref{thm:finitely-many-lines}, and then Theorems~\ref{thm:limit-slope-finite} and \ref{thm:limit-slope-rational} assert that $r$ is rational.

\begin{proof}
According to Theorem~\ref{thm:c-rational}, the constants $c_i$ are all rational multiples of $2\pi$.  Let $n \geq 1$ be an integer such that $nr$ and each of the $\frac{nc_i}{\pi}$ are integers, and let $m = nr$ and $N = n^2$.  For a given slope $\frac{p}{q} \in \Q$ not equal to $r$, with $q>0$ and $p$ relatively prime to $q$, if the inequalities
\[ 0 \leq \frac{q}{2}\left(\frac{p}{q} - r\right) + \left\{\frac{qc_i}{2\pi}\right\} \leq 1 \]
are satisfied for all $i$, then Lemma~\ref{lem:cyclic-slope-approximation} says that no point $(\alpha,\beta)$ on one of the lines $\beta = -r\alpha + c_i$ which is the image of an irreducible $\rho: \pi_1(S^3 \ssm K) \to SU(2)$  can satisfy $p\alpha + q\beta \equiv 0 \pmod{2\pi}$ (recall that $0 < \alpha < \pi$, by Proposition~\ref{prop:pillowcase-facts}).  We will take $(p,q) = (knm+1, kn^2)$ for any integer $k \geq 1$, noting that $p$ and $q$ are relatively prime since $km^2q - (knm-1)p = 1$.  We compute for each $i$ that
\begin{align*}
\frac{q}{2}\left(\frac{p}{q} - r\right) + \left\{\frac{qc_i}{2\pi}\right\} &= \frac{kn^2}{2}\left(\frac{knm+1}{kn^2} - \frac{m}{n}\right) + \left\{kn \cdot \frac{nc_i}{2\pi}\right\} \\
&= \frac{1}{2} + \left\{\frac{1}{2} \cdot kn \cdot \frac{nc_i}{\pi}\right\},
\end{align*}
which is either $\frac{1}{2}$ or $1$, so the irreducible representations on the lines $\beta = -r\alpha + c_i$ do not produce irreducible representations of the $\frac{kmn+1}{kn^2}$-surgery on $K$.

We are left with finitely many isolated points $(\alpha_j,\beta_j)$; letting $c'_j = \beta_j + r\alpha_j$, we may assume without loss of generality that $c'_j \not\in\ 2\pi\Q$, since otherwise we could have just added the line $\beta=-r\alpha + c'_j$ to the collection of lines considered above, possibly at the cost of taking a larger value of $n$.  If for some $k \geq 1$ we have $(kmn+1)\alpha_j + kn^2\beta_j  \in 2\pi\Z$, then this together with $kn^2(\frac{m}{n}\alpha_j + \beta_j - c'_j) = 0$ implies that $\alpha_j + kn^2c'_j \in 2\pi\Z$.  For fixed $j$, there is at most one $k$ for which this can be satisfied since $c'_j$ is not a rational multiple of $2\pi$, and hence each $(\alpha_j,\beta_j)$ produces an irreducible representation of at most one $\frac{kmn+1}{kn^2}$-surgery on $K$.  Therefore the surgery on $K$ of slope $\frac{kmn+1}{kn^2} = r + \frac{1}{kN}$ is $SU(2)$-cyclic for all but finitely many $k \geq 1$.

Next, the mirror $\mirror{K}$ satisfies the same hypotheses with $-r$ and $2\pi-c_i$ in place of $r$ and $c_i$ by Proposition~\ref{prop:mirror-image}, so we can take the same value of $n$ (and hence $N$) as above and conclude that all but finitely many $(-r+\frac{1}{lN})$-surgeries on $\mirror{K}$ are $SU(2)$-cyclic when $l \geq 1$.  These are the $(r-\frac{1}{lN})$-surgeries on $K$ up to reversing orientation, so taking $k = -l$ we see that all but finitely many $(r+\frac{1}{kN})$-surgeries on $K$ with $k \leq -1$ are $SU(2)$-cyclic as well.
\end{proof}

If $K$ is small and $SU(2)$-averse, then by Theorem~\ref{thm:small-knot-integer} we have $r(K) \in \Z$ and $c_i \in \pi \Z$, so if there are no isolated points in the pillowcase image of $X^*(K)$ then we can take $n=1$ and hence $N=1$ in the proof of Theorem~\ref{thm:finite-lines-converse}.  In this case, we conclude that every slope $r(K) + \frac{1}{k}$ (with $k\in\Z$ nonzero) is an $SU(2)$-cyclic slope for $K$.

\begin{corollary} \label{cor:common-slopes}
If $K_1,K_2,\dots,K_m$ are $SU(2)$-averse knots which all have the same limit slope, then they have infinitely many $SU(2)$-cyclic surgery slopes in common.
\end{corollary}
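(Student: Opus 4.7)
The plan is to apply Theorem~\ref{thm:finite-lines-converse} to each $K_i$ separately and then combine the resulting sequences of $SU(2)$-cyclic slopes by passing to a common denominator.

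Concretely, let $r$ denote the common limit slope of $K_1,\dots,K_m$. By Theorem~\ref{thm:finite-lines-converse}, for each $i$ there is a positive integer $N_i$ and a finite set $E_i \subset \Z$ such that $(r + \frac{1}{kN_i})$-surgery on $K_i$ is $SU(2)$-cyclic for every $k \in \Z \ssm E_i$. I would then set
\[
N = \lcm(N_1,N_2,\dots,N_m), \qquad M_i = N/N_i,
\]
so that for each integer $k$ one has
\[
r + \frac{1}{kN} = r + \frac{1}{(kM_i)\,N_i}.
\]
Thus $(r+\frac{1}{kN})$-surgery on $K_i$ fails to be $SU(2)$-cyclic only when $kM_i \in E_i$, which constrains $k$ to lie in the finite set $\frac{1}{M_i}E_i \cap \Z$. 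Taking the union of these finite sets over $i=1,\dots,m$ gives a finite set $E \subset \Z$ such that, for every $k \in \Z \ssm E$, the slope $r+\frac{1}{kN}$ is simultaneously an $SU(2)$-cyclic slope for all of $K_1,\dots,K_m$. Since $\Z \ssm E$ is infinite and distinct values of $k$ yield distinct slopes, this produces the required infinite family of common $SU(2)$-cyclic surgery slopes.

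There is no real obstacle here: the content all lies in Theorem~\ref{thm:finite-lines-converse}, which supplies each $K_i$ with an arithmetic progression (in $\frac{1}{k}$) of $SU(2)$-cyclic slopes accumulating at $r$, and the only work left is the elementary observation that any finite collection of such progressions with the same accumulation point $r$ shares a cofinite subprogression once one passes to a common denominator.
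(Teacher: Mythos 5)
Your proposal is correct and is essentially the paper's own proof: both apply Theorem~\ref{thm:finite-lines-converse} to each $K_i$, pass to $N = \lcm(N_1,\dots,N_m)$, and observe that $(r+\frac{1}{kN})$-surgery is $SU(2)$-cyclic for all $K_i$ once $k$ avoids a finite exceptional set. You have simply spelled out the finite-exception bookkeeping a bit more explicitly than the paper does.
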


\begin{proof}
Let $r\in\Q$ be the common limit slope and take $N_1,N_2,\dots,N_m \geq 1$ such that for each $i$, all but finitely many of the $(r + \frac{1}{kN_i})$-surgeries on $K_i$ are $SU(2)$-cyclic.  Then for all but finitely many $k$, all of the $(r+\frac{1}{kN})$-surgeries on the various $K_i$ are $SU(2)$-cyclic, where $N = \lcm(N_1,N_2,\dots,N_m)$.
\end{proof}

\section{Relation to instanton L-spaces} \label{sec:l-spaces}

Let $Y$ be a closed, connected, oriented 3-manifold, and let $I^\#(Y)$ be the singular instanton knot homology $I^\#(Y,\emptyset) = I^\natural(Y,U)$ of the empty link in $Y$, as defined by Kronheimer and Mrowka \cite{km-khovanov}.  This is a $\Z/4\Z$-graded abelian group, and if $Y$ is a rational homology sphere then it has Euler characteristic $|H_1(Y;\Z)|$, as shown by Scaduto \cite[Corollary~1.4]{scaduto}.  Thus we have a rank inequality
\[ \rank(I^\#(Y)) \geq |H_1(Y;\Z)|, \]
and if equality holds then we say that $Y$ is an \emph{instanton L-space}.

\begin{theorem} \label{thm:l-space-surgeries}
If $K$ is $SU(2)$-averse with limit slope $r = r(K)$, then $S^3_s(K)$ is an instanton L-space for all rational
\[ s \in \begin{cases}
\big[\lceil r\rceil -1, \infty\big), & r > 0 \\
\big({-}\infty, \lfloor r\rfloor+1\big], & r < 0.
\end{cases} \]
In either case, $S^3_r(K)$ is an instanton L-space.
\end{theorem}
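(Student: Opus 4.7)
The plan is to combine two ingredients: (i) any $SU(2)$-cyclic rational homology sphere is automatically an instanton L-space, and (ii) the Kronheimer--Mrowka surgery exact triangle for $I^\#$ propagates the L-space condition across adjacent slopes in the Farey graph. The first ingredient is the key input from $SU(2)$ representation theory: if $Y$ is $SU(2)$-cyclic, every $\pi_1(Y) \to SU(2)$ factors through the finite abelian group $H_1(Y;\Z)$, so the chain complex computing $I^\#(Y)$ has no irreducible generators after a suitable small perturbation, and its rank matches the lower bound $\chi(I^\#(Y))=|H_1(Y;\Z)|$ from Scaduto's Euler characteristic identity. Thus $\rank I^\#(Y) = |H_1(Y;\Z)|$.

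Applying Theorem~\ref{thm:finite-lines-converse}, I obtain an integer $N \geq 1$ such that $r + \tfrac{1}{kN}$ is an $SU(2)$-cyclic slope of $K$ for all but finitely many $k \in \Z$. By the first ingredient, $S^3_{r+1/(kN)}(K)$ is therefore an instanton L-space for all but finitely many $k$, yielding an infinite family of L-space slopes accumulating at $r$ from both sides. I would then invoke the surgery exact triangle
\[
\cdots \to I^\#(S^3_n(K)) \to I^\#(S^3_{n+1}(K)) \to I^\#(S^3) \to \cdots,
\]
together with its variants for other Farey triads of slopes. Using $\rank I^\#(S^3)=1$ and the universal lower bound $\rank I^\#(Y) \geq |H_1(Y;\Z)|$, an elementary Euler-characteristic argument shows that whenever two members of a surgery triad are L-spaces and their $|H_1|$'s add correctly, so is the third.

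Starting from the accumulating sequence $r + \tfrac{1}{kN}$ and chaining such triads, I would propagate the L-space condition to every rational slope in $[\lceil r\rceil-1,\infty)$ when $r>0$ (and to $(-\infty,\lfloor r\rfloor+1]$ when $r<0$, by the mirror-symmetric argument); the slope $r$ itself lies in this half-line because $\lceil r\rceil-1 \leq r$. The main obstacle is Step (ii): the slopes $r+\tfrac{1}{kN}$ are generically non-integer, so one cannot directly apply the integer-slope triangle, and the argument must chain Farey triads to transfer L-space status first to integer surgeries near $r$ and then outward along the entire half-line. The specific endpoint $\lceil r\rceil-1$ corresponds to the furthest integer below $r$ still reachable by a single triangle step from the accumulating sequence while preserving the rank inequality in the correct direction.
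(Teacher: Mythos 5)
Your first ingredient --- that every $SU(2)$-cyclic rational homology sphere is automatically an instanton L-space --- is not a known fact, and the sketch you give for it does not work. The generators of the complex computing $I^\#(Y)$ are not simply the flat $SU(2)$ connections on $Y$: even when all representations $\pi_1(Y)\to SU(2)$ are abelian, the reducible critical points of the (perturbed) Chern--Simons functional can be degenerate, and after perturbation they may contribute more generators than $|H_1(Y;\Z)|$. For surgeries on a knot this degeneracy is controlled precisely by whether $\Delta_K(t^2)$ vanishes at certain roots of unity, which is why the relevant result in the literature (Baldwin--Sivek, Corollary~4.8 of \cite{baldwin-sivek}) says only: if $S^3_{a/b}(K)$ is \emph{not} an instanton L-space, then either it is not $SU(2)$-cyclic \emph{or} some zero of $\Delta_K(t^2)$ is an $a$th root of unity. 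To use this you must kill the second alternative, and that is the step your proposal is missing. The paper does it by an arithmetic trick: among the $SU(2)$-cyclic slopes $r-\frac{1}{kN}=\frac{pNi-1}{qNi}$ produced by Theorem~\ref{thm:finite-lines-converse}, Dirichlet's theorem gives infinitely many with prime numerator $a$, and then $\Phi_a(1)=a$ cannot divide $\Delta_K(1)=1$, so no zero of $\Delta_K(t^2)$ is an $a$th root of unity and the cited corollary forces $S^3_{a/b}(K)$ to be an instanton L-space.

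Your second step is closer to the truth but is also under-justified as written. The propagation from one L-space slope to a half-line of slopes is exactly \cite[Theorem~4.20]{baldwin-sivek}, which the paper simply invokes: an instanton L-space slope $\frac{a}{b}>0$ with $\frac{a}{b}<r$ and $\lfloor a/b\rfloor=\lceil r\rceil-1$ yields L-spaces for all $s\geq\lceil r\rceil-1$. Be aware that this propagation is one-directional (upward for positive slopes), so the fact that your $SU(2)$-cyclic slopes accumulate at $r$ from both sides buys you nothing extra; and the ``Euler-characteristic bookkeeping'' you describe for Farey triads is not by itself enough to run the induction --- one needs the exact triangle together with rank inequalities applied in a specific order, which is the content of the Baldwin--Sivek theorem. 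The negative-slope case then follows by mirroring, as you say.
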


\begin{remark} \label{rem:l-space-knot}
A knot $K$ with instanton L-space surgeries is fibered, and either $K$ or its mirror is strongly quasipositive \cite{bs-lspace}.  Conjecturally, being an instanton L-space is the same as being an L-space in Heegaard Floer homology; if true, this conjecture would further imply that all of the nonzero coefficients of $\Delta_K(t)$ are $\pm 1$ \cite{osz-lens}.
\end{remark}

\begin{proof}[Proof of Theorem~\ref{thm:l-space-surgeries}]
Assume for now that $r(K) > 0$, and write $r(K) = \frac{p}{q}$ for coprime positive integers $p$ and $q$.  Theorem~\ref{thm:finite-lines-converse} provides an integer $N > 0$ such that for all but finitely many $k>0$, $r(K) - \frac{1}{kN}$ is an $SU(2)$-cyclic surgery slope for $K$.  (It is also at least $r(K) - 1 > 1$, hence positive.)  In particular, this is true for all but finitely many multiples $k=qi$ of $q$, in which case the slopes in question are
\[ \frac{p}{q} - \frac{1}{qi\cdot N} = \frac{pNi-1}{qNi} \]
for some $i>0$.  The numerator $pNi-1$ is prime for infinitely many values of $i$ by Dirichlet's theorem on primes in arithmetic progressions, so $K$ must have a positive $SU(2)$-cyclic surgery slope of the form $\frac{a}{b} < \frac{p}{q}$, where $a$ is prime.

If $S^3_{a/b}(K)$ is not an instanton L-space, then \cite[Corollary~4.8]{baldwin-sivek} says that either $S^3_{a/b}(K)$ is not $SU(2)$-cyclic or some zero $\zeta$ of $\Delta_K(t^2)$ is an $a$th root of unity.  The former is false by construction, and the latter cannot hold either: the minimal polynomial $\Phi_a(t)=1+t+t^2+\dots+t^{a-1}$ of $\zeta$ also divides $\Delta_K(t^2)$, and so $\Phi_a(1)=a$ must divide $\Delta_K(1^2) = 1$, which is impossible.  Thus $S^3_{a/b}(K)$ is an instanton L-space.

Now if $n = \lfloor\frac{a}{b}\rfloor$, then $S^3_s(K)$ is an instanton L-space for all $s \geq n$ by \cite[Theorem~4.20]{baldwin-sivek}.  Since $n$ is an integer which is strictly less than $r(K)$, we have $n \leq \lceil r(K) \rceil - 1$, and this completes the proof in the case $r(K) > 0$.

If instead $K$ has negative limit slope, then the mirror $\mirror{K}$ satisfies $r(\mirror{K}) = -r(K) > 0$.  The above argument shows that $S^3_s(\mirror{K}) = -S^3_{-s}(K)$ is an instanton L-space whenever
\[ s \geq \lceil -r(K)\rceil-1 = -\lfloor r(K)\rfloor - 1, \]
or equivalently whenever $-s \leq \lfloor r(K)\rfloor + 1$.  Since the property of being an instanton L-space is preserved under orientation reversal, this proves the case $r(K) < 0$.
\end{proof}

It is generally hard to show that a knot does not have any instanton L-space surgeries, but when we can do so it follows by Theorem~\ref{thm:l-space-surgeries} that the knot in question is not $SU(2)$-averse.

\begin{theorem} \label{thm:smoothly-slice}
If a nontrivial knot $K \subset S^3$ is smoothly slice, then it has no nontrivial instanton L-space surgeries.  In particular, $K$ is not $SU(2)$-averse.
\end{theorem}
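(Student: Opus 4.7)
The plan is to establish the contrapositive: if $K$ is a nontrivial smoothly slice knot, then $S^3_s(K)$ is not an instanton L-space for any rational $s \neq \infty$. First I would reduce to the case of positive slopes $s > 0$. The mirror $\mirror{K}$ of a slice knot is again slice, and $S^3_s(K) = -S^3_{-s}(\mirror{K})$ as oriented $3$-manifolds; since the property of being an instanton L-space depends only on the underlying unoriented manifold, ruling out positive instanton L-space surgeries on every slice knot will automatically rule out negative ones.

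With $s > 0$ and $S^3_s(K)$ assumed to be an instanton L-space, I would then appeal to a concordance obstruction for instanton L-space knots from \cite{baldwin-sivek}, analogous to the Heegaard Floer picture. The cleanest version would be: a knot admitting a positive instanton L-space surgery satisfies $g_4(K) = g_3(K)$, via a singular-instanton analogue of $\tau$ whose absolute value equals the Seifert genus for L-space knots and is bounded above by the smooth slice genus. Since sliceness forces $g_4(K) = 0$, this would give $g_3(K) = 0$ and hence $K = U$, contradicting nontriviality. An alternative route, if available from \cite{baldwin-sivek}, is to use the fact that instanton L-space knots are strongly quasipositive and then invoke Rudolph's theorem, which rules out nontrivial slice strongly quasipositive knots via the slice--Bennequin inequality.

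The second assertion, that $K$ is not $SU(2)$-averse, then follows immediately from Theorem~\ref{thm:l-space-surgeries}: an $SU(2)$-averse knot would possess an entire ray of rational instanton L-space surgery slopes and in particular a nontrivial one, directly contradicting the first half of the theorem for slice $K$. The main obstacle in this plan is locating the exact positivity/genus statement in \cite{baldwin-sivek} that produces the equality $g_3(K) = g_4(K)$ (or strong quasipositivity) from the existence of a single positive instanton L-space surgery. If the available statement is weaker, a fallback would be to build, from the slice disk $D \subset B^4$ together with the $s$-framed $2$-handle attached along $K$, a smooth $4$-manifold with boundary $S^3_s(K)$ containing an embedded $2$-sphere of positive self-intersection $s$; such a configuration should obstruct $I^\#(S^3_s(K))$ from attaining its minimal possible rank $|H_1(S^3_s(K);\Z)|$ via a Floer-theoretic adjunction or nonvanishing argument, again contradicting the L-space assumption.
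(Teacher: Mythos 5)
Your first-choice route has a genuine gap: the input you need from \cite{baldwin-sivek} does not exist there (and was not known at the time). There is no established singular-instanton analogue of $\tau$ forcing $g_3(K)=g_4(K)$ for knots with a positive instanton L-space surgery, and the assertion that instanton L-space knots are strongly quasipositive (or fibered) is explicitly flagged in this paper as \emph{conjectural}, contingent on the unproven equivalence between instanton L-spaces and Heegaard Floer L-spaces (see Remark~\ref{rem:l-space-knot}). So the clean ``$g_4=0 \Rightarrow g_3=0$'' or Rudolph-style argument is not available.

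Your fallback identifies the right geometric object --- the sphere $S$ of self-intersection $m>0$ obtained by capping the slice disk with the core of the $2$-handle --- but the logic as you state it does not close. The sphere does not directly obstruct $I^\#(S^3_m(K))$ from having minimal rank; what it gives (via \cite{mmr-book}) is the vanishing of the cobordism map $I^\#(S^3)\to I^\#(S^3_m(K))$ in Floer's surgery exact triangle. That splits the triangle and yields the recursion $\rank I^\#(S^3_{m+1}(K)) = \rank I^\#(S^3_m(K)) + 1$, which is perfectly consistent with every $S^3_m(K)$ being an instanton L-space --- this is exactly what happens for the unknot, where the same sphere exists. The recursion only shows that $S^3_n(K)$ is an instanton L-space for all large integers $n$ if and only if $S^3_1(K)$ is. The paper then needs two further inputs to derive a contradiction: \cite[Theorem~4.20]{baldwin-sivek}, which says $S^3_1(K)$ can be an instanton L-space only if $\Delta_K(t) = t-1+t^{-1}$ or $-t+3-t^{-1}$, and the Fox--Milnor condition, which rules out both of these Alexander polynomials for a slice knot. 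Without these final steps your argument does not reach a contradiction. (Your reduction to positive slopes and the deduction of non-$SU(2)$-averseness from Theorem~\ref{thm:l-space-surgeries} are both fine and match the paper.)
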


\begin{proof}
Suppose that $K$ is slice and that $S^3_r(K)$ is an instanton L-space for some rational $r$; by taking mirror images as needed we can assume that $r > 0$.  Then \cite[Theorem~4.20]{baldwin-sivek} says that $S^3_n(K)$ is an instanton L-space for all integers $n \geq r$.

Letting $m \geq 1$ be an integer, we now apply Floer's surgery exact triangle
\[ \dots \to I^\#(S^3) \to I^\#(S^3_m(K)) \to I^\#(S^3_{m+1}(K)) \to \dots \]
as stated in \cite[Section~7.5]{scaduto}, cf.\ also \cite{floer-surgery,braam-donaldson}.  (This sequence involves twisted coefficients, but following the third row of \cite[Figure~1]{scaduto} we can place the nontrivial coefficients in the first term; the twisted group $I^\#(S^3;\mu)$ is isomorphic to $I^\#(S^3)$, since $[\mu]=0$ in $H_1(S^3;\Z/2\Z)$.)  Each of these maps is induced by a 2-handle cobordism, and the cobordism from $S^3$ to $S^3_m(K)$ contains a smoothly embedded 2-sphere $S$ of self-intersection $m \geq 1$, obtained by gluing a slice disk for $K$ to the core of the 2-handle.

Since the sphere $S$ has positive self-intersection, the map $I^\#(S^3) \to I^\#(S^3_m(K))$ necessarily vanishes.  This can be proved directly from the results of \cite{mmr-book}, or indirectly from \cite[Corollary~16.0.2]{mmr-book} by using $S$ to construct a torus of self-intersection at least 2: first, if $m=1$ then we patch two copies of $S$ together to get a sphere $S'$ of self-intersection 4, and then we attach a handle to either $S$ or $S'$ to get the desired torus.  Thus the above triangle splits, and since $I^\#(S^3) \cong \Z$ we have
\[ \rank(I^\#(S^3_{m+1}(K))) = \rank(I^\#(S^3_m(K))) + 1, \]
hence $\rank(I^\#(S^3_m(K))) = m + \big(\rank(I^\#(S^3_1(K)))-1\big)$ for all $m \geq 1$ by induction.

We conclude that $S^3_n(K)$ is an instanton L-space for each integer $n \geq r$ if and only if $S^3_1(K)$ is.  By \cite[Theorem~4.20]{baldwin-sivek}, the latter can only be true if $K$ has Alexander polynomial $t-1+t^{-1}$ or $-t+3-t^{-1}$.  But then $K$ violates the Fox-Milnor condition, which asserts that slice knots satisfy $\Delta_K(t) = f(t)f(t^{-1})$ for some polynomial $f(t)$ with integer coefficients \cite{fox-milnor}, so it cannot be slice and we have a contradiction.
\end{proof}

\begin{proposition} \label{prop:slice-genus-1}
If $K$ is $SU(2)$-averse with smooth slice genus 1, then either $S^3_1(K)$ or $S^3_{-1}(K)$ is an instanton L-space depending on whether $r(K)$ is positive or negative.  Thus $K$ has Seifert genus 1 and its Alexander polynomial is either $t-1+t^{-1}$ or $-t+3-t^{-1}$.
\end{proposition}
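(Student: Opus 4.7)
The plan is to adapt the strategy of the proof of Theorem~\ref{thm:smoothly-slice}, replacing the slice disk for $K$ by a genus-$1$ surface in $B^4$. After passing to the mirror if necessary, we may assume $r(K)>0$ and aim to show that $S^3_1(K)$ is an instanton L-space; the case $r(K)<0$ follows symmetrically. Fix a properly embedded oriented genus-$1$ surface $F\subset B^4$ with $\partial F = K$, and for each integer $m$ write $W_m\colon S^3\to S^3_m(K)$ for the $2$-handle cobordism of framing $m$; gluing $F$ to the core of the $2$-handle produces an embedded torus $T_m\subset W_m$ with $[T_m]^2=m$.

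For $m\geq 2$, the torus $T_m$ has self-intersection at least $2$, so by \cite[Corollary~16.0.2]{mmr-book} the cobordism map $I^\#(S^3)\to I^\#(S^3_m(K))$ induced by $W_m$ vanishes, exactly as in the proof of Theorem~\ref{thm:smoothly-slice}. The surgery exact triangle then splits, yielding
\[ \rank I^\#(S^3_{m+1}(K)) = \rank I^\#(S^3_m(K)) + 1 \qquad (m\geq 2). \]
Theorem~\ref{thm:l-space-surgeries} supplies some $n_0\geq 2$ such that $S^3_n(K)$ is an instanton L-space for all $n\geq n_0$, so backward induction via the above relation forces $\rank I^\#(S^3_m(K))=m$ for every $m\geq 2$; in particular $S^3_2(K)$ is an instanton L-space.

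The main obstacle is the case $m=1$, since $T_1$ has self-intersection only $1$ and the torus vanishing does not apply directly. Imitating the doubling trick from the proof of Theorem~\ref{thm:smoothly-slice}, I would take two parallel copies of $T_1$ in $W_1$, meeting transversely in a single point, and resolve this intersection to obtain an embedded genus-$2$ surface $\Sigma\subset W_1$ with $[\Sigma]^2=4$. Provided this is enough to invoke an MMR-type vanishing (the adjunction-type bound $[\Sigma]^2\geq 2g(\Sigma)$ is saturated), the cobordism map $I^\#(S^3)\to I^\#(S^3_1(K))$ vanishes, and the surgery triangle yields
\[ \rank I^\#(S^3_1(K)) = \rank I^\#(S^3_2(K)) - 1 = 1, \]
so $S^3_1(K)$ is an instanton L-space.

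Finally, \cite[Theorem~4.20]{baldwin-sivek} forces $\Delta_K(t)\in\{t-1+t^{-1},\,-t+3-t^{-1}\}$, used here in the converse direction from its application in Theorem~\ref{thm:smoothly-slice}. The bound $g(K)\geq 1$ is immediate from the slice genus assumption; for $g(K)\leq 1$ I would invoke Kronheimer--Mrowka's genus detection by $\KHI$ from \cite{km-alexander}, together with the structural information on $\KHI(K)$ coming from the existence of the instanton L-space surgery on $K$, to conclude that $\KHI(K,j)=0$ for $|j|\geq 2$ and therefore $g(K)=1$.
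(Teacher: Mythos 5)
Your overall architecture is the same as the paper's: mirror so that $r(K)>0$, glue the genus-$1$ surface in $B^4$ to the $2$-handle core to get tori $T_m \subset W_m$ with $[T_m]^2 = m$, kill the cobordism maps for $m\geq 2$ via \cite[Corollary~16.0.2]{mmr-book}, split the surgery triangle, and descend from the L-space range supplied by Theorem~\ref{thm:l-space-surgeries} down to $m=2$. All of that is correct.

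The gap is the $m=1$ step. Your doubled surface is a genus-$2$ surface of self-intersection $4$, and neither available vanishing theorem applies to it. First, \cite[Corollary~16.0.2]{mmr-book} is a statement about \emph{tori} of self-intersection at least $2$: in the slice-disk case of Theorem~\ref{thm:smoothly-slice} the doubled sphere has square $4$ and genus $0$, so one can stabilize \emph{up} to a torus of square $4$, but here the genus is already $2$ and cannot be lowered. Second, the adjunction-type vanishing that the paper imports from \cite{km-gauge2} into the cobordism setting (where $b^+\geq 3$ fails) requires an embedded surface of \emph{odd} genus $g$ with self-intersection $n\not\equiv 0\pmod 4$ satisfying $\tfrac{n}{2}+2 < 2g-2$; your surface has even genus, $n=4\equiv 0\pmod 4$, and $\tfrac{n}{2}+2 = 4 > 2 = 2g-2$, so it fails all three conditions. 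Saturating (or even strictly violating) the naive bound $[\Sigma]^2\leq 2g-2$ is not the relevant criterion. The paper's fix is to take \emph{five} parallel copies of $T_1$ and resolve the ${5\choose 2}=10$ intersection points, producing an embedded surface of genus $11$ and square $25$, which does satisfy the criterion; with that replacement your argument goes through. Your final paragraph is fine, though slightly roundabout: once $S^3_{\pm 1}(K)$ is known to be an instanton L-space, \cite[Theorem~4.20]{baldwin-sivek} already pins down both the Alexander polynomial and the Seifert genus, so the separate $\KHI$ argument for $g(K)=1$ is not needed.
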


\begin{proof}
We arrange for $r(K) > 0$ by replacing $K$ with its mirror as necessary, and then repeat the proof of Theorem~\ref{thm:smoothly-slice}: each $S^3_n(K)$ is an instanton L-space for integers $n \geq r(K)$, and so by the surgery exact triangle and induction, $S^3_1(K)$ must be as well.  The 2-handle cobordisms from $S^3$ to $S^3_m(K)$ now have an embedded torus (rather than sphere) of self-intersection $m \geq 1$, and for $m \geq 2$ we can still apply \cite[Corollary~16.0.2]{mmr-book} to see that the cobordism maps on $I^\#$ vanish.

The only change needed is in the case $m=1$, where we prove the vanishing of the cobordism map $I^\#(X)$ by using the adjunction inequality of \cite{km-gauge2} instead.  This requires some attention, since \cite{km-gauge2} assumes the ambient manifold is closed with $b^+ \geq 3$, but in fact they only really need this to ensure that the Donaldson invariants are well-defined (which is not an issue for cobordism maps) and to handle the boundary case where $X$ contains an essential sphere of self-intersection zero.  In our situation the same argument applies.  Given a torus $T$ with self-intersection 1, Kronheimer and Mrowka explain in \cite[\S6 (ii)]{km-gauge2} how to blow up $X$ with exceptional divisor $E$ and then construct for all sufficiently large, even $d$ a surface $\Sigma_d \subset X\#\overline{\mathbb{CP}}^2$ homologous to $d[T]-[E]$ with odd genus and 
\begin{align*}
\Sigma_d\cdot\Sigma_d &= d^2 - 1, &
\frac{\Sigma_d\cdot\Sigma_d}{2} + 2 &< 2g(\Sigma_d)-2.
\end{align*}
Since $g(\Sigma_d)$ is odd, the self-intersection $n_d = \Sigma_d\cdot\Sigma_d$ is positive and not a multiple of $4$, and $\frac{1}{2}n_d + 2 < 2g(\Sigma_d)-2$, we can apply \cite[Proposition~6.5]{km-gauge2} to see that the polynomial invariants associated to the cobordism $X\#\overline{\mathbb{CP}}^2$ vanish on the orthogonal complement of each class $d[T]-[E]$, hence they are identically zero; and then the same is true of the polynomial invariants for $X$, which include the map $I^\#(X)$ as a special case.

Thus if $r(K) > 0$ then $S^3_1(K)$ is an instanton L-space, and if $r(K) < 0$ then the same is true of $S^3_1(\mirror{K}) = -S^3_{-1}(K)$, hence also of $S^3_{-1}(K)$.  We now apply \cite[Theorem~4.20]{baldwin-sivek} to $K$ or $\mirror{K}$ to draw the desired conclusions about its genus and Alexander polynomial.
\end{proof}

After the original version of this paper appeared, Baldwin and the first author \cite{bs-lspace} proved that the trefoils are the only nontrivial knots of slice genus at most 1 which admit instanton L-space surgeries.  In particular, this implies that the trefoils are the only $SU(2)$-averse knots with slice genus at most 1.  We remark that the proof used an adjunction inequality for cobordisms asserting that if $b_1(X)=0$ and $X$ contains a surface $\Sigma$ with $\Sigma\cdot\Sigma > 2g(\Sigma)-2 \geq 0$, then the cobordism map $I^\#(X)$ is zero, analogous to the main result of \cite{km-gauge2}; this generalizes the vanishing results used to prove Theorem~\ref{thm:smoothly-slice} and Proposition~\ref{prop:slice-genus-1}.

\begin{example} \label{ex:twist-knots-redux}
We consider the twist knots $K_n$ of Example~\ref{ex:twist-knots}, assuming $n \in \Z$ without loss of generality.  These all have Seifert genus and hence smooth slice genus at most 1, and Alexander polynomial $\Delta_{K_n}(t) = -nt + (2n+1) - nt^{-1}$, so by Proposition~\ref{prop:slice-genus-1} these cannot possibly be $SU(2)$-averse except when $n=-1,0,1$.  The case $n=1$ is the figure eight, which cannot be $SU(2)$-averse because it is amphichiral: otherwise we would have $r(K_1) = r(\mirror{K_1}) = -r(K_1)$ and hence $r(K_1)=0$, which is impossible.  Thus we see once again that no hyperbolic twist knot is $SU(2)$-averse.
\end{example}

\section{Satellites and cabling} \label{sec:satellites}

Let $P \subset S^1 \times D^2$ be an oriented knot in the solid torus which does not lie in any embedded 3-ball, and let $K \subset S^3$ be a knot with tubular neighborhood $N(K)$ and peripheral curves $\mu,\lambda \in \partial N(K)$.  Then we can take a homeomorphism
\[ \varphi: S^1 \times D^2 \xrightarrow{\sim} N(K) \subset S^3 \]
which sends $\{\pt\} \times \partial D^2$ to $\mu$ and $S^1 \times \{\pt\} \subset S^1 \times \partial D^2$ to $\lambda$, and we define the \emph{satellite} $P(K) \subset S^3$ to be the image $\varphi(P)$.  The knots $P$ and $K$ are called the \emph{pattern} and the \emph{companion}, respectively, and $P$ is \emph{nontrivial} if it is not isotopic to the core of $S^1 \times D^2$.  The \emph{winding number} of $P$ is the unique integer $w$ such that $P$ represents an element $w[S^1 \times \{\pt\}]$ of $H^1(S^1 \times D^2)$.  From now on $U$ will denote the unknot.

The following is a result of Silver and Whitten, stated here in a slightly stronger form.
\begin{proposition}[{\cite[Proposition~3.4]{silver-whitten}}] \label{prop:satellite-quotient}
Given any pattern $P$ and companion $K$, there is a surjection
\[ \psi: \pi_1(S^3 \ssm P(K)) \to \pi_1(S^3 \ssm P(U)) \]
which preserves meridians and longitudes, i.e.\ such that $\psi(\mu_{P(K)}) = \mu_{P(U)}$ and $\psi(\lambda_{P(K)}) = \psi(\lambda_{P(U)})$.
\end{proposition}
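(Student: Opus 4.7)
The plan is to realize $\psi$ via the natural decomposition of satellite exteriors along the companion torus. Writing $T = \partial(S^1 \times D^2)$ and $C_P = (S^1 \times D^2) \ssm N(P)$, the satellite construction identifies
\[ S^3 \ssm N(P(K)) = (S^3 \ssm N(K)) \cup_T C_P, \qquad S^3 \ssm N(P(U)) = (S^3 \ssm N(U)) \cup_T C_P, \]
where in each case the gluing $\varphi|_T$ sends $\partial D^2 \times \{\pt\} \mapsto \mu$ and $S^1 \times \{\pt\} \mapsto \lambda$ of the respective companion. Van Kampen presents each $\pi_1$ as a pushout of $\pi_1(C_P)$ and $\pi_1$ of the companion complement over $\pi_1(T) \cong \Z^2$. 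Since $S^3 \ssm N(U)$ is a solid torus, $\pi_1(S^3 \ssm N(U)) \cong \Z$ is generated by $\mu_U$, with $\lambda_U$ trivial; and the abelianization of $\pi_1(S^3 \ssm N(K))$ is $H_1 \cong \Z\langle\mu_K\rangle$ with $\lambda_K \mapsto 0$. The composition $f: \pi_1(S^3 \ssm N(K)) \to \pi_1(S^3 \ssm N(U))$ therefore sends $\mu_K \mapsto \mu_U$ and $\lambda_K \mapsto 1 = \lambda_U$, so $f$ and $\mathrm{id}_{\pi_1(C_P)}$ agree on the image of $\pi_1(T)$ and the universal property of the pushout produces the desired $\psi$. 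Surjectivity is automatic because the images of $\pi_1(S^3 \ssm N(U))$ and $\pi_1(C_P)$ generate $\pi_1(S^3 \ssm N(P(U)))$.

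Preservation of meridians is immediate: both $\mu_{P(K)}$ and $\mu_{P(U)}$ are identified with the meridian $\mu_P$ of the pattern $P$ inside $\pi_1(C_P)$, on which $\psi$ restricts to the identity. The genuine content is the longitudes. The key claim is that under the satellite identification $\partial N(P(K)) \cong \partial N(P)$ the preferred longitude $\lambda_{P(K)}$ corresponds to a curve on $\partial N(P)$ which is \emph{independent of $K$}, provided one uses the $0$-framed gluing $\varphi(S^1 \times \{\pt\}) = \lambda_K$. To pin this curve down, I would introduce the specific longitude $\lambda_P$ of $P$ on $\partial N(P)$ characterized by $[\lambda_P] = w\,[S^1 \times \{\pt\}]$ in $H_1(C_P)$, where $w$ is the winding number. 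A standard computation (e.g.\ via excision and the long exact sequence of the pair $(S^1 \times D^2, C_P)$) gives $H_1(C_P) \cong \Z\langle\mu_P\rangle \oplus \Z\langle[S^1\times\{\pt\}]\rangle$, and such a $\lambda_P$ exists because a relative Seifert surface for $P$ in $S^1 \times D^2$ has boundary $P - w\,(S^1 \times \{\pt\})$, producing a surface in $C_P$ with boundary $\lambda_P - w(S^1 \times \{\pt\})$.

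A Mayer--Vietoris calculation on the decomposition of $S^3 \ssm N(P(K))$ now yields $H_1 \cong \Z\langle\mu_P\rangle$. Since the gluing identifies $S^1 \times \{\pt\}$ with $\lambda_K$ and the latter is nullhomologous in $S^3 \ssm N(K)$, we conclude $[\lambda_P] = w\,[\lambda_K] = 0$ in $H_1(S^3 \ssm N(P(K)))$, so $\lambda_P$ is the unique longitude of $P$ which is nullhomologous in the satellite exterior, forcing it to coincide with $\lambda_{P(K)}$ on $\partial N(P)$. The same argument with $K$ replaced by $U$ gives $\lambda_P = \lambda_{P(U)}$, and since $\psi$ is the identity on $\pi_1(C_P)$ we deduce $\psi(\lambda_{P(K)}) = \lambda_{P(U)}$. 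The main technical obstacle is bookkeeping the peripheral identifications; the essential geometric input is that the $0$-framing convention built into $\varphi$ is precisely what makes $\lambda_K$ nullhomologous in the companion complement, which is exactly what transports the nullhomology of the satellite's longitude across the decomposition.
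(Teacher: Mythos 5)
Your proposal is correct and follows essentially the same route as the paper: the same decomposition along the companion torus, with $\psi$ induced by collapsing the companion factor to $\Z$ (your pushout of the abelianization map is exactly the paper's quotient by the normal closure of the commutator subgroup of $\pi_1(S^3\ssm K)$), and the longitude identified via the relative Seifert surface for $P$ together with the fact that $\lambda_K$ bounds Seifert surfaces in the companion exterior. The only cosmetic difference is that you phrase the longitude step homologically (characterizing $\lambda_P$ by $[\lambda_P]=w[S^1\times\{\pt\}]$ and nullhomology in the satellite exterior) where the paper exhibits the bounding surface directly; these are equivalent.
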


\begin{proof}
We can write the knot group of $P(K)$ as an amalgamated free product
\[ \pi_1(S^3 \ssm P(K)) = \pi_1(S^3 \ssm K) \ast_{\Z^2} \pi_1((S^1 \times D^2) \ssm P), \]
where generators of $\Z^2 = \pi_1(\partial N(K))$ identify a meridian and longitude of $K$ with the curves $\{\pt\} \times \partial D^2$ and $S^1 \times \{\pt\}$ on $S^1 \times \partial D^2$ respectively.  Silver and Whitten argue that taking the quotient of this product by the normal closure $\cN$ of the commutator subgroup of $\pi_1(S^3 \ssm K)$ reduces the first factor to $\Z$, generated by $\mu_K \sim (\{\pt\} \times \partial D^2)$, and it sends the element $\lambda_K \sim (S^1 \times \{\pt\})$ to the identity, so the quotient is the knot group of $P(U)$ and $\psi$ is defined as the quotient map.

It is clear from the construction that $\psi(\mu_{P(K)})$ is still a meridian of $P(U)$.  To see the analogous claim for $\lambda_{P(K)}$, we take an embedded surface $\Sigma$ in $S^1 \times D^2$ with boundary $P \sqcup (S^1 \times \underline{w})$, where $\underline{w} \subset \partial D^2$ is a set of oriented points, and let $\lambda_P = \Sigma \cap \partial N(P) \subset (S^1\times D^2) \ssm N(P)$.  Then $\varphi(\lambda_P)$ is a longitude of any satellite $P(K)$, since the image $\varphi(S^1 \times \underline{w})$ bounds a collection of parallel Seifert surfaces for $K$.  When we identify $\pi_1(S^3 \ssm P(U))$ as the quotient
\[ \pi_1\big(S^3 \ssm P(K)\big) / \cN \cong \pi_1\big((S^1 \times D^2) \ssm P\big) / \langle S^1 \times \{\pt\} \rangle, \]
it follows that both $\lambda_{P(K)}$ and $\lambda_{P(U)}$ are represented by the same curve $\lambda_P \subset (S^1 \times D^2) \ssm N(P)$, and so $\psi(\lambda_{P(K)}) = \lambda_{P(U)}$.
\end{proof}

The following are straightforward consequences of Proposition~\ref{prop:satellite-quotient}.

\begin{proposition} \label{prop:satellite-properties}
Let $P \subset S^1 \times D^2$ be a pattern and $K \subset S^3$ any knot.
\begin{enumerate}
\item \label{i:image-pk} The image of $\ch^*(P(U))$ in the pillowcase is a subset of the image of $\ch^*(P(K))$.
\item \label{i:slopes-pk} The $SU(2)$-cyclic surgery slopes for $P(K)$ are all $SU(2)$-cyclic slopes for $P(U)$.
\item \label{i:limit-pk} If $P(K)$ is $SU(2)$-averse and $P(U)$ is not the unknot, then $r(P(K)) = r(P(U))$.
\end{enumerate}
\end{proposition}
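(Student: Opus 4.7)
The plan is to deduce all three parts of the proposition from Proposition~\ref{prop:satellite-quotient}, which gives a meridian- and longitude-preserving surjection $\psi: \pi_1(S^3 \ssm P(K)) \to \pi_1(S^3 \ssm P(U))$. The key observation is that pulling back representations via $\psi$ is a faithful operation on pillowcase coordinates: if $\rho$ is any representation of the knot group of $P(U)$ with coordinates $(\alpha,\beta)$ in the pillowcase, then $\rho \circ \psi$ is a representation of the knot group of $P(K)$ with the same coordinates, since $\psi(\mu_{P(K)}) = \mu_{P(U)}$ and $\psi(\lambda_{P(K)}) = \lambda_{P(U)}$. Moreover, since $\psi$ is surjective, $\rho \circ \psi$ has exactly the same image as $\rho$, hence is irreducible whenever $\rho$ is.

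For part \eqref{i:image-pk}, I would simply apply this observation directly: any point in the pillowcase image of $\ch^*(P(U))$ arising from some irreducible $\rho$ is equally realized by the irreducible character of $\rho \circ \psi \in R^*(P(K))$.

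For part \eqref{i:slopes-pk}, suppose $\frac{m}{n}$ is an $SU(2)$-cyclic slope for $P(K)$, and let $\bar\rho: \pi_1(S^3_{m/n}(P(U))) \to SU(2)$ be any representation. Pulling $\bar\rho$ back to $\pi_1(S^3 \ssm P(U))$ gives a representation that kills $\mu_{P(U)}^m \lambda_{P(U)}^n$; composing with $\psi$ produces a representation of $\pi_1(S^3 \ssm P(K))$ that kills $\mu_{P(K)}^m \lambda_{P(K)}^n$, and so factors through $\pi_1(S^3_{m/n}(P(K)))$. By hypothesis this composition has cyclic image, and since $\psi$ is surjective this forces the original $\bar\rho$ to have cyclic image as well. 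Therefore every slope $\frac{m}{n}$ that is $SU(2)$-cyclic for $P(K)$ is also $SU(2)$-cyclic for $P(U)$.

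For part \eqref{i:limit-pk}, assume $P(K)$ is $SU(2)$-averse, so it has infinitely many $SU(2)$-cyclic slopes converging to $r(P(K))$. By part \eqref{i:slopes-pk}, each of these is also $SU(2)$-cyclic for $P(U)$, so $P(U)$ also has infinitely many $SU(2)$-cyclic slopes and, since $P(U)$ is assumed nontrivial, it is $SU(2)$-averse. By Theorem~\ref{thm:open-pillowcase-image}, the $SU(2)$-cyclic slopes of $P(U)$ have a unique limit point $r(P(U))$, and the infinite subset consisting of the $SU(2)$-cyclic slopes of $P(K)$ must share this same unique limit point, giving $r(P(K)) = r(P(U))$. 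No step here is a real obstacle; the content of the proposition lies entirely in Proposition~\ref{prop:satellite-quotient}, and once the meridian/longitude-preserving surjection is in hand, the rest is a formal unwinding of definitions.
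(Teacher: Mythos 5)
Your proposal is correct and follows essentially the same route as the paper: all three parts are deduced from the meridian- and longitude-preserving surjection of Proposition~\ref{prop:satellite-quotient}, with part~\eqref{i:image-pk} proved by pulling back representations and noting that surjectivity preserves the image (hence irreducibility), and part~\eqref{i:limit-pk} by the uniqueness of the limit point from Theorem~\ref{thm:open-pillowcase-image}. The only cosmetic difference is in part~\eqref{i:slopes-pk}, where you argue directly with representations of the surgered manifolds rather than via the disjointness of the line $m\alpha+n\beta\equiv 0$ from the pillowcase image as the paper does; both are immediate consequences of part~\eqref{i:image-pk} and the surjection.
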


\begin{proof}
\eqref{i:image-pk} If an irreducible $\rho: \pi_1(S^3 \ssm P(U)) \to SU(2)$ has pillowcase coordinates $(\alpha,\beta)$, then so does $\tilde\rho = \rho\circ\psi: \pi_1(S^3 \ssm P(K)) \to SU(2)$, since $\psi(\mu_{P(K)}) = \mu_{P(U)}$ and $\psi(\lambda_{P(K)}) = \lambda_{P(U)}$.  We have $\img\tilde\rho = \img\rho$ since $\psi$ is surjective, so $\tilde\rho$ is irreducible as well.

\eqref{i:slopes-pk} If $\frac{m}{n}$-surgery on $P(K)$ is $SU(2)$-cyclic then the line $m\alpha+n\beta\equiv 0\pmod{2\pi}$ is disjoint from the image of $\ch^*(P(K))$, and hence also from the image of $\ch^*(P(U))$ by \eqref{i:image-pk}.

\eqref{i:limit-pk} The set of $SU(2)$-cyclic slopes for $P(K)$ has limit point $r(P(K))$ by Theorem~\ref{thm:open-pillowcase-image}, hence so does the set of $SU(2)$-cyclic slopes for $P(U)$ by \eqref{i:slopes-pk}.  If $P(U)$ is not the unknot, then this limit point must therefore be $r(P(U))$, again by Theorem~\ref{thm:open-pillowcase-image}.
\end{proof}

We can also prove more about satellites with nonzero winding number, as follows.

\begin{proposition} \label{prop:satellites-nonzero-winding}
Let $P$ be a satellite with winding number $w \neq 0$.  If some $r$-surgery on $P(K)$ is $SU(2)$-cyclic, then so is $\frac{r}{w^2}$-surgery on $K$.  In particular, if $K$ is nontrivial and $P(K)$ is $SU(2)$-averse, then $K$ is also $SU(2)$-averse, and $r(P(K)) = w^2r(K)$.
\end{proposition}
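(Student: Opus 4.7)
The heart of the argument is the first claim, a surgery-transfer statement: if $r$-surgery on $P(K)$ is $SU(2)$-cyclic, then $r/w^2$-surgery on $K$ is $SU(2)$-cyclic. I will prove this by contrapositive — given an irreducible $\rho : \pi_1(S^3 \ssm K) \to SU(2)$ with $\rho(\mu_K^m \lambda_K^{n w^2}) = I$ (where $r = m/n$ in lowest terms), I build an irreducible $\tilde\rho : \pi_1(S^3 \ssm P(K)) \to SU(2)$ with $\tilde\rho(\mu_{P(K)}^m \lambda_{P(K)}^n) = I$ — and then derive the ``in particular'' statement using the limit-slope machinery of Section~\ref{sec:pillowcase}.

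For the construction, use the Seifert--van Kampen decomposition
\[
  \pi_1(S^3 \ssm P(K)) = \pi_1(S^3 \ssm K) \ast_{\pi_1(T)} \pi_1(X_1),
\]
with $X_1 = (S^1 \times D^2) \ssm N(P)$ and $T = \partial N(K)$ identified with the outer boundary torus of $X_1$ as in the proof of Proposition~\ref{prop:satellite-quotient}.  A Mayer--Vietoris computation together with winding-number considerations yields the two relations
\[
  \mu_K = w \mu_P, \qquad \lambda_{P(K)} = w \lambda_K
\]
in $H_1(X_1) = \Z\{\mu_P, \lambda_K\}$: the first because a meridional disk of $S^1 \times D^2$ meets $P$ in $w$ algebraic points, and the second because $\lambda_{P(K)}$ lies in $\ker(H_1(X_1) \to H_1(S^3 \ssm P(K))) = \Z\{\lambda_K\}$ and has winding number $w$.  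Define an abelian representation $\sigma : \pi_1(X_1) \to U(1) \subset SU(2)$, factoring through $H_1(X_1)$, by $\sigma(\lambda_K) = \rho(\lambda_K)$ and $\sigma(\mu_P)$ a $w$-th root of $\rho(\mu_K)$ to be chosen below.  Then $\sigma|_{\pi_1(T)} = \rho|_{\pi_1(T)}$, so $\rho$ and $\sigma$ amalgamate to a representation $\tilde\rho$ of $\pi_1(S^3 \ssm P(K))$; since $\tilde\rho$ restricts to the irreducible $\rho$, it has non-abelian image and is thus irreducible.  The surgery condition $\tilde\rho(\mu_{P(K)}^m \lambda_{P(K)}^n) = I$ translates via the above relations to $m\alpha_P + n w \beta \in 2\pi \Z$, where $\sigma(\mu_P) = \diag(e^{i\alpha_P}, e^{-i\alpha_P})$, $\rho(\lambda_K) = \diag(e^{i\beta}, e^{-i\beta})$, and $w \alpha_P \equiv \alpha \pmod{2\pi}$ for $\rho(\mu_K) = \diag(e^{i\alpha}, e^{-i\alpha})$.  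Combined with the hypothesis $m\alpha + n w^2 \beta \in 2\pi \Z$, this reduces to a linear congruence in the choice of $w$-th root $\alpha_P = (\alpha + 2\pi k)/w$; verifying that an appropriate $k$ exists is the main arithmetic step of the proof, and I expect to establish it by exploiting the freedom of choosing $\rho$ within its path component of irreducibles.

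For the corollary, assume $P(K)$ is $SU(2)$-averse with infinitely many $SU(2)$-cyclic slopes $(r_i)$.  By the transfer, each $r_i/w^2$ is an $SU(2)$-cyclic slope for $K$; since $w \ne 0$, these are pairwise distinct for all but finitely many $i$, so $K$ is itself $SU(2)$-averse.  Theorem~\ref{thm:limit-slope-finite} gives $r(P(K)) < \infty$, so $r_i \to r(P(K))$ and hence $r_i/w^2 \to r(P(K))/w^2$.  By Theorem~\ref{thm:open-pillowcase-image}, any limit of $SU(2)$-cyclic slopes of a nontrivial $SU(2)$-averse knot must equal its limit slope, forcing $r(K) = r(P(K))/w^2$, i.e.\ $r(P(K)) = w^2 r(K)$.
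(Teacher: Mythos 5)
Your construction is exactly the paper's: decompose $S^3\ssm P(K)$ along $\partial N(K)$, extend an irreducible $\rho$ on the companion side by an abelian representation of $\pi_1((S^1\times D^2)\ssm P)$ determined by a choice of $w$-th root $\alpha_P=(\alpha+2\pi k)/w$ of $\rho(\mu_K)$, and reduce everything to a congruence in $k$. The homological relations, the irreducibility of the amalgamated representation, and the deduction of the "in particular" clause from Theorem~\ref{thm:open-pillowcase-image} are all fine.

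The gap is precisely at the step you defer: you never verify that a suitable $k$ exists, and the mechanism you propose for it --- "exploiting the freedom of choosing $\rho$ within its path component of irreducibles" --- cannot work. If $\rho$ witnesses the failure of $SU(2)$-cyclicity of $\frac{m}{nw^2}$-surgery, then writing $d=\gcd(m,w^2)$ (so that $\frac{m/d}{nw^2/d}$ is the slope in lowest terms) the witness satisfies $\frac{m}{d}\alpha+\frac{nw^2}{d}\beta=2\pi N'$ for some integer $N'$, and the condition you need is $km\equiv -dN' \pmod{w}$. The integer $N'$ is locally constant on the space of such witnesses, so deforming $\rho$ within its path component changes nothing; if the congruence were unsolvable for one witness it would be unsolvable for all nearby ones. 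What actually closes the argument is elementary arithmetic, which is where the paper spends its effort: the congruence $km\equiv -dN'\pmod{w}$ is solvable because $\gcd(m,w)$ divides $\gcd(m,w^2)=d$ (the paper phrases this as: $m/d$ is invertible modulo $w^2/d$, so one may choose $k$ with $N'+k(m/d)\equiv 0\pmod{w^2/d}$, making the relevant quantity a multiple of $2\pi w$). Note also that this bookkeeping with $d$ is unavoidable --- $\frac{m}{nw^2}$ is generally not in lowest terms, so the hypothesis on $\rho$ must be stated for the reduced slope, exactly as above. With that arithmetic inserted your proof is complete and coincides with the paper's.
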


\begin{proof}
We will use the standard identification
\[ S^3 \ssm P(K) = \big(S^3 \ssm N(K)\big) \cup \big((S^1 \times D^2) \ssm P\big) \]
to construct representations $\rho \in R(P(K))$ whose restrictions to $S^3 \ssm N(K)$ are irreducible (so that $\rho$ is also irreducible), but whose restrictions to $(S^1 \times D^2) \ssm P$ are abelian.  
Let $\mu_P, \lambda_P$ denote peripheral curves for $P$ in $\partial N(P) \subset S^1 \times D^2$, so that $\mu_P$ bounds a meridian of $P$ and the image $\varphi(\lambda_P) \subset \partial(S^3 \ssm N(P(K)))$ is a longitude of $P(K)$.  We note that $[\lambda_P] = w[S^1 \times \{\pt\}]$ as elements of $H_1((S^1 \times D^2) \ssm N(P))$.

Let $\rho_K: \pi_1(S^3 \ssm N(K)) \to SU(2)$ be an irreducible representation, where up to conjugacy we can write
\[ \rho_K(\mu_K) = \twomatrix{e^{i\alpha}}{0}{0}{e^{-i\alpha}}, \qquad \rho_K(\lambda_K) = \twomatrix{e^{i\beta}}{0}{0}{e^{-i\beta}} \]
for some pair $(\alpha,\beta)$.  By the Mayer-Vietoris sequence we have $H^1((S^1\times D^2) \ssm N(P)) \cong \Z^2$, with generators $[\mu_P]$ and $[S^1 \times \{\pt\}]$ where $\pt \in \partial D^2$.  We know that $[\lambda_P] = w[S^1 \times \{\pt\}]$, and similarly by examining a disk $\{\pt\} \times D^2$ which intersects $P$ transversely in $w$ points (with sign) we see that $w[\mu_P] = [\{\pt\} \times \partial D^2]$.  So if we define an abelian representation
\[ \rho_P: \pi_1\big((S^1\times D^2) \ssm N(P)) \xrightarrow{\ab} H_1\big((S^1\times D^2) \ssm N(P)\big) \to SU(2) \]
by the formulas
\[ \rho_P(\mu_P) = \twomatrix{e^{i(\alpha+2\pi k)/w}}{0}{0}{e^{-i(\alpha+2\pi k)/w}}, \qquad \rho_P(S^1 \times \{\pt\}) = \twomatrix{e^{i\beta}}{0}{0}{e^{-i\beta}} \]
for some integer $k$, then we have $\rho_P(\{\pt\} \times \partial D^2) = \rho_K(\mu_K)$ and $\rho_P(S^1 \times \{\pt\}) = \rho_K(\lambda_K)$.  We glue $\rho_K$ and $\rho_P$ to get the desired representation $\rho \in R^*(P(K))$, with peripheral curves $\mu = \varphi(\mu_P)$ and $\lambda = \varphi(\lambda_P)$ satisfying
\[ \rho(\mu) = \twomatrix{e^{i(\alpha+2\pi k)/w}}{0}{0}{e^{-i(\alpha+2\pi k)/w}}, \qquad \rho(\lambda) = \twomatrix{e^{i\beta w}}{0}{0}{e^{-i\beta w}}, \]
and it follows that $\rho$ has image $(\frac{\alpha+2\pi k}{w}, \beta w \pmod{2\pi})$ in the pillowcase.

Now suppose that some $\frac{p}{q}$-surgery on $P(K)$ is $SU(2)$-cyclic, where $p$ and $q$ are relatively prime.  Let $d = \gcd(p, w^2)$, so that $\frac{p}{d}$ and $\frac{qw^2}{d}$ are also relatively prime.  If the $\frac{p/d}{qw^2/d}$-surgery on $K$ is not $SU(2)$-cyclic, then there is some irreducible $\rho_K \in R^*(K)$ with pillowcase image $(\alpha,\beta)$ satisfying
\[ \frac{p}{d}\alpha + \frac{qw^2}{d}\beta = 2\pi n \]
for some integer $n$.  The corresponding $\rho \in R^*(P(K))$ has image $(\frac{\alpha + 2\pi k}{w}, \beta w)$, satisfying
\[ p\left(\frac{\alpha + 2\pi k}{w}\right) + q\left(\beta w\right) = \frac{2\pi d}{w} \left(n+k(p/d)\right). \]
But $\frac{p}{d}$ is invertible modulo $\frac{w^2}{d}$ since these are coprime integers, so we can choose $k$ so that $n+k(\frac{p}{d})$ is a multiple of $\frac{w^2}{d}$, and then the right side is a multiple of $2\pi w$.  The representation $\rho$ then descends to an irreducible representation of the $\frac{p}{q}$-surgery on $P(K)$, which is impossible, so we conclude that $\frac{p/d}{qw^2/d} = \frac{p}{qw^2}$ is an $SU(2)$-cyclic surgery slope for $K$ after all.
\end{proof}

\subsection{Connected sums}

We can now show that all $SU(2)$-averse knots are prime.

\begin{theorem} \label{thm:averse-connected-sum}
If $K_1$ and $K_2$ are nontrivial knots, then $K_1 \# K_2$ is not $SU(2)$-averse.
\end{theorem}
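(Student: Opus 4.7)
The plan is to argue by contradiction: suppose $K_1 \# K_2$ is $SU(2)$-averse with limit slope $r := r(K_1 \# K_2)$, which by Theorems~\ref{thm:limit-slope-finite} and \ref{thm:limit-slope-rational} is a finite rational number with $|r|>2$. Since $K_1 \# K_2$ is a satellite of $K_1$ with a winding-number-one pattern $P \subset S^1 \times D^2$ satisfying $P(U) = K_2$ (and symmetrically a satellite of $K_2$), Propositions~\ref{prop:satellite-properties}(3) and \ref{prop:satellites-nonzero-winding} force each $K_i$ to itself be $SU(2)$-averse with $r(K_i) = r$.

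Next I would apply Theorem~\ref{thm:pillowcase-near-pi/2} to each summand. Case (1) of that theorem is ruled out for $K_i$ because $K_i$ admits infinitely many $SU(2)$-cyclic surgeries, so case (2) produces an arc
\[ \rho^{(i)}_s \colon \pi_1(S^3 \ssm K_i) \to SU(2), \qquad s \in [\pi/2,\pi/2+\epsilon_i), \]
of irreducible representations with $\rho^{(i)}_s(\mu_i) = \diag(e^{is},e^{-is})$. Letting $\beta_i(s)$ denote the $\beta$-coordinate of $\rho^{(i)}_s$ in the pillowcase, combining $\rho^{(i)}_s$ with the unique (up to conjugation) reducible representation of $\pi_1(S^3 \ssm K_{3-i})$ sending the meridian to $\diag(e^{is},e^{-is})$ and the longitude to $I$ yields an irreducible representation of $\pi_1(S^3 \ssm (K_1 \# K_2))$ whose pillowcase image is again $(s,\beta_i(s))$. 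Hence the arc $s \mapsto (s,\beta_i(s))$ lies in $i^*(\ch^*(K_1 \# K_2))$, which by Theorems~\ref{thm:finitely-many-lines} and \ref{thm:su2-averse-rational} is a finite union of nontrivial line segments of slope $-r$ and isolated points. Since the arc varies continuously with strictly monotonic $\alpha$-coordinate, it cannot meet any isolated point and must lie in a single line segment, giving $\beta_i(s) = -rs + c_i$ for a constant $c_i \in \R/2\pi\Z$.

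Finally I would set $\epsilon = \min(\epsilon_1,\epsilon_2)$ and, for each $s \in [\pi/2,\pi/2+\epsilon)$, combine the two arcs via the amalgamated structure
\[ \pi_1(S^3 \ssm (K_1 \# K_2)) = \pi_1(S^3 \ssm K_1) \ast_\Z \pi_1(S^3 \ssm K_2) \]
along the common meridian, on which $\rho^{(1)}_s$ and $\rho^{(2)}_s$ already agree, to obtain a representation $\rho_s = \rho^{(1)}_s \ast \rho^{(2)}_s$. This $\rho_s$ is irreducible because its image contains the non-abelian image of $\rho^{(1)}_s$, and using $\lambda_{K_1 \# K_2} = \lambda_1 \lambda_2$ in $\pi_1$, its pillowcase image is
\[ (s,\beta_1(s)+\beta_2(s)) = (s,-2rs + c_1 + c_2), \]
tracing an arc of slope $-2r$ inside $i^*(\ch^*(K_1 \# K_2))$. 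Since $r \neq 0$, a curve of slope $-2r$ meets each of the finitely many lines of slope $-r$ in at most one point, so this arc can contain only finitely many points of the pillowcase image, contradicting the positive length of the parameter interval $[\pi/2,\pi/2+\epsilon)$. The main obstacle is ensuring that $K_1$ and $K_2$ simultaneously admit arcs of irreducibles meeting at the same $\alpha$-coordinate; Theorem~\ref{thm:pillowcase-near-pi/2} supplies this by forcing both arcs to emanate from $\alpha = \pi/2$, and the rest reduces to the slope comparison $-2r \neq -r$.
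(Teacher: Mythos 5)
Your proposal is correct and follows essentially the same route as the paper: use the satellite results to force both summands to be $SU(2)$-averse with the common limit slope $r$, invoke Theorem~\ref{thm:pillowcase-near-pi/2} to obtain arcs of irreducibles emanating from $\alpha=\pi/2$ lying on lines of slope $-r$, and glue them along the amalgamated free product to produce a segment of slope $-2r$ in the pillowcase image of $K_1\# K_2$, contradicting Theorem~\ref{thm:finitely-many-lines}. The only cosmetic differences are that you identify the slope of each arc by passing through $\ch^*(K_1\# K_2)$ rather than applying Theorem~\ref{thm:finitely-many-lines} directly to each $\ch^*(K_i)$, and you invoke Proposition~\ref{prop:satellites-nonzero-winding} where Proposition~\ref{prop:satellite-properties} alone already suffices.
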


\begin{proof}
We know that $K_1\# K_2$ is a satellite $P(K_2)$, where $P(U) = K_1$.  Indeed, we can remove a small open neighborhood of a meridian $m \subset \partial N(K_1)$ from $S^3$ to get a solid torus $S^1 \times D^2$ with embedded knot $P = K_1$, and this is the desired pattern.  Similarly, there is a pattern $P'$ such that $P'(K_1) = K_1 \# K_2$ and $P'(U) = K_2$.  Proposition~\ref{prop:satellite-properties} applied to $P$ and to $P'$ now says that if $K_1 \# K_2$ is $SU(2)$-averse, then each summand must be $SU(2)$-averse as well and
\[ r(K_1) = r(K_1 \# K_2) = r(K_2). \]
We let $r$ denote the common limit slope, recalling that $r \neq 0$.

The knot group of $K_1 \# K_2$ is an amalgamated free product
\[ \pi_1(S^3 \ssm K_1) \ast_\Z \pi_1(S^3 \ssm K_2) \]
in which we identify the meridians $\mu_1 \sim \mu_2$, with peripheral data $\mu_\# = \mu_1 = \mu_2$ and $\lambda_\# = \lambda_1\lambda_2$.  Since both $K_1$ and $K_2$ are $SU(2)$-averse, they satisfy case~\eqref{i:arc-near-pi/2} of Theorem~\ref{thm:pillowcase-near-pi/2}, so the pillowcase images of $\ch^*(K_1)$ and $\ch^*(K_2)$ contain arcs of the form
\[ \beta = -r\alpha + c_i, \qquad \frac{\pi}{2} \leq \alpha < \frac{\pi}{2} + \epsilon \]
for some constants $c_1$ and $c_2$ respectively and some $\epsilon > 0$.  For each $\alpha$ in this interval, we let $\rho_\alpha^j: \pi_1(S^3 \ssm K_j) \to SU(2)$, $j=1,2$, be corresponding representations such that
\[ \rho_\alpha^j(\mu_j) = \twomatrix{e^{i\alpha}}{0}{0}{e^{-i\alpha}}, \qquad
\rho_\alpha^j(\lambda_j) = \twomatrix{e^{i(-r\alpha+c_j)}}{0}{0}{e^{-i(-r\alpha+c_j)}}. \]
Combining these produces a representation $\rho^\#_\alpha = \rho^1_\alpha \ast \rho^2_\alpha$ of $\pi_1(S^3 \ssm (K_1\#K_2))$, with
\[ \rho^\#_\alpha(\mu_\#) = \twomatrix{e^{i\alpha}}{0}{0}{e^{-i\alpha}}, \qquad
\rho^\#_\alpha(\lambda_\#) = \twomatrix{e^{i(-2r\alpha+c_1+c_2)}}{0}{0}{e^{-i(-2r\alpha+c_1+c_2)}}. \]
But the $\rho^\#_\alpha$ produce a line segment of slope $-2r$ in the pillowcase image of $\ch^*(K_1 \# K_2)$, so by Theorem~\ref{thm:finitely-many-lines} we must actually have $r(K_1 \# K_2) = 2r$, and since $r \neq 2r$ this is impossible.
\end{proof}

\subsection{Cables}

If $p$ and $q$ are relatively prime integers with $q \geq 2$, and $K$ is an arbitrary knot, we define the {$(p,q)$-cable} $C_{p,q}(K)$ to be a peripheral curve in $\partial N(K)$ in the class $\mu^p\lambda^q$.  This is clearly a satellite with winding number $q$, and if $|p| \geq 2$ then $C_{p,q}(U)$ is the $(p,q)$-torus knot.

\begin{proposition} \label{prop:cable-converse}
If $K$ is a nontrivial, $SU(2)$-averse knot with limit slope $r(K) = \frac{p}{q}$, where $p$ and $q$ are relatively prime and $q \geq 2$, then the $(p,q)$-cable $C = C_{p,q}(K)$ is $SU(2)$-averse with limit slope $r(C) = pq$.
\end{proposition}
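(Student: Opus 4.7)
The plan is to apply Theorem~\ref{thm:finite-lines-converse} to $C=C_{p,q}(K)$ with $r=pq$, and then read off the limit slope using Proposition~\ref{prop:satellites-nonzero-winding}. Decompose
\[
S^3 \ssm N(C) = X_K \cup_{T_1} \Sigma,
\]
where $X_K = S^3 \ssm N(K)$, $T_1=\partial N(K)$, and $\Sigma=(S^1\times D^2) \ssm N(C_{p,q})$ is the cable space, Seifert fibered over an annulus with a single exceptional fiber of multiplicity $q$. Let $h \in \pi_1(\Sigma)$ be the central class of a regular fiber. A standard computation gives $[h] = p[\mu_K]+q[\lambda_K]$ on $T_1$ (fibers on $\partial V$ are parallel to the cable) and $[h] = pq[\mu_C]+[\lambda_C]$ on $T_2=\partial N(C)$; the latter uses the satellite self-linking formula together with $\mathrm{lk}_{S^3}(K,\lambda_K)=0$ to conclude that the Seifert framing of $C_{p,q}(K)$ is $pq$ independently of $K$.

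For an irreducible $\rho\colon \pi_1(S^3 \ssm C) \to SU(2)$ with restrictions $\rho_\Sigma$ and $\rho_K$ to the two pieces, I distinguish two cases. If $\rho_\Sigma$ is non-abelian then $\rho(h)=\pm I$ by centrality of $h$, and reading this on $T_2$ yields $\rho(\mu_C^{pq}\lambda_C)=\pm I$, so the pillowcase coordinates $(\alpha,\beta)$ of $\rho$ satisfy $pq\alpha+\beta \equiv 0 \pmod\pi$. If instead $\rho_\Sigma$ is abelian then $\rho_K$ must be irreducible (else $\rho$ is globally reducible). Using the relations $[\mu_K]=q[\mu_C]$ in $H_1(\Sigma)$ (a meridian disk of $V$ meets $C_{p,q}$ in $q$ points) and $[\lambda_C]=q[\lambda_K]$ (obtained by equating the two expressions for $[h]$ inside $H_1(\Sigma)\cong \Z^2$), the coordinates $(\alpha_C,\beta_C)$ of $\rho$ and $(\alpha_K,\beta_K)$ of $\rho_K$ are related by $q\alpha_C\equiv\alpha_K \pmod{2\pi}$ and $\beta_C\equiv q\beta_K \pmod{2\pi}$. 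Substituting into the constraint $p\alpha_K+q\beta_K \equiv c_i \pmod{2\pi}$ given by Theorem~\ref{thm:su2-averse-rational} for $K$ yields
\[
pq\alpha_C+\beta_C \equiv c_i \pmod{2\pi}.
\]

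Hence in every case the pillowcase image of an irreducible character of $C$ lies on a line of slope $-pq$, with constant in the finite set $\{0,\pi\}\cup\{c_i\}$. Semi-algebraicity of $\ch^*(C)$ guarantees finitely many path components, so the image is contained in finitely many such line segments together with finitely many isolated points. Theorem~\ref{thm:finite-lines-converse}, applied to the nontrivial knot $C$, then produces infinitely many $SU(2)$-cyclic surgery slopes on $C$, so $C$ is $SU(2)$-averse. Finally, Proposition~\ref{prop:satellites-nonzero-winding}, applied with nonzero winding number $q$, gives $r(C)=q^2 r(K)=pq$.

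The main technical obstacle is the framing identity $[h]=pq[\mu_C]+[\lambda_C]$ on $T_2$ for arbitrary companion $K$; once this is in hand, the rest is careful bookkeeping: matching $\rho_\Sigma$ (either abelian or forcing $h \mapsto \pm I$) with $\rho_K$ along $T_1$, and verifying that reducible corner cases (limits of irreducibles with $\beta_K = 0$) contribute at most finitely many isolated points, using that $\Delta_C(t)$ has finitely many roots on the unit circle.
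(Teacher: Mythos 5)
Your proof is correct, but it takes a genuinely different route from the paper's. The paper's argument is two lines: apply Theorem~\ref{thm:finite-lines-converse} to the companion $K$ to get $SU(2)$-cyclic slopes $\frac{p}{q}+\frac{1}{kN}$, and then invoke Gordon's surgery formula $S^3_{pq+\frac{1}{n}}(C_{p,q}(K)) \cong S^3_{\frac{p}{q}+\frac{1}{q^2n}}(K)$ to transfer these directly to $SU(2)$-cyclic slopes $pq+\frac{1}{n}$ on the cable, which converge to $pq$. You instead work at the level of character varieties: you use the Seifert-fibered structure of the cable space (centrality of the regular fiber $h$, with $[h]=p[\mu_K]+q[\lambda_K]$ on the inner torus and $[h]=pq[\mu_C]+[\lambda_C]$ on the outer one, i.e.\ Gordon's Lemma~7.2 rather than his Corollary~7.3) to show that every irreducible character of $C$ lands on one of finitely many lines of slope $-pq$ in the pillowcase, then apply Theorem~\ref{thm:finite-lines-converse} to $C$ itself and Proposition~\ref{prop:satellites-nonzero-winding} to identify the limit slope. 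Your case analysis is sound: when $\rho_\Sigma$ is nonabelian the fiber is forced to $\pm I$, and when $\rho_\Sigma$ is abelian the companion representation must be irreducible (the central corner case $\rho_K(\mu_K)=\pm I$ forces global reducibility since $\mu_K$ normally generates), so Theorem~\ref{thm:su2-averse-rational} applies; the sign ambiguity from the pillowcase involution only doubles the finite set of constants. The trade-off is that the paper's proof is shorter and exhibits explicit cyclic slopes, while yours additionally describes the pillowcase image of the cable knot — essentially the same analysis the paper deploys later for the converse direction in Theorem~\ref{thm:cables} — at the cost of more bookkeeping.
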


\begin{proof}
According to Theorem~\ref{thm:finite-lines-converse}, there is an integer $N \geq 1$ such that $(\frac{p}{q}+\frac{1}{kN})$-surgery on $K$ is $SU(2)$-averse for all but finitely many $k$.  Gordon \cite[Corollary~7.3]{gordon} showed that
\[ S^3_{pq+\frac{1}{n}}(C_{p,q}(K)) \cong S^3_{\frac{p}{q}+\frac{1}{q^2n}}(K) \]
for all $n \geq 1$, and the latter is $SU(2)$-cyclic for all but finitely many multiples $n$ of $N$, so $C_{p,q}(K)$ has an infinite sequence of $SU(2)$-cyclic slopes which converges to $pq$.
\end{proof}

\begin{theorem} \label{thm:cables}
Let $K$ be a nontrivial knot, and let $p$ and $q$ be relatively prime with $q \geq 2$.  The cable $C_{p,q}(K)$ is $SU(2)$-averse if and only if $K$ is also $SU(2)$-averse with limit slope $r(K) = \frac{p}{q}$, and in this case $r(C_{p,q}(K)) = pq$.
\end{theorem}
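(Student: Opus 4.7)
The ``if'' direction is Proposition~\ref{prop:cable-converse}. For the converse, I assume $C := C_{p,q}(K)$ is $SU(2)$-averse. The cable pattern has nonzero winding number $q \geq 2$, so Proposition~\ref{prop:satellites-nonzero-winding} immediately yields that $K$ itself is $SU(2)$-averse, with
\[ r(C) = q^2 \, r(K). \]
The problem reduces to identifying $r(C) = pq$, after which $r(K) = p/q$ and $r(C_{p,q}(K)) = pq$ both follow at once.

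In the main case $|p| \geq 2$, the satellite $C_{p,q}(U) = T_{p,q}$ is a nontrivial torus knot. Torus knots are $SU(2)$-averse with limit slope $pq$ by Moser's classification of lens space surgeries \cite{moser} (recalled in Section~\ref{sec:pillowcase}), so Proposition~\ref{prop:satellite-properties}(iii) forces $r(C) = r(T_{p,q}) = pq$, and combined with the scaling relation above we obtain $r(K) = p/q$.

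The residual case $|p| = 1$ must be handled separately because $C_{\pm 1, q}(U)$ is the unknot and Proposition~\ref{prop:satellite-properties}(iii) then gives no information. In this range, however, the alleged conclusion $r(K) = \pm 1/q$ has magnitude at most $1/2$ and violates the universal bound $|r(K)| > 2$ of Theorem~\ref{thm:open-pillowcase-image}; the actual content of the theorem in this range is therefore the non-existence assertion that $C_{\pm 1, q}(K)$ cannot be $SU(2)$-averse for any nontrivial $K$. This is the step I expect to be the main obstacle. My plan is to rule out such $C$ by combining Gordon's surgery identity $S^3_{\pm q + 1/n}(C) \cong S^3_{\pm 1/q + 1/(q^2 n)}(K)$ with the quantitative gap bounds of Theorem~\ref{thm:main-slopes} and the existence statement of Theorem~\ref{thm:finite-lines-converse}. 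Since $|r(K)| > 2$ forces the $K$-slopes $\pm 1/q + 1/(q^2 n)$ to cluster at $\pm 1/q$, uniformly separated from $r(K)$, only finitely many of them can be $SU(2)$-cyclic, so only finitely many $C$-slopes of the form $\pm q + 1/n$ can be $SU(2)$-cyclic either. Meanwhile $SU(2)$-averseness of $C$ supplies an infinite family of $SU(2)$-cyclic slopes clustering at $r(C) = q^2 r(K) \neq \pm q$, whose surgeries $S^3_s(C)$ are toroidal since $s \neq \pm q + 1/n$. To convert this apparent mismatch into a genuine contradiction I would analyze the JSJ decomposition of these toroidal surgeries together with the $A$-polynomial restrictions from Section~\ref{sec:a-polynomial}, in particular using the fact that the pillowcase image of $\ch^*(C)$ must contain a nontrivial line segment of slope $-r(C)$ that constrains the factorization of $A_C(M,L)$.
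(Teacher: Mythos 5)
Your treatment of the easy direction and of the case $|p|\geq 2$ matches the paper exactly: Proposition~\ref{prop:cable-converse} for ``if,'' Proposition~\ref{prop:satellites-nonzero-winding} for $r(C)=q^2r(K)$, and Proposition~\ref{prop:satellite-properties} applied to $C_{p,q}(U)=T_{p,q}$ to pin down $r(C)=pq$. You have also correctly located the real difficulty in the case $p=\pm1$.

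However, your plan for $p=\pm 1$ has a genuine gap. The ``apparent mismatch'' you describe is not a mismatch at all: the finitely many $SU(2)$-cyclic slopes of $C$ of the form $\pm q+\frac{1}{n}$ cluster near $\pm q$, while the infinite family guaranteed by averseness clusters near $r(C)=q^2r(K)$, and since these two accumulation points are different there is no tension between the two statements. Nothing you have written forces a contradiction, and the proposed follow-up (JSJ decompositions of the toroidal surgeries plus $A$-polynomial constraints) is left entirely unexecuted; in particular it is not clear how one would produce an irreducible $SU(2)$ representation of a toroidal filling from that data alone. The paper's argument for $p=\pm1$ is quite different and is the key missing idea: the cable space $(S^1\times D^2)\ssm C$ is Seifert fibered with central regular fiber of slope $pq[\mu_C]+[\lambda_C]$, so any representation of $\pi_1(S^3\ssm C_{\pm1,q}(K))$ that is irreducible on the cable space must send $\mu_C^{\pm q}\lambda_C$ to $\pm I$. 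Starting from a single irreducible $\rho_K$ with $\rho_K(\mu_K^{\pm1}\lambda_K^q)=I$ (which exists because $\pm\frac{1}{q}$-surgery on $K$ is not $SU(2)$-cyclic, by \cite{km-su2}), one extends $\rho_K$ over the cable space in a one-parameter family $\rho_s$ by choosing $\rho_s(h)$ with $\rho_s(h)^q=-1$ along a circle of purely imaginary quaternions; the trace of $\rho_s(\mu_C)$ then sweeps out an open interval while $\rho_s(\mu_C^{\pm q}\lambda_C)=\pm I$ is held fixed. This produces a nontrivial line segment of slope $\mp q$ in the pillowcase image of $\ch^*(C_{\pm1,q}(K))$, so Theorem~\ref{thm:finitely-many-lines} forces $r(C)=\pm q$, hence $r(K)=\pm\frac{1}{q}$, contradicting $|r(K)|>2$. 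Without some construction of this kind that actually identifies $r(C)$ in the $p=\pm1$ case, your proof is incomplete.
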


We begin by proving the hardest case of Theorem~\ref{thm:cables}, namely $p=\pm1$.

\begin{lemma} \label{lem:cables-p-1}
Let $K$ be a nontrivial knot and $q \geq 2$.  Then the pillowcase image of $\ch^*(C_{\pm1,q}(K))$ contains a nontrivial line segment of slope $\mp q$, and the cable $C_{\pm1,q}(K)$ is not $SU(2)$-averse.
\end{lemma}

\begin{proof}
We begin with some generalities about arbitrary cables.  Many of the details below are adapted from the proof of \cite[Theorem~2.8]{ni-zhang}.  Letting $C=C_{p,q}$ denote the corresponding pattern in $S^1 \times D^2$, the complement $(S^1 \times D^2) \ssm C$ is Seifert fibered over the annulus with one singular fiber of order $q$, where the generic fibers on $S^1 \times \partial D^2$ have slope
\[ p[\mu_K] + q[\lambda_K] = p[\{\pt\} \times \partial D^2] + q[S^1 \times \{\pt\}], \]
and those on $\partial N(C)$ have slope $pq[\mu_C] + [\lambda_C]$; see \cite[Lemma~7.2]{gordon}.  These are central in $\pi_1((S^1\times D^2) \ssm C)$, so any representation
\[ \rho: \pi_1(S^3 \ssm C_{p,q}(K)) \to SU(2) \]
whose restriction to $\pi_1((S^1 \times D^2) \ssm C)$ is irreducible must send these to the center of $SU(2)$, meaning that $\rho(\mu_C^{pq}\lambda_C)$ is either $I$ or $-I$.

We now assume that $p=\pm 1$ and let $\rho^+_K: \pi_1(S^3 \ssm K) \to SU(2)$ be an irreducible representation satisfying $\rho^+_K(\mu_K^{\pm 1}\lambda_K^q) = I$; this exists since $\pm\frac{1}{q}$-surgery on $K$ is not $SU(2)$-cyclic, by \cite{km-su2}.  Then $\rho^+_K(\mu_K) \not\in \{I,-I\}$, or else $\rho^+_K$ would be reducible, and since $\rho^+_K(\lambda_K^q) \not\in \{I,-I\}$ as a result, the same is true of $\rho^+_K(\lambda_K)$.  Likewise, we can multiply $\rho^+_K$ by the central character
\[ \chi: \pi_1(S^3\setminus K) \to H_1(S^3\setminus K) \cong \Z \xrightarrow{n \mapsto (-1)^n} \{\pm 1\} \]
to get an irreducible $\rho^-_K$ with $\rho^-_K(\mu_K^{\pm 1}\lambda_K^q) = -I$ and $\rho^-_K(\mu_K), \rho^-_K(\lambda_K) \not\in \{I,-I\}$.

We will use either $\rho^+_K$ or $\rho^-_K$ to construct a family of representations
\[ \rho_s: \pi_1(S^3 \ssm C_{\pm 1,q}(K)) \to SU(2) \]
which are irreducible on $(S^1 \times D^2) \ssm C$, hence satisfy $\rho_s(\mu_C^{\pm q}\lambda_C) \in \{I,-I\}$, but for which the trace of $\rho_s(\mu_C)$ takes values everywhere in an open interval.  This produces a line segment of slope $\mp q$ in the pillowcase image of $\ch^*(C_{\pm 1,q}(K))$, which implies that if $C_{\pm 1,q}(K)$ is $SU(2)$-averse then it has limit slope $\pm q$.  But in this case $K$ must also be $SU(2)$-averse with limit slope $r(K) = \frac{1}{q^2}r(C_{\pm 1,q}(K)) = \pm \frac{1}{q}$ by Proposition~\ref{prop:satellites-nonzero-winding}, and this contradicts $|r(K)| > 2$, so the proof will be complete.

Taking $C = C_{\pm 1,q} \subset S^1\times D^2$, we have a presentation
\[ \pi_1((S^1 \times D^2) \ssm C) = \langle h, \lambda_K \mid h^q\lambda_K = \lambda_K h^q \rangle \]
with $\mu_C = h\lambda_K$, as deduced in \cite{ni-zhang}; here $h$ is freely homotopic to the singular fiber of $(S^1 \times D^2) \ssm C$.  (Note that we are using $K$ to refer to the companion knot and $C$ to refer to the cabling pattern, in contrast with \cite{ni-zhang}.)  Thus we can extend either $\rho^+_K$ or $\rho^-_K$ to a representation of $\pi_1(S^3 \ssm C_{\pm 1,q}(K))$ by finding an element of $SU(2)$ whose $q$th power commutes with $\rho_K(\lambda_K)$; this will provide an extension of $\rho^+_K$ if the Seifert fiber $\mu_C^{\pm q}\lambda_C \simeq \mu_K^{\pm1}\lambda_K^q$ is sent to $+I$, and of $\rho^-_K$ if it is sent to $-I$ instead.

Viewing $SU(2)$ as the unit quaternions for convenience, we can write $\rho_K(\lambda_K) = e^{i\beta}$ up to conjugacy for some $\beta$, noting that $\sin(\beta) \neq 0$ since $e^{i\beta} \not\in \{1,-1\}$.  We define a family of purely imaginary unit quaternions by 
\[ v_s = \cos(s)i + \sin(s)j, \qquad 0 \leq s \leq \pi. \]
Then for all $s$ in the same range we extend $\rho_K$ to $\rho_s: \pi_1(S^3 \ssm C_{\pm 1,q}(K)) \to SU(2)$ with pillowcase coordinates $(\alpha_s,\beta_s)$ by setting
\[ \rho_s(h) = \cos\left(\frac{\pi}{q}\right) + \sin\left(\frac{\pi}{q}\right)v_s. \]
It is clear that $\rho_s(h^q) = -1$, which commutes with $\rho_s(\lambda_K) = e^{i\beta}$, and so these representations are well-defined.  For $0<s<\pi$ they restrict to irreducible representations on $\pi_1((S^1\times D^2)\ssm C)$, since $\rho_s(h)$ does not commute with $e^{i\beta}$.  From $\rho_s(\mu_K) = \rho_s(h)\rho_s(\lambda_K)$ we also compute that
\[ \cos(\alpha_s) = \re(\rho_s(\mu_K)) = \cos\left(\frac{\pi}{q}\right)\cos(\beta) - \cos(s)\sin\left(\frac{\pi}{q}\right)\sin(\beta). \]
Since $\sin(\frac{\pi}{q})\sin(\beta) \neq 0$, the value of $\cos(\alpha_s)$ and hence of $\alpha_s$ covers an open interval of values for $0 < s < \pi$, as desired.
\end{proof}

\begin{proof}[Proof of Theorem~\ref{thm:cables}]
One direction was already established in Proposition~\ref{prop:cable-converse}, so we will suppose that $C_{p,q}(K)$ is $SU(2)$-averse.  By Proposition~\ref{prop:satellites-nonzero-winding}, $K$ is $SU(2)$-averse with $r(C_{p,q}(K)) = q^2 r(K)$.  Lemma~\ref{lem:cables-p-1} says that $p \neq \pm 1$, so then $C_{p,q}(U) = T_{p,q}$ is not the unknot.  But then $r(C_{p,q}(K)) = r(T_{p,q}) = pq$ by Proposition~\ref{prop:satellite-properties} and we conclude that $r(K) = \frac{p}{q}$.
\end{proof}

Theorem~\ref{thm:cables} says that if any cable at all of $K$ is $SU(2)$-averse then exactly one cable $C_{p,q}(K)$ is, with limit slope $r(C_{p,q}(K)) = pq$; and in this case, not only is $K$ also $SU(2)$-averse but the limit slope $r(K) = \frac{p}{q}$ cannot be an integer.  Since $SU(2)$-averse small knots (which include torus knots) and cables must have integral limit slopes, we conclude the following.

\begin{corollary}
Cables of small knots and of nontrivial cables are never $SU(2)$-averse.
\end{corollary}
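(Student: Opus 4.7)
The plan is to reduce both assertions to the observation that Theorem~\ref{thm:cables} forces an $SU(2)$-averse cable's companion to have non-integral limit slope, which cannot happen for either class of companions in question. First I would suppose toward a contradiction that $C_{p,q}(K)$ is $SU(2)$-averse, with $\gcd(p,q)=1$ and $q\geq 2$. By Theorem~\ref{thm:cables}, $K$ is itself $SU(2)$-averse with $r(K) = \tfrac{p}{q}$, and since $\gcd(p,q)=1$ and $q \geq 2$, the rational number $r(K)$ is not an integer.

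Now I split on which class $K$ belongs to. If $K$ is small, then Theorem~\ref{thm:small-knot-integer} asserts that $r(K) \in \Z$, contradicting $r(K) = \tfrac{p}{q} \notin \Z$. If instead $K$ is a nontrivial cable, write $K = C_{p',q'}(K_0)$ for some nontrivial knot $K_0$ with $\gcd(p',q')=1$ and $q' \geq 2$. Since $K$ is $SU(2)$-averse, a second application of Theorem~\ref{thm:cables}, this time to the presentation of $K$ as the cable of $K_0$, yields $r(K) = p'q' \in \Z$, again contradicting $r(K) = \tfrac{p}{q} \notin \Z$. In either case the assumption that $C_{p,q}(K)$ is $SU(2)$-averse is untenable.

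There is essentially no obstacle here: the corollary is a clean two-line deduction from Theorems~\ref{thm:cables} and~\ref{thm:small-knot-integer}, relying only on the elementary fact that a fraction in lowest terms with denominator at least $2$ is not an integer. The only mild subtlety is bookkeeping: one must apply Theorem~\ref{thm:cables} twice in the second case, once to deduce that the companion $K$ is $SU(2)$-averse and a second time to compute its limit slope as $p'q'$, and one must note that we are free to do so because Theorem~\ref{thm:cables} hypothesizes only that the companion in question is a nontrivial knot.
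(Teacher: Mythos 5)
Your proof is correct and follows essentially the same route as the paper: the paragraph preceding the corollary deduces from Theorem~\ref{thm:cables} that the companion of an $SU(2)$-averse cable must have the non-integral limit slope $\frac{p}{q}$, while small knots (Theorem~\ref{thm:small-knot-integer}) and nontrivial cables (Theorem~\ref{thm:cables} again) have integral limit slopes. Your case split and the double application of Theorem~\ref{thm:cables} match the intended argument exactly.
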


We have the following special case, since algebraic knots are iterated cables of the unknot.

\begin{corollary} \label{cor:algebraic-knots}
An algebraic knot is $SU(2)$-averse if and only if it is a torus knot.
\end{corollary}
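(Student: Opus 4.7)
The plan is to combine the structural description of algebraic knots with the corollary immediately preceding the statement. Recall that an algebraic knot is, by definition, an iterated torus knot, built from the unknot by a finite sequence of cabling operations; equivalently, any nontrivial algebraic knot is either a torus knot $T_{p,q}=C_{p,q}(U)$ or a cable $C_{p,q}(K)$ with $q \geq 2$ of a nontrivial algebraic knot $K$, where in the latter case $K$ itself is either a torus knot or another nontrivial cable.

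For the ``if'' direction I would simply quote Moser: every torus knot is $SU(2)$-averse with limit slope $pq$, as already observed after Definition~\ref{def:limit-slope}. For the ``only if'' direction, suppose $A$ is an $SU(2)$-averse algebraic knot. Since the unknot is not $SU(2)$-averse (by the nontriviality clause in the definition), I may assume $A$ is a nontrivial iterated torus knot, and the goal is to show $A$ is a torus knot.

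Suppose instead that $A$ is not a torus knot. Then $A = C_{p,q}(K)$ with $q \geq 2$ and $K$ a nontrivial algebraic knot. If $K$ is a torus knot then $K$ is small, since torus knot exteriors are Seifert fibered and contain no essential closed surfaces other than boundary-parallel tori; if instead $K$ is an iterated cable, then $K$ is a nontrivial cable. Either way, $A$ is either a cable of a small knot or a cable of a nontrivial cable, and the corollary immediately preceding Corollary~\ref{cor:algebraic-knots} rules out both possibilities.

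Given how directly the statement follows from that corollary once the structure of algebraic knots is unpacked, there is no serious obstacle in this argument; the only point one must verify is that torus knot exteriors are small, which is classical.
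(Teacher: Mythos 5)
Your proof is correct and is essentially the paper's own argument: the paper deduces this corollary in one line from the immediately preceding (unnumbered) corollary that cables of small knots and of nontrivial cables are never $SU(2)$-averse, using exactly your observation that a non-torus algebraic knot is a cable of a nontrivial algebraic knot, which is either a torus knot (hence small) or itself a nontrivial cable.
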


\subsection{Whitehead doubles} \label{sec:whitehead-doubles}

Using what we know so far about satellites, we cannot say much about whether the Whitehead double $\Wh(K)$ of a nontrivial knot is $SU(2)$-averse: neither Proposition~\ref{prop:satellite-properties} nor Proposition~\ref{prop:satellites-nonzero-winding} can be used here, since $\Wh(U)$ is the unknot and the corresponding pattern knot has winding number zero.   On the other hand, we can completely answer Conjecture~\ref{conj:main-conjecture} for Whitehead doubles by other means.

\begin{proposition}
If $K$ is a nontrivial knot, then its Whitehead double $\Wh(K)$ is not $SU(2)$-averse.
\end{proposition}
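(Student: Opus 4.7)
The plan is to combine two results already proved earlier in the paper: Theorem~\ref{thm:smoothly-slice}, which rules out $SU(2)$-averseness for smoothly slice knots, and Proposition~\ref{prop:slice-genus-1}, which constrains the Alexander polynomial of any $SU(2)$-averse knot of smooth slice genus $1$ to be either $t-1+t^{-1}$ or $-t+3-t^{-1}$. The two external facts I will need are classical: the Whitehead double bounds an obvious genus-one Seifert surface (built from a disk for the pattern capped off by the clasp), so
\[ g_s(\Wh(K)) \leq g_3(\Wh(K)) \leq 1, \]
and its Alexander polynomial satisfies $\Delta_{\Wh(K)}(t) \equiv 1$.

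Given these inputs, I will split the argument on $g_s(\Wh(K)) \in \{0,1\}$. In the case $g_s(\Wh(K)) = 0$, the Whitehead double is smoothly slice, and Theorem~\ref{thm:smoothly-slice} applies directly to conclude that $\Wh(K)$ is not $SU(2)$-averse. In the case $g_s(\Wh(K)) = 1$, I will suppose for contradiction that $\Wh(K)$ is $SU(2)$-averse; then Proposition~\ref{prop:slice-genus-1} forces $\Delta_{\Wh(K)}(t)$ to lie in $\{t-1+t^{-1},\ -t+3-t^{-1}\}$, contradicting $\Delta_{\Wh(K)}(t) = 1$. Either way, $\Wh(K)$ cannot be $SU(2)$-averse.

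I do not anticipate any serious obstacle. The main conceptual point worth flagging is that the satellite techniques of Propositions~\ref{prop:satellite-properties} and \ref{prop:satellites-nonzero-winding}, which drive most of Section~\ref{sec:satellites}, give no information here: the Whitehead pattern has winding number $0$ and $\Wh(U)$ is the unknot, so both results become vacuous. The instanton L-space obstructions of Section~\ref{sec:l-spaces}, already packaged into Theorem~\ref{thm:smoothly-slice} and Proposition~\ref{prop:slice-genus-1}, are precisely what is needed to sidestep this and close the case in two lines.
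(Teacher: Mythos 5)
Your proof is correct and is essentially identical to the paper's: both use the genus-one Seifert surface to bound the slice genus by $1$, dispose of the slice case via Theorem~\ref{thm:smoothly-slice}, and use $\Delta_{\Wh(K)}(t)=1$ against Proposition~\ref{prop:slice-genus-1} in the genus-one case. Your remark that the satellite results are vacuous here also matches the paper's own framing of why these L-space tools are needed.
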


\begin{proof}
Since $\Wh(K)$ has Seifert genus 1, its smooth slice genus is at most 1.  If it is smoothly slice, then Theorem~\ref{thm:smoothly-slice} says that it is not $SU(2)$-averse.  Otherwise its slice genus is exactly 1, and then since its Alexander polynomial is 1 we know by Proposition~\ref{prop:slice-genus-1} that it is not $SU(2)$-averse.
\end{proof}

\section{Montesinos knots and knots with small crossing number} \label{sec:evidence}

In this section we provide evidence for Conjecture~\ref{conj:main-conjecture}, that the only $SU(2)$-averse knots are the torus knots, by proving it for several classes of examples.  Our main tools will be: \begin{itemize}
\item Theorem~\ref{thm:det-limit-slope} for small knots, and in particular the inequality $\det(K) \leq |r(K)|-1$;
\item restrictions on the roots of the Alexander polynomial from Proposition~\ref{prop:alexander-root-averse}; and
\item Theorem~\ref{thm:smoothly-slice} and Proposition~\ref{prop:slice-genus-1}, which assert that an $SU(2)$-averse knot cannot be smoothly slice, and that if it has slice genus 1 then its Seifert genus is also 1.
\end{itemize}
We have obtained data about knots with low crossing number from the Knot Atlas \cite{knotatlas} and KnotInfo \cite{knotinfo}, and we rely on SnapPy \cite{snappy} for a computation.  In general, we only discuss each knot with the chirality specified in these tables, but if $K$ is not $SU(2)$-averse then neither is its mirror $\mirror{K}$.

First, we will use Theorem~\ref{thm:det-limit-slope} together with bounds on the boundary slopes of two-bridge knots, and of Montesinos knots with three rational tangles, in terms of the crossing number $c(K)$ to verify Conjecture~\ref{conj:main-conjecture} for these knots.

\begin{theorem} \label{thm:2-bridge-averse}
Let $K$ be an $SU(2)$-averse knot which is an alternating Montesinos knot with at most three rational tangles.  Then $K$ is a $(2,2n+1)$-torus knot.
\end{theorem}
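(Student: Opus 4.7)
The strategy is to play a lower bound on $\det(K)$ coming from the alternating hypothesis against the upper bound $\det(K) \leq |r(K)| - 1$ of Theorem~\ref{thm:det-limit-slope}, combined with a linear upper bound on boundary slopes in terms of crossing number.

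First, by Theorem~\ref{thm:averse-connected-sum}, the $SU(2)$-averse knot $K$ is prime, so Proposition~\ref{prop:alternating-det-cr} applies: either $K$ is a $(2,2n+1)$-torus knot, $K$ is a twist knot, or $\det(K) \geq 3c(K)-8$. Twist knots are eliminated by Example~\ref{ex:twist-knots-redux} (the hyperbolic ones fail to be $SU(2)$-averse, and the non-hyperbolic twist knots are the unknot and $\pm T_{2,3}$, which are already of the desired form). Next, two-bridge knots are small by Hatcher--Thurston \cite{hatcher-thurston}, and Montesinos knots with at most three rational tangles are small by Oertel \cite{oertel}, so Theorem~\ref{thm:det-limit-slope} gives $\det(K) \leq |r(K)| - 1$.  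By Theorem~\ref{thm:su2-averse-rational}, $r(K)$ is a boundary slope of $K$.

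The key input is then a linear upper bound $|r(K)| \leq A\cdot c(K) + B$ on boundary slopes in terms of crossing number, which for two-bridge knots follows from Hatcher--Thurston's classification of incompressible surfaces via continued-fraction data (one can take roughly $|r(K)| \leq 2c(K)$), and for Montesinos knots with three rational tangles follows from the Hatcher--Oertel combinatorial description of boundary slopes. Combining these gives
\[
3c(K) - 8 \;\leq\; \det(K) \;\leq\; |r(K)| - 1 \;\leq\; A\cdot c(K) + B - 1,
\]
which is a linear inequality in $c(K)$ and forces $c(K)$ to lie below an explicit threshold.

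This reduces the theorem to a finite case check, which I would dispose of using the tools developed earlier: Proposition~\ref{prop:alexander-root-averse} eliminates any knot whose Alexander polynomial has a non-cyclotomic odd-order root on the unit circle; Proposition~\ref{prop:small-r-6} and the divisibility statement of Proposition~\ref{prop:small-alexander-root} further constrain $|r(K)|$; Theorem~\ref{thm:smoothly-slice} rules out smoothly slice examples such as $6_1$; and Proposition~\ref{prop:slice-genus-1} rules out the remaining genus-one slice genus examples. The main obstacle is extracting a sufficiently clean linear boundary-slope bound in the three-tangle Montesinos setting from Hatcher--Oertel, since their description is combinatorial and the resulting constants $A,B$ govern how many residual knots one must inspect by hand; if the constants turn out to be worse than in the two-bridge case, one may need to combine the determinant bound with the slice-genus and Alexander-polynomial criteria to control the enlarged finite list.
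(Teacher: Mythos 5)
Your outline is essentially the paper's proof: primeness plus the Crowell-type determinant bound, smallness plus $\det(K) \leq |r(K)|-1$ from Theorem~\ref{thm:det-limit-slope}, and a crossing-number bound on the boundary slope $r(K)$, followed by a finite check. The one step you leave unresolved --- ``extracting a sufficiently clean linear boundary-slope bound'' in the three-tangle Montesinos case --- is exactly where the paper inserts a citation rather than an argument: Mattman--Maybrun--Robinson (two-bridge) and Ichihara--Mizushima (alternating Montesinos) prove that the set of finite boundary slopes has diameter \emph{exactly} $2c(K)$, and since $0$ is always a boundary slope this gives $|r(K)| \leq 2c(K)$ with no slack. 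With that sharp constant your inequality $3c(K)-8 \leq 2c(K)-1$ forces $c(K) \leq 7$, and the paper packages the resulting hand-check into Corollary~\ref{cor:alternating-det-cr-2}: the only prime alternating knots with $\det(K) \leq 2c(K)$ are the $(2,2n+1)$ torus knots, the twist knots, $6_2$, and $7_3$. The residual cases are then just $6_2$ and $7_3$, both killed by Proposition~\ref{prop:alexander-root-averse} (their Alexander polynomials are irreducible, non-cyclotomic, and take a negative value on the unit circle), so none of the heavier tools you list (Proposition~\ref{prop:small-r-6}, Theorem~\ref{thm:smoothly-slice}, Proposition~\ref{prop:slice-genus-1}) are needed here. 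So: same strategy, correct modulo the boundary-slope input, and that input is a known theorem rather than something you need to re-derive from Hatcher--Thurston or Hatcher--Oertel.
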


We will prove Theorem~\ref{thm:2-bridge-averse} by showing that the limit slope of such a knot is at most twice the crossing number, which is in turn less than the determinant in almost all cases, and this is impossible for small knots.  We begin with the following slight refinement of a theorem of Crowell \cite[Theorem~6.5]{crowell}.

\begin{proposition} \label{prop:alternating-det-cr}
If $K$ is a prime alternating knot, then $\det(K) \geq 3c(K)-8$ unless $K$ is a $(2,2k+1)$ torus knot or a twist knot.
\end{proposition}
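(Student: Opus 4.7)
The plan is to translate the inequality into a statement about the number of spanning trees of a Tait graph and then verify it by a combinatorial analysis. Let $D$ be a reduced alternating diagram of $K$ on $c = c(K)$ crossings, which exists by the resolution of the Tait conjectures, and let $G$ be one of its two checkerboard graphs. Then $G$ is a connected planar multigraph with $c$ edges: because $D$ is reduced, $G$ has no loops, and because $K$ is prime, $G$ has no bridges and (when $|V(G)|\geq 3$) no cut vertex, as either of these features in $G$ would force a connect-sum decomposition or a Reidemeister-I simplification of $K$. By Kauffman's classical formula, $\det(K) = \tau(G)$, where $\tau(G)$ is the number of spanning trees of $G$.

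It therefore suffices to prove the following purely combinatorial claim: for any loopless multigraph $G$ with $n$ edges that is either $2$-vertex-connected or a pair of vertices joined by parallel edges, $\tau(G) \geq 3n - 8$ unless $G$ is either a cycle $C_n$ or the theta graph $\theta_{1,1,n-2}$ formed by three internally disjoint paths of lengths $1$, $1$, and $n-2$ between two vertices (or a planar dual of one of these). The cycle case, giving $\tau(G) = n$, is Crowell's classical inequality and corresponds to the $(2,n)$-torus link, which is a knot precisely when $n = 2k+1$. The theta case, giving $\tau(G) = 2n - 3$, corresponds by direct inspection of the standard diagram to $K$ being a twist knot.

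To prove this combinatorial claim I would stratify by the first Betti number $r = n - |V(G)| + 1 \geq 1$. When $r = 1$, the hypotheses force $G$ to be a cycle. When $r = 2$, the graph $G$ is a theta graph $\theta_{a,b,c}$ with $a + b + c = n$ and $a,b,c \geq 1$, so $\tau(\theta_{a,b,c}) = ab + bc + ca$ attains its minimum $2n - 3$ at $(1,1,n-2)$ and its next smallest value $3n - 7$ at $(1,2,n-3)$, already exceeding $3n - 8$. When $r \geq 3$, I would induct on $n$ using the deletion-contraction identity $\tau(G) = \tau(G - e) + \tau(G/e)$, choosing $e$ so as to preserve looplessness and $2$-vertex-connectedness in both $G - e$ and $G/e$; a natural choice is one edge of a maximal pair of parallel edges, when one is present. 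A small list of base cases must be verified by hand, along with extremal graphs such as the $4$-strand theta $\theta_{1,1,1,n-3}$, which saturates $\tau = 3n - 8$.

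The main obstacle will be the inductive step when $r \geq 3$. One must choose the deletion-contraction edge carefully to preserve the structural hypotheses, and when $G - e$ or $G/e$ happens to be one of the exceptional graphs from the $r \leq 2$ case, one must enumerate the possible ways $G$ can extend it and verify the inequality $\tau(G) \geq 3n - 8$ in each such case. The near-extremal configurations like $\theta_{1,1,1,n-3}$ require particular attention, and the bulk of the proof will be this case analysis together with direct matrix-tree computations at low $n$.
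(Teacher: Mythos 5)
Your reduction to a spanning-tree count is sound (for a reduced alternating diagram of a prime knot the Tait graph is loopless and $2$-connected, and $\det(K)=\tau(G)$), and your analysis of the cases $r=1$ and $r=2$ is correct: the cycle gives $\tau=n$ and the theta graphs give $\tau(\theta_{a,b,c})=ab+bc+ca$, with unique sub-$(3n-8)$ minimizer $\theta_{1,1,n-2}$. But the case $r\geq 3$, which is where essentially all prime alternating knots live, is not proved — you only describe a deletion–contraction induction and yourself flag it as "the main obstacle." As stated, that induction has real problems: deleting an edge of a $2$-connected multigraph can destroy $2$-connectivity; contracting one edge of a parallel pair turns its partner into a loop; and since $G-e$ and $G/e$ each have $n-1$ edges, the recursion forces you to track exactly when one of them degenerates to a cycle or a near-extremal theta, which is precisely the content of the proposition. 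So the proposal is a plan for the hard case rather than a proof of it.

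The paper avoids this induction entirely by quoting two results of Crowell: the inequality $\det(K)-1\geq (b-1)(w-1)$, where $b$ and $w$ are the numbers of vertices and faces of the black graph (equivalently, the vertex counts of the two Tait graphs), and the fact that $\det(K)\geq 2c(K)-3$ for prime alternating non-torus knots. Since $b+w=c(K)+2$ one has the identity $(b-1)(w-1)=2(c(K)-2)+(b-3)(w-3)$, and when $b,w\geq 4$ the product $(b-3)(w-3)$ is at least $c(K)-5$, giving $\det(K)\geq 3c(K)-8$ immediately. The whole problem then collapses to the single case $b=3$ (or dually $w=3$): a three-vertex graph with parallel classes $e_1,e_2,e_3$ has $\tau=e_1e_2+e_2e_3+e_3e_1$, and a short algebraic argument shows $\tau\leq 3(e_1+e_2+e_3)-9$ forces $(e_1,e_2,e_3)=(1,1,a)$, i.e.\ a twist knot. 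If you want to salvage your route, you would either need to carry out the full deletion–contraction case analysis, or import an inequality like Crowell's $\tau(G)\geq (b-1)(w-1)+1$ to dispatch the high-Betti-number graphs in one step, which is exactly what the paper does.
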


\begin{proof}
Crowell \cite[Theorem~6.5]{crowell} proved that any prime alternating knot $K$ satisfies $\det(K) \geq 2c(K)-3$ unless $K$ is of ``elementary torus type''; in the latter case, its Alexander polynomial has the form
\[ \Delta_K(t) \doteq 1 - t + t^2 - \dots + (-t)^n \]
\cite[(6.6)]{crowell} for some $n$, which guarantees by \cite[Proposition~4.1]{osz-lens} that it is a $(2,n+1)$ torus knot.  We assume from now on that $K$ is not a torus knot.

Let $B$ be the black graph associated to a checkerboard coloring of a diagram of $K$ with $c(K)$ crossings.  We let $b$ and $w$ denote the number of vertices and faces of $B$, or equivalently the number of faces and vertices of the white graph.  Then $b,w \geq 3$ by \cite[(6.3)]{crowell}, and since $B$ is planar with $c(K)$ edges, we have $b+w=c(K)+2$.

The proof of \cite[Theorem~6.5]{crowell} combines the relation
\[ (b-1)(w-1) = 2(c(K)-2) + (b-3)(w-3), \]
which is equivalent to the relation $b+w=c(K)+2$, with the inequality
\begin{equation} \label{eq:det-tree-inequality}
\det(K)-1 \geq (b-1)(w-1)
\end{equation}
of \cite[Theorem~5.10]{crowell}.  We note that $b+w=c(K)+2$ is also equivalent to
\[ (b-3)(w-3) = (b-4)(w-4) + (c(K)-5), \]
so given that $b,w \geq 3$, if the left side is nonzero then $b,w \geq 4$, and it follows that $(b-3)(w-3)$ is either zero or at least $c(K)-5$.  Thus either
\begin{align*}
\det(K) &\geq (b-1)(w-1) + 1 \\
&= 2c(K)-3 + (b-3)(w-3) \\
&\geq 3c(K)-8
\end{align*}
or one of $b-3$ and $w-3$ is zero.

In the remaining cases, we take $b=3$ without loss of generality; in this case $B$ has three vertices, $c(K)$ edges, and $w=c(K)-1$ faces.  Crowell deduces the inequality \eqref{eq:det-tree-inequality} from the fact that for alternating knots, $\det(K)$ is the number of spanning trees of $B$, so we must determine the cases in which (given $b=3$ and $w=c(K)-1$) the graph $B$ has at most $3c(K)-9$ spanning trees.

Label the vertices of $B$ as $v_1,v_2,v_3$, and let $e_i$ denote the number of edges between $v_j$ and $v_k$ whenever $\{i,j,k\} = \{1,2,3\}$; note that $e_i \geq 1$ for all $i$, since there are no vertices whose removal from $B$ disconnects it (see \cite[(5.3)~and~(5.4)]{crowell}).  There are $e_1e_2 + e_2e_3 + e_3e_1$ spanning trees and $c(K) = e_1+e_2+e_3$, so we wish to solve the inequality
\[ e_1e_2+e_2e_3+e_3e_1 \leq 3(e_1+e_2+e_3) - 9. \]
Reordering the $v_i$ if needed so that $e_1 \leq e_2 \leq e_3$, we rearrange the inequality to get
\[ (e_1+e_2-3)e_3 + (e_1-3)(e_2-3) \leq 0. \]
If $e_1+e_2 \geq 3$, then it follows that $e_2 \geq 2$ (hence $2e_2-3 > 0$) and that 
\[ (e_1+e_2-3)e_2 + (e_1-3)(e_2-3) \leq 0, \]
or equivalently $e_2^2 \leq (3-e_1)(2e_2-3)$.  But this implies that $e_2^2 \leq 2(2e_2-3)$, which has no solutions, so we must therefore have $e_1+e_2 < 3$, which means that $e_1=e_2=1$.

We conclude that up to permuting the coordinates, the possible values of $(e_1,e_2,e_3)$ are $(1,1,a)$ for some $a \geq 4$.  In each of these cases, the corresponding black graph can only come from a twist knot, completing the proof.
\end{proof}

\begin{corollary} \label{cor:alternating-det-cr-2}
If $K$ is a prime alternating knot, then $\det(K) > 2c(K)$ unless $K$ is a $(2,2k+1)$ torus knot, a twist knot, $6_2$, or $7_3$.
\end{corollary}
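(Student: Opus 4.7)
The plan is to deduce this corollary as an essentially immediate consequence of Proposition~\ref{prop:alternating-det-cr} together with a small finite check at low crossing numbers. First I would invoke the proposition to reduce to prime alternating knots $K$ which are neither $(2,2k+1)$-torus knots nor twist knots; for any such $K$, the proposition gives $\det(K) \geq 3c(K)-8$. Since $3c(K)-8 > 2c(K)$ exactly when $c(K) \geq 9$, the desired inequality $\det(K) > 2c(K)$ is immediate in that range.

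For $c(K) = 8$, the bound only gives $\det(K) \geq 16 = 2c(K)$, so a supplementary parity argument is needed. The key observation is that $\det(K) = |\Delta_K(-1)|$ is an odd integer for every knot in $S^3$, which automatically sharpens $\det(K) \geq 16$ to $\det(K) \geq 17 > 16$, settling the eight-crossing case without needing to examine any specific knots. For the remaining low-crossing regime $c(K) \leq 7$, I would enumerate the prime alternating knots that are not torus or twist knots: the list is empty for $c(K) \leq 5$, it consists of $6_2$ and $6_3$ at six crossings, and $7_3,7_4,7_5,7_6,7_7$ at seven crossings. Consulting a standard table such as KnotInfo, the determinants are $11,13$ and $13,15,17,19,21$ respectively, and comparing to $2c(K) = 12$ or $14$ reveals that precisely $6_2$ and $7_3$ violate the strict inequality, while the other five satisfy it.

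There is no genuine obstacle: all the real content is packaged inside Proposition~\ref{prop:alternating-det-cr}, and the corollary is a routine finite refinement. The only minor subtlety is the parity step at $c(K)=8$, which is what allows the finite check to stop at seven crossings rather than extending to an inspection of every prime alternating eight-crossing knot.
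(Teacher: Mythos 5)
Your proposal is correct and follows essentially the same route as the paper: reduce to Proposition~\ref{prop:alternating-det-cr} for $c(K)\geq 8$, use the oddness of $\det(K)$ to upgrade the bound $3c(K)-8\geq 2c(K)$ to a strict inequality, and finish with a finite check of the non-torus, non-twist prime alternating knots through seven crossings. The only cosmetic difference is that the paper invokes parity uniformly for all $c(K)>7$ (via $3c(K)-8>2c(K)-1$) rather than isolating the $c(K)=8$ case, and your explicit enumeration of $6_3$ and $7_3$--$7_7$ with their determinants matches the paper's hand inspection.
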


\begin{proof}
We inspect the knot types with up to 7 crossings by hand to see that the knots with $\det(K) \leq 2c(K)$ are either torus knots ($3_1$, $5_1$, and $7_1$), twist knots ($4_1$, $5_2$, $6_1$, and $7_2$), or $6_2$ or $7_3$.  Assuming that $c(K) > 7$ and $K$ is not a torus knot or a twist knot, we have $\det(K) \geq 3c(K)-8 > 2c(K)-1$ by Proposition~\ref{prop:alternating-det-cr}, and since $\det(K)$ is always odd we in fact have $\det(K) > 2c(K)$, so there are no further examples.
\end{proof}

\begin{proof}[Proof of Theorem~\ref{thm:2-bridge-averse}]
By hypothesis, the set of finite boundary slopes of $K$ has diameter equal to $2c(K)$; this is due to Mattman, Maybrun, and Robinson \cite{mmr} in the two-bridge case (i.e., at most two rational tangles), and to Ichihara and Mizushima \cite{ichihara-mizushima} for alternating Montesinos knots in general.  Both $0$ and the limit slope $r(K)$ are boundary slopes, so we must have $|r(K)| \leq 2c(K)$.  Two-bridge knots are small \cite{hatcher-thurston}, as are Montesinos knots with at most three rational tangles \cite{oertel}, so Theorem~\ref{thm:det-limit-slope} implies that
\begin{equation} \label{eq:det-cr}
\det(K) \leq |r(K)|-1 \leq 2c(K)-1.
\end{equation}
Theorem~\ref{thm:averse-connected-sum} says that $K$ is prime, so by Corollary~\ref{cor:alternating-det-cr-2}, we see that $K$ is either a $(2,2n+1)$ torus knot, a twist knot, $6_2$, or $7_3$.  But we have already seen in Example~\ref{ex:twist-knots} that the trefoil is the only nontrivial, $SU(2)$-averse twist knot, so this leaves only $6_2$ and $7_3$.  These have Alexander polynomials
\begin{align*}
\Delta_{6_2}(t) &= -t^2 + 3t - 3 + 3t^{-1} - t^{-2} \\
\Delta_{7_3}(t) &= 2t^2 - 3t + 3 - 3t^{-1} + 2t^{-2},
\end{align*}
both of which are irreducible and not cyclotomic; since $\Delta_{6_2}(-1) = -11$ and $\Delta_{7_3}(i) = -1$,  Proposition~\ref{prop:alexander-root-averse} says that neither knot is $SU(2)$-averse.
\end{proof}

We now begin to verify Conjecture~\ref{conj:main-conjecture} for knots with low crossing number.  Nearly all knots through 10 crossings can be handled by elementary obstructions, meaning they do not depend on the results of \cite{bs-lspace}.

\begin{theorem} \label{thm:up-to-9-crossings}
If a knot with crossing number at most 9 is $SU(2)$-averse, then it is a torus knot.
\end{theorem}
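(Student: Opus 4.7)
The strategy is to combine Theorem~\ref{thm:2-bridge-averse} with a case-by-case analysis of the finite list of prime knots with $c(K) \leq 9$ not covered by that theorem. By Theorem~\ref{thm:averse-connected-sum} we may assume $K$ is prime; the torus knots with $c(K) \leq 9$ are $3_1$, $5_1$, $7_1$, $8_{19} = T_{3,4}$, and $9_1 = T_{2,9}$, and we must rule out $SU(2)$-averseness for every other prime knot in this range.

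Theorem~\ref{thm:2-bridge-averse} already handles every two-bridge knot (in particular every prime knot with $c(K) \leq 7$) and every alternating Montesinos knot with three rational tangles. What remains is a short, explicit list of non-alternating and non-Montesinos prime knots with eight or nine crossings, which can be enumerated from standard tables such as KnotInfo; examples include $8_{18}, 8_{20}, 8_{21}$ and a small collection of 9-crossing knots.

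For each such $K$, the plan is to invoke one of three obstructions:
(i) if $K$ is smoothly slice, Theorem~\ref{thm:smoothly-slice} rules it out;
(ii) if the Alexander polynomial $\Delta_K(t)$ has no cyclotomic factors and satisfies $\Delta_K(e^{i\theta_0}) < 0$ for some $\theta_0$, then Proposition~\ref{prop:alexander-root-averse} applies;
(iii) if $K$ is small (verifiable via SnapPy), then by Theorem~\ref{thm:det-limit-slope} and Proposition~\ref{prop:small-r-6} any limit slope $r(K)$ must be an integer boundary slope with $|r(K)| \geq \max(\det(K)+1,\, 6)$, and one checks against tabulated boundary slopes that no such $r(K)$ exists for $K$.
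In each case these three obstructions together should exhaust the short residual list.

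The main obstacle is that no single criterion applies uniformly, so each of the handful of exceptional knots requires its own verification. The hardest step is criterion (iii), which depends on having an explicit enumeration of the boundary slopes of each small knot in the residual list; one has to rely on existing computations of $A$-polynomials and boundary slopes (e.g.\ Culler--Dunfield and SnapPy) rather than a uniform theoretical argument. The recalcitrant knot $10_{98}$ flagged in the introduction occurs only at crossing number $10$, so it causes no difficulty here.
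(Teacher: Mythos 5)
Your overall strategy matches the paper's: reduce to the finite residual list of prime knots not covered by Theorem~\ref{thm:2-bridge-averse} and kill each one with an ad hoc obstruction. However, there is a genuine gap in your toolkit: you have omitted Proposition~\ref{prop:slice-genus-1}, and your criterion (i) covers only the smoothly slice knots. The paper's proof dispatches the large majority of the residual knots --- all five of $8_{16},8_{17},8_{18},8_{20},8_{21}$ and thirteen of the sixteen residual nine-crossing knots --- precisely because each is \emph{either} smoothly slice \emph{or} has smooth slice genus exactly $1$ with Seifert genus at least $2$; the latter case is ruled out by Proposition~\ref{prop:slice-genus-1} (an $SU(2)$-averse knot of slice genus $1$ would have an instanton L-space $\pm 1$-surgery, forcing Seifert genus $1$ and Alexander polynomial $t-1+t^{-1}$ or $-t+3-t^{-1}$), not by Theorem~\ref{thm:smoothly-slice}. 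As stated, your criteria (i)--(iii) do not obviously cover such a knot: take $8_{21}$, with $\Delta_{8_{21}}(t)=(t-1+t^{-1})(t-3+t^{-1})$. It is not slice, so (i) fails; its Alexander polynomial has a cyclotomic factor and the complementary factor has no roots on the unit circle, so Proposition~\ref{prop:alexander-root-averse} gives nothing; and (iii) would require tabulated boundary slope data and a determinant comparison that you have not verified. So "these three obstructions together should exhaust the short residual list" is an unproved claim, and for several knots it is at best unclear.

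Two smaller points. First, your description of the residual list as the "non-alternating and non-Montesinos" knots is not quite right: the paper's residual nine-crossing list includes alternating knots that are simply not Montesinos with at most three tangles (e.g.\ $9_{29}$ through $9_{34}$), and non-alternating Montesinos knots; the correct residual set is everything not covered by the hypotheses of Theorem~\ref{thm:2-bridge-averse}, namely $8_{16}$--$8_{21}$ and $9_n$ for $n\in\{29,32,33,34,38,\dots,49\}$. Second, the rest of your plan is sound where it applies: (ii) is exactly how the paper handles $9_{38}$ and $9_{49}$, and (iii) is exactly how it handles $9_{43}$ (boundary slopes $-4,0,6,8,32/3$ versus $\det(9_{43})=13$). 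Adding Proposition~\ref{prop:slice-genus-1} to your list of obstructions, together with the genus data from the knot tables, would close the gap and reproduce the paper's argument.
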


\begin{proof}
Most knots with at most 8 crossings are alternating Montesinos knots with at most three tangles, and thus handled by Theorem~\ref{thm:2-bridge-averse}; the exceptions are $8_n$ for $16 \leq n \leq 21$, of which $8_{19}$ is the $(3,4)$ torus knot.  The remaining five knots all have slice genus at most 1 and Seifert genus at least 2, so Theorem~\ref{thm:smoothly-slice} and Proposition~\ref{prop:slice-genus-1} say that they are not $SU(2)$-averse.

Similarly, there are 49 knot types with crossing number 9, and the knot $9_n$ is alternating and Montesinos with at most three rational tangles for $1\leq n \leq 28$ and $n=30,31,35,36,37$, so Theorem~\ref{thm:2-bridge-averse} applies for these values of $n$.  The remaining knots are $9_n$ for $n$ in
\[ 29, 32, 33, 34, 38, 39, 40, 41, 42, 43, 44, 45, 46, 47, 48, 49. \]
Among these knots, all but $9_{38}$, $9_{43}$, and $9_{49}$ are either smoothly slice, or have smooth slice genus 1 but Seifert genus greater than 1, so they cannot be $SU(2)$-averse by Theorem~\ref{thm:smoothly-slice} and Proposition~\ref{prop:slice-genus-1}.

We now use Proposition~\ref{prop:alexander-root-averse} to rule out $9_{38}$, since
\[ \Delta_{9_{38}}(t) = (t-1+t^{-1})(5t-9+5t^{-1}) \]
and the roots of $5t-9+5t^{-1}$ lie on the unit circle but are not roots of unity; and $9_{49}$, since
\[ \Delta_{9_{49}}(t) = 3t^2 - 6t + 7 - 6t^{-1} + 3t^{-2} \]
is irreducible with $\Delta_{9_{49}}(e^{i\pi/3}) = -2$.  This leaves only the small Montesinos knot $9_{43}$: if it were $SU(2)$-averse then $r(9_{43})$ would be one of its boundary slopes, which are
\[ -4, 0, 6, 8, 32/3 \]
by \cite{hatcher-oertel,dunfield-table}, but since $\det(9_{43}) = 13$, it cannot satisfy $\det(9_{43}) < |r(9_{43})|$.
\end{proof}

\begin{theorem} \label{thm:10-crossing}
The only 10-crossing knot which is $SU(2)$-averse is $10_{124} = T(3,5)$, with the possible exception of $10_{98}$.
\end{theorem}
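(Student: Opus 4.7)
The plan is to run through the 165 prime knots with crossing number 10 exactly as in the proof of Theorem~\ref{thm:up-to-9-crossings}, using the strongest available obstruction for each one. First I would dispose in one stroke of every knot among $10_1,\dots,10_{123}$ that is an alternating Montesinos knot with at most three rational tangles by invoking Theorem~\ref{thm:2-bridge-averse}; this accounts for most of the alternating case. The knot $10_{124}=T(3,5)$ is the $(3,5)$ torus knot and is $SU(2)$-averse with limit slope $15$, so it legitimately appears in the statement of the theorem.

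For each remaining alternating knot, and in parallel for each non-alternating knot $10_{125},\dots,10_{165}$, I would read the smooth slice genus and Seifert genus from KnotInfo and apply Theorem~\ref{thm:smoothly-slice} to the smoothly slice ones and Proposition~\ref{prop:slice-genus-1} to any knot of slice genus $1$ whose Seifert genus is at least $2$. Any survivor must have Seifert genus $1$, and I would then feed its Alexander polynomial to Proposition~\ref{prop:alexander-root-averse}, hoping that $\Delta_K(t_0)<0$ for some $t_0$ on the unit circle and that $\Delta_K(t)$ has no cyclotomic factors. For any small knot whose finite boundary slopes $s_1,\dots,s_m$ have been tabulated by Hatcher--Oertel \cite{hatcher-oertel}, Dunfield \cite{dunfield-table}, or computed via SnapPy, Theorem~\ref{thm:det-limit-slope} combined with Theorem~\ref{thm:su2-averse-rational} gives a direct obstruction: since $r(K)$ must lie among the $s_j$, if $\det(K)>|s_j|-1$ for every $j$ then $K$ cannot be $SU(2)$-averse.

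The main obstacle is $10_{98}$, which is exactly the knot that the theorem statement allows as a possible exception. It is hyperbolic and small, but the four tools above do not appear to be strong enough to rule it out: its Alexander polynomial factors in a way that does not meet the hypothesis of Proposition~\ref{prop:alexander-root-averse}, no slice genus obstruction applies, and its determinant is not large enough relative to its boundary slopes to make Theorem~\ref{thm:det-limit-slope} conclusive. For every other 10-crossing knot I expect at least one of the obstructions to succeed, so once the Alexander polynomials, slice and Seifert genera, and boundary slope data have been gathered from \cite{knotatlas,knotinfo} and SnapPy, the remaining verification is essentially mechanical case-by-case bookkeeping.
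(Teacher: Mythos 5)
Your plan is essentially the paper's proof: dispose of $10_1,\dots,10_{78}$ via Theorem~\ref{thm:2-bridge-averse}, use Theorem~\ref{thm:smoothly-slice} and Proposition~\ref{prop:slice-genus-1} on the non-Montesinos range, apply Proposition~\ref{prop:alexander-root-averse} to the survivors, and finish the last small knot with boundary-slope data against Theorem~\ref{thm:det-limit-slope}, leaving $10_{98}$ untouched. Two caveats. First, your assertion that ``any survivor must have Seifert genus $1$'' is wrong: the genus tools only eliminate knots that are slice or have slice genus $1$ together with Seifert genus at least $2$, so a knot of slice genus $\geq 2$ survives this stage regardless of its Seifert genus, and indeed most of the $21$ knots left at this point in the paper's proof (e.g.\ $10_{139}$) have genus larger than $1$. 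This slip does not derail the argument, since you then hand the survivors to the Alexander-polynomial obstruction anyway. Second, for a theorem of this kind the verification \emph{is} the proof, and you have deferred all of it: the specific evaluations $\Delta_{10_n}(e^{i\pi/4})<0$ and $\Delta_{10_n}(e^{i\pi/6})<0$ together with the absence of cyclotomic factors; the three knots $10_{85}$, $10_{139}$, $10_{142}$ whose Alexander polynomials have a cyclotomic factor $t-1+t^{-1}$, where one must instead evaluate the non-cyclotomic cofactor on the unit circle to produce the odd-order root; and the case $10_{100}$, where SnapPy's normal-surface boundary slopes are bounded by $20$ in absolute value while $\det(10_{100})=65$, so Theorem~\ref{thm:det-limit-slope} applies. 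Until those computations are actually carried out (and the $10_{85},10_{139},10_{142}$ wrinkle noticed), the proposal is a correct roadmap rather than a complete proof.
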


\begin{proof}
The 10-crossing knots are labeled $10_n$ for $n \leq 165$, and $10_n$ is alternating and Montesinos with at most three rational tangles for all $n \leq 78$, so Theorem~\ref{thm:2-bridge-averse} applies to them.  For $79 \leq n \leq 165$, the knot $10_n$ is either smoothly slice, or has slice genus 1 and Seifert genus at least 2, for all $n$ except $124$ (i.e., the $(3,5)$ torus knot) and
\[ \begin{gathered} 80, 85, 92, 98, 100, 101, 111, 120, 127, 128, 134, \\ 139, 142, 145, 149, 150, 152, 154, 157, 160, 161. \end{gathered} \]
Thus Theorem~\ref{thm:smoothly-slice} and Proposition~\ref{prop:slice-genus-1} rule out all of the remaining 10-crossing knots except for these.

Among the remaining 21 knot types $10_n$, most of these are not $SU(2)$-averse by Proposition~\ref{prop:alexander-root-averse}.  The Alexander polynomials of the following knots have no cyclotomic factors, and are negative at the indicated points on the unit circle:
\begin{alignat*}{3}
\Delta_{10_n}(e^{i\pi/4}) &< 0 &: \quad n &= 80, 92, 101, 111, 127, 128, 134, 145, 149, 150, 154, 157, 160, 161 \\
\Delta_{10_n}(e^{i\pi/6}) &< 0 &: \quad n &= 120, 152.
\end{alignat*}
Proposition~\ref{prop:alexander-root-averse} can also be applied to the Alexander polynomials
\begin{align*}
\Delta_{10_{85}}(t) &= (t-1+t^{-1})(t^3-3t^2+4t-3+4t^{-1}-3t^{-2}+t^{-3}), \\
\Delta_{10_{139}}(t) &= (t-1+t^{-1})(t^3-t+1-t^{-1}+t^{-3}), \\
\Delta_{10_{142}}(t) &= (t-1+t^{-1})(2t^2-t-1-t^{-1}+2t^{-2}).
\end{align*}
Indeed, if we call the second factors $f_{85}(t)$, $f_{139}(t)$, and $f_{142}(t)$ respectively, then these are real-valued on the unit circle and equal to 1 at $t=1$, with $f_{85}(-1)=-19$, $f_{139}(e^{i\pi/3})=-2$, and $f_{142}(i)=-5$, so that each $f_n(t)$ and hence $\Delta_{10_n}(t)$ has a simple root on the unit circle which is not a root of unity.

This leaves only the knots $10_{98}$ and $10_{100}$.  Since $10_{100}$ is small, we use SnapPy \cite{snappy} to compute the list of boundary slopes of spun normal surfaces in its complement:
\[ \texttt{\detokenize{Manifold('10_100').normal_boundary_slopes()}} \]
The list includes all nonzero boundary slopes since $10_{100}$ is a hyperbolic knot in $S^3$, and thus it gives us an upper bound of $20$ on the absolute value of a boundary slope for $10_{100}$.  This upper bound is smaller than $\det(10_{100}) = 65$, so Theorem~\ref{thm:det-limit-slope} says that it cannot be $SU(2)$-averse.
\end{proof}

On the other hand, if we appeal to \cite{bs-lspace} in the form of Corollary~\ref{cor:fibered-sqp}, then we can push these results even further with minimal effort.

\begin{theorem} \label{thm:11-crossing}
If $K$ is a prime knot of crossing number at most 11, then $K$ is $SU(2)$-averse if and only if it is a torus knot.
\end{theorem}

\begin{proof}
Corollary~\ref{cor:fibered-sqp} says that $K$ must be fibered, and either $K$ or its mirror is strongly quasipositive.  According to KnotInfo \cite{knotinfo}, the only such prime knots up to 11 crossings are torus knots and
\[ 10_{139},\ 10_{145},\ 10_{152},\ 10_{154},\ 10_{161},\ 11n_{77},\ 11n_{183}. \] 
In the proof of Theorem~\ref{thm:10-crossing}, we used the Alexander polynomial obstruction of Proposition~\ref{prop:alexander-root-averse} to rule out each of the 10-crossing knots listed above.  We can do the same for $11n_{77}$ and $11n_{183}$, whose Alexander polynomials factor as
\begin{align*}
\Delta_{11n_{77}}(t) &= (t-1+t^{-1})^2 (t^2+t-3+t^{-1}+t^{-2}) \\
\Delta_{11n_{183}}(t) &= (t-1+t^{-1}) (t^2+2t-5+2t^{-1}+t^{-2}).
\end{align*}
Indeed, $f_{77}(t) = t^2+t-3+t^{-1}+t^{-2}$ and $f_{183}(t) = t^2+2t-5+2t^{-1}+t^{-2}$ are irreducible and not cyclotomic, and $f_{77}(1) = f_{183}(1) = 1$ while $f_{77}(-1) = -3$ and $f_{183}(-1) = -7$, so both $f_{77}$ and $f_{183}$ have simple roots on the unit circle which are not roots of unity.
\end{proof}

\appendix

\section{$C^r$-approximation through shearing maps} \label{sec:c1-approximation}
\allowdisplaybreaks

In this appendix, we prove Theorem \ref{C1 approximation}, following the strategy of the proof of \cite[Theorem~3.3]{zentner}.

We start by fixing some notational and linguistic conventions which are self-suggesting by the fact that the tangent bundle of the torus is parallelizable. If $X$ is a vector field on $T^2$ we regard it as a map $T^2 \to \R^2$. We consider its derivative as a map $DX:T^2 \times \R^2 \to \R^2$, and if the point $p$ on the torus is fixed we denote by $DX_p$ the corresponding linear map. The expression $\norm{DX}_\infty$ denotes the supremum of the norms $\norm{DX_p}$ as $p$ varies over $T^2$. 

Likewise we regard the flow $\phi^t_{X}$ as a map $T^2 \to \R^2$, and we consider the derivative as a map $D\phi^t_X:T^2 \times \R^2 \to \R^2$. 

By a slight abuse of notation, we shall denote by 
\[
	\norm{\zeta - \xi}_{C^r} := \sup_{(x,y) \in T^2} d(\zeta(x,y),\xi(x,y)) + \sum_{l=1}^{r} \norm{D^{(l)}\zeta- D^{(l)} \xi}_\infty
\]
the $C^r$-distance of the maps $\zeta, \xi: T^2 \to T^2$. 

\subsection*{The Fourier decomposition}
The first point is that Lemma 3.4 in \cite{zentner} generalizes to the $C^l$ norm for any $l\geq 1$ if the vector field $X$ one starts with is differentiable infinitely often. More precisely, for any smooth vector field $X$, and for any $\epsilon > 0$, there is a sum of Fourier vector fields $Z = W_1 + \dots + W_m$ such that we have $\norm{X-Z}_{C^l} < \epsilon$. A Fourier vector field $W_i$ is of the form
\[
	W_i(x,y) = {\bf a}_i \sin({\bf k}_i \cdot (x,y)) + {\bf b}_i \cos({\bf k}_i \cdot (x,y)) \,,
\]
where ${\bf a}_i, {\bf b}_i \in \R^2$, and ${\bf k}_i \in \Z^2$. It is shown in \cite[Lemma 3.4]{zentner} that if the vector field $Z$ is divergence-free, then the flow of the vector fields $W_i$ are isotopies through shearing maps. 

\subsection*{A few lemmata}
In the sequel we denote by $\R_+$ the non-negative real integers. 
The following lemma is a generalization of \cite[Lemma 3.6]{zentner}.

\begin{lemma}\label{le:bound_a}
	Let $r \geq 0$. For any $0 \leq l \leq r$, there are functions 
	\begin{equation*}
		a_l\colon \R_+ \times \R_+ \to \R_+
	\end{equation*}
	which are continuous and monotonically increasing in both variables, such that the following holds.  Let $X\colon T^2 \to \R^2$ be a vector field on $T^2$ with flow $\phi^t_X: T^2 \to T^2$, and suppose that for any $l \leq r+1$ its derivatives satisfy
	\[
		\norm{D^{(i)}X}_\infty \leq K_l \quad \mathrm{for\ all\ } i \leq l
	\] 
	for some positive constants $K_l$.  Then:
\begin{enumerate}
\item For any $p,q\in T^2$, we have
\[ \norm{\phi^t_X(p) - \phi^t_X(q)} \leq a_0(t,K_{1}) \cdot \norm{p-q}. \]
\item For any $p,q\in T^2$ and any $l \geq 1$, we have
\[ \norm{(D^{(l)}\phi^t_X)(p) - (D^{(l)}\phi^t_X)(q)} \leq t \cdot a_l(t,K_{l+1}) \cdot \norm{p-q}. \]
\end{enumerate}
\end{lemma}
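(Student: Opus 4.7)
\medskip

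\noindent\textbf{Proof plan for Lemma~\ref{le:bound_a}.}

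The plan is to carry out standard ODE estimates via Gronwall's inequality applied to the variational equations satisfied by the flow $\phi^t_X$ and its higher derivatives with respect to the initial condition. The crucial observation is that the factor of $t$ in part~(2) for $l \geq 1$ comes from an initial-value argument: at $t=0$ we have $\phi^0_X = \mathrm{id}$, so $D\phi^0_X \equiv I$ is a constant map and $D^{(l)}\phi^0_X \equiv 0$ for $l\geq 2$; in particular $(D^{(l)}\phi^0_X)(p)-(D^{(l)}\phi^0_X)(q)=0$ for all $l\geq 1$, so integrating the time derivative of this difference from $0$ to $t$ produces the desired factor of $t$.

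Step 1 (the case $l=0$): Write $\phi^t_X(p)-\phi^t_X(q) = \int_0^t \bigl[X(\phi^s_X(p)) - X(\phi^s_X(q))\bigr]ds$ and use the mean value theorem together with $\|DX\|_\infty \leq K_1$ to obtain
\[
\|\phi^t_X(p)-\phi^t_X(q)\| \;\leq\; \|p-q\| + K_1\int_0^t \|\phi^s_X(p)-\phi^s_X(q)\|\,ds.
\]
Gronwall's inequality then gives $\|\phi^t_X(p)-\phi^t_X(q)\| \leq e^{K_1 t}\|p-q\|$, so we may take $a_0(t,K_1) = e^{K_1 t}$, which is continuous and monotonically increasing in both variables.

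Step 2 (uniform bounds on $\|D^{(l)}\phi^t_X\|_\infty$): The $l$-th variational derivative $\Phi_l^t := D^{(l)}\phi^t_X$ satisfies a linear inhomogeneous ODE obtained by differentiating the flow equation $\tfrac{d}{dt}\phi^t_X = X\circ \phi^t_X$ $l$ times in the spatial variable, using the Fa\`a di Bruno formula. Schematically,
\[
\frac{d}{dt}\Phi_l^t \;=\; DX(\phi^t_X)\cdot \Phi_l^t \;+\; P_l\bigl(\Phi_1^t,\dots,\Phi_{l-1}^t;\,DX(\phi^t_X),\dots,D^{(l)}X(\phi^t_X)\bigr),
\]
where $P_l$ is a universal polynomial expression in the lower-order flow derivatives and in the derivatives of $X$ up to order $l$. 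Starting from $\Phi_1^0 = I$ and $\Phi_l^0 = 0$ for $l\geq 2$, we apply Gronwall inductively to produce, for each $l\leq r+1$, a continuous monotone function $B_l(t,K_l)$ with $\|\Phi_l^t\|_\infty \leq B_l(t,K_l)$.

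Step 3 (spatial Lipschitz estimates, $l\geq 1$): With Step 2 in hand, consider the difference $\Phi_l^t(p) - \Phi_l^t(q)$. Since this vanishes at $t=0$, we have
\[
\Phi_l^t(p) - \Phi_l^t(q) \;=\; \int_0^t \frac{d}{ds}\bigl[\Phi_l^s(p) - \Phi_l^s(q)\bigr]\,ds.
\]
Each factor appearing in the ODE for $\Phi_l$ is Lipschitz in the initial condition: the derivatives $D^{(j)}X(\phi^s_X(\cdot))$ for $j\leq l$ are Lipschitz with constant $K_{l+1}\cdot a_0(s,K_1)$ by Step 1, and the lower-order flow derivatives $\Phi_j^s$ for $j<l$ satisfy inductive Lipschitz bounds of the form $s\cdot a_j(s,K_{j+1})\cdot\|p-q\|$. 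Combining these via the product rule and applying Gronwall once more yields a continuous monotone function $a_l(t,K_{l+1})$ such that the integrand above is bounded by $a_l(s,K_{l+1})\|p-q\|$, and integration from $0$ to $t$ produces the claimed bound $t\cdot a_l(t,K_{l+1})\|p-q\|$. Monotonicity in both variables is preserved throughout, since all constructions involve only exponentials, integrals, and polynomials of continuous increasing functions.

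The main obstacle is bookkeeping in Step 3: ensuring that the Lipschitz bound on $\Phi_l^t(p)-\Phi_l^t(q)$ depends only on $K_{l+1}$ and $t$ (not on all lower $K_j$). This is handled by the monotonicity hypothesis, which lets us replace any $K_j$ ($j\leq l+1$) by $K_{l+1}$ without loss, absorbing all lower-order constants into a single continuous increasing function of $(t,K_{l+1})$.
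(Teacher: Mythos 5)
Your proposal is correct and follows essentially the same route as the paper: the integral (variational) equation for $D^{(l)}\phi^t_X$, the observation that the difference vanishes at $t=0$ (which supplies the factor of $t$), the $C^0$ Lipschitz bound $a_0=e^{K_1t}$ and the uniform bound $\norm{D\phi^s_X}_\infty\le e^{K_1 s}$ as inputs, Fa\`a di Bruno for the higher-order terms, and Gronwall at each stage, with induction on $l$. The only cosmetic difference is your appeal to a ``monotonicity hypothesis'' on the $K_j$ at the end; this is unnecessary, since by the lemma's hypothesis $K_{l+1}$ already bounds all derivatives of $X$ of order up to $l+1$, so all lower-order constants can be absorbed into a function of $(t,K_{l+1})$ directly.
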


\begin{proof}
The case $r=0$ is Lemma 3.6 in \cite{zentner}, with $a_0(t,K_1) = e^{K_1t}$. For $r \geq 1$, the proof will be by induction on $r$. 
 
By definition the flow $\phi^t_X$ satisfies the differential equation 
	\[
		\frac{d \phi^t_X}{dt}(p) = X (\phi^t_X(p)) \, ,
	\]	
for any $p \in T^2$, which integrated gives 
	\begin{equation}\label{eq:integrated flow}
		\phi^t_X (p) = p + \int_{0}^{t} X(\phi^s_X(p)) \, ds \, .
	\end{equation}
Differentiating this equation with respect to $p$ yields
\begin{equation}\label{eq:flow differentiated}
	D\phi^t_X(p) = \id + \int_{0}^{t} DX_{\phi^s_X(p)} \circ D\phi^s_X(p) \, ds \, .
\end{equation}
Therefore we get, using the triangle inequality and the mean value theorem,
\begin{equation*}
\begin{split}
	\norm{D\phi^t_X(p) - D\phi^t_X(q)} 
		 \leq & \int_0^{t} \norm{DX_{\phi^s_X(p)} \circ D\phi^s_X(p) -  DX_{\phi^s_X(q)} \circ D\phi^s_X(q)}\, ds \\
		 \leq & \int_0^{t} \norm{DX_{\phi^s_X(p)} \circ D\phi^s_X(p) -  DX_{\phi^s_X(q)} \circ D\phi^s_X(p)} \, ds \\*
			&\qquad + \int_0^{t} \norm{DX_{\phi^s_X(q)} \circ D\phi^s_X(p) -  DX_{\phi^s_X(q)} \circ D\phi^s_X(q)} \, ds \\
			\leq & \, \norm{D\phi^{s}_X}_{L^\infty(T^2 \times [0,t])} \cdot K_2 \cdot \int_0^{t}\norm{\phi^s_X(p) - \phi^s_X(q)} \, ds \\*
			&\qquad + \norm{DX}_\infty \, \int_0^{t} \norm{D\phi^s_X(p) - D\phi^s_X(q)} \, ds \, . 
\end{split}
\end{equation*}
But \cite[Lemma 3.6]{zentner} states that $\norm{ \phi^s_X(p) - \phi^s_X(q)} \leq e^{K_1 s} \norm{p-q}$. Therefore, we get the inequality
\begin{equation*}
\begin{split}
	\norm{D\phi^t_X(p) - D\phi^t_X(q)} 
		 \leq & \, \norm{D\phi^{s}_X}_{L^\infty(T^2 \times [0,t])} \cdot K_2 \cdot t \cdot \, e^{K_1 t} \, \norm{p-q} \\
		&\qquad + \norm{DX}_\infty \, \int_0^{t} \norm{D\phi^s_X(p) - D\phi^s_X(q)} \, ds \, . 
\end{split}
\end{equation*}
Gronwall's inequality now yields
\begin{equation} \label{eq:estimate flow}
	\norm{D\phi^t_X(p) - D\phi^t_X(q)} \leq t \cdot \norm{p-q} \cdot \norm{D\phi^{s}_X}_{L^\infty(T^2 \times [0,t])} \cdot K_2 \cdot  \exp(t (\norm{DX}_\infty + K_1)) \, 
\end{equation}
for all $p,q \in T^2$, and for all $t \geq 0$. 

In a similar but easier application of Gronwall's inequality, Equation~\eqref{eq:flow differentiated} yields
	\begin{equation}\label{eq:flow_vs_its_vector_field}
		\norm{D\phi^{s}_X}_{L^\infty(T^2 \times [0,t])} \leq \exp(K_1 t) \, .
	\end{equation}
Hence the bound in Equation~\eqref{eq:estimate flow} implies the $l=1$ case of the lemma once we set $a_1(t,K_2):= K_2 \cdot \exp(3K_2 t)$.

We suppose now that the result holds for $r-1$ for some $r\geq 2$. Differentiating Equation~\eqref{eq:flow differentiated}, we get the formula
\begin{equation} \label{eq:flow higher derivatives}
	\begin{split}
	(D^{(r)} \phi^t_X)(p) = &  \int_0^{t} L^{(r-1)}(s) \, ds \, \\
							& +  \int_0^{t} (D X)_{\phi^s_X(p)} \circ (D^{(r)} \phi^s_X)(p) \, ds \, .
	\end{split}
\end{equation}
Here, the term $L^{(r-1)}(s)$ is a polynomial expression of derivatives of $X$ and of $\phi^{s}_X$ of order up to $r-1$. A precise expression could be given, but we don't need this here. The formula for higher derivatives of a composition of functions of one variable is known as Fa\`a di Bruno's formula. 

The claim now follows easily from this, using the same sort of estimates as above, the induction step, and then Gronwall's inequality to obtain an inequality for $\norm{D^{(r)} \phi^s_X}_\infty$, and for the difference $\norm{(D^{(r)} \phi^s_X)(p) - (D^{(r)} \phi^s_X)(q)}$. This will use the fact that sums and products of non-negative monotonely increasing functions are again monotonely increasing. We leave the details as an exercise for the interested reader. 
\end{proof} 

One can prove the following generalization of \cite[Lemma 3.7]{zentner} along the same lines.
\begin{lemma}\label{le:bound_b}
Let $r \geq 0$. Then for any $l \leq r$ there are functions 
	\begin{equation*}
		b_l \colon \R_+ \times \R_+ \times \R_+ \to \R_+
	\end{equation*}
which are continuous and monotonically increasing in all three variables, 
such that the following holds: 
\begin{enumerate}
\item
	For all $t\geq 0$ and $K \geq 0$ we have	$b_l(t,0,K)=0$.
\item
Let $X,Y \colon T^2 \to \R^2$ be vector fields on $T^2$, and suppose that for any $l \leq r+1$ the derivatives of $X$ satisfy
	\[
		\norm{D^{(i)}X}_\infty \leq K_l \quad \mathrm{for\ all\ } i \leq l
	\] 
	for some positive constants $K_l$. Then the flows $\phi^t_X, \phi^t_Y \colon T^2 \to T^2$ satisfy
	\begin{equation*}
	\norm{(D^{(l)}\phi^t_X)(p) - (D^{(l)}\phi^t_Y)(p)} \leq t \cdot b_l(t,\norm{X-Y}_{C^l},K_{l+1}) \cdot 
	\end{equation*}
for any $p \in T^2$. 
\end{enumerate}
\end{lemma}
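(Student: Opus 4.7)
The proof goes by induction on $l$, closely paralleling the structure of Lemma~\ref{le:bound_a}. The base case $l=0$ is exactly \cite[Lemma~3.7]{zentner}: starting from
\[
\phi^t_X(p) - \phi^t_Y(p) = \int_0^t \bigl(X(\phi^s_X(p)) - X(\phi^s_Y(p))\bigr)\, ds + \int_0^t \bigl(X(\phi^s_Y(p)) - Y(\phi^s_Y(p))\bigr)\, ds,
\]
bounding the first integrand by $K_1 \cdot \|\phi^s_X(p) - \phi^s_Y(p)\|$ and the second by $\|X-Y\|_\infty$, and applying Gronwall's inequality yields a bound of the form $b_0(t,\|X-Y\|_{C^0},K_1) = \|X-Y\|_{C^0}\cdot e^{K_1 t}$, which is continuous and monotonically increasing in all variables and vanishes when $\|X-Y\|_{C^0}=0$.

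For the inductive step, assume the result for all orders up to $l-1$ and differentiate Equation~\eqref{eq:flow higher derivatives} (applied to $X$ and $Y$) to write
\[
\begin{split}
D^{(l)}\phi^t_X(p) - D^{(l)}\phi^t_Y(p) = & \int_0^t \bigl(L^{(l-1)}_X(s) - L^{(l-1)}_Y(s)\bigr)\, ds \\
& + \int_0^t \bigl(DX_{\phi^s_X(p)} \circ D^{(l)}\phi^s_X(p) - DY_{\phi^s_Y(p)} \circ D^{(l)}\phi^s_Y(p)\bigr)\, ds,
\end{split}
\]
where $L^{(l-1)}_X(s)$ is the polynomial in derivatives of $X$ and $\phi^s_X$ of orders at most $l-1$ coming from Fa\`a di Bruno's formula (and analogously for $Y$). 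The key estimates are then: (i) the difference $\|L^{(l-1)}_X(s) - L^{(l-1)}_Y(s)\|$ can be controlled, by the triangle inequality applied term by term, using the assumed bounds $K_1,\dots,K_{l+1}$ on derivatives of $X$, Lemma~\ref{le:bound_a} applied to $X$, and the induction hypothesis $\|D^{(i)}\phi^s_X - D^{(i)}\phi^s_Y\| \leq s\cdot b_i(s,\|X-Y\|_{C^i},K_{i+1})$ for $i \leq l-1$; and (ii) the second integrand is split as
\[
\bigl(DX_{\phi^s_X(p)} - DX_{\phi^s_Y(p)}\bigr) \circ D^{(l)}\phi^s_X(p) + \bigl(DX_{\phi^s_Y(p)} - DY_{\phi^s_Y(p)}\bigr)\circ D^{(l)}\phi^s_X(p) + DY_{\phi^s_Y(p)} \circ \bigl(D^{(l)}\phi^s_X(p) - D^{(l)}\phi^s_Y(p)\bigr),
\]
in which the first two pieces are bounded using the base case and $\|X-Y\|_{C^l}$, together with the $L^\infty$ bound on $D^{(l)}\phi^s_X$ provided by Lemma~\ref{le:bound_a}, while the third has the desired recursive form $K_1 \cdot \|D^{(l)}\phi^s_X(p) - D^{(l)}\phi^s_Y(p)\|$.

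Combining these estimates produces an integral inequality of the form
\[
\|D^{(l)}\phi^t_X(p) - D^{(l)}\phi^t_Y(p)\| \leq A(t,\|X-Y\|_{C^l},K_{l+1}) + K_1 \int_0^t \|D^{(l)}\phi^s_X(p) - D^{(l)}\phi^s_Y(p)\|\, ds,
\]
where $A$ is an explicit continuous, monotonically increasing function in all three variables which vanishes when $\|X-Y\|_{C^l}=0$; Gronwall's inequality then yields the desired bound with
\[
b_l(t,\delta,K) := \tfrac{1}{t}\,A(t,\delta,K)\cdot e^{K t},
\]
and this function inherits continuity, monotonicity, and the vanishing property from $A$. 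The main technical obstacle is the bookkeeping in step (i): one must verify that each monomial in $L^{(l-1)}_X - L^{(l-1)}_Y$, after the usual telescoping across factors, can be estimated by a sum of products of the inductive bounds $b_i$ (for $i<l$) multiplied by factors depending only on $t$ and $K_{l+1}$, so that the resulting $A(t,\delta,K)$ is genuinely continuous and monotone in $(t,\delta,K)$ and vanishes at $\delta=0$; since sums and products of nonnegative continuous monotone functions with this vanishing property inherit it, no new analytic difficulty appears, only a careful accounting of Fa\`a di Bruno terms.
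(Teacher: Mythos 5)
Your proposal is correct and follows essentially the same route as the paper: induction on $l$, a telescoping three-term decomposition of the integrand coming from the differentiated flow equation, control of the non-recursive terms via Lemma~\ref{le:bound_a}, the lower-order cases, and the hypotheses on $X$, followed by Gronwall, with the Fa\`a di Bruno bookkeeping for $l\geq 2$ left implicit just as in the paper. The one small point to tidy is that your recursive term carries $DY_{\phi^s_Y(p)}$ rather than $DX$, so its operator norm is bounded by $K_1+\norm{X-Y}_{C^1}$ rather than by $K_1$ alone (the paper arranges the split so that $DX$ sits in the recursive term and instead bounds $\norm{D\phi^s_Y}_\infty\leq \exp((K_1+\norm{DX-DY}_\infty)s)$ in the last term); either way the resulting Gronwall factor remains continuous and monotone in $\norm{X-Y}_{C^l}$, so the conclusion is unaffected.
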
 

\begin{proof}
The case $r=0$ is Lemma 3.7 of \cite{zentner}, with $b_0(t,d,K_1) = d\cdot e^{K_1t}$. The proof for $r \geq 1$ again goes by induction.
To start the induction, let $r=1$.  Applying the triangle inequality as above, one gets 
\begin{align*}
	\norm{D\phi^t_X(p) - D\phi^t_Y(p)} 
		 \leq & \int_0^{t} \norm{DX_{\phi^s_X(p)} \circ D\phi^s_X(p) -  DY_{\phi^s_Y(p)} \circ D\phi^s_Y(p)}\, ds \\
		 \leq & \int_0^{t} \norm{DX_{\phi^s_X(p)} \circ D\phi^s_X(p) -  DX_{\phi^s_Y(p)} \circ D\phi^s_X(p)} \, ds \\* 
		 	&\qquad + \int_0^{t} \norm{DX_{\phi^s_Y(p)} \circ D\phi^s_X(p) - DX_{\phi^s_Y(p)} \circ D\phi^s_Y(p) } \, ds \\*
		 	&\qquad + \int_0^{t} \norm{DX_{\phi^s_Y(p)} \circ D\phi^s_Y(p) - DY_{\phi^s_Y(p)} \circ D\phi^s_Y(p)} \, ds \\
			\leq & \, \norm{D\phi^{s}_X}_{L^\infty(T^2 \times [0,t])} \cdot K_2 \cdot \int_0^{t}\norm{\phi^s_X(p) - \phi^s_Y(p)} \, ds \\*
			&\qquad + \norm{DX}_\infty \, \int_0^{t} \norm{D\phi^s_X(p) - D\phi^s_Y(p)} \, ds \, \\*
			&\qquad + t\,  \norm{DX - DY}_\infty \cdot \norm{D\phi^s_Y}_{L^\infty(T^2 \times [0,t])}.  
\end{align*}
Now \cite[Lemma 3.7]{zentner} yields the estimate $\norm{\phi^s_X(p) - \phi^s_Y(p)} \leq s \norm{X-Y}_{\infty} e^{K_1 s}$, where $K_1$ can be taken as a Lipschitz constant for $X$. Gronwall's inequality now yields
\begin{align*}
	\norm{D\phi_X^t(p) - D\phi_Y^t(p)} &\leq t \big( \norm{DX - DY}_\infty \cdot \norm{D\phi^s_Y}_{L^\infty(T^2 \times [0,t])} \\ 
	&\qquad +  K_2 \cdot \norm{X-Y}_{\infty} \cdot e^{K_1t} \cdot \norm{D\phi^s_X}_{L^\infty(T^2 \times [0,t])}\big) \cdot \exp(t \, \norm{DX}_\infty) \, . 
\end{align*}
This has the desired form once we notice that by \eqref{eq:flow_vs_its_vector_field} and the triangle inequality,
\begin{equation*}
\begin{split}
\norm{D\phi^s_Y}_\infty \leq \exp(\norm{DY}_\infty s) & \leq \exp(\norm{DX-DY}_\infty \, s + \norm{DX}_\infty \, s ) \\ & \leq \exp(\norm{DX-DY}_\infty s + K_1 s) \, . 
\end{split}
\end{equation*}

Assuming the claim holds up to $r-1$, the induction step now follows again from Equation~\eqref{eq:flow higher derivatives} similarly as in the proof of the previous lemma. Again, we leave the details to the reader.
\end{proof} 

We will also need a generalization of \cite[Lemma 3.11]{zentner}. 

\begin{lemma}\label{le:bound_c}
Let $r \geq 0$. Then for any $l \leq r$ and any $m \geq 2$ there are functions 
	\begin{equation*}
		c_l^{(m)} \colon \R_+ \times \R_+ \times \R_+ \to \R_+
	\end{equation*}
which are continuous and monotonically increasing in all three variables, 
such that the following holds: 

Let $Z$ be a vector field given as a sum of vector fields $Z= W_0 + \dots + W_{m-1}$.  Suppose that there are positive constants $K_0,\dots,K_{r+1}$ and $M_0,\dots,M_r$ such that for any $l \leq r+1$ one has
	\[
		\norm{D^{(i)} (W_{j_1} + \dots + W_{j_n})}_\infty \leq K_l
	\] 
for all $i \leq l$ and all nonempty subsets $\{j_1, \dots, j_n\}$ of $\{0, \dots, m-1\}$, and that for any $l \leq r$ and $1 \leq n \leq m-1$ we have
	\[
		\norm{D^{(i)}[W_n,W_0+ \dots + W_{n-1}]}_\infty \leq M_l
	\] 
for all $i \leq l$.  Then for any $p \in T^2$ we have 	
\begin{equation*}
	\norm{(D^{(l)}\phi^t_Z)(p) - (D^{(l)}(\phi^t_{W_m} \circ \dots \circ \phi^t_{W_1}))(p)} \leq t^2 \cdot c_l^{(m)}(t,M_l,K_{l+1}) 
	\end{equation*}
for all $t \geq 0$. 
\end{lemma}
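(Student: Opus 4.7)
The plan is to prove Lemma~\ref{le:bound_c} by a double induction: an outer induction on $m \geq 2$ and, for each fixed $m$, an inner induction on $l$ from $0$ up to $r$. The base case $m=2$, $l=0$ is exactly the statement of Lemma~3.11 of \cite{zentner}, which provides the starting point. The plan is to reduce every higher case either to the $m=2$ case applied to a pair of \emph{partial sums}, or to the inductive hypothesis via Lemmata~\ref{le:bound_a} and \ref{le:bound_b}.

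For the base of the outer induction ($m=2$), set $X=W_0$, $Y=W_1$, and compare $\gamma(t,p) := \phi^t_{X+Y}(p)$ with $\eta(t,p) := \phi^t_Y \circ \phi^t_X(p)$. Writing both via their defining integral equations and subtracting produces
\[
\gamma(t,p)-\eta(t,p) \;=\; \int_0^t \!\bigl[(X+Y)(\gamma(s,p))- Y(\eta(s,p)) - D\phi^s_Y|_{\phi^s_X(p)}\cdot X(\phi^s_X(p))\bigr]\,ds.
\]
A Taylor expansion at $s=0$ shows the bracketed integrand vanishes at $s=0$ with derivative exactly $[Y,X](p)$, so the integrand is $s\,[Y,X](p)+O(s^2)$ and a Gronwall argument using the bound $\|DX\|_\infty,\|DY\|_\infty \leq K_1$ delivers the $C^0$-estimate with constant controlled by $M_0$ and $K_1$. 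For higher $l$, differentiate the integral identity above $l$ times with respect to $p$, apply Fa\`a di Bruno to the composition $\phi^t_Y\circ\phi^t_X$, and use Lemma~\ref{le:bound_a} (to bound derivatives of each individual flow) and Lemma~\ref{le:bound_b} (to compare derivatives of the two flows in question). The assumption that $\|D^{(i)}[W_1,W_0]\|_\infty \leq M_l$ for $i\leq l$ controls the leading $t^2$-term in the $C^l$ norm, while all error terms of higher order in $t$ are absorbed into the constant, producing the desired $c_l^{(2)}(t,M_l,K_{l+1})$.

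For the inductive step on $m$, use the telescoping decomposition
\begin{align*}
\phi^t_Z - \phi^t_{W_{m-1}}\!\circ\cdots\circ\phi^t_{W_0}
\;=\;& \bigl[\phi^t_Z - \phi^t_{W_{m-1}}\!\circ\phi^t_{W_0+\cdots+W_{m-2}}\bigr] \\
&\;+\; \bigl[\phi^t_{W_{m-1}}\!\circ\phi^t_{W_0+\cdots+W_{m-2}} - \phi^t_{W_{m-1}}\!\circ\cdots\circ\phi^t_{W_0}\bigr].
\end{align*}
The first bracket is bounded by the $m=2$ case applied to the pair $(W_0{+}\cdots{+}W_{m-2},\,W_{m-1})$, whose commutator derivatives are controlled by hypothesis. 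The second bracket is $\phi^t_{W_{m-1}}$ post-composed with the $(m{-}1)$-case difference; its $C^l$-norm is controlled by the inductive hypothesis together with Lemma~\ref{le:bound_a} applied to $W_{m-1}$ (which bounds the derivatives of $\phi^t_{W_{m-1}}$ and its Lipschitz constant), invoking Fa\`a di Bruno once more to convert $C^l$-bounds on the inner map into $C^l$-bounds on the composition. Summing the two contributions gives a $t^2$ bound with constant assembled recursively from $a_l$, $b_l$, and $c_l^{(m-1)}$; defining $c_l^{(m)}$ as this recursive expression makes continuity and monotonicity in $(t,M_l,K_{l+1})$ automatic.

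The main obstacle is bookkeeping. The hypotheses of Lemma~\ref{le:bound_c} are phrased in terms of uniform bounds on derivatives of \emph{every} partial sum $W_{j_1}+\cdots+W_{j_n}$, which is exactly what one needs to apply Lemmata~\ref{le:bound_a} and~\ref{le:bound_b} to each intermediate vector field appearing in the telescope; one must verify at each step that these hypotheses propagate. Differentiating compositions $l$ times via Fa\`a di Bruno produces a combinatorial explosion of terms, each of which is a product of derivatives of the constituent flows, and one must check that every such product is controlled by a monotone function of $(t,M_l,K_{l+1})$. Once the recursive definition of $c_l^{(m)}$ is organized correctly, however, continuity and monotonicity are inherited termwise from the analogous properties of $a_l$ and $b_l$, completing the induction.
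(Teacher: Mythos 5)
Your proposal is correct and follows essentially the same strategy as the paper: the $m=2$ case rests on the expansion $D\phi^t_Y(X(p)) = X(\phi^t_Y(p)) + t[X,Y](\phi^t_Y(p)) + o(t)$, the integral equations for the two flows, and Gronwall, with higher $l$ handled by differentiating the integral identity and invoking Lemmata~\ref{le:bound_a} and~\ref{le:bound_b}. The only differences are cosmetic — you run the outer induction on $m$ rather than on $l$, and you make the $m$-step explicit via the telescoping through $\phi^t_{W_{m-1}}\circ\phi^t_{W_0+\cdots+W_{m-2}}$ (which is exactly what the lemma's hypothesis on the commutators $[W_n, W_0+\cdots+W_{n-1}]$ is designed for, and which the paper leaves as an exercise).
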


\begin{proof}
We recall that for two vector fields $X,Y$ the Lie bracket is defined to be
\[
[X,Y](p) = \lim_{h \to 0} \frac{D\phi^h_Y (X(\phi_Y^{-h}(p)) - X(p)}{h}
		= \frac{d}{dt} [(\phi^t_Y)_*(X)](p) |_{t=0} \, . 
\]
Hence we have for any point $p$ the equality
\begin{equation}\label{commuting flow}
	D\phi^t_Y (X(p)) = X(\phi^t_Y(p)) + t [X,Y](\phi_Y^t(p)) + R_{X,Y}(\phi^t_Y(p),t) \, ,
\end{equation}
where the term $R_{X,Y}$ satisfies
\[
\lim_{t \to 0} \frac{R_{X,Y}(p,t)}{t} = 0 \, 
\]
and this convergence is uniform in $p$. Moreover, any derivatives of $R$ with respect to $p$ have the same limit as $t$ goes to $0$, since we can interchange the order of the derivatives. 

The proof of the lemma now goes by induction on $l$. The case $l=0$ is the statement of \cite[Lemma 3.11]{zentner}. We will prove the lemma for $l=1$, leaving the induction to higher $l$ to the interested reader. This case will itself proceed by induction on $m$, the number of summands in the expression $Z=W_0+\dots+W_{m-1}$, beginning with $m=2$.

Suppose we have $Z=W_0 + W_1$.  Differentiating the composition $\phi^{t}_{W_1} \circ \phi^{t}_{W_0}$ and using the above remark, we obtain
\begin{align} \label{diff flows}
			\frac{d}{dt}  \phi^t_{W_1} (\phi^t_{W_0}(p))  &= \frac{d \phi_{W_1}^t}{dt}(\phi^t_{W_0}(p)) + (\phi^t_{W_1})_* \left(\frac{d \phi_{W_0}^t(p)}{dt}\right) \nonumber \\
			 &= W_1( \phi_{W_1}^t(\phi^t_{W_0}(p))) + (\phi^t_{W_1})_* W_0(\phi^t_{W_0}(p)) \nonumber \\
			 &= (W_1 + W_0) ( \phi_{W_1}^t(\phi^t_{W_0}(p))) \\
			   &\qquad+ t [W_0,W_1](\phi^t_{W_1} (\phi^t_{W_0}(p))) + R_{W_0,W_1}(\phi^t_{W_1}(\phi^t_{W_0}(p)),t) \, . \nonumber
\end{align} 
Integrating this equation, and differentiating with respect to $p$, we obtain the integral equality
\begin{align}\label{integral_equality_composite}
 D (\phi^{t}_{W_1} \circ \phi^{t}_{W_0})(p) &= \id + \int_{0}^{t} D(W_1 + W_0) \circ D(\phi^s_{W_1} \circ \phi^s_{W_0})(p) \, ds \nonumber \\
 	&\qquad + \int_0^{t} s \, D([W_1,W_0]) \circ D(\phi^{s}_{W_1} \circ \phi^{s}_{W_0})(p) \, ds \\
	&\qquad + \int_0^{t} s \, D\left(\frac{R}{s}\right)(\phi^t_{W_1}(\phi^t_{W_0}(p))) \, ds \, .  \nonumber
\end{align}
We substract this from the corresponding integral equality which is satisfied by the derivative of the flow $\phi^{t}_{W_1+W_0}$, as in \eqref{eq:flow differentiated}, and this yields the estimate
\begin{align*}
	\left\|(D \phi^{t}_{W_1+W_0}) (p)\right. & - \left.D(\phi^{t}_{W_1} \circ \phi^{t}_{W_0})(p)\right\| \\*
		& \leq \int_0^{t} \left\| D(W_1 + W_0)_{\phi^s_{W_1+W_0}(p)} \circ D \phi^{s}_{W_1+W_0}(p) \right. \\*
		 & \qquad\qquad - \left.D(W_1 + W_0)_{(\phi^s_{W_1} \circ \phi^s_{W_0})(p)} \circ D(\phi^s_{W_1} \circ \phi^s_{W_0})(p)\right\| \, ds \\*
		& \qquad + \int_0^{t} s \norm{[W_1,W_0]}_{C^1} \, \norm{D (\phi^{s}_{W_1} \circ \phi^s_{W_0})} \, ds \\*
		& \qquad + \int_0^{t} s \norm{D \frac{R}{s}} \, ds \\
		&  \leq \int_0^{t} \norm{ D(W_1 + W_0)_{\phi^s_{W_1+W_0}(p)}}\norm{D \phi^{s}_{W_1+W_0}(p) 
			- D(\phi^s_{W_1} \circ \phi^s_{W_0})(p)} \, ds \\*
		& \qquad 
			+ \int_0^{t} \norm{D(W_1 + W_0)_{\phi^{s}_{W_1+W_0}(p)}
			- D(W_1 + W_0)_{(\phi^s_{W_1} \circ \phi^s_{W_0})(p)}} \norm{D(\phi^s_{W_1} \circ 					\phi^s_{W_0})(p)} \, ds \\*
		& \qquad + \frac{t^2}{2} \max_{s \in [0,t]} (\norm{[W_1,W_0]}_{C^1} \,\cdot \, \norm{D (\phi^{s}_{W_1} \circ \phi^s_{W_0})}) \\*
		&  \qquad + \frac{t^2}{2} \max_{s \in [0,t]} \norm{\frac{R}{s}}_{C^1} \\
		& \leq K_1 \int_0^{t} \norm{D \phi^{s}_{W_1+W_0}(p) 
			- D(\phi^s_{W_1} \circ \phi^s_{W_0})(p)} \, ds \\*
		& \qquad + K_2 \int_0^{t} \norm{\phi^s_{W_1+W_0}(p) - (\phi^s_{W_1} \circ \phi^s_{W_0})(p)} \, 					\norm{D (\phi^{s}_{W_1} \circ \phi^s_{W_0})(p)} \, ds \\*
		& \qquad + \frac{t^2}{2} \max_{s \in [0,t]} (\norm{[W_1,W_0]}_{C^1} \,\cdot \, \norm{D (\phi^{s}_{W_1} \circ \phi^s_{W_0})(p)}) \\*
		&  \qquad + \frac{t^2}{2} \max_{s \in [0,t]} \norm{\frac{R}{s}}_{C^1} \, .
		\stepcounter{equation}\tag{\theequation}\label{proof:bound_composite vs sum}
\end{align*}
By the induction hypothesis we have 
\[
\norm{\phi^s_{W_1+W_0}- \phi^s_{W_1} \circ \phi^s_{W_0}} \leq s^2 \, c_0^{(2)}(s,M_0,K_1) \, 
\]
for all $s$. Furthermore one easily shows that 
\[
\norm{D(\phi^{s}_{W_1} \circ \phi^s_{W_0})}_\infty \leq e^{K_1 s}
\]
holds for all $s$. If we plug these two inequalities into the estimate \eqref{proof:bound_composite vs sum}, Gronwall's inequality yields the desired result for $l=1$; here we define 
\[
	c_1^{(2)}(t,M,K):= \big( K \, c_0^{(2)}(t,M,K) e^{Kt} + \frac{1}{2} M e^{Kt} + \frac{1}{2} r_1(t,M,K) \big) \, e^{Kt} \, ,
\]
where $r_1(t,M,K)$ is a function that can be explicitly described in terms of $R_{W_1,W_0}$, which was defined in Equation~\eqref{commuting flow} above. We leave the induction in the cases $m \geq 3$ as an exercise for the interested reader. 
\end{proof}

We are now ready to start with the proof of Theorem \ref{C1 approximation}.
\subsection*{Strategy of the proof}
We observe that an isotopy $(\psi_t)_{t \in [0,1]}$ as in the statement of Theorem \ref{C1 approximation} is the flow of a time-dependent, divergence-free vector field $X_t$ satisfying
\begin{equation}\label{time dependent vector field}
	\frac{d \psi^t(p)}{dt} = X_t(\psi^t(p))
\end{equation}
for any $p \in T^2$ and any $t \in [0,1]$. Integrating the vector field from time $t_0$ to time $t$ sends a point $p \in T^2$ to $\psi^t((\psi^{t_0})^{-1}(p))$. 

Let $\epsilon > 0$ be given, and suppose that we want to approximate $\psi_t$ to within a distance of at most $\epsilon$ in the $C^r$ norm. 
The proof of Theorem \ref{C1 approximation} is an approximation in three steps, just as in the proof of \cite[Theorem~3.3]{zentner}.
\begin{enumerate}
	\item \label{i:strategy part 1}
	We approximate the flow $\psi_t$ of $X_t$ by a composition of flows of time-independent vector fields $X_i := X_{i/n}$, $i=0,\dots,n-1$.  More precisely, we define the isotopies $(\Theta^t_{(X_j)}): T^2 \to T^2$ on each interval $\frac{i}{n} \leq t \leq \frac{i+1}{n}$ by
\begin{equation}\label{broken flow}
	\Theta^t_{(X_j)} := \phi^{t-i/n}_{X_i} \circ \phi^{1/n}_{X_{i-1}} \circ \dots \circ \phi^{1/n}_{X_0} \, .
\end{equation}
We will show that by taking $n$ large enough, we get
	\[
	\norm{\Theta^t_{(X_j)} - \psi^t}_{C^{r}} < \frac{\epsilon}{3}
	\]
for all $t \in [0,1]$.

	\item \label{i:strategy part 2}
	We approximate each of the $X_i$ by a finite sum $Z_i$ of Fourier vector fields, using the $C^r$-version of \cite[Lemma~3.4]{zentner} mentioned above.  We define $\Theta^t_{(Z_j)}$ to be the analogue of \eqref{broken flow} above with $Z_i$ in place of $X_i$, and by keeping track of the accumulated error, we find that 
	\[
	\norm{\Theta^t_{(X_j)} - \Theta^t_{(Z_j)}}_{C^r} < \frac{\epsilon}{3}
	\]
for all $t \in [0,1]$, assuming each $Z_j$ was chosen sufficiently $C^r$-close to $X_j$.

	\item \label{i:strategy part 3}
We write each $Z_j$ as a finite Fourier series
\[
	Z_j = \sum_{r=0}^{m_j-1} W^{(j)}_r,
\]
where each $W^{(j)}_r$ is a shearing vector field in some direction in $\Z^2$.  We will approximate the flow of $Z_j$, which occurs over a time interval of length at most $\frac{1}{n}$ in $\Theta^t_{(Z_j)}$, by successive flows along each of the summands $W^{(j)}_{r}$.

For each $Z_j$, we fix some $k_j \in \N$ and define an isotopy $\Xi^t_{(W_r^{(j)})}$ as follows.  For $0 \leq t \leq \frac{1}{k_jm_jn}$, we flow along $W_0^{(j)}$ with speed $m_j$; then we flow along $W_1^{(j)}$ with speed $m_j$ for $\frac{1}{k_jm_jn} \leq t \leq \frac{2}{k_jm_jn}$; and so on, until we flow along $W_{m_j-1}^{(j)}$ during the $m_j$th interval of length $\frac{1}{k_jm_jn}$ and a total time of $\frac{1}{k_jn}$ has elapsed.  We then repeat this $k_j$ times to get the desired $\Xi^t_{(W_r^{(j)})}$, defined for $0 \leq t \leq \frac{1}{n}$.

We now approximate $\Theta^t_{(Z_j)}$ by an isotopy $\Omega^t_{(Z_j)}$, defined in terms of $\Xi^t_{(W_r^{(j)})}$ by
\begin{equation}\label{very broken flow}
	\Omega^t_{(Z_j)} := \Xi^{t-i/n}_{(W^{(i)}_r)} \circ \Xi^{1/n}_{(W^{(i-1)}_r)} \circ \dots \circ \Xi^{1/n}_{(W^{(0)}_r)} 
\end{equation}
for $\frac{i}{n} \leq t \leq \frac{i+1}{n}$, where $i = 0, \dots, n-1$. 
We will show that if for each sum $Z_j = \sum_{r=0}^{m_j-1} W^{(j)}_r$ we choose the corresponding $k_j$ large enough, then
	\[
	\norm{\Theta^t_{(Z_j)} - \Omega^t_{(Z_j)}}_{C^r} < \frac{\epsilon}{3}
	\]
for all $t \in [0,1]$.

\end{enumerate}

\subsection*{First step}
We start by approximating the isotopy $\psi_t$ by $\Theta^t_{(X_j)}$, as defined in \eqref{broken flow}.  Thus we need an estimate that bounds the distance from $\psi^t$ to $\Theta^{t}_{(X_j)}$ in the $C^r$-topology. 

\begin{lemma}\label{bound first step}
	Let $r \geq 0$. Then for any $l \leq r$ and $n \in \N$, there are functions 
	\[
		h_l^{(n)}: \R_+ \times \R_+ \times \R_+ \to \R_+
	\]
	which are continuous and monotonically increasing in all three variables, such that the following holds:
	\begin{enumerate}
		\item \label{i:bound first limit}The functions $h^{(n)}_l$ have the property that 
		\[
			\lim_{x \to 0^+} h^{(n)}_l(t,x,K) = 0 \, ,
		\]
		and the convergence is uniform in $n$, and also in $t$ and $K$ taken from compact subsets of 		$\R_+$. 
		\item \label{i:bound first estimate} Suppose that for any $l \leq r+1$ we have 
		\[
		\norm{D^{(i)}X_t}_\infty \leq K_l
		\] 
		for all $i \leq l$ and all $t \geq 0$.  Then we have the estimate
		\[
		\norm{D^{(l)}\psi^t - D^{(l)} \Theta^t_{(X_j)}}_\infty \, \leq \, h^{(n)}_l\left(t,\max_{j=0, \dots, n-1} \max_{\frac{j}{n} \leq t \leq \frac{j+1}{n}} \norm{X_j - X_t}_{C^{l}(T^2)},K_{l+1}\right) \, 
		\]
		for all $t \in [0,1]$. 
	\end{enumerate}
\end{lemma}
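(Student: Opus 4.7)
The strategy is to reinterpret $\Theta^t_{(X_j)}$ as the flow of the time-dependent, piecewise constant vector field $\tilde X_s := X_{\lfloor ns\rfloor/n}$, so that both $\psi^t$ and $\Theta^t_{(X_j)}$ arise as time-$t$ maps of flows of time-dependent vector fields $X_s$ and $\tilde X_s$ respectively, starting from the identity. By construction, for every $s\in[0,1]$,
\[
\|X_s-\tilde X_s\|_{C^l} \;\leq\; d\;:=\;\max_{j=0,\dots,n-1}\;\max_{j/n\le t\le (j+1)/n}\|X_j-X_t\|_{C^l}(T^2),
\]
and both fields satisfy the uniform spatial bound $\|D^{(i)}X_s\|_\infty,\|D^{(i)}\tilde X_s\|_\infty\leq K_l$ for all $i\leq l$ and all $s$.

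The plan is then to establish the direct time-dependent analogues of Lemmata~\ref{le:bound_a} and \ref{le:bound_b}, producing monotone continuous functions $\tilde a_l$ and $\tilde b_l$ with the same properties. The starting point is that the integral identity \eqref{eq:integrated flow} generalizes verbatim to $\psi^t(p)=p+\int_0^t X_s(\psi^s(p))\,ds$, and differentiation with respect to the spatial variable $p$ commutes with the $s$-integral. Because only spatial derivatives of $X_s$ enter the Gronwall bookkeeping, the inductive proofs of Lemmata~\ref{le:bound_a} and \ref{le:bound_b} carry over line by line. The $s$-discontinuities of $\tilde X_s$ are harmless, since every estimate is phrased through $L^1$-integrals in $s$ and the inductive hypothesis on $\|D^{(l)}\phi_X^s-D^{(l)}\phi_Y^s\|_\infty$ is used after integration.

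Applying the time-dependent version of Lemma~\ref{le:bound_b} to the pair $(X_s,\tilde X_s)$ yields
\[
\|D^{(l)}\psi^t-D^{(l)}\Theta^t_{(X_j)}\|_\infty \;\leq\; t\cdot \tilde b_l(t,d,K_{l+1}).
\]
Setting $h_l^{(n)}(t,x,K):= t\cdot \tilde b_l(t,x,K)$, which is actually independent of $n$, we inherit continuity and monotonicity from $\tilde b_l$, and the vanishing $\tilde b_l(t,0,K)=0$ (proved exactly as in Lemma~\ref{le:bound_b}) gives $h_l^{(n)}(t,0,K)=0$. Uniformity in $n$ is then trivial, and uniformity in $t$ and $K$ on compact subsets follows from continuity at $x=0$ of the single function $\tilde b_l$.

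The main obstacle is the book-keeping in the inductive step of the time-dependent Lemma~\ref{le:bound_b} for $l\geq 1$: one expands $D^{(l)}[X_s\circ\phi^s]$ via Fa\`a di Bruno and must verify that no term involves an $s$-derivative of $X_s$ (which is only a distribution for the piecewise constant $\tilde X_s$). Since every differentiation in Fa\`a di Bruno's formula is spatial, this is automatic, and the argument is pushed through exactly as in the autonomous case, using the inductive $C^{l-1}$-bound inside the $s$-integral and closing the estimate with Gronwall. The remaining work is routine.
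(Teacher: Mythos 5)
Your proof is correct, but it takes a genuinely different route from the paper's. The paper proves the estimate by induction on the subinterval index $j$: on each $[\tfrac{j}{n},\tfrac{j+1}{n}]$ it writes $\psi^t$ and $\Theta^t_{(X_j)}$ as compositions of a ``last segment'' with the time-$\tfrac{j}{n}$ maps, splits the difference of derivatives into three terms by the triangle inequality, and controls them using Lemma~\ref{le:bound_a}, a time-dependent version of Lemma~\ref{le:bound_b} applied only on that one subinterval, and the already-established lower-order ($l=0$) case; the resulting $h_l^{(n)}$ is an explicit sum over subintervals that must then be dominated by an $n$-independent function to get uniformity in $n$. You instead observe that $\Theta^t_{(X_j)}$ is itself the flow of the piecewise-constant time-dependent field $\tilde X_s = X_{\lfloor ns\rfloor/n}$ and apply a single, global, time-dependent version of Lemma~\ref{le:bound_b} to the pair $(X_s,\tilde X_s)$. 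This is legitimate: the integral identities \eqref{eq:integrated flow} and \eqref{eq:flow differentiated} and their higher-order analogues involve only spatial derivatives of the field and $L^1$-in-$s$ integrals, so neither the time-dependence nor the jump discontinuities of $\tilde X_s$ obstruct the Gronwall arguments, exactly as you say (and the paper itself concedes that Lemma~\ref{le:bound_b} ``extends verbatim'' to time-dependent fields, though it only invokes this on a single subinterval against a constant field). Your approach buys a cleaner bookkeeping — no induction on $j$, no appeal to the $l=0$ case of the lemma inside the $l\geq 1$ argument — and an $h_l^{(n)}$ that is manifestly independent of $n$, which makes the uniformity claim in part (1) immediate rather than requiring the separate domination step. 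The cost is that you must carry the full inductive proof of Lemma~\ref{le:bound_b} through in the time-dependent, piecewise-continuous setting for all orders $l\leq r$, but your Fa\`a di Bruno observation that only spatial derivatives of the field ever appear disposes of the only potential issue there.
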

\begin{proof}
We prove the lemma by induction on $l$. The case $l=0$ is the conclusion of \cite[Lemma~3.9]{zentner}. In fact, we can define the function $h_0^{(n)}$ by the formula
\begin{equation*}
	h_0^{(n)}(t,x,K):= \left(t-\frac{j}{n}\right) x\, e^{K (t-\frac{j}{n})} + \frac{x}{n}\sum_{i=0}^{j-1} e^{K(t-\frac{i}{n})}
\end{equation*}
where $\frac{j}{n} \leq t \leq \frac{j+1}{n}$. This function clearly is bounded uniformly by $f_0(t,x,K) := t \, x \, e^{Kt}$, and hence the claim follows. 

For simplicity of notation and more clarity of the argument, we will treat the case $l=1$ only, and we will leave the induction argument for $l \geq 2$ to the interested reader.

The proof of Lemma \ref{le:bound_b} extends verbatim to the case of vector fields which depend on time. In our case, we will compare the flow of $X_t$ to that of $X_0$. The conclusion we get is the uniform bound
\begin{equation}\label{proof:comparing_flows}
\norm{D\psi^t - D\phi^t_{X_0}} \leq t \cdot b_1(t,\sup_{s \in [0,t]} \norm{X_0-X_s}_{C^1(T^2)},K_2)\, 
\end{equation}
which holds for all $t \geq 0$. 
It is also straightforward to verify the bounds
\begin{equation}\label{proof:bound_0}
	\norm{D \psi^{t}}_\infty \leq e^{K_1 \, t} \quad \text{and} \quad \norm{D \phi^{t}_{X_j}}_\infty \leq e^{K_1 \, t}\, 
\end{equation}
for any $j=0, \dots, n-1$, cf.\ Equation~\eqref{eq:flow_vs_its_vector_field}.

We claim that for $\frac{j}{n} \leq t \leq \frac{j+1}{n}$ we have
\begin{align}\label{proof:bound_1}
\left\lVert (D\psi^t)(p) \right. & \left.- (D\Theta^t_{(X_j)})(p)\right\rVert \nonumber \\
	& \leq \left(t-\frac{j}{n}\right) \, \left[ b_1\left(\frac{1}{n},\sup_{s \in [\frac{j}{n},\frac{j+1}{n}]} \norm{X_j - X_s}_{C^1(T^2)},K_2\right) \right. \\*
	& \qquad
	+ \left.a_1\left(\frac{1}{n},K_2\right) \, h_0^{(n)}\left(\frac{j}{n},\sup_{s \in [\frac{j}{n},\frac{j+1}{n}]} \norm{X_i - X_s}_{C^1(T^2)},K_1\right) \right] e^{K_1 \frac{j}{n}} \nonumber \\*
	& \quad + \sum_{i=0}^{j-1} \frac{1}{n} \left[ b_1\left(\frac{1}{n},\sup_{s \in [\frac{j}{n},\frac{j+1}{n}]} \norm{X_j - X_s}_{C^1(T^2)},K_2\right) \right.\nonumber \\*
	& \qquad+ \left.a_1\left(\frac{1}{n},K_2\right) \, h_0^{(n)}\left(\frac{i}{n},\max_{i=0, \dots, n-1} \sup_{s \in [\frac{i}{n},\frac{i+1}{n}]} \norm{X_i - X_s}_{C^1(T^2)},K_1\right) \right] e^{K_1 \frac{j}{n}}. \nonumber
\end{align}
Here $a_1$ and $b_1$ are some functions satisfying the statement of Lemmas \ref{le:bound_a} and \ref{le:bound_b} above. 

We prove this claim by induction on $j$.  The case $j=0$, and hence the claim for times $0 \leq t \leq \frac{1}{n}$ follows immediately from \eqref{proof:comparing_flows}. (Notice that $h_0^{(n)}(0,x,K) = 0$.)

Suppose the claim holds for $t \leq \frac{j}{n}$.
Using the triangle inequality, Lemmas \ref{le:bound_a} and \ref{le:bound_b}, and Equation~\eqref{proof:bound_0} above, we obtain for $\frac{j}{n} \leq t \leq \frac{j+1}{n}$ the bound
\begin{align*}
   \norm{(D\psi^t)(p) - (D\Theta^t_{(X_j)})(p)} &= \norm{D (\psi^t \circ (\psi^{\frac{j}{n}})^{-1})_{\psi^{\frac{j}{n}}(p)} \circ (D\psi^{\frac{j}{n}})(p) - (D\phi^{t-\frac{j}{n}}_{X_j})_{\Theta^{\frac{j}{n}}(p)} \circ D \Theta^{\frac{j}{n}}_{(X_j)}(p)} \\
   	 &\leq \norm{D (\psi^t \circ (\psi^{\frac{j}{n}})^{-1})_{\psi^{\frac{j}{n}}(p)} - 
	 		(D\phi^{t-\frac{j}{n}}_{X_j})_{\psi^{\frac{j}{n}}(p)}} \,\cdot\,  \norm{(D\psi^{\frac{j}{n}})(p)} \\*
	      &\qquad + \norm{(D\phi^{t-\frac{j}{n}}_{X_j})_{\psi^{\frac{j}{n}}(p)} - 
	        (D\phi^{t-\frac{j}{n}}_{X_j})_{\Theta^{\frac{j}{n}}(p)}} \,\cdot\, 
	         \norm{(D\psi^{\frac{j}{n}})(p)} \\*
	      &\qquad + \norm{(D\psi^{\frac{j}{n}})(p) - (D\Theta^{\frac{j}{n}})(p)} \,\cdot\, 
	      	  \norm{(D\phi^{t-\frac{j}{n}}_{X_j})_{\Theta^{\frac{j}{n}}(p)}} \\
	  &\leq \left(t-\frac{j}{n}\right)\,  b_1\left(t-\frac{j}{n}, \sup_{s \in [\frac{j}{n},t]} \norm{X_j-X_s}_{C^1(T^2)},K_2\right)\,  e^{K_1 \frac{j}{n}} \\*
	  &\qquad + \left(t-\frac{j}{n}\right) \, a_1\left(t-\frac{j}{n},K_2\right) \, \norm{\psi^{\frac{j}{n}}(p) - \Theta^{\frac{j}{n}}(p)}\, e^{K_1 \frac{j}{n}} \\*
	  &\qquad + \norm{(D\psi^{\frac{j}{n}})(p) - (D\Theta^{\frac{j}{n}})(p)} \, e^{K_1 (t-\frac{j}{n})}. 
\end{align*}
Using the induction hypothesis to bound the last term, and the case $l=0$ to bound the middle term, we clearly obtain the bound \eqref{proof:bound_1}. Hence we define the function $h_1^{(n)}$ by the formula
\begin{align*}
	h_1^{(n)}(t,x,K) &:= \left(t-\frac{j}{n}\right) \left(b_1\left(\frac{1}{n},x,K\right) + a_1\left(\frac{1}{n},K\right) h_0^{(n)}\left(\frac{j}{n},x,K\right) \right) e^{K t} \\
		&\qquad + \sum_{i=0}^{j-1} \frac{1}{n} \left(b_1\left(\frac{1}{n},x,K\right) + a_1\left(\frac{1}{n},K\right) h_0^{(n)}\left(\frac{i}{n},x,K\right) \right) e^{K\frac{i}{n}}
\end{align*}
for $\frac{j}{n} \leq t \leq \frac{j+1}{n}$. The bound \eqref{proof:bound_1} implies statement~\eqref{i:bound first estimate} of the lemma for the case $l=1$. 

The functions $h_1^{(n)}$ satisfy the bound 
\[
	h_1^{(n)}(t,x,K) \leq f_1(t,x,K) := \big( b_1(1,x,K) + a_1(1,K) f_0(t,x,K) \big) e^{K t}
\]
for all $t \in [0,1]$, where we recall that $f_0(t,x,K) = txe^{Kt}$. By the properties of the functions $f_0$, $a_1$ and $b_1$ we clearly have 
\[
	\lim_{x \to 0^+} f_1(t,x,K) = 0\, ,
\]
and this limit is uniform in $t$ and $K$ taken from compact subsets. This implies statement~\eqref{i:bound first limit} of the lemma for the case $l=1$. 
\end{proof}

The first statement of Lemma~\ref{bound first step} tells us that there are functions
\[
	f_l: \R_+ \times \R_+ \to \R_+
\]
such that 
\[
	h_l^{(n)}(t,x,K_{r+1})  \leq f_l(x,K_{r+1})
\]
for all $n \in \N$ and all $t \in [0,1]$. Furthermore, these functions satisfy 
\[
\lim_{x \to 0^+} f_l(x,K)=0 
\]
for all $K$, so we can choose $M>0$ so small that $f_l(M,K_{r+1}) < \frac{\epsilon}{3(r+1)}$ for all $l=0, \dots,r$.

We now notice that the time dependent vector field $(X_t)\colon T^2 \times [0,1] \to \R^2$ is equicontinuous, and all of its derivatives are also equicontinuous maps. In particular, given $M$ as above, there is some $\delta > 0$ such that 
we have 
\[
	\norm{X_s - X_t}_{C^r(T^2)} < M \quad\mathrm{whenever}\quad \abs{s-t} < \delta. 
\]
Choosing $n$ large enough so that $\frac{1}{n} < \delta$, the second conclusion of Lemma~\ref{bound first step} implies that
\[
	\norm{\Theta^t_{(X_j)} - \psi^{t}}_{C^{r}(T^2)} < \frac{\epsilon}{3} \, . 
\]
for all $t \in [0,1]$, as desired.

\subsection*{Second step} 
We now approximate $\Theta^t_{(X_j)}$ by $\Theta^t_{(Z_j)}$, where each $Z_i$ is a finite Fourier series which $C^r$-approximates $X_i$.  The following is nearly a duplicate of Lemma~\ref{bound first step}, with $\Theta^t_{(X_j)}$ and $\Theta^t_{(Z_j)}$ in place of $\psi^t$ and $\Theta^t_{(X_j)}$.

\begin{lemma}\label{bound second step}
	Let $r \geq 0$. Then for any $l \leq r$ and $n \in \N$, there are functions 
	\[
		g_l^{(n)}: \R_+ \times \R_+ \times \R_+ \to \R_+
	\]
	which are continuous and monotonically increasing in all three variables, such that:
	\begin{enumerate}
		\item The functions $g^{(n)}_l$ have the property that 
		\[
			\lim_{x \to 0^+} g^{(n)}_l(t,x,K) = 0 \, ,
		\]
		and the convergence is uniform in $n$, and also in $t$ and $K$ taken from compact subsets of 		$\R_+$. 
		\item Suppose that for any $l \leq r+1$ we have 
		\[
		\norm{D^{(i)}X_j}_\infty \leq K_l 
		\] 
		for all $i \leq l$.  Then we have the estimate
		\[
		\norm{D^{(l)}\Theta^t_{(X_j)} - D^{(l)} \Theta^t_{(Z_j)}}_\infty \, \leq \, g^{(n)}_l(t,\max_{j=0, \dots, n-1} \max_{\frac{j}{n} \leq t \leq \frac{j+1}{n}} \norm{X_j - Z_j}_{C^{l}(T^2)},K_{l+1}) \, 
		\]
		for all $t \in [0,1]$. 
	\end{enumerate}
\end{lemma}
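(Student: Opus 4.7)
The plan is to mimic the proof of Lemma~\ref{bound first step} almost verbatim, replacing the comparison ``time-dependent flow $\psi^t$ versus piecewise-constant flow $\Theta^t_{(X_j)}$'' with the comparison ``piecewise-constant flow $\Theta^t_{(X_j)}$ versus piecewise-constant flow $\Theta^t_{(Z_j)}$''. The key point is that on each subinterval $[\frac{j}{n},\frac{j+1}{n}]$ we are comparing the flows of two genuinely time-independent vector fields $X_j$ and $Z_j$, to which Lemmas~\ref{le:bound_a} and~\ref{le:bound_b} apply directly (without needing the time-dependent generalization used in the previous lemma).

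We argue by induction on $l$. For $l=0$, the statement reduces to \cite[Lemma~3.10]{zentner}, which says that if we flow along a sequence of vector fields and replace each by a close approximation, the total $C^0$-error is controlled by a function which vanishes as the input $C^0$-error vanishes. This gives $g_0^{(n)}(t,x,K)$ with the required monotonicity and limit properties, bounded independently of $n$ by some $f_0(x,K)$ with $\lim_{x\to 0^+}f_0(x,K)=0$.

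For the inductive step, assume the result for $l-1$. On the interval $\frac{j}{n} \leq t \leq \frac{j+1}{n}$, write
\[
\Theta^{t}_{(X_j)} = \phi^{t-j/n}_{X_j} \circ \Theta^{j/n}_{(X_j)}, \qquad \Theta^{t}_{(Z_j)} = \phi^{t-j/n}_{Z_j} \circ \Theta^{j/n}_{(Z_j)},
\]
and apply the chain rule (Fa\`a di Bruno) to express $D^{(l)}\Theta^t_{(X_j)}(p)$ as a polynomial in the derivatives of order $\leq l$ of $\phi^{t-j/n}_{X_j}$ (evaluated at $\Theta^{j/n}_{(X_j)}(p)$) and of $\Theta^{j/n}_{(X_j)}$ (evaluated at $p$), and likewise for $Z_j$. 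Subtracting the two expressions and splitting each term via telescoping, one obtains a sum of terms of three types. The first type involves $\|D^{(i)}\phi^{t-j/n}_{X_j} - D^{(i)}\phi^{t-j/n}_{Z_j}\|_\infty$ for $i\leq l$; this is bounded by $(t-j/n)\cdot b_i(\tfrac{1}{n},\|X_j-Z_j\|_{C^i},K_{i+1})$ using Lemma~\ref{le:bound_b}. The second type involves the Lipschitz constant of $D^{(i)}\phi^{t-j/n}_{X_j}$ multiplied by the $C^0$-gap $\|\Theta^{j/n}_{(X_j)} - \Theta^{j/n}_{(Z_j)}\|_\infty$, handled by Lemma~\ref{le:bound_a} and the $l=0$ case. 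The third type is the residual composition error $\|D^{(i)}\Theta^{j/n}_{(X_j)} - D^{(i)}\Theta^{j/n}_{(Z_j)}\|_\infty$ for $i \leq l$, controlled by the inductive hypothesis $g_i^{(n)}$ for $i < l$ and appearing linearly for $i=l$, which allows the recursion to close (analogously to the role of the final term in~\eqref{proof:bound_1}).

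Collecting terms and iterating from $j=0$ up to the current interval, one defines $g_l^{(n)}$ as a finite sum of products of $a_i$, $b_i$, $g_{i}^{(n')}$ for $i<l$, and exponentials $e^{K_1 \cdot (j/n)}$, exactly parallel to the formula for $h_l^{(n)}$ in Lemma~\ref{bound first step}. Since finite sums and products of non-negative, monotonically increasing continuous functions enjoy the same properties, $g_l^{(n)}$ is continuous and monotone in all three variables. Moreover, because $b_l(t,0,K)=0$ (Lemma~\ref{le:bound_b}(1)) and the inductive bounds $g_i^{(n)}$ vanish as $x\to 0^+$ uniformly in $n$ and on compact $(t,K)$-sets, the same uniform limit property propagates to $g_l^{(n)}$. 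In particular $g_l^{(n)}(t,x,K) \leq f_l(x,K)$ for some $f_l$ independent of $n$ and $t\in[0,1]$ with $\lim_{x\to 0^+}f_l(x,K)=0$, which gives both asserted properties.

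The main obstacle is bookkeeping: writing the Fa\`a di Bruno expansion explicitly enough to see which terms are bounded by which auxiliary lemma, and verifying that all emergent bound functions preserve monotonicity and the uniform $x\to 0^+$ limit under the recursion. No new analytic input beyond Lemmas~\ref{le:bound_a} and~\ref{le:bound_b} is needed; in fact this lemma is strictly easier than Lemma~\ref{bound first step} because we never need to control a difference $X_s - X_t$ of a time-dependent family, only the fixed gaps $\|X_j - Z_j\|_{C^l}$.
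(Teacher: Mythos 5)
Your proposal is correct and follows essentially the same route as the paper: the authors' proof of this lemma consists of the single remark that one repeats the proof of Lemma~\ref{bound first step} verbatim (reusing the functions $h_l^{(n)}$), and your interval-by-interval telescoping with Lemmas~\ref{le:bound_a} and~\ref{le:bound_b} is exactly that argument spelled out. Your observation that this step is easier because no modulus of continuity of a time-dependent family enters is also right. The one point you gloss over is precisely the detail the paper singles out as the only thing requiring verification: the hypotheses bound only $\norm{D^{(i)}X_j}_\infty$, yet your chaining inevitably uses quantities such as $\norm{D^{(i)}\phi^{s}_{Z_j}}_\infty$ (e.g.\ the factor multiplying the residual term $\norm{D^{(i)}\Theta^{j/n}_{(X_j)}-D^{(i)}\Theta^{j/n}_{(Z_j)}}$, or Lemma~\ref{le:bound_a} applied to $Z_j$), and these require derivative bounds on the $Z_j$. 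The fix is the triangle inequality $\norm{D^{(i)}Z_j}_\infty \leq K_l + \norm{X_j-Z_j}_{C^l}$, with the extra summand absorbed into the definition of $g_l^{(n)}$ as a further monotone dependence on the second argument; you should make this explicit, since without it the constants in your recursion are not justified by the stated hypotheses.
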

\begin{proof}
	We repeat the proof of Lemma~\ref{bound first step} verbatim, using the same functions $h_l^{(n)}$ as before.  The only details which need to be established first are bounds for each $l \leq r+1$ of the form $\norm{D^{(i)}Z_j}_\infty \leq K_l$, for all $i \leq l$ and all $j=0, \dots, n-1$. But clearly we have 
	\[
		\norm{D^{(i)}Z_j}_\infty \leq \norm{D^{(i)} X_j}  + \norm{D^{(i)} (Z_j - X_j)} 
			\leq K_l + \norm{Z_j - X_j}_{C^{l}}\, , 
	\] 
and we can absorb the extra $\norm{Z_j-X_j}_{C^l}$ in the definition of the functions $g^{(n)}_l$. 
\end{proof} 

The first statement of the preceding lemma tells us that 
\[
\lim_{x \to 0^+} g_l^{(n)}(t,x,K_{r+1}) = 0
\]
uniformly in $t \in [0,1]$. (We don't need the statement about uniform convergence in $n$, as $n$ was already fixed in the first approximation step.)  We choose $M>0$ small enough that $g_l^{(n)}(1,M,K_{r+1}) \leq \frac{\epsilon}{3(r+1)}$ for all $l=0, \dots,r$. Having fixed $M$, we choose the finite Fourier sums $Z_j=W^{(j)}_0 + \dots + W^{(j)}_{m_j-1}$ so that we have
\[
	\norm{X_j - Z_j}_{C^{r}} \leq M
\]
for $j=0, \dots, n-1$. Now the second conclusion of Lemma~\ref{bound second step} implies that
\[
	\norm{\Theta^t_{(X_j)} - \Theta^{t}_{(Z_j)}}_{C^{r}(T^2)} < \frac{\epsilon}{3} 
\]
for all $t \in [0,1]$. 

\subsection*{Third step} 
We start with a lemma that describes the $C^l$-distance between the flow $\phi^{t}_{Z}$ of $Z=W_0+\dots+W_{m-1}$ and the isotopy $\Xi^{t}_{(W_r)}$, defined as in step~\eqref{i:strategy part 3}.  To be explicit, suppose that we have fixed positive integers $n$ and $k$, so that for any $t \in [0,\frac{1}{n}]$ we can find integers $i,r$ such that
\[ \frac{1}{n}\left(\frac{i}{k}+\frac{r}{km}\right) \leq t \leq \frac{1}{n}\left(\frac{i}{k}+\frac{r+1}{km}\right) \]
with $0 \leq i \leq k-1$ and $0 \leq r \leq m-1$.  Then we define $\Xi^t_{(W_r)}$ by the formula
\[ \Xi^t_{(W_r)} := \phi_{W_r}^{m(t-\frac{im+r}{kmn})} \circ \phi_{W_{r-1}}^{\frac{1}{kn}} \circ \dots \circ \phi_{W_0}^{\frac{1}{kn}} \circ \big(\phi_{W_{m-1}}^{\frac{1}{kn}} \circ \dots \circ \phi_{W_0}^{\frac{1}{kn}}\big)^i. \]

\begin{lemma}\label{bound third step}
	Let $r \geq 0$. Then for any $l \leq r$ and $k \in \N$, there are functions 
	\[
		d_l^{(k)}: \R_+ \times \R_+ \times \R_+ \to \R_+
	\]
	which are continuous and monotonically increasing in all three variables, such that:
	\begin{enumerate}
		\item The functions $d^{(k)}_l$ have the property that 
		\[
			\lim_{k \to \infty} d^{(k)}_l(t,M,K) = 0 \, 
		\]
		for any $M$, $K$, and $t$. 
		\item Let $Z$ be a vector field given as a sum of vector fields $Z= W_0 + \dots + W_{m-1}$. 
		Suppose that there are positive constants $K_0,\dots,K_{r+1}$ and $M_0,\dots,M_r$ such that
		for any $l \leq r+1$ we have
		\[
			\norm{D^{(i)} (W_{j_1} + \dots + W_{j_n})}_\infty \leq K_l
		\] 
		for all $i \leq l$ and all finite subsets $\{j_1, \dots, j_n\}$ of $\{0, \dots, m-1\}$, and such that
		\[
			\norm{D^{(i)}[W_n,W_0+ \dots + W_{n-1}]}_\infty \leq M_l
		\] 
		for all $i \leq l \leq r$ and all $n \geq 1$.  Then we have the estimate
		\[
		\norm{D^{(l)}\phi^t_{Z} - D^{(l)} \Xi^{t}_{(W_r)}}_\infty \, \leq \, 
			d^{(k)}_l(t, M_l,K_{l+1}) \, 
		\]
		for all $t\geq 0$. (We recall here that $\Xi^{t}_{(W_r)}$ depends on $k$.)
	\end{enumerate}
\end{lemma}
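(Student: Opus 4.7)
\emph{Strategy.} The plan is to view the isotopy $\Xi^{t}_{(W_r)}$ as a $k$-fold iteration of the one-cycle approximation controlled by Lemma \ref{le:bound_c}, and to accumulate the resulting errors across the $k$ sub-intervals of length $\tfrac{1}{kn}$ that comprise it. Setting
\[
\Psi := \phi^{1/(kn)}_{W_{m-1}} \circ \cdots \circ \phi^{1/(kn)}_{W_0},
\]
one has $\Xi^{j/(kn)}_{(W_r)} = \Psi^{\circ j}$ for each $0 \le j \le k$, while $\phi^{j/(kn)}_Z = (\phi^{1/(kn)}_Z)^{\circ j}$. Applying Lemma \ref{le:bound_c} with the time parameter $\tfrac{1}{kn}$ yields
\[
\norm{D^{(l)}\phi^{1/(kn)}_Z - D^{(l)}\Psi}_\infty \le \frac{1}{(kn)^2} \cdot c_l^{(m)}\!\left(\frac{1}{kn}, M_l, K_{l+1}\right),
\]
so the per-sub-interval $C^l$-error decays like $O(1/k^2)$, and summing across $k$ sub-intervals should produce a total $C^l$-error of order $O(1/k)$.

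I would prove the bound by induction on $l$, with base case $l=0$ being essentially \cite[Lemma 3.11]{zentner} iterated $k$ times (as in the $C^0$-version of the proof of \cite[Theorem 3.3]{zentner}). For the inductive step, I would mirror the strategy used in Lemmas \ref{bound first step} and \ref{bound second step}: for $t$ in the $i$-th sub-interval $\bigl[\tfrac{i}{kn},\tfrac{i+1}{kn}\bigr]$, write
\[
\phi^{t}_Z - \Xi^{t}_{(W_r)} = \Bigl(\phi^{t-i/(kn)}_Z \circ \phi^{i/(kn)}_Z - \phi^{t-i/(kn)}_Z \circ \Xi^{i/(kn)}_{(W_r)}\Bigr) + \Bigl(\phi^{t-i/(kn)}_Z \circ \Xi^{i/(kn)}_{(W_r)} - \Xi^{t}_{(W_r)}\Bigr),
\]
and expand the $l$-th derivative by the chain rule (Fa\`a di Bruno for $l\ge 2$). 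The first bracket is controlled by the inductively assumed bound on $D^{(l)}\phi^{i/(kn)}_Z - D^{(l)}\Xi^{i/(kn)}_{(W_r)}$ together with the Lipschitz-type estimates for the flow from Lemma \ref{le:bound_a}; the second bracket is the one-sub-interval error, bounded by Lemma \ref{le:bound_c}, possibly combined with Lemma \ref{le:bound_b} to propagate it through the outer composition. A telescoping summation over $0 \le j \le i-1$ then gives a bound of the form
\[
d_l^{(k)}(t, M, K) := \frac{1}{k} \cdot \Phi_l(t, M, K),
\]
where $\Phi_l$ is continuous and monotonically increasing in all three variables and \emph{independent of $k$}. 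Since $\Phi_l$ is bounded on compact subsets of $\R_+^3$, this yields $d_l^{(k)}(t,M,K) \to 0$ as $k \to \infty$ for each fixed $(t,M,K)$, as required.

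The main obstacle is the uniform-in-$k$ bookkeeping: as in the earlier approximation lemmas, one must verify that the multiplicative factors arising from the iterated compositions do not themselves blow up with $k$. This rests on the fact that the total elapsed time remains bounded by $\tfrac{1}{n}$, so that the exponential growth factors $e^{K_1 t}$ from \eqref{eq:flow_vs_its_vector_field} and the Lipschitz-type constants in $a_l$ and $b_l$ are all bounded independently of $k$. Once those factors are controlled, the $O(1/k^2)$ decay of the per-sub-interval error from Lemma \ref{le:bound_c} dominates the linear accumulation over $k$ sub-intervals, giving the claimed $O(1/k)$ bound.
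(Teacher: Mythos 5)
Your overall strategy coincides with the paper's: induct on $l$, treat $\Xi^t_{(W_r)}$ as the $k$-fold iterate of the single cycle $\Psi=\phi^{1/(kn)}_{W_{m-1}}\circ\cdots\circ\phi^{1/(kn)}_{W_0}$, bound the per-cycle $C^l$-error by Lemma~\ref{le:bound_c} (giving $O(1/(kn)^2)$), propagate accumulated errors through subsequent cycles using the Gronwall-type factors of Lemmas~\ref{le:bound_a} and \ref{le:bound_b} together with the bound $\norm{D\Xi^t}_\infty\le e^{K_1t}$, and telescope over the $\lfloor nkt\rfloor$ cycles. This is exactly the induction the paper carries out for $l=1$.

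There is one point your proposal glosses over that the paper has to handle separately: intermediate times $t$ strictly inside a sub-interval $[\tfrac{j}{kn},\tfrac{j+1}{kn}]$. There the remaining factor $\Xi^{t-j/(kn)}_{(W_r)}$ is a \emph{partial} composition (you have only flowed along some of the $W_0,\dots,W_{m-1}$, and part-way along the last one), so Lemma~\ref{le:bound_c} does not apply to it; nor does Lemma~\ref{le:bound_b}, which compares flows of two vector fields rather than a flow with a partial composition of flows. The paper substitutes the coarse estimate $\norm{D\Xi^{t-j/(nk)}-D\phi^{t-j/(nk)}_Z}_\infty\le 2(t-\tfrac{j}{nk})K_1e^{K_1(t-j/(nk))}$, which is only $O(1/k)$ rather than $O(1/k^2)$, but this term occurs once (not summed over $j$), so the total bound is unharmed. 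You should also be slightly careful with the claim that $d_l^{(k)}=\tfrac1k\Phi_l$ with $\Phi_l$ independent of $k$: the paper's bound for $l=1$ contains the factor $a_1(\tfrac1k,K_2)\,d_0^{(k)}(1,M_1,K_2)$ in each summand, so the clean $1/k$ factorization requires the base case $d_0^{(k)}$ itself to be $O(1/k)$ uniformly; the conclusion $d_l^{(k)}\to 0$ needed for the lemma follows either way from $d_0^{(k)}\to 0$, which is what the paper actually uses.
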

\begin{proof}
The case $l=0$ is the content of \cite[Lemma 3.12]{zentner}. Again, we will show the case $l=1$ and leave the induction for higher $l$ to the interested reader; we start by noting that
\begin{equation}\label{proof:thirdbound_bounds1}
	\norm{D\Xi^{t}_{(W_r)}}_\infty \leq e^{K_1 t}
\end{equation}
for all $t\geq 0$. 

We first claim that at times $t=\frac{j}{nk}$, where $j$ is an integer, we have
\begin{equation}\label{proof:thirdbound_bound2}
	\begin{split}
	\norm{D\Xi_{(W_r)}^{\frac{j}{nk}}-D\phi^{\frac{j}{nk}}_Z}_\infty 
	& \leq \sum_{i=0}^{j-1} \frac{1}{nk} \left[ \frac{1}{nk} c_1\left(\frac{1}{nk},M_1,K_2\right)\right. \\
	&\qquad\qquad+ 
		\left.a_1\left(\frac{1}{nk},K_2\right) \, d_0^{(k)}\left(\frac{j}{nk},M_1,K_2\right) \right] e^{K_1 \frac{i}{nk}}\, . 
	\end{split}
\end{equation}
Here $c_1$ denotes a function $c_1^{(m)}: \R_+ \times \R_+ \times \R_+ \to \R_+$ as provided by Lemma \ref{le:bound_c} above; we omit the superscript $(m)$ for convenience, as well as the $(W_r)$ subscript on $\Xi$ in the sequel.

The proof of \eqref{proof:thirdbound_bound2} is by induction, analogous to the proof of the bound \eqref{proof:bound_1} in Lemma~\ref{bound second step}; in this situation, we make use of Lemma~\ref{le:bound_c} above.  By using the triangle inequality, followed by Lemma~\ref{le:bound_a} and \eqref{eq:flow_vs_its_vector_field}, we obtain the estimate
\begin{align}\label{proof:bound_3}
   \norm{(D\Xi^t)(p) - (D\phi^t_{Z})(p)} &= \norm{D (\Xi^{t-\frac{j}{nk}} )_{\Xi^{\frac{j}{nk}}(p)} \circ (D\Xi^{\frac{j}{nk}})(p) - (D\phi^{t-\frac{j}{nk}}_{Z})_{\phi^{\frac{j}{nk}}_{Z}(p)} \circ D \phi^{\frac{j}{nk}}_{Z}(p)} \nonumber\\
   	 &\leq \norm{D (\Xi^{t-\frac{j}{nk}})_{\Xi^{\frac{j}{nk}}(p)} - 
	 		(D\phi^{t-\frac{j}{nk}}_{Z})_{\Xi^{\frac{j}{nk}}(p)}} \,\cdot\,  \norm{(D\Xi^{\frac{j}{nk}})(p)} \nonumber\\*
	      &\qquad + \norm{(D\phi^{t-\frac{j}{nk}}_{Z})_{\Xi^{\frac{j}{nk}}(p)} - 
	        (D\phi^{t-\frac{j}{nk}}_{Z})_{\phi^{\frac{j}{nk}}_{Z}(p)}} \,\cdot\, 
	         \norm{(D\Xi^{\frac{j}{nk}})(p)} \nonumber\\*
	      &\qquad + \norm{(D\Xi^{\frac{j}{nk}})(p) - (D\phi^{\frac{j}{nk}}_{Z})(p)} \,\cdot\, 
	      	  \norm{(D\phi^{t-\frac{j}{nk}}_{Z})_{\Theta^{\frac{j}{nk}}(p)}} \nonumber\\
	  &\leq \norm{D (\Xi^{t-\frac{j}{nk}})_{\Xi^{\frac{j}{nk}}(p)} - 
	 		(D\phi^{t-\frac{j}{nk}}_{Z})_{\Xi^{\frac{j}{nk}}(p)}}\,  e^{K_1 \frac{j}{nk}} \nonumber\\*
	  &\qquad + \left(t-\frac{j}{nk}\right) \, a_1\left(t-\frac{j}{nk},K_2\right) \, \norm{\Xi^{\frac{j}{nk}}(p) - \phi_{Z}^{\frac{j}{nk}}(p)}\, e^{K_1 \frac{j}{nk}} \nonumber\\*
	  &\qquad + \norm{(D\Xi^{\frac{j}{nk}})(p) - (D\phi^{\frac{j}{nk}}_{Z})(p)} \, e^{K_1 (t-\frac{j}{nk})}.
\end{align}

Supposing we are trying to prove \eqref{proof:thirdbound_bound2} for $t=\frac{j+1}{nk}$, the last two terms of \eqref{proof:bound_3} can be bounded using the case $l=0$ of the lemma and the case $t=\frac{j}{nk}$ of \eqref{proof:thirdbound_bound2} respectively.  At time $t=\frac{j+1}{nk}$ the map $\Xi^{t-\frac{j}{nk}}$ is equal to $\Xi^{\frac{1}{nk}} = \phi_{W_{m-1}}^{\frac{1}{nk}} \circ \dots \circ \phi_{W_0}^{\frac{1}{nk}}$, and hence Lemma~\ref{le:bound_c} gives us the bound
\begin{equation*}
\begin{split}
	\norm{D (\Xi^{t-\frac{j}{nk}})_{\Xi^{\frac{j}{nk}}(p)} - 
	 		(D\phi^{t-\frac{j}{nk}}_{Z})_{\Xi^{\frac{j}{nk}}(p)}}
	& = 	\norm{D (\Xi^{\frac{1}{nk}})_{\Xi^{\frac{j}{nk}}(p)} - 
	 		(D\phi^{\frac{1}{nk}}_{Z})_{\Xi^{\frac{j}{nk}}(p)}} \\
	& \leq \left(\frac{1}{nk}\right)^2 c_1\left(\frac{1}{nk}, M_1,K_2\right) \, .
\end{split}
\end{equation*}
This bound together with \eqref{proof:bound_3} establishes the bound \eqref{proof:thirdbound_bound2} by induction.

Now the intermediate times $\frac{j}{kn} \leq t \leq \frac{j+1}{kn}$ need a separate treatment, because for these times $\Xi^{t-\frac{j}{nk}}$ is not of the form where Lemma \ref{le:bound_c} applies. In fact, for these times we can only establish a coarser estimate:
\begin{align}\label{proof:estimate_coarse}
	\norm{D \Xi^{t-\frac{j}{nk}} - 
	 		D\phi^{t-\frac{j}{nk}}_{Z}}_\infty
		& \leq  
		\norm{D \Xi^{t-\frac{j}{nk}} - \id}_\infty + \norm{D\phi^{t-\frac{j}{nk}}_{Z} - \id}_\infty \nonumber\\
		& \leq \left(t-\frac{j}{nk}\right) K_1 e^{K_1 (t-\frac{j}{nk})} + \left(t-\frac{j}{nk}\right) K_1 e^{K_1 (t-\frac{j}{nk})}
		\nonumber\\
		& = 2 \left(t-\frac{j}{nk}\right) K_1 e^{K_1 (t-\frac{j}{nk})} \, .
\end{align}
The estimate $\norm{D\phi^{t}_{Z} - \id}_\infty \leq t \, K_1 e^{K_1 t}$ is an immediate application of Gronwall's inequality to the derivative of the flow equation of $Z$, i.e.\ equation \eqref{eq:flow differentiated}, and we leave the estimate 
\[
\norm{D\Xi^{t-\frac{j}{nk}}_{Z} - \id}_\infty \leq (t-\frac{j}{nk}) \, K_1 e^{K_1 (t-\frac{j}{nk})} 
\]
for $\frac{j}{kn} \leq t \leq \frac{j+1}{kn}$ as an exercise to the interested reader.

In total, we have now proved that for any $\frac{j}{nk} \leq t \leq \frac{j+1}{nk}$ we have
\begin{equation}\label{eq:pre f}
\begin{split}
	\norm{D\Xi_{(W_r)}^t - D\phi^t_{Z}}_\infty 
	& \leq 
	\sum_{i=0}^{j} \frac{1}{nk} \left[ \max\left\{\frac{1}{nk} \, c_1\left(\frac{1}{nk},M_1,K_2\right),\frac{2}{nk} K_1 e^{\frac{K_1}{nk}}\right\} \right. \\
	& \qquad\qquad\qquad + \left.
		a_1\left(\frac{1}{nk},K_2\right) d_0^{(k)}\left(\frac{j}{nk},M_1,K_2\right) \right] e^{K_1 \frac{i}{nk}}\, .
\end{split}
\end{equation}
Now notice that for $\frac{j}{nk} \leq t < \frac{j+1}{nk}$ the sum on the right hand side has $j+1 = \floor{nk t} + 1$ summands. Thus we clearly obtain the estimate
\begin{equation}\label{proof:f}
	\begin{split}
	\norm{D\Xi_{(W_r)}^t - D\phi^t_{Z}}_\infty 
	& \leq 2 \left(t + \frac{1}{k}\right)
	 \left( \max\left\{\frac{1}{k} \, c_1(1,M_1,K_2),\frac{2}{k} K_1 e^{\frac{K_1}{k}}\right\} \right. \\
	& \quad \quad \quad + \left.
		a_1\left(\frac{1}{k},K_2\right) d_0^{(k)}(1,M_1,K_2) \right) e^{K_1 t}\, .
\end{split}
\end{equation}
(The functions $a_1$, $c_1$, and $d_0^{(k)}$ are increasing in $t$, so we can thus obtain expressions independent of $n$.)
We define $d_1^{(k)}(t,M_1,K_2)$ to be the right hand side of \eqref{proof:f}, and then
\[
\lim_{k \to \infty} d_1^{(k)}(t,M,K) = 0
\] 
follows immediately from the fact that $\lim_{k \to \infty} d_0^{(k)}(t,M,K) = 0$. 
\end{proof}

We are now ready to compare $\Theta^{t}_{(Z_j)}$ to the map $\Omega^{t}_{(Z_j)}$ defined in equation~\eqref{very broken flow} above. Recall that the definition of $\Omega^{t}_{(Z_j)}$ depends on the summands $W_0^{(j)}, \dots, W_{m_j-1}^{(j)}$ in $Z_j = W_0^{(j)} + \dots + W_{m_j-1}^{(j)}$ and on the integers $k_j$ which appear in the definitions of the maps $\Xi^t_{(W_{r}^{(j)})}$, for $j=0,1,\dots,n-1$. Here $n$ has been fixed in the first approximation step, and the vector fields $Z_0, \dots, Z_{n-1}$ have been fixed in the second approximation step. 

\begin{lemma}\label{third step estimate}
Let $r \geq 0$. Then for any $l \leq r$ there are functions 
	\[
	f_l^{(k_0, \dots, k_{n-1})}:\R_+ \times \R_+^{n} \times \R_+^{n} \to \R,
	\]
which are continuous and monotonically increasing in all variables, such that:
\begin{enumerate}
	\item For any tuples ${\bf M}=(M_0, \dots, M_{n-1})$ and ${\bf K}=(K_0, \dots, K_{n-1})$ we have
	\begin{equation*}
		\lim_{k_{\text{min}} \to \infty} f_l^{(k_0, \dots, k_{n-1})}(t,{\bf M},{\bf K}) = 0	\, ,
	\end{equation*} 
	where $k_{\text{min}}$ is defined as the minimum of the numbers $k_0, \dots, k_{n-1}$. \\
	
	\item Let $Z_j$ be a vector field of the form $W_0^{(j)} + \dots + W^{(j)}_{m_j-1}$ for $0 \leq j \leq n-1$.
		Suppose that for each $j$ there are positive constants $K_0^{(j)},\dots,K_{r+1}^{(j)}$ and $M_0^{(j)},\dots,M_r^{(j)}$ such that for any $l \leq r+1$ one has 
		\[
			\norm{D^{(i)} (W^{(j)}_{i_1} + \dots + W^{(j)}_{i_q})}_\infty \leq K_l^{(j)}
		\] 
		for all $i \leq l$ and all subsets $\{i_1, \dots, i_q\}$ of $\{0, \dots, m_{j}-1\}$, and
		\[
			\norm{D^{(i)}[W_q^{(j)},W_0^{(j)} + \dots + W_{q-1}^{(j)}]}_\infty \leq M_l^{(j)}
		\] 
		for all $i \leq l \leq r$ and $1 \leq q \leq m_j-1$.  Then we have the estimate
		\begin{equation*}
		\norm{D^{(l)}\Theta^t_{(Z_j)} - D^{(l)} \Omega^t_{(W_r^{(j)})}}_\infty \leq f^{(k_0,\dots,k_{n-1})}_l(t, (M_l^{(0)}, \dots,M_l^{(n-1)}),(K_{l+1}^{(0)}, \dots, K_{l+1}^{(n-1)})) \, 
		\end{equation*}
		for all $t\geq 0$. 
\end{enumerate}
\end{lemma}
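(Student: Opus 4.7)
The plan is to mimic the inductive structure used in Lemmas~\ref{bound first step} and \ref{bound second step}, with Lemma~\ref{bound third step} providing the per-piece replacement estimate (in place of Lemmas~\ref{le:bound_b} and \ref{le:bound_c}). Specifically, I would proceed by induction on $l$, and within each $l$ by a secondary induction on the piece index $i \in \{0,1,\dots,n-1\}$ corresponding to the interval $[i/n,(i+1)/n]$ on which $t$ lies. The base case $l=0$ is essentially the $l=0$ case of Lemma~\ref{bound third step} composed $n$ times, combined with the triangle-inequality decomposition already used in \cite[Lemma~3.9]{zentner} (and in the $l=0$ case of our Lemma~\ref{bound first step}).

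For the inductive step, fix $l\geq 1$ and $t\in[i/n,(i+1)/n]$. Writing
\[
\Theta^t_{(Z_j)} = \phi^{t-i/n}_{Z_i} \circ \Theta^{i/n}_{(Z_j)}, \qquad
\Omega^t_{(W_r^{(j)})} = \Xi^{t-i/n}_{(W_r^{(i)})} \circ \Omega^{i/n}_{(W_r^{(j)})},
\]
I would decompose the difference of $l$-th derivatives via the triangle inequality into three parts, in the same pattern as the estimates leading to \eqref{proof:bound_1} and \eqref{proof:bound_3}: a term comparing $D^{(l)}\Xi^{t-i/n}_{(W_r^{(i)})}$ with $D^{(l)}\phi^{t-i/n}_{Z_i}$ at a common basepoint, a term coming from the discrepancy of basepoints $\Omega^{i/n}(p)$ versus $\Theta^{i/n}(p)$, and a term containing the accumulated error up to time $i/n$. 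The first of these is controlled by $d_l^{(k_i)}(1/n, M_l^{(i)}, K_{l+1}^{(i)})$ from Lemma~\ref{bound third step}; the second is bounded by the product of the Lipschitz constant for $D^{(l)}\phi^{t-i/n}_{Z_i}$ from Lemma~\ref{le:bound_a} with the positional error $\|\Omega^{i/n}(p)-\Theta^{i/n}(p)\|$ (which is handled by the $l=0$ case); and the third is bounded by the inductive hypothesis at time $i/n$. The multiplicative stretching factors $\|D^{(l)}\Xi^{t-i/n}\|_\infty$ and $\|D^{(l)}\phi^{t-i/n}_{Z_i}\|_\infty$ are estimated uniformly in terms of $K_1^{(i)},\dots,K_{l+1}^{(i)}$ using Lemma~\ref{le:bound_a} and \eqref{proof:thirdbound_bounds1}, exploiting that each subinterval has length at most $1/n$ with $n$ fixed.

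Telescoping over $i$ yields an explicit formula for $f_l^{(k_0,\dots,k_{n-1})}$ as a sum of $n$ terms, each being a continuous monotone function of $t$, of the individual $d_l^{(k_i)}$ values, and of the constants $M_l^{(i)},K_{l+1}^{(i)}$, multiplied by uniformly bounded $e^{K_1^{(i)}/n}$-type factors. Monotonicity and continuity in all variables follow from the corresponding properties of $a_l$, $d_l^{(k)}$, and $f_{l-1}^{(k_0,\dots,k_{n-1})}$. For the limiting statement, as $k_{\min}\to\infty$ each $d_l^{(k_i)}(1/n,M_l^{(i)},K_{l+1}^{(i)})$ tends to $0$ by Lemma~\ref{bound third step}, and the term involving $f_{l-1}^{(k_0,\dots,k_{n-1})}$ tends to $0$ by the induction on $l$; since $n$ is fixed independently of the $k_i$, the sum of $n$ such terms also tends to $0$.

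The main obstacle is the bookkeeping for intermediate times $i/n\leq t\leq (i+1)/n$: the map $\Xi^{t-i/n}_{(W_r^{(i)})}$ is only a partial composition of short shearing flows and is not itself the flow of any single vector field, so one cannot directly apply Lemma~\ref{le:bound_c} to it. Fortunately, Lemma~\ref{bound third step} has already been proved for \emph{every} $t\geq 0$ (the coarser estimate \eqref{proof:estimate_coarse} was introduced precisely in order to handle these intermediate times), so the present argument only needs to invoke its conclusion as a black box, and no further intermediate-time analysis is required.
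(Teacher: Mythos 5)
Your proposal matches the paper's proof: the paper also proceeds by induction on $l$ (with the case $l=0$ quoted from \cite[Lemma~3.13]{zentner}) and, within each $l$, by induction on the piece index, splitting $D^{(l)}\Omega^t_{(Z_j)} - D^{(l)}\Theta^t_{(Z_j)}$ via the chain rule and triangle inequality, bounding the per-piece term by $d_l^{(k_i)}$ from Lemma~\ref{bound third step}, the stretching factors by $e^{K_1^{(i)}/n}$-type quantities, and the accumulated error by the inductive hypothesis, with the limit statement following from the first conclusion of Lemma~\ref{bound third step}. Your explicit three-term decomposition, which isolates the basepoint-discrepancy term and controls it via Lemma~\ref{le:bound_a} together with the $l=0$ positional error, is if anything slightly more careful than the two-term splitting displayed in \eqref{estimate:very_broken_trajectory}, but the argument is the same.
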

\begin{proof}
	The theme of the proof is by now familiar. The case $l=0$ is the statement of \cite[Lemma 3.13]{zentner}. 
	For the case $l=1$, and for times $\frac{j}{n} \leq t \leq \frac{j+1}{n}$, one starts with the estimate 
\begin{align}\label{estimate:very_broken_trajectory}
		\norm{D \Omega^t_{(Z_j)} - D\Theta^{t}_{(Z_j)}} &= \norm{D \Xi^{t-\frac{j}{n}}_{(W_r^{(j)})} \circ D\Omega^{\frac{j}{n}}_{(Z_j)} - D\phi^{t-\frac{j}{n}}_{Z_j} \circ D \Theta^{\frac{j}{n}}_{(Z_j)} } \nonumber \\
			& \leq \norm{D \Omega^{t-\frac{j}{n}}_{(Z_j)} - D\phi^{t-\frac{j}{n}}_{Z_j}} \, \norm{D \Omega^{\frac{j}{n}}_{(Z_j)}} + \norm{D\phi^{t-\frac{j}{n}}_{Z_j}} \, \norm{D\Omega^{\frac{j}{n}}_{(Z_j)} - D \Theta^{\frac{j}{n}}_{(Z_j)} }\, .
\end{align}
Now it is simple to verify that $\norm{D \Omega^{\frac{j}{n}}_{(Z_j)}} \leq e^{\frac{1}{n}(K_1^{(0)} + \dots + K_1^{(j-1)})}$, and together with Lemma \ref{bound third step} this gives inductively for $\frac{j}{n} \leq t \leq \frac{j+1}{n}$ the estimate
\begin{align*}
		\norm{D \Omega^t_{(Z_j)} - D\Theta^{t}_{(Z_j)}} & \leq \sum_{i=0}^{j-1} d_1^{(k_i)}\left(\frac{1}{n},M_1^{(j)},K_2^{(j)}\right) e^{\frac{1}{n} (K_1^{(0)} + \dots + K_1^{(i-1)})} \\
			& \qquad + d_1^{(k_j)}\left(t-\frac{j}{n},M_1^{(j)},K_2^{(j)}\right) e^{\frac{1}{n} (K_1^{(0)} + \dots + K_1^{(j-1)})} \, ,
\end{align*}
where the $d_1^{(k)}$ are functions satisfying the conclusion of Lemma~\ref{bound third step}. The right hand side defines the function $f_1^{(k_0,\dots,k_{n-1})}$, and the fact that
	\begin{equation*}
		\lim_{k_{\text{min}} \to \infty} f_l^{(k_0, \dots, k_{n-1})}(t,{\bf M},{\bf K}) = 0	\, 
	\end{equation*} 
follows from the first conclusion of Lemma \ref{bound third step}.

We leave the induction steps for $l \geq 2$ as an exercise for the interested reader.
\end{proof}

Once the vector fields $Z_j$ and their decomposition as sums $Z_j = W_{0}^{(j)} + \dots + W_{m_j-1}^{(j)}$ are fixed, there are constants $K_l^{(j)}$ and $M_l^{(j)}$ satisfying the hypotheses of Lemma \ref{third step estimate}.  Lemma~\ref{third step estimate} then allows us to choose the multi-index $(k_0, \dots, k_{n-1})$ large enough so that we have
\[
	f_l^{(k_0, \dots, k_{n-1})}(t,(M_l^{(0)}, \dots, M_l^{(n-1)}),(K_{l+1}^{(0)}, \dots, K_{l+1}^{(n-1)})) < \frac{\epsilon}{3(r+1)}
\]
for all $l=0, \dots, r$, and it follows that
\begin{equation*}
	\norm{\Theta^{t}_{(Z_j)} - \Omega^t_{(Z_j)}}_{C^r} < \frac{\epsilon}{3}
\end{equation*}
for all $t \in [0,1]$. This concludes the proof of Theorem \ref{C1 approximation}.

\section{Area-preserving isotopies of surfaces} \label{sec:symplectic-isotopy}

In this appendix we prove Lemma~\ref{lem:symplectic-isotopy}, which was used in the proofs of Theorems~\ref{thm:pillowcase-near-pi/2} and \ref{thm:det-limit-slope}.  We restate the lemma here for convenience.

{
\renewcommand{\thetheorem}{\ref*{lem:symplectic-isotopy}}
\begin{lemma}
Let $\Sigma$ be a compact surface with area form $\omega$, and let $K$ be a compact subset of $\Sigma$.
Let $C_t \subset \Sigma$ ($0 \leq t \leq 1$) be a smooth isotopy of smoothly embedded curves (not necesarily connected), and suppose that
\begin{enumerate}
\item the union $\bigcup_{t\in[0,1]} C_t$ is contained in the interior of a compact, connected subsurface $\Sigma'$ with smooth boundary, such that $\Sigma'$ is disjoint from $K$;
\item each component $\gamma \subset C_t$ is separating in $\Sigma \setminus (C_t \setminus \gamma)$; \label{i:separating-condition}
\item the areas of the respective components of $\Sigma \setminus C_t$ are independent of $t$.
\end{enumerate}
Then there is a smooth isotopy $\psi_t: \Sigma \to \Sigma$ with $\psi_0 = \mathrm{id}_\Sigma$ such that
\begin{enumerate}
\item $\psi_t(C_0) = C_t$ for all $t$;
\item $\psi_t$ is constant on $\overline{\Sigma\setminus\Sigma'}$, and hence on a neighborhood of $K$, for all $t \in [0,1]$;
\item each $\psi_t$ is a symplectomorphism, i.e., $\psi_t^*\omega = \omega$ for all $t$.
\end{enumerate}
\end{lemma}
\addtocounter{theorem}{-1}
}

\begin{proof}
We break the proof into two steps: first we arrange an isotopy which fixes the symplectic form near $C_t$ and on the closure of $\Sigma \setminus \Sigma'$, and then we use another isotopy to fix it away from these sets.
\vspace{1em}

\noindent{\bf Step 1}: Find disjoint open neighborhoods $V$ of $C_0$ and $W$ of $\overline{\Sigma\setminus\Sigma'}$, and an isotopy $\varphi_t: \Sigma \to \Sigma$ with $\varphi_t(C_0) = C_t$, such that $\left(\varphi_t^*\omega\right)|_V = \left.\omega\right|_V$ and $\varphi_t|_W = \mathrm{id}_W$ for all $t$.

We start by using the isotopy extension theorem to produce an isotopy
\[ \phi_t: \Sigma \to \Sigma, \quad 0 \leq t \leq 1, \]
supported on an arbitrarily small neighborhood of the compact set $\bigcup_t C_t \subset \Sigma'$, such that $\phi_t(C_0) = C_t$ for all $t$.  By taking the support small enough we can make it disjoint from an open neighborhood $W$ of $\overline{\Sigma\setminus\Sigma'}$, since the latter is compact and disjoint from $\bigcup_t C_t$.

Since $C_0$ is Lagrangian with respect to each of the forms $\phi_t^*\omega$, $0 \leq t \leq 1$, the parametrized Weinstein Lagrangian neighborhood theorem (see \cite[Exercise~3.3.4]{oh-book}) provides tubular neighborhoods $U_t$ of $C_0$ and a 1-parameter family of diffeomorphisms
\[ h_t: U_0 \xrightarrow{\sim} U_t, \]
with $h_0=\id_\Sigma$, which fix $C_0$ pointwise and satisfy $h_t^*(\phi_t^*\omega) = \omega$.  This $h_t$ is constructed as the time-1 flow of a vector field $v_t$, which is defined on a neighborhood $U$ of $C_0$ and satisfies $v_t|_{C_0} = 0$ for all $t$.  We take a smaller open neighborhood $V$ of $C_0$ whose closure satisfies
\[ \overline{V} \subset U \cap (\Sigma \setminus \overline{W}), \]
and a smooth cut-off function $\rho: \Sigma \to [0,1]$ which is supported on $U \setminus \overline{W}$ and satisfies $\rho|_V\equiv 1$.  We then let $\tilde{h}_t: \Sigma \to \Sigma$ be the time-$t$ flow of $\rho\cdot v_t$, and define
\[ \varphi_t = \phi_t \circ \tilde{h}_t. \]
By construction, it follows that $\varphi_t(C_0) = \phi_t (\tilde{h}_t(C_0)) = \phi_t(C_0) = C_t$; that both $\phi_t$ and $\tilde{h}_t$ fix $W$ pointwise, hence so does $\varphi_t$; and that
\[ \left.\varphi_t^*\omega\right|_V = \left.\tilde{h}_t^*(\phi_t^*\omega)\right|_V = \left.h_t^*(\phi_t^*\omega)\right|_V = \omega|_V \]
as desired.

\vspace{1em}

\noindent{\bf Step 2}: Construct the isotopy $\psi_t: \Sigma \to \Sigma$.

Let $\Sigma_i$ denote the closures of the various components of $\Sigma' \setminus C_0$.  Then by hypothesis, and by the fact that $\varphi_t$ fixes $\overline{\Sigma\setminus\Sigma'} \subset W$, each of the areas
\[ \int_{\Sigma_i} \varphi_t^*\omega \]
is constant as a function of $t$, so we apply Moser's theorem, as generalized by Banyaga \cite{banyaga} to compact manifolds with boundary, to the surfaces $\Sigma_i$.  This theorem asserts that since $\int_{\Sigma_i} \varphi_t^*\omega$ is constant, there is an isotopy
\[ \chi^i_t: \Sigma_i \to \Sigma_i, \qquad 0 \leq t \leq 1 \]
such that $(\chi^i_t)^*(\varphi_t^*\omega) = \varphi_0^*\omega = \omega$ and $\chi^i_t|_{\partial\Sigma_i} = 0$ for all $t\in[0,1]$.  Banyaga's proof begins by constructing a 1-form $\alpha^i_t$ on $\Sigma_i$ satisfying
\begin{align*}
\frac{d}{dt}\left(\left.\varphi_t^*\omega\right|_{\Sigma_i}\right) &= d\alpha^i_t, &
\left.\alpha^i_t\right|_{\partial\Sigma_i} &= 0,
\end{align*}
and then one takes $\chi^i_t$ to be the flow of a vector field $v^i_t$ satisfying $\iota_{v^i_t}(\varphi_t^*\omega) = -\alpha^i_t$.  We remark that by condition~\eqref{i:separating-condition}, each component of $C_0 \cap \Sigma_i$ belongs to the boundary of $\Sigma_i$ rather than the interior, so that $\alpha^i_t = 0$ along all of $C_0 \cap \Sigma_i$.

We now modify the above so that $\chi^i_t$ fixes not just $\partial \Sigma_i$ but a whole neighborhood thereof.  Let $N \subset \Sigma_i$ be a collar neighborhood of $\partial\Sigma_i$ contained in $V \cup W$, and identify $N \cong [0,1]\times\partial\Sigma_i$ so that $\partial\Sigma_i$ is identified with $\{0\}\times\partial\Sigma_i$.  We know that $\left.\alpha^i_t\right|_{\partial\Sigma_i}$ is zero, so it represents the zero class in $H^1_{\mathrm{dR}}(\partial\Sigma_i)$.  Since $N \subset V\cup W$, Step 1 guarantees that $\varphi_t^*\omega$ is constant on $N$, so then $\left.\alpha^i_t\right|_N$ is closed, and its cohomology class is sent to zero by the restriction map
\[ H^1_{\mathrm{dR}}(N) \xrightarrow{\sim} H^1_{\mathrm{dR}}(\partial\Sigma_i), \]
so it must be exact.   Thus we can write $\left.\alpha^i_t\right|_N = d\beta_t$, and we replace $\alpha^i_t$ with $\alpha^i_t - d(\rho\beta_t)$, where $\rho: [0,1]\to[0,1]$ is a smooth cutoff function satisfying $\rho(t)=1$ for $t \leq \frac{1}{3}$ and $\rho(t)=0$ for $t \geq \frac{2}{3}$.  This leaves $d\alpha^i_t$ unchanged, but now we have $\alpha^i_t \equiv 0$ on $[0,\frac{1}{3}] \times \partial\Sigma_i \subset N$.  It follows in the above construction that $v^i_t$ is zero on this neighborhood, so $\chi^i_t$ is stationary there as desired.

Having arranged for each $\chi^i_t$ to fix a neighborhood of $\partial\Sigma_i$, we can now glue the various $\chi^i_t$ together and extend by the identity on the rest of $\Sigma$ to get a single smooth isotopy
\[ \chi_t: \Sigma \to \Sigma \]
such that $\chi_t^*(\varphi_t^*\omega) = \omega$, and such that $\chi_t$ is the identity on a neighborhood $U$ of $C_0 \cup \overline{\Sigma\setminus\Sigma'}$.  We therefore define
\[ \psi_t = \varphi_t \circ \chi_t: \Sigma \to \Sigma, \]
which fixes the neighborhood $U \cap W$ of $\overline{\Sigma\setminus\Sigma'}$.  By construction we have
\[ \psi_t(C_0) = \varphi_t(\chi_t(C_0)) = \varphi_t(C_0) = C_t \]
and $\psi_t^* \omega = \chi_t^*(\varphi_t^*\omega) = \omega$, so $\psi_t$ is the desired isotopy.
\end{proof}

\bibliographystyle{myalpha}
\bibliography{References}

\newcommand{\etalchar}[1]{$^{#1}$}
\begin{thebibliography}{CCG{\etalchar{+}}94}

\bibitem[Ban74]{banyaga}
A.~Banyaga.
\newblock Formes-volume sur les vari\'{e}t\'{e}s \`a bord.
\newblock {\em Enseign. Math. (2)}, 20:127--131, 1974.

\bibitem[BCR98]{bcr}
J.~Bochnak, M.~Coste, and M.-F. Roy.
\newblock {\em Real algebraic geometry}, volume~36 of {\em Ergebnisse der
  Mathematik und ihrer Grenzgebiete (3) [Results in Mathematics and Related
  Areas (3)]}.
\newblock Springer-Verlag, Berlin, 1998.
\newblock Translated from the 1987 French original, Revised by the authors.

\bibitem[BD95]{braam-donaldson}
P.~J. Braam and S.~K. Donaldson.
\newblock Floer's work on instanton homology, knots and surgery.
\newblock In {\em The {F}loer memorial volume}, volume 133 of {\em Progr.
  Math.}, pages 195--256. Birkh\"auser, Basel, 1995.

\bibitem[BNM{\etalchar{+}}]{knotatlas}
D.~Bar-Natan, S.~Morrison, et~al.
\newblock The {K}not {A}tlas.
\newblock \url{http://katlas.org}.
\newblock August 23, 2017.

\bibitem[BS18]{baldwin-sivek}
J.~A. Baldwin and S.~Sivek.
\newblock Stein fillings and {$\rm SU(2)$} representations.
\newblock {\em Geom. Topol.}, 22(7):4307--4380, 2018.

\bibitem[BS19]{bs-lspace}
J.~A. Baldwin and S.~Sivek.
\newblock Instantons and {L}-space surgeries.
\newblock arXiv:1910.13374, 2019.

\bibitem[Bur90]{burde}
G.~Burde.
\newblock {${\rm SU}(2)$}-representation spaces for two-bridge knot groups.
\newblock {\em Math. Ann.}, 288(1):103--119, 1990.

\bibitem[BZ98]{boyer-zhang-seminorms}
S.~Boyer and X.~Zhang.
\newblock On {C}uller-{S}halen seminorms and {D}ehn filling.
\newblock {\em Ann. of Math. (2)}, 148(3):737--801, 1998.

\bibitem[CCG{\etalchar{+}}94]{ccgls}
D.~Cooper, M.~Culler, H.~Gillet, D.~D. Long, and P.~B. Shalen.
\newblock Plane curves associated to character varieties of {$3$}-manifolds.
\newblock {\em Invent. Math.}, 118(1):47--84, 1994.

\bibitem[CDGW]{snappy}
M.~Culler, N.~M. Dunfield, M.~Goerner, and J.~R. Weeks.
\newblock Snap{P}y, a computer program for studying the geometry and topology
  of $3$-manifolds.
\newblock Available at \url{http://snappy.computop.org} (04/04/2016).

\bibitem[CGLS87]{cgls}
M.~Culler, C.~M. Gordon, J.~Luecke, and P.~B. Shalen.
\newblock Dehn surgery on knots.
\newblock {\em Ann. of Math. (2)}, 125(2):237--300, 1987.

\bibitem[CL]{knotinfo}
J.~C. Cha and C.~Livingston.
\newblock Knot{I}nfo: {T}able of {K}not {I}nvariants.
\newblock \url{http://www.indiana.edu/~knotinfo}.
\newblock August 23, 2017.

\bibitem[CL96]{cooper-long-remarks}
D.~Cooper and D.~D. Long.
\newblock Remarks on the {$A$}-polynomial of a knot.
\newblock {\em J. Knot Theory Ramifications}, 5(5):609--628, 1996.

\bibitem[Cro59]{crowell}
R.~H. Crowell.
\newblock Nonalternating links.
\newblock {\em Illinois J. Math.}, 3:101--120, 1959.

\bibitem[CS83]{culler-shalen-splittings}
M.~Culler and P.~B. Shalen.
\newblock Varieties of group representations and splittings of {$3$}-manifolds.
\newblock {\em Ann. of Math. (2)}, 117(1):109--146, 1983.

\bibitem[CS84]{culler-shalen-bounded}
M.~Culler and P.~B. Shalen.
\newblock Bounded, separating, incompressible surfaces in knot manifolds.
\newblock {\em Invent. Math.}, 75(3):537--545, 1984.

\bibitem[CS99]{collin-steer}
O.~Collin and B.~Steer.
\newblock Instanton {F}loer homology for knots via {$3$}-orbifolds.
\newblock {\em J. Differential Geom.}, 51(1):149--202, 1999.

\bibitem[DG04]{dunfield-garoufalidis}
N.~M. Dunfield and S.~Garoufalidis.
\newblock Non-triviality of the {$A$}-polynomial for knots in {$S^3$}.
\newblock {\em Algebr. Geom. Topol.}, 4:1145--1153 (electronic), 2004.

\bibitem[Don02]{donaldson-book}
S.~K. Donaldson.
\newblock {\em Floer homology groups in {Y}ang-{M}ills theory}, volume 147 of
  {\em Cambridge Tracts in Mathematics}.
\newblock Cambridge University Press, Cambridge, 2002.
\newblock With the assistance of M. Furuta and D. Kotschick.

\bibitem[Dun01]{dunfield-table}
N.~M. Dunfield.
\newblock A table of boundary slopes of {M}ontesinos knots.
\newblock {\em Topology}, 40(2):309--315, 2001.

\bibitem[Flo88]{floer}
A.~Floer.
\newblock An instanton-invariant for {$3$}-manifolds.
\newblock {\em Comm. Math. Phys.}, 118(2):215--240, 1988.

\bibitem[Flo90]{floer-surgery}
A.~Floer.
\newblock Instanton homology, surgery, and knots.
\newblock In {\em Geometry of low-dimensional manifolds, 1 ({D}urham, 1989)},
  volume 150 of {\em London Math. Soc. Lecture Note Ser.}, pages 97--114.
  Cambridge Univ. Press, Cambridge, 1990.

\bibitem[FM66]{fox-milnor}
R.~H. Fox and J.~W. Milnor.
\newblock Singularities of {$2$}-spheres in {$4$}-space and cobordism of knots.
\newblock {\em Osaka J. Math.}, 3:257--267, 1966.

\bibitem[Gol84]{goldman}
W.~M. Goldman.
\newblock The symplectic nature of fundamental groups of surfaces.
\newblock {\em Adv. in Math.}, 54(2):200--225, 1984.

\bibitem[Gor83]{gordon}
C.~M. Gordon.
\newblock Dehn surgery and satellite knots.
\newblock {\em Trans. Amer. Math. Soc.}, 275(2):687--708, 1983.

\bibitem[Hat82]{hatcher}
A.~E. Hatcher.
\newblock On the boundary curves of incompressible surfaces.
\newblock {\em Pacific J. Math.}, 99(2):373--377, 1982.

\bibitem[Her94]{herald-legendrian}
C.~M. Herald.
\newblock Legendrian cobordism and {C}hern-{S}imons theory on {$3$}-manifolds
  with boundary.
\newblock {\em Comm. Anal. Geom.}, 2(3):337--413, 1994.

\bibitem[Her97]{herald}
C.~M. Herald.
\newblock Flat connections, the {A}lexander invariant, and {C}asson's
  invariant.
\newblock {\em Comm. Anal. Geom.}, 5(1):93--120, 1997.

\bibitem[HK98]{heusener-kroll}
M.~Heusener and J.~Kroll.
\newblock Deforming abelian {${\rm SU}(2)$}-representations of knot groups.
\newblock {\em Comment. Math. Helv.}, 73(3):480--498, 1998.

\bibitem[HO89]{hatcher-oertel}
A.~Hatcher and U.~Oertel.
\newblock Boundary slopes for {M}ontesinos knots.
\newblock {\em Topology}, 28(4):453--480, 1989.

\bibitem[HT85]{hatcher-thurston}
A.~Hatcher and W.~Thurston.
\newblock Incompressible surfaces in {$2$}-bridge knot complements.
\newblock {\em Invent. Math.}, 79(2):225--246, 1985.

\bibitem[IM08]{ichihara-mizushima}
K.~Ichihara and S.~Mizushima.
\newblock Crossing number and diameter of boundary slope set of {M}ontesinos
  knot.
\newblock {\em Comm. Anal. Geom.}, 16(3):565--589, 2008.

\bibitem[Kla91]{klassen}
E.~P. Klassen.
\newblock Representations of knot groups in {${\rm SU}(2)$}.
\newblock {\em Trans. Amer. Math. Soc.}, 326(2):795--828, 1991.

\bibitem[KM95]{km-gauge2}
P.~B. Kronheimer and T.~S. Mrowka.
\newblock Gauge theory for embedded surfaces. {II}.
\newblock {\em Topology}, 34(1):37--97, 1995.

\bibitem[KM04a]{km-su2}
P.~B. Kronheimer and T.~S. Mrowka.
\newblock Dehn surgery, the fundamental group and {SU{$(2)$}}.
\newblock {\em Math. Res. Lett.}, 11(5-6):741--754, 2004.

\bibitem[KM04b]{km-p}
P.~B. Kronheimer and T.~S. Mrowka.
\newblock Witten's conjecture and property {P}.
\newblock {\em Geom. Topol.}, 8:295--310 (electronic), 2004.

\bibitem[KM10a]{km-alexander}
P.~Kronheimer and T.~Mrowka.
\newblock Instanton {F}loer homology and the {A}lexander polynomial.
\newblock {\em Algebr. Geom. Topol.}, 10(3):1715--1738, 2010.

\bibitem[KM10b]{km-excision}
P.~Kronheimer and T.~Mrowka.
\newblock Knots, sutures, and excision.
\newblock {\em J. Differential Geom.}, 84(2):301--364, 2010.

\bibitem[KM11a]{km-khovanov}
P.~B. Kronheimer and T.~S. Mrowka.
\newblock Khovanov homology is an unknot-detector.
\newblock {\em Publ. Math. Inst. Hautes \'Etudes Sci.}, (113):97--208, 2011.

\bibitem[KM11b]{km-yaft}
P.~B. Kronheimer and T.~S. Mrowka.
\newblock Knot homology groups from instantons.
\newblock {\em J. Topol.}, 4(4):835--918, 2011.

\bibitem[Lan66]{lang}
S.~Lang.
\newblock {\em Introduction to transcendental numbers}.
\newblock Addison-Wesley Publishing Co., Reading, Mass.-London-Don Mills, Ont.,
  1966.

\bibitem[Lim10]{lim}
Y.~Lim.
\newblock Instanton homology and the {A}lexander polynomial.
\newblock {\em Proc. Amer. Math. Soc.}, 138(10):3759--3768, 2010.

\bibitem[Lin16]{lin}
J.~Lin.
\newblock {$\rm{SU}(2)$}-cyclic surgeries on knots.
\newblock {\em Int. Math. Res. Not. IMRN}, (19):6018--6033, 2016.

\bibitem[MMR94]{mmr-book}
J.~W. Morgan, T.~Mrowka, and D.~Ruberman.
\newblock {\em The {$L^2$}-moduli space and a vanishing theorem for {D}onaldson
  polynomial invariants}.
\newblock Monographs in Geometry and Topology, II. International Press,
  Cambridge, MA, 1994.

\bibitem[MMR08]{mmr}
T.~W. Mattman, G.~Maybrun, and K.~Robinson.
\newblock 2-bridge knot boundary slopes: diameter and genus.
\newblock {\em Osaka J. Math.}, 45(2):471--489, 2008.

\bibitem[Mos71]{moser}
L.~Moser.
\newblock Elementary surgery along a torus knot.
\newblock {\em Pacific J. Math.}, 38:737--745, 1971.

\bibitem[NZ17]{ni-zhang}
Y.~Ni and X.~Zhang.
\newblock Detection of knots and a cabling formula for {$A$}-polynomials.
\newblock {\em Algebr. Geom. Topol.}, 17(1):65--109, 2017.

\bibitem[Oer84]{oertel}
U.~Oertel.
\newblock Closed incompressible surfaces in complements of star links.
\newblock {\em Pacific J. Math.}, 111(1):209--230, 1984.

\bibitem[Oh15]{oh-book}
Y.-G. Oh.
\newblock {\em Symplectic topology and {F}loer homology. {V}ol. 1}, volume~28
  of {\em New Mathematical Monographs}.
\newblock Cambridge University Press, Cambridge, 2015.
\newblock Symplectic geometry and pseudoholomorphic curves.

\bibitem[OS05]{osz-lens}
P.~Ozsv\'ath and Z.~Szab\'o.
\newblock On knot {F}loer homology and lens space surgeries.
\newblock {\em Topology}, 44(6):1281--1300, 2005.

\bibitem[Sca15]{scaduto}
C.~W. Scaduto.
\newblock Instantons and odd {K}hovanov homology.
\newblock {\em J. Topol.}, 8(3):744--810, 2015.

\bibitem[Smy15]{smyth}
C.~Smyth.
\newblock Seventy years of {S}alem numbers.
\newblock {\em Bull. Lond. Math. Soc.}, 47(3):379--395, 2015.

\bibitem[SW06]{silver-whitten}
D.~S. Silver and W.~Whitten.
\newblock Knot group epimorphisms.
\newblock {\em J. Knot Theory Ramifications}, 15(2):153--166, 2006.

\bibitem[Tau90]{taubes}
C.~H. Taubes.
\newblock Casson's invariant and gauge theory.
\newblock {\em J. Differential Geom.}, 31(2):547--599, 1990.

\bibitem[Zen18]{zentner}
R.~Zentner.
\newblock Integer homology 3-spheres admit irreducible representations in
  {${\rm SL}(2,\mathbb{C})$}.
\newblock {\em Duke Math. J.}, 167(9):1643--1712, 2018.

\end{thebibliography}

\end{document}